\newtheorem{thm}{Theorem}[section]
\newtheorem{cor}[thm]{Corollary}
\newtheorem{lem}[thm]{Lemma}
\newtheorem{prop}[thm]{Proposition}
\numberwithin{equation}{thm}
\theoremstyle{definition}
\newtheorem{defn}[thm]{Definition}
\newtheorem{rem}[thm]{Remark}
\newtheorem{rems}[thm]{Remarks}
\newtheorem{conv}[thm]{Convention}
\newtheorem{notn}[thm]{Notation}
\newtheorem{exam}[thm]{Example}
\newcommand{\D}{\mathcal{D}}
\newcommand{\fsG}{\mathscr{G}}
\newcommand{\LS}{\mathcal{L}}
\newcommand{\US}{\mathfrak{A}}
\newcommand{\fcM}{\mathcal{M}}
\newcommand{\fcB}{\mathcal{B}}
\newcommand{\fcP}{\mathcal{P}}
\newcommand{\fcY}{\mathcal{Y}}
\newcommand{\frb}{\mathfrak{b}}
\newcommand{\frp}{\mathfrak{p}}
\newcommand{\frm}{\mathfrak{m}}
\def\f#1{\mathfrak{#1}}
\newcommand{\fS}{\f{S}}
\newcommand{\C}{\mathcal{C}}
\def\MN(#1){ M_{#1}(\mathbb{N})}
\def\MNR(#1,#2){ M_{#1}(\mathbb{N})_{#2}}
\def\MNS(#1){ M_{#1}(\mathbb{N})^{\pm}}
\newcommand{\ZZ}{\mathbb{Z}}
\newcommand{\QQ}{\mathbb{Q}}
\newcommand{\ZG}{{{\mathbb{Z}}}_2}
\newcommand{\NN}{\mathbb{N}}
\def\MZ(#1){ M_{#1}(\ZG)}
\def\NZ(#1){ (\NN|\ZG)^{#1}}
\def\NZST(#1,#2,#3){ (\NN|\ZG)^{#1}_{#2|#3}}
\def\NZS(#1,#2){ {\NN}^{#1}_{#2}}
\def\MNZ(#1,#2){ M_{#1}(\NN | \ZG)_{#2}}
\def\MNZN(#1){ M_{#1}(\NN | \ZG)}
\def\CMNZ(#1,#2,#3){\Lambda(#1,#2|#3)}
\def\CMN(#1,#2){\Lambda(#1,#2)}
\def\CMNP(#1,#2){\Lambda_{#1,#2}}
\def\MNZNS(#1){ \MNZN(#1)^{\pm}}
\def\SE#1{{#1}^{\bar{0}}}
\def\SO#1{{#1}^{\bar{1}}}
\def\SEE#1{{#1}^{\bar{0}}}
\def\SOE#1{{#1}^{\bar{1}}}
\def\SUP#1{\SE{#1}|\SO{#1}}
\def\SS(#1,#2){{#1}^{\ol{#2}}}
\def\SSE(#1,#2){{#1}^{\ol{#2}}}
\def\ESE(#1, #2, #3){ \SEE{#1}_{{#2},{#3}} }
\def\ESO(#1, #2, #3){ \SOE{#1}_{{#2},{#3}} }
\def\bs#1{\boldsymbol{#1}}
\def\Qqs(#1,#2){ \mathcal{Q}(#1,#2) }
\def\lcase#1{\MakeLowercase{#1}}
\newcommand{\SerA}{\f{H}^c_r}
\newcommand{\HCR}{\mathcal{H}^c_{r, R}}
\newcommand{\Heck}{\mathcal{H}_{r,R}}
\newcommand{\QqnrR}{\mathcal{Q}_{\lcase{q}}(\lcase{n},\lcase{r}; R)}
\newcommand{\SQqnrR}{{\widetilde{\mathcal{Q}}_{\lcase{q}}(\lcase{n},\lcase{r}; R)}}
\newcommand{\Qqnr}{\mathcal{Q}_{\lcase{q}}{(\lcase{n},\lcase{r})}}
\newcommand{\SQvnR}{\widetilde{\boldsymbol{\mathcal{Q}}}_{\lcase{v}}{(\lcase{n})}}
\newcommand{\SQvnrR}{\widetilde{\boldsymbol{\mathcal{Q}}}_{\lcase{v}}(\lcase{n},\lcase{r})}
\newcommand{\bsSQvnr}{\widetilde{\boldsymbol{\mathcal{Q}}}_{\lcase{v}}(\lcase{n},\lcase{r})}
\newcommand{\USnv}{{\US_{v}(n)}}
\def\ol#1{\overline{#1}}
\newcommand{\ep}{\epsilon}
\newcommand{\Qv}{\mathbb Q({v})}
\def\qn{\mathfrak{\lcase{q}}_n}
\def\Uqn{\bsU(\mathfrak{\lcase{q}}_n)}
\def\Qvs(#1){\mathcal{Q}_{\lcase{v}}{(\lcase{#1})}}
\def\Uvqn{{\boldsymbol U}_{\!{v}}(\mathfrak{\lcase{q}}_{n})}
\def\Uvq(#1){U_{\lcase{v}}(\mathfrak{\lcase{q}}_{\lcase{#1}})}
\def\USN(#1){{\US[#1]}_{v}}
\newcommand{\Qnr}{\mathcal{\MakeUppercase{Q}}{(\lcase{n},\lcase{r})}}
\def\SABJR(#1,#2,#3,#4){({#1}|{#2})[\bs{#3}, #4]}
\def\SABJRS(#1,#2,#3,#4){({#1}|{#2})[#3, #4]}
\def\SABJS(#1,#2,#3){({#1}|{#2})[#3]}
\def\SAJRS(#1,#2,#3){{#1}[#2, #3]}
\def\SAJS(#1,#2){{#1}[#2]}
\def\SABJ(#1,#2,#3){({#1}|{#2})[\bs{#3}]}
\def\SAJR(#1,#2,#3){{#1}[\bs{#2}, #3]}
\def\SAJ(#1,#2){{#1}[\bs{#2}]}
\def\ABJR(#1,#2,#3,#4){({#1}|{#2})(\bs{#3}, #4)}
\def\ABJRS(#1,#2,#3,#4){({#1}|{#2})(#3, #4)}
\def\ABJS(#1,#2,#3){({#1}|{#2})(#3)}
\def\AJRS(#1,#2,#3){{#1}(#2, #3)}
\def\AJS(#1,#2){{#1}(#2)}
\def\ABJ(#1,#2,#3){({#1}|{#2})(\bs{#3})}
\def\AJR(#1,#2,#3){{#1}(\bs{#2}, #3)}
\def\AJ(#1,#2){{#1}(\bs{#2})}
\def\ABSUM#1{\widehat{#1}}
\def\snorm#1{|{#1}|}
\def\STDUE(#1,#2){({#1}+E_{{#2},{#2}+1}-E_{{#2}+1,{#2}+1}|0)}
\def\STDUO(#1,#2){({#1}-E_{{#2}+1,{#2}+1}|E_{{#2},{#2}+1})}
\def\STDLE(#1,#2){({#1}-E_{{#2},{#2}}+E_{{#2}+1,{#2}}|0)}
\def\STDLO(#1,#2){({#1}-E_{{#2},{#2}}|E_{{#2}+1,{#2}})}
\def\STDDE(#1){(D_{#1}|0)}
\def\STDDO(#1,#2){({#1}-E_{{#2},{#2}}|E_{{#2},{#2}})}
\newcommand{\tspan}{\mathrm{span}}
\newcommand{\End}{\mathrm{End}}
\newcommand{\ro}{\mathrm{ro}}
\newcommand{\co}{\mathrm{co}}
\newcommand{\AP}{{A^+_{h,k}}}
\newcommand{\TDAP}{T_{d_{A^+_{h,k}}}}
\newcommand{\AM}{{A^-_{h,k}}}
\newcommand{\TDAM}{T_{d_{A^-_{h,k}}}}
\def\STEPX#1#2{{[\![{#1}]\!]}_{#2}}
\def\STEP#1{ {[\![{#1}]\!]}_{{q}} }
\def\STEPP#1{{[\![{#1}]\!]}_{{q}^2}}
\def\STEPPD#1{{[\![{#1}]\!]}_{{q},{q}^2}}
\def\STEPPDR#1{{[\![{#1}]\!]}_{{q}^2,{q}}}
\def\VSTEP#1{ {[\![{#1}]\!]}_{{v}^2} }
\def\VSTEPP#1{{[\![{#1}]\!]}_{{v}^4}}
\def\VSTEPPD#1{ \STEPX{#1}{{v}^2,{v}^4} }
\def\VSTEPPDR#1{{[\![{#1}]\!]}_{{v}^4,{v}^2}}
\newcommand{\where}{\ \bs{|} \ }
\def\Hom{\mathrm{Hom}}
\newcommand{\spaceintv}{}
\def\intd(#1,#2,#3){\left[\begin{matrix}{#1};{#2}\\{#3}\end{matrix}\right]}
\def\intds(#1,#2){\left[\begin{matrix}{#1}\\{#2}\end{matrix}\right]}
\def\intdss(#1,#2){\intd({#1},{0},{#2})}
\def\diag{{\rm{diag}}}
\def\ker{\rm{ker}}
\def\xn{\bs{\xi}_n}
\def\parity#1{p({#1})}
\def\TAIJ(#1,#2){ T^\lhd_{({#1}, {#2})} }
\def\TDIJ(#1,#2){ T^\rhd_{({#1}, {#2})} }
\def\rmText#1{}
\def\rmForm#1{}
\def\AK(#1,#2){ {\overleftarrow{\bf r}}^{#2}_{#1} }
\def\BK(#1,#2){ {\overrightarrow{\bf r}}^{#2}_{#1} }
\newcommand{\YK}{{\widetilde a}^{\bar1}_{h-1,k}}
\newcommand{\ft}{{\boldsymbol{\mathrm{t}}}}
\newcommand{\cbefore}{{\overleftarrow c}_{\!\!A\;\;}^{h,k}}
\newcommand{\cafter}{{\overrightarrow c}_{\!\!A}^{h+1,k}}
\newcommand{\ckbefore}{{\overleftarrow c}_{\!\!A\;\;}^{h,k}}
\newcommand{\ckafter}{{\overrightarrow c}_{\!\!A}^{h,k}}
\newcommand{\cmiddle}{c_{A;h,k}^{h+1,k}}
\def\fkf{{\mathfrak f}}
\def\bsU{\boldsymbol U}
\def\RZ{\hlt{{\mathsf{E}}}}
\def\RP{\hlt{{\mathsf{P}}}}
\def\genE{\hlt{\mathsf{E}}}
\def\genF{\hlt{\mathsf{F}}}
\def\genK{\hlt{\mathsf{K}}}
\def\RZ{{{\mathsf{E}}}}
\def\RP{{{\mathsf{P}}}}
\def\genE{{\mathsf{E}}}
\def\genF{{\mathsf{F}}}
\def\genK{{\mathsf{K}}}
\title[Approaching quantum queer supergroups  using finite dimensional algebras]
{Approaching quantum queer supergroups using\\ finite dimensional superalgebras \\
\tiny{Preliminary version}
}
\author{Jie Du, Haixia Gu, Zhenhua Li and Jinkui Wan}
\address{Jie Du, School of Mathematics, University of New South Wales, UNSW Sydney 2052, Australia}
\email{j.du@unsw.edu.au}
\address{Haixia Gu, School of Science, Huzhou University, Huzhou 313000, China}
\email{ghx@zjhu.edu.cn}
\address{Zhenhua Li, School of Mathematical Sciences, Xiamen University, Xiamen 361005, China}
\email{zhen-hua.li@qq.com}
\address{Jinkui Wan,  Department of Mathematics, Beijing Institute of Technology, Beijing 100081, China}
\email{ wjk302@gmail.com}
\keywords{quantum  queer  supergroup, Hecke-Clifford superalgebra,
	quantum queer Schur superalgebra, realization.
}
\subjclass[2020]{17B37, 17A70, 20G42, 20C08}
\begin{document}
\maketitle

\begin{abstract}The idea of using a sequence of finite dimensional algebras to approach a quantum linear group (i.e., a quantum
$\mathfrak{gl}_n$)  was first introduced by Beilinson--Lusztig--MacPherson \cite{BLM}. 
In their work, 
the algebras are convolution algebras of some finite partial flag varieties
whose certain structure constants relative to the orbital basis satisfy a stabilization property. 
This property leads to the definition of an infinite dimensional idempotented algebra. 
Finally, taking a limit process yields a new realization for the quantum $\mathfrak{gl}_n$. 
Since then, this work has been modified \cite{DF2} 
and generalized to quantum affine $\mathfrak{gl}_n$ (see \cite{GV,L} for the geometric approach 
and \cite{DDF,DF} for the algebraic approach and a new realization) 
and quantum super  $\mathfrak{gl}_{m|n}$ \cite{DG}, 
and, more recently,
to convolution algebras arising from type $B/C$ geometry 
and $i$-quantum groups $\boldsymbol U^\jmath$ and $\boldsymbol U^\imath$; 
see \cite{BKLW,DWu1, DWu2}. 
This paper extends the algebraic approach 
to the quantum queer supergroup $\Uvqn$ via finite dimensional queer $q$-Schur superalgebras.
\end{abstract}

\maketitle

\tableofcontents

\section{Introduction}\label{sec_introduction}

Since the introduction of quantum groups as quantized enveloping algebras
 of semisimple Lie algebras in late eighties of last century, their realizations and related
applications have achieved outstanding progress. First, C. M. Ringel developed a realization 
for the positive part of such a quantum group, using Ringel--Hall algebras 
associated with the representation category of quivers. 
Then G. Lusztig's geometric approach to quantum groups and canonical bases advanced the theory to a new level. 
On the other hand, almost at the same time, 
Beilinson, Lusztig and MacPherson (see \cite{BLM}) initiated a new method 
of using a sequence of geometrically defined convolution algebras associated 
with some (finite) partial flag varieties (i.e., $q$-Schur algebras) to approach quantum linear groups. 
This approach results in a new realization for the entire quantum group $\bsU_{\!{v}}(\mathfrak{gl}_n)$ 
via its regular representation defined explicitly through 
some multiplication formulas of generators on a basis for $\bsU_{\!{v}}(\mathfrak{gl}_n)$. 
The convolution algebra approach was soon generalized to quantum affine $\mathfrak{gl}_n$ in \cite{GV, L}, 
which motivated an algebraic approach to their new realizations developed in \cite{DDF, DF}.

Recently, motivated from the work by Bao and Wang \cite{BW}, 
in which $i$-quantum groups (or more precisely, quantum symmetric pairs) 
play a key role to give a reformulation of Kazhdan-Lusztig theory that provides a
perfect solution to the problem of character formulas of $\mathfrak{osp}$ Lie superalgebras,
this geometric approach has also been generalized to convolution algebras 
arising from type B/C geometry, modified $i$-quantum groups, and their canonical bases.  
Building on \cite{BKLW} and the new Schur--Weyl duality in \cite{BW}, 
new realizations for the $i$-quantum groups $\boldsymbol U^\jmath$ and $\boldsymbol U^\imath$ are obtained in \cite{DWu1,DWu2}.

Though the geometric (or convolution algebra) approach seems not suitable
 for the realization problem for quantum supergroups, 
 the idea of using finite dimensional superalgebras to approach quantum supergroups 
continues to shed lights on the development of the algebraic approach.
The quantum supergroups we are interested in are quantized enveloping algebra of 
the Lie superalgebras arising from two types  
of finite dimensional simple associative superalgebras over the complex number field. 
These Lie superalgebras are called
 the general linear Lie superalgebra {$\mathfrak{gl}_{m|n}$} and the queer Lie superalgebra {$\qn$}.

 Using the theory of $q$-Schur superalgebras associated to Hecke algebras developed in \cite{DR}, the first two authors discovered a BLM type new realization for quantum linear supergroup $\bsU_{\!{v}}(\mathfrak{gl}_{m|n})$; see \cite{DG}. This algebraic method has further been refined in \cite{DGZ} to a three-step approach:
 \begin{enumerate}
 \item[(1)] developing some commutation formulas in Hecke algebras,
 \item[(2)] deriving some short (element) multiplication formulas, which gives the regular representation of $q$-Schur superalgebras, and some long (element) multiplication formulas, on which the Drinfeld--Jimbo type defining relations are built, and finally,
 \item[(3)] using a triangular relation between a monomial basis 
 and the BLM type long element basis to establish a new realization for $\bsU_{\!{v}}(\mathfrak{gl}_{m|n})$ 
 through its regular representation arising from the long multiplication formulas.
 \end{enumerate}  Note that the convolution algebra approach combines (1) and (2), 
 obtaining the short multiplication formulas via convolution products. Note also that the work \cite{DG}, 
 especially the partial integral Schur--Weyl duality, played a key role in the investigation  \cite{DLZ} of polynomial super representation theory of $\bsU_{\!{v}}(\mathfrak{gl}_{m|n})$ in positive characteristic, 
 which generalises the classical theory developed in \cite{BKu}.

The representaiton theory of the algebraic Lie supergroups of type $Q$ and their associated Lie superalgebras, called
the queer Lie superalgebra {$\qn$} here, 
has been studied  extensively at both the ordinary and modular levels; see \cite{BK1, BK2} and the references therein.
The queer Lie superalgebra {$\qn$}, on the one hand, 
behaves in many aspects as the Lie algebra $\mathfrak{gl}_n$ or the Lie superalgebra $\mathfrak{gl}_{m|n}$. 
But, on the other hand, 
it differs drastically from the basic classical Lie superalgebras in terms of the Cartan subalgebra,
 which is not purely
even, and the invariant bilinear form, 
which does not exists for {$\qn$}.
Thus, we do have a beautiful analog of the Schur-Weyl duality discovered by Sergeev  \cite{Ser}, 
often referred as Sergeev duality, and
the notion of queer Schur superalgebra can be introduced. 
On the other hand, an unusual highest weight theory needs to be developed in \cite{BK1, BK2} by Brundan and Kleshchev 
in order to solve the classification problem of finite dimensional irreducible polynomial supermodules. 
They also determine the irreducible projective representations of
the symmetric group $\fS_r$.

At the quantum level, Olshanski \cite{Ol} constructed a quantum deformation $\Uvqn$ of the universal enveloping algebra
$U(\qn)$ and established a quantum analog of the Sergeev duality in the generic case. 
Then, in \cite{GJKK, GJKKK}, highest weight modules and crystal bases in the quantum characteristic 0 case are  investigated. Also,
a quantum analog of Schur superalgebras, called  queer {$q$}-Schur superalgebra (or quantum queer Schur superalgebra),
 was  obtained by the first and last authors (see \cite{DW1} 
 and independently in \cite{BGJKW} using walled Brauer-Clifford superalgebras),
 and presentations for both queer Schur and $q$-Schur superalgebras were given therein.
Later, an integral theory for the queer $q$-Schur superalegbra was developed in \cite{DW2}.
These developments indicate that
a BLM type realization for $\Uvqn$ using queer $q$-Schur superalgebras should exist and the polynomial representation theory in positive quantum characteristic (i.e., in the root of unity case), 
as a quantum analog of \cite{BK1,BK2}, would also be developed.  
Naturally, one expects that the three-step approach above 
should be generalized from $\bsU_{\!{v}}(\mathfrak{gl}_{m|n})$ to  $\Uvqn$.

This project was initiated almost ten years ago.  The main obstacles occurred in deriving short multiplication formulas for odd generators. After some unsuccessful early attempts, we decided to tackle the classical (i.e., ${v}=1$) case first.
In \cite{GLL}, a new realization of the enveloping superalgebra {$\Uqn$} is obtained 
and the three-step approach was carried out. In particular, 
key commutation formulas in Sergeev algebras emerge and both short and long multiplication formulas 
are derived in queer Schur superalgebras {$\Qnr$} with structure constants independent of {$r \ge 0$}. 
This work provides a road map for the realization in the quantum case.

However, unlike the classical case in \cite{GLL}, we still could not write down explicitly all  multiplication formulas
 involving odd generators  because of the complexity caused by the Clifford part. This
 shows a drastic difference from the cases of $\mathfrak{gl}_n$
  and $\mathfrak{gl}_{m|n}$, as well as the classical case.
 To fill this gap, we introduce the SDP condition
 (see Definition \ref{defn:SDP}) and derive some multiplication formulas 
 under the SDP condition.
 \footnote{These formulas are raw formulas which are not normalized. See Remark \ref{raw}(2).} 
 Fortunately, by an inductive approach (cf. Remark \ref{induction_N}), 
 this is enough to realize the generators and relations for the quantum queer supergroup $\bsU_v(\mathfrak{q}_n)$ in queer $q$-Schur superalgebras. 
 In this way, we successfully completed Step (1) 
 by discovering some key commutation formulas in Hecke--Clifford algebras required in Step (2).
Note that the combinatorics developed in \cite{DGZ} and \cite{DW1,DW2} played decisive roles.
 Finally, by using the PBW basis theorem established in \cite{DW1},
  we obtain
a realization of the supergroup $\Uvqn$, completing Step (3).

It is worthwhile to point out that the first author together with Y. Lin and Z. Zhou has also given in \cite{DLZ} a realization of {$\Uvqn$} via ${v}$-differential operators. 
This approach aims at constructing directly the regular module of $\Uvqn$. It is not clear how this new construction relates to the  Schur--Weyl--Olshanski duality.

We expect in forthcoming papers to apply 
the new realization via queer $q$-Schur algebras to the study of the integral Schur--Weyl--Olshanski duality
 and to develop the modular representation theory of quantum queer supergroups at roots of unity, generalizing the work \cite{BK1, BK2} to the quantum case.

This  paper is organized as follows.
In Section \ref{sec_basicformulas}, 
we deduce some useful commutation formulas in the Hecke-Clifford superalgebra and introduce the important SDP condition as certain commutation formulas between certain elements 
in the Clifford subalgebra and elements associated with shortest double coset representatives 
in the Hecke subalgbera. We also verify the condition for two cases.
In Section \ref{sec_qqschur}, we first follow \cite{DW2, DW1} 
to introduce queer $q$-Schur superalgebra $\QqnrR$. 
We then introduce a superalgebra $\SQqnrR$ as a twisted version of $\QqnrR$ 
and as the homomorphic image of the quantum queer supergroup $\Uvqn$ as well. 
In the next two sections, 
we derive the multiplication formulas of the defining basis by certain homogeneous generators for the superalgebra
$\QqnrR$, 
where the even case is done in Section \ref{even case}, 
while the odd case is done in Section \ref{odd case}.
 Similar to BLM's original method, we introduce
in Section \ref{sec_spanningsets} some long elements which span {$\SQqnrR$} uniformly on $r$ and derive their multiplication formulas by the images of the generators of $\Uvqn$. 
Note that the ``structure constants'' in these multiplication formulas are independent of $r$. We then check all the defining relations of $\Uvqn$ in $\SQvnrR$ in Section \ref{defining relations}.
 In this way, we obtain superalgebra homomorphisms from $\Uvqn$ to $\SQvnrR$,
 which induce a homomorphism $\bs{\xi}_n$ from $\Uvqn$ to the direct product $\SQvnR$ of  $\SQvnrR$ (Theorem \ref{qqschur_reltion}).
Finally, in Section \ref{sec_generators},
we determine the image {$\USnv$} of the homomorphism $\bs{\xi}_n$ and prove that $\bs{\xi}_n$ is injective. 
Thus, the new superalgebra {$\USnv$} is a new realization of {$\Uvqn$} (Theorem \ref{map_iso}).

\begin{notn}\label{sec_notations}
For any $m\in {\ZZ}_+$ and indeterminates {$y, z$}, set
\begin{align}\label{stepd}
	{\STEPX{m}{y}} = 1 + {y} + \cdots + {y}^{m-1}
	 = \frac{{y}^{m} - 1}{{y} - 1},\qquad {\STEPX{m}{y, z}} ={\STEPX{m}{y}} - {\STEPX{m}{z}}.
\end{align}

For {$i,j \in \ZZ$} with {$i \le j$}, let {$[i, j] = \{ i, i+1, \cdots, j \}$}.
Throughout this paper,
$R$ is a commutative ring of characteristic not equal to 2,
$\QQ$ is the field of rational numbers,
$n$ and $r$ are integers and {$n \ge 2$}, {$r \ge 1$}.
Let {${q}, {v}$} be indeterminates,  {${q} = {v}^2$}.
\end{notn}

\spaceintv
\section{Preliminaries in the Hecke-Clifford superalgebra }\label{sec_basicformulas}

We shall establish a number of relations and commutation formulas in $\HCR$
which will be useful to compute the multiplication formulas
in the queer $q$-Schur superalgebra
(i.e., the quantum queer Schur superalgebra).

The Clifford superalgebra {$\C_r$} is an associative superalgebra over {$R$} defined by odd generators {$c_1, \cdots , c_r$} and relations: for $1\leq i,j\leq r$,
\begin{equation}\label{cliff}
	c_i^2 = -1,  \quad c_i c_j = - c_j c_i \; (i \ne j).
\end{equation}
Let {$\fS_{r}$} be the symmetric group on {$r$} letters,
with generators  {$ s_{i}=(i, i+1)$} for all {$1 \le i < r$}.
The Hecke  algebra  {$\Heck$} associated with {$\fS_{r}$} and $q\in R^\times$
is an algebra  over {${R}$}  generated by
{$T_{i} = T_{s_{i}}$} for all  {$1 \le i < r$},
subject to the relations: for $1 \le i, j \le r-1$,
\begin{equation}\label{Hecke}
	(T_i - {q} )(T_i + 1) = 0, \quad T_i T_j = T_j T_i\,
	 (|i-j| >1),\quad
	T_i T_{i+1} T_i = T_{i+1} T_i T_{i+1}\; (i\neq r-1).
\end{equation}
The {\it Hecke-Clifford superalgebra}  {$\HCR$}  is a  superalgebra  over {${R}$}  generated by even generators
{$T_1, \cdots, T_{r-1}$} and odd generators {$c_1, \cdots, c_r$},
subject to the relations \eqref{cliff}-\eqref{Hecke} and the extra relations: for $1\leq i\leq r-1, 1\leq j\leq r,$
\begin{equation}\label{Hecke-Cliff}
T_i c_j = c_j T_i\,(j\neq i, i+1), \quad
T_i c_i = c_{i+1} T_i, \quad T_i c_{i+1} = c_i T_i - ({q}-1)(c_i - c_{i+1}).
\end{equation}
The following relations can be deduced directly from \eqref{cliff}-\eqref{Hecke-Cliff} and will be frequently used in the sequel: for any {$1\leq k\leq r-1$} and $\alpha\in\Lambda(n,r)$,
\begin{equation}\label{Tick-inv}
\begin{aligned}
(1)\;\;&{q} T_{k}^{-1} = T_{k} -  ({q} - 1),
\qquad
&(2)\;\;   x_{\alpha} T_{k}^{-1} =  {q}^{-1} x_{\alpha} \quad  \mbox{ if }  s_{k} \in \fS_{\alpha},\\
(3)\;\;&T_{k} c_{k+1}
= {q} c_{k} T_{k}^{-1} + ({q} - 1) c_{k+1},
\qquad
&(4)\;\; c_{k}T_{k}
= {q} T_{k}^{-1} c_{k+1}  + ({q} - 1) c_{k}.
\end{aligned}
\end{equation}

For {$w \in \fS_r$} and $\alpha\in {\ZZ}^r_2$, let
$$T_w = T_{i_1} \cdots T_{i_k},\qquad c^\alpha=c_1^{\alpha_1}c_2^{\alpha_2}\cdots c_r^{\alpha_r}.$$
Here $w= s_{i_1} \cdots s_{i_k}$  is any reduced expression of $w$. 
Then, by \cite[Lemma 2.2]{DW1}, both
$\{T_wc^\alpha\mid w\in\fS_r,\alpha\in {\ZZ}^r_2\}$ and $\{c^\alpha T_w\mid w\in\fS_r,\alpha\in {\ZZ}^r_2\}$ form $R$-bases for $\HCR$.

Define the set of  compositions of $r$ in $n$ parts:
\begin{equation}\label{Lanr}
\CMN(n,r)=\{\lambda=(\lambda_1,\lambda_2,\ldots,\lambda_n)\in \NN^n \where  |\lambda|:=\sum_i\lambda_i=r \}.
\end{equation}
Associated with {${\lambda} = ({\lambda}_1, \cdots , {\lambda}_{n}) \in \CMN(n,r)$}, define its {\it partial sum sequence} $ \widetilde{\lambda}_1,\ldots,\widetilde{\lambda}_n=r$, where
\begin{equation}\label{latilde}
\widetilde{\lambda}_i = \sum_{k=1}^i \lambda_{k} \;\;(1\leq i\leq n), \text{ and set }\widetilde{\lambda}_0=0.
\end{equation}
Denote by {$\fS_{\lambda}$} the standard Young subgroup of {$\fS_r$} corresponding to {$\lambda$},
and let
\begin{align}\label{xlambda}
	x_{\lambda} = x_{\fS_{\lambda}} = \sum_{w \in \fS_{\lambda}} T_w.
\end{align}
It is known that for any {$T_{i}$} satisfying {$  s_i \in \fS_{\lambda} $}, we have
\begin{align}\label{Tixlambda}
	T_i x_{\lambda} =x_{\lambda}  T_i = {q} x_{\lambda}.
\end{align}

Denote by $\MN(n)$ the set of $n\times n$-matrices with entries being non-negative integers.
For any {$A=(a_{i,j}) \in \MN(n)$}, let
\begin{align}\label{nuA}
	\nu_{A} := (a_{1,1}, \cdots, a_{n,1}, a_{1,2}, \cdots, a_{n,2}, \cdots,  a_{1,n}, \cdots, a_{n,n})=(\nu_1,\nu_2,\ldots,\nu_{n^2}).
\end{align}
If $\nu_k=a_{i,j}$ for some $1\leq i,j\leq n $, then we write the partial sum $\widetilde{\nu}_k$ as
\begin{align}\label{mtildehk}
\widetilde{\nu}_k=\widetilde{a}_{i,j} := \sum_{p=1}^{j-1}\sum_{u=1}^{n} a_{u,p} + \sum_{u=1}^{i} a_{u,j}.
\end{align}
For convenience of later use (see \eqref{map d_A} and Definition \ref{defn:SDP}), we also set
\begin{equation}
 \widetilde a_{0,1}=0\text{ and }
\widetilde{a}_{0,j} =\widetilde{a}_{n,j-1}\; (j\geq2).
\end{equation}

For {$ \lambda,\mu \in \CMN(n,r)$},
let {$\D_{\lambda}$} be the shortest representatives of right cosets of {$\fS_{\lambda}$} in {$\fS$},
let {$\D_{\lambda, \mu} $} denote the shortest representatives of the {$\fS_{\lambda}$}-{$\fS_{\mu}$} double cosets of {$\fS_r$}.

Let {$  \MNR(n,r) = \{ A=(a_{i,j}) \in \MN(n) \where  r=|A| \} $}, where {$|A|:=\sum_{i,j} a_{i,j} $}.
According to \cite[Section 4.2]{DDPW}, there is a bijection
\begin{equation}\label{jmath}
\aligned
	\mathfrak{j}: \MNR(n,r) &\to \{ (\lambda,  d, \mu) \where \lambda,\mu \in \CMN(n,r), d \in \D_{\lambda, \mu} \} \\
	A & \mapsto (\ro(A), d_A, \co(A)).
	\endaligned
\end{equation}
Here, for $A=(a_{i,j})$, if $\lambda=\ro(A)$ and $\mu =\co(A)$, then $\lambda_i=\sum_{j=1}^na_{i,j}$ and
$\mu_j=\sum_{i=1}^na_{i,j}$. Call $\ro(A)$ (resp., $\co(A)$) the {\it row sum} (resp., {\it column sum}) vector of $A$.
Moreover, the shortest double coset representative $d_A$ has a reduced expression as follows according to \cite[Lemma 3.2]{DGZ}:
\begin{equation}\label{d_A}
\begin{aligned}
	d_A =
		(w_{2,1}w_{3,1}\cdots w_{n,1})
		(w_{2,2}w_{3,2}\cdots w_{n,2})
		\cdots
		(w_{2,n-1}w_{3,n-1}\cdots w_{n,n-1}) ,
\end{aligned}
\end{equation}
where $w_{i,j}$ is defined by
\begin{equation}\label{wij}
\begin{aligned}
w_{i,j}=\left\{
\begin{array}{ll}
1,\text{ if }a_{i,j}=0, \text{ or } a_{i,j}>0 \text{ and }\sigma_{i-1,j} = \widetilde{a}_{i-1,j},\\
(s_{\sigma_{i-1,j}} s_{\sigma_{i-1,j} - 1} \cdots s_{\widetilde{a}_{i-1,j} +1})
		\cdots
		(s_{\sigma_{i-1,j}+a_{i,j}-1} s_{\sigma_{i-1,j}+a_{i,j}-2 } \cdots s_{\widetilde{a}_{i,j}}), \text{ otherwise}.
\end{array}
\right.
\end{aligned}
\end{equation}
Here, for $\lambda=\ro(A)$ and $\mu=\co(A)$,
\begin{align}\label{sigmaij}
{\sigma}_{i,j}
	:=\sum_{p=1}^{j-1}  \sum_{u=1}^{n}a_{u,p}
		+ \sum_{u \le i, p \ge j}  a_{u,p} = \widetilde\mu_{j-1}+\sum_{u \le i, p \ge j}  a_{u,p}
	= \widetilde{\lambda}_{i}	 + \sum_{u > i, p < j}  a_{u,p}.
\end{align}
Observe from \eqref{mtildehk} that the following useful fact holds when computing $d_A$:
\begin{align}\label{sigma-a}
\sigma_{i,j}-\widetilde{a}_{i,j}=\sum_{u\leq i, p\geq j+1}a_{u,p}.
\end{align}

\begin{rem}\label{rem:dA=1}
By definition, 
 {$d_A=1$}
$\iff$  $\sigma_{i-1,j}-\widetilde{a}_{i-1,j}=0$ for all $a_{i,j}>0$
 with $2 \le i \le  n,  1 \le j \le n-1$  
 $\iff$ 
 $ \sum_{u \le i-1 , p\geq j+1 }a_{u,p} = 0$ for all $a_{i, j}> 0$   
 with $2 \le i \le  n,  1 \le j \le n-1$.
\end{rem}

For any {$A \in \MN(n)$} and {$1 \le h \le n-1, 1 \le k \le n$}, denote
\begin{align}\label{Ahk}
	& A^+_{h,k} = A + E_{h,k} - E_{h+1, k} \in \MN(n), \qquad
	  A^-_{h,k} = A - E_{h,k} + E_{h+1, k} \in \MN(n),
\end{align}
where {$E_{i,j}$} denotes the matrix with entry $1$ at {$(i,j)$} and {$0$} at other entries.

Given {$A =(a_{i,j}) \in \MN(n)$}, define the partial row sums:
\begin{equation}\label{prsum}
\aligned
\AK(h,k) =\AK(h,k)(A),\text{ where }&\AK(h,1)=0, \;\; \AK(h,k) = \sum^{k-1}_{u=1}a_{h, u}\;(2\leq k\leq n),\;\AK(h,n+1)=\ro(A)_h;\\
\BK(h,k) =\BK(h,k)(A),\text{ where }&\BK(h,0)=\ro(A)_h,\;\; \BK(h,k) = \sum^{n}_{j=k+1}a_{h, j}\;(1\leq k<n),\;\; \BK(h,n)=0.
\endaligned
\end{equation}

\begin{lem}\label{prop_pjshift}
	Let {$A =(a_{i,j})= \MN(n)$},
	{$\lambda = \ro(A)$}. Keep the above notations.

{\rm(1)}
 For each $1\leq h\leq n-1$ and $1\leq k\leq n$ such that $a_{h+1,k}>0$, we have
\begin{align*}
&\sum_{j=\AK(h+1,k)}^{\AK(h+1,k+1) - 1}
		T_{\widetilde{\lambda}_{h}+1} T_{\widetilde{\lambda}_{h}+2} \cdots T_{\widetilde{\lambda}_{h}+j}
		{T_{d_A}}
=
		T_{\widetilde{\lambda}_{h}} T_{\widetilde{\lambda}_{h}-1}\cdots T_{\widetilde{\lambda}_{h}-\BK(h,k)+1}
		{{\TDAP}}
		(\sum_{j=\AK(h+1,k)}^{\AK(h+1,k+1) - 1}
		T_{\widetilde{a}_{h,k}+1} \cdots T_{\widetilde{a}_{h,k}+p_j}).
\end{align*}

{\rm(2)}
For each $1\leq h\leq n$ and $1\leq k\leq n$ such that $a_{h,k}>0$, we have
\begin{align*}
&\sum_{j=\BK(h,k)}^{\BK(h,k-1) - 1}
	T_{\widetilde{\lambda}_{h}-1} \cdots T_{\widetilde{\lambda}_{h}-j }
	T_{d_{A}}
	= T_{\widetilde{\lambda}_{h}} T_{\widetilde{\lambda}_{h}+1}\cdots T_{\widetilde{\lambda}_{h}+\AK(h+1,k)-1}  T_{d_{A^-_{i,k}}}
	(\sum_{j=\BK(h,k)}^{\BK(h,k-1) - 1}
		T_{\widetilde{a}_{h,k}-1} \cdots T_{\widetilde{a}_{h,k}-q_j}).
\end{align*}
Here $p_j=j-\AK(h+1,k)$ and $q_j = j - \BK(h,k)$. (Note that, in case {$j=0$}, we set
	{$T_{\widetilde{\lambda}_{h}+1} \cdots T_{\widetilde{\lambda}_{h}+j }=1$} and
	{$T_{\widetilde{\lambda}_{h}-1} \cdots T_{\widetilde{\lambda}_{h}-j } = 1$}).
\end{lem}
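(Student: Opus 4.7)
The plan is to prove both identities by direct manipulation of the reduced expression \eqref{d_A} for $d_A$ together with the length-additive multiplication in $\Heck$ (i.e.\ $T_aT_b = T_{ab}$ whenever $\ell(ab) = \ell(a) + \ell(b)$). The guiding principle is that $A^+_{h,k}$ differs from $A$ only by moving one box from position $(h+1,k)$ to $(h,k)$, so that $\widetilde{\lambda}_h$ increases by $1$ while all other partial row-sums are unchanged, and $\widetilde{a}_{h,k}$ increases by $1$ while the other partial column-sums $\widetilde{a}_{i,j}$ are unaltered. Consequently, the reduced factorization \eqref{d_A} for $d_{A^+_{h,k}}$ should differ from that of $d_A$ only in the block $w_{h+1,k}$ and in an explicit "upward" sweep near position $\widetilde{\lambda}_h$, which will account for exactly the discrepancy between the two sides of the lemma.

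First, using \eqref{d_A}--\eqref{sigmaij} and the hypothesis $a_{h+1,k} > 0$, I would write $T_{d_A} = T_u \, T_{w_{h+1,k}} \, T_v$ with $\ell(d_A) = \ell(u) + \ell(w_{h+1,k}) + \ell(v)$, where $u$ is the prefix of \eqref{d_A} strictly preceding the $(h+1,k)$-block and $v$ is the remaining suffix. A direct inspection of \eqref{wij} shows that all simple reflections appearing in $u$ have indices strictly greater than $\widetilde{\lambda}_h + \AK(h+1,k+1) - 1$, so $T_{\widetilde{\lambda}_h + 1}, \ldots, T_{\widetilde{\lambda}_h + j}$ commute freely past $T_u$ for every $j$ in the summation range. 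The problem therefore reduces to an identity inside $T_u \cdot \Heck \cdot T_v$ involving only the $w_{h+1,k}$ block.

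Second, writing $p = p_j = j - \AK(h+1,k)$, I would concatenate the leftward sweep $T_{\widetilde{\lambda}_h + 1} \cdots T_{\widetilde{\lambda}_h + \AK(h+1,k) + p}$ with the explicit reduced word for $w_{h+1,k}$ given in \eqref{wij} and recognize the result, after a straightforward braid rearrangement and cancellation of the form $s_i s_{i+1} \cdots s_j \cdot s_i s_{i+1} \cdots s_{j-1} = s_{i+1} \cdots s_j \cdot s_i s_{i+1} \cdots s_{j-1} \cdot s_i$, as a reduced expression for
$(s_{\widetilde{\lambda}_h} \cdots s_{\widetilde{\lambda}_h - \BK(h,k) + 1}) \cdot w_{h+1,k}(A^+_{h,k}) \cdot (s_{\widetilde{a}_{h,k}+1} \cdots s_{\widetilde{a}_{h,k} + p})$. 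Since $A^+_{h,k}$ and $A$ share all other blocks in \eqref{d_A}, replacing $w_{h+1,k}(A)$ by this product converts $T_{d_A}$ into $T_{\widetilde{\lambda}_h} \cdots T_{\widetilde{\lambda}_h - \BK(h,k) + 1} \, T_{d_{A^+_{h,k}}}$ on the left times $T_{\widetilde{a}_{h,k}+1} \cdots T_{\widetilde{a}_{h,k} + p}$ on the right, all of which is length-additive and thus holds at the level of $T$'s. Summing over $j$ gives part (1). Part (2) follows by an entirely symmetric argument, playing the role of the "downward" sweep $T_{\widetilde{\lambda}_h - 1} \cdots T_{\widetilde{\lambda}_h - j}$ against the same $w_{h+1,k}$ block, now with $A^-_{h,k}$ in place of $A^+_{h,k}$.

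The main obstacle will be the careful bookkeeping required to verify length-additivity at every step, so that no quadratic Hecke relation $T_i^2 = (q-1)T_i + q$ is ever triggered and the argument remains a pure braid-group identity lifted to $\Heck$. A secondary subtlety lies in the degenerate cases where either $w_{h+1,k}$ or $w_{h+1,k}(A^{\pm}_{h,k})$ reduces to the identity (e.g.\ when $\sigma_{h,k} = \widetilde{a}_{h,k}$, in accordance with Remark \ref{rem:dA=1}); here the boundary terms $T_{\widetilde{\lambda}_h} \cdots T_{\widetilde{\lambda}_h - \BK(h,k)+1}$ collapse correctly by the convention $\BK(h,n) = 0$, but this needs to be verified by hand.
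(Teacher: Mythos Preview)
Your approach differs from the paper's: rather than manipulating the reduced expression \eqref{d_A} directly, the paper simply invokes \cite[Proposition~3.6(1)]{DGZ}, which already establishes the identity
\[
s_{\widetilde{\lambda}_{h}+1} \cdots s_{\widetilde{\lambda}_{h}+j}\, d_{A}
= s_{\widetilde{\lambda}_{h}} \cdots s_{\widetilde{\lambda}_{h}-\BK(h,k)+1}\, d_{A^+_{h,k}}\, s_{\widetilde{a}_{h,k}+1} \cdots s_{\widetilde{a}_{h,k}+p_j}
\]
in $\fS_r$ with both sides reduced; length additivity then lifts this to the Hecke algebra, and summing over $j$ finishes. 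So the paper offloads all the combinatorics to the cited reference.

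Your direct route has two concrete problems. First, the claim that ``all simple reflections appearing in $u$ have indices strictly greater than $\widetilde{\lambda}_h + \AK(h+1,k+1) - 1$'' is false: the prefix $u$ contains the blocks $w_{h+1,j}$ for $j<k$, and by \eqref{wij}--\eqref{sigmaij} those involve simple reflections with indices starting at $\widetilde{a}_{h,j}+1$ and running up through $\sigma_{h,j}+a_{h+1,j}-1 = \widetilde{\lambda}_h + \sum_{u>h,p<j}a_{u,p} + a_{h+1,j}-1$, which overlaps the interval $[\widetilde{\lambda}_h+1,\widetilde{\lambda}_h+j]$ rather than lying above it. So the left factor does \emph{not} commute freely past $T_u$; what actually happens (and what \cite{DGZ} proves) is a nontrivial braid rearrangement across several blocks, not a simple commutation. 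Second, the braid identity you write down,
\[
s_i s_{i+1}\cdots s_j \cdot s_i s_{i+1}\cdots s_{j-1} = s_{i+1}\cdots s_j \cdot s_i s_{i+1}\cdots s_{j-1}\cdot s_i,
\]
is incorrect: already for $i=1,j=2$ the left side is $s_1s_2s_1=(1\,3)$ while the right side is $s_2 s_1 s_1 = s_2=(2\,3)$. A correct version of the rearrangement you need is precisely the content of \cite[Proposition~3.6]{DGZ}, and reproving it from scratch requires tracking how the left sweep interacts with \emph{all} of the blocks $w_{h+1,1},\ldots,w_{h+1,k}$, not just $w_{h+1,k}$.
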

\begin{proof}
We only prove {\rm(1)}, and the proof of {\rm(2)} is analogous.
For each {$j \in  [\AK(h+1,k), \AK(h+1,k+1) - 1]$} and  {$p_j=j-\AK(h+1,k)$},
according to \cite[Proposition 3.6{\rm(1)}]{DGZ},
we have
\begin{align*}
&s_{\widetilde{\lambda}_{h}+1} s_{\widetilde{\lambda}_{h}+2} \cdots s_{\widetilde{\lambda}_{h}+  j} d_{A}
=
		s_{\widetilde{\lambda}_{h}} s_{\widetilde{\lambda}_{h}-1}\cdots
		s_{\widetilde{\lambda}_{h} - \BK(h,k) +1}
		{d_{A^+_{h,k}}}
		 s_{\widetilde{a}_{h,k}+1} \cdots s_{\widetilde{a}_{h,k}+p_j}.
\end{align*}
Length additivity of both sides gives
\begin{align*}
&T_{\widetilde{\lambda}_{h}+1} T_{\widetilde{\lambda}_{h}+2} \cdots T_{\widetilde{\lambda}_{h}+  j} T_{d_{A}}
=
		T_{\widetilde{\lambda}_{h}} T_{\widetilde{\lambda}_{h}-1}\cdots
		T_{\widetilde{\lambda}_{h} - \BK(h,k) +1}
		T_{d_{A^+_{h,k}}}
		 T_{\widetilde{a}_{h,k}+1} \cdots T_{\widetilde{a}_{h,k}+p_j}.
\end{align*}
Hence, (1) follows.
\end{proof}

For any {$1 \le i \le j \le r-1$},
define the following elements in {$\HCR$}:
\begin{equation}\label{Tij}
\begin{aligned}
\TAIJ(i,j) &=  1 + T_{i} + T_{i} T_{i+1} + \cdots +  T_{i} T_{i+1}  \cdots  T_{j}, \;\quad\TAIJ(i,i-1)=1;\\
\TDIJ(j,i) &=  1 + T_{j} + T_{j} T_{j-1} + \cdots +  T_{j} T_{j-1}  \cdots  T_{i}, \;\quad\TDIJ(i-1,i)=1.
\end{aligned}
\end{equation}
The  notation  {$\TAIJ(i,j)$} will be  frequently used later on in this paper.
\begin{lem}\label{Tnsum}
	For  {$ h \in [1,n-1] , k \in [1,n] $}, {$A=(a_{i,j}) \in \MN(n)$} and
	 $\mu = \co{(A)}$,
	we have
\begin{align*}
{\rm(1)} \quad &\TAIJ( {\widetilde{a}_{h,k}} + 1,  { {\widetilde{a}_{h,k}} + a_{h+1,k}-1})
		\sum_{\sigma\in\mathcal{D}_{\nu_A}\cap {\fS}_\mu} T_{\sigma}
		= \TDIJ( {{\widetilde{a}_{h,k}}}, {{\widetilde{a}_{h,k}} - a_{h,k} + 1})
		\sum_{\sigma\in\mathcal{D}_{\nu_{A^+_{h,k}}}\cap {\fS}_\mu} T_{\sigma}, \text{ if }\; a_{h+1,k} \ge 1;\\
{\rm(2)} \quad 	&\TDIJ( {{\widetilde{a}_{h,k}} - 1} ,   {{\widetilde{a}_{h,k}} - a_{h,k} + 1})
		\sum_{\sigma\in\mathcal{D}_{\nu_A}\cap {\fS}_\mu} T_{\sigma}
		 = \TAIJ( {{\widetilde{a}_{h,k}}} , {{\widetilde{a}_{h,k}} + a_{h+1,k} - 1})
	\sum_{\sigma\in\mathcal{D}_{\nu_{A^-_{h,k}}}\cap {\fS}_\mu} T_{\sigma}, \text{ if }\;a_{h,k}\geq1.
\end{align*}
\end{lem}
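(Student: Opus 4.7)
My plan is to reduce the identity in two stages to a two-block statement in a Hecke subalgebra of $\fS_{a+b}$, and then to prove that statement by exhibiting both sides as the sum over shortest right coset representatives of a common refinement, invoking the transitivity of shortest right coset representatives throughout. First, I will factor the LHS along columns of $A$: since $\mu = \co(A)$, both $\fS_\mu = \prod_{j=1}^n \fS_{\mu_j}$ and $\fS_{\nu_A} = \prod_{j=1}^n \fS_{\nu_A^{(j)}}$ (with $\nu_A^{(j)} = (a_{1,j}, \ldots, a_{n,j})$) decompose compatibly along the $n$ columns, so
\[
\sum_{\sigma \in \D_{\nu_A} \cap \fS_\mu} T_\sigma = \prod_{j=1}^n Y_A^{(j)}, \qquad Y_A^{(j)} := \sum_{\sigma \in \D_{\nu_A^{(j)}} \cap \fS_{\mu_j}} T_\sigma,
\]
with the factors commuting because they lie in Hecke subalgebras on disjoint index ranges. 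As $A$ and $A^+_{h,k}$ differ only in column $k$ and the $T$-factors inside $\TAIJ$ and $\TDIJ$ involve only simple reflections in column $k$'s index range, I can cancel the common column factors and reduce to the column-$k$ identity for $Y_A^{(k)}$ and $Y_{A^+_{h,k}}^{(k)}$.

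Next, writing $a = a_{h,k}$, $b = a_{h+1,k}$, $i = \widetilde{a}_{h,k}$, I will introduce the subgroup $\fS_{\nu^0}$ of $\fS_{\mu_k}$ obtained from $\fS_{\nu_A^{(k)}}$ by merging its $h$-th and $(h+1)$-th blocks into one block of size $a+b$ on positions $\{i-a+1, \ldots, i+b\}$. Both $\fS_{\nu_A^{(k)}}$ and $\fS_{\nu_{A^+_{h,k}}^{(k)}}$ lie inside $\fS_{\nu^0}$, so the standard transitivity of shortest right cosets (for nested parabolic subgroups $K \subset H \subset G$, the map $(\tau,\sigma')\mapsto \tau\sigma'$ is a length-additive bijection from $\D_K^H \times \D_H^G$ onto $\D_K^G$) yields factorizations
\[
Y_A^{(k)} = Z_A \cdot Y^0, \qquad Y_{A^+_{h,k}}^{(k)} = Z_{A^+_{h,k}} \cdot Y^0,
\]
sharing the common factor $Y^0 = \sum_{\sigma \in \D_{\nu^0} \cap \fS_{\mu_k}} T_\sigma$. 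Since $\TAIJ$ and $\TDIJ$ lie in the Hecke subalgebra on the merged block and multiply length-additively through $Y^0$, cancelling $Y^0$ reduces the identity to the two-block statement
\[
\TAIJ(i+1, i+b-1) \cdot Z_A = \TDIJ(i, i-a+1) \cdot Z_{A^+_{h,k}},
\]
where $Z_A$ and $Z_{A^+_{h,k}}$ are the $T$-sums over shortest right coset reps of $\fS_a \times \fS_b$ and $\fS_{a+1} \times \fS_{b-1}$, respectively, in $\fS_{a+b}$.

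For this final identity I plan to show that both sides equal $\sum_\sigma T_\sigma$ over shortest right coset reps of the common refinement $\fS_{(a,1,b-1)}$ (with blocks on $\{i-a+1,\ldots,i\}$, $\{i+1\}$, and $\{i+2,\ldots,i+b\}$) in $\fS_{a+b}$. Applying transitivity to $\fS_{(a,1,b-1)} \subset \fS_{(a,b)} \subset \fS_{a+b}$ identifies the extra factor with the sum over shortest right reps of $\fS_{b-1}$ (on $\{i+2,\ldots,i+b\}$) in $\fS_b$ (on $\{i+1,\ldots,i+b\}$); classifying these reps by $j = w^{-1}(i+1) \in \{i+1,\ldots,i+b\}$ produces the explicit list $e,\, s_{i+1},\, s_{i+1}s_{i+2},\, \ldots,\, s_{i+1}s_{i+2}\cdots s_{i+b-1}$ with $T$-sum $\TAIJ(i+1, i+b-1)$. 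Symmetrically, transitivity through $\fS_{(a,1,b-1)} \subset \fS_{(a+1,b-1)} \subset \fS_{a+b}$ yields the extra factor $\TDIJ(i, i-a+1)$, this time parametrizing shortest right reps of $\fS_a$ in $\fS_{a+1}$ by $w^{-1}(i+1)$. This proves part~(1). Part~(2) then follows immediately by applying part~(1) with $A$ replaced by $A^-_{h,k}$, since $(A^-_{h,k})^+_{h,k} = A$ interchanges the two sides.

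The hard part will be the careful bookkeeping of length-additivity at each application of transitivity—this is what promotes the set-theoretic coset decompositions into genuine algebraic identities of $T$-sums in the Hecke algebra—after which the identification of the two extra factors with $\TAIJ(i+1, i+b-1)$ and $\TDIJ(i, i-a+1)$ becomes a direct enumeration of coset representatives.
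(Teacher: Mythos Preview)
Your proof is correct and takes a genuinely different route from the paper's. The paper simply cites \cite[Proposition~3.4]{GLL} for the corresponding identity in the group algebra of $\fS_r$ (i.e., with $s_i$ in place of $T_i$), then observes that since every summand on each side is a $T_w$ for a reduced word $w$, the identity transfers verbatim to the Hecke algebra; part~(2) follows from part~(1) by substituting $A^-_{h,k}$ for $A$, exactly as you do.

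Your argument is more self-contained: rather than importing the symmetric-group identity from \cite{GLL}, you prove it directly at the Hecke-algebra level by repeated use of transitivity of shortest right coset representatives for the chain $\fS_{(a,1,b-1)}\subset\fS_{(a,b)}\subset\fS_{a+b}$ (and its companion $\fS_{(a,1,b-1)}\subset\fS_{(a+1,b-1)}\subset\fS_{a+b}$), identifying both sides with the single sum $\sum_{\sigma\in\D_{(a,1,b-1)}\cap\fS_{a+b}}T_\sigma$. This buys you an explanation of \emph{why} the two expressions coincide (they are two factorizations of one common coset sum), and avoids the external reference. One small wording point: you do not actually ``cancel'' $Y^0$---you show the stronger statement $\TAIJ\cdot Z_A=\TDIJ\cdot Z_{A^+_{h,k}}$ directly, which then implies the column-$k$ identity after multiplying by $Y^0$; no invertibility of $Y^0$ is needed.
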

\begin{proof}
By \cite[Proposition 3.4]{GLL},
we have
\begin{align*}
	& ( 1 + s_{{\widetilde{a}_{h,k}} + 1} + s_{{\widetilde{a}_{h,k}} + 1}s_{{\widetilde{a}_{h,k}} + 2} \cdots +   s_{{\widetilde{a}_{h,k}} + 1} \cdots s_{{\widetilde{a}_{h,k}} + a_{h+1,k}-1})
		\sum_{\sigma\in\mathcal{D}_{\nu_A}\cap \mathfrak{S}_\mu}\sigma \\
& 	 =
	( 1 + s_{\widetilde{a}_{h,k}} + s_{\widetilde{a}_{h,k}} s_{\widetilde{a}_{h,k} - 1} + \cdots +
	 s_{\widetilde{a}_{h,k}} \cdots s_{\widetilde{a}_{h,k} - a_{h,k} + 1})
		\sum_{\sigma\in\mathcal{D}_{\nu_{A^+_{h,k}}}\cap \mathfrak{S}_\mu}\sigma.
\end{align*}
As the expressions of the summands are all reduced,
then {\rm(1)} is correct.
Equation {\rm(2)}  could be proved by replacing {$A$} with {${A^-_{h,k}}$}.
\end{proof}

\begin{lem}\label{xTinverse}
For {$\alpha \in \CMN(n, r)$},
{$1\leq u < r$, and $ j \ge 0$},
assume
{$s_{u+1}, \cdots , s_{u+j} \in \fS_{\alpha}$} if $j \ge 1$.
\begin{itemize}
\item[(1)] If $s_u\in\fS_{\alpha}$, then
	$x_{\alpha} \TAIJ({u},  {u+j}) = x_{\alpha} \TDIJ({u+j},  {u})=\STEP{ j+2} x_{\alpha} $.
\item[(2)]
The following holds:
\begin{align*}
x_{\alpha} T_{u}^{-1} T_{u+1}^{-1} \cdots T_{u+j}^{-1}
&=
	{q}^{-1 - j} x_{\alpha}T_{u}   T_{u+1} \cdots T_{u + j}
	- {q}^{- j-1 }  ({q} - 1)   x_{\alpha} \TAIJ( {u}, { u + j  - 1})
.
\end{align*}
\end{itemize}
\end{lem}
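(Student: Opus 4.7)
My plan is to handle the two parts by very different methods: part (1) is a direct consequence of \eqref{Tixlambda}, while part (2) requires an induction on $j$ that carefully uses the relation \eqref{Tick-inv}(1).

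For part (1), since $s_u, s_{u+1}, \ldots, s_{u+j}$ all lie in $\fS_\alpha$, I can repeatedly apply \eqref{Tixlambda} to obtain $x_\alpha T_u T_{u+1} \cdots T_{u+\ell} = q^{\ell+1} x_\alpha$ for each $\ell \in \{-1, 0, 1, \ldots, j\}$ (with the $\ell=-1$ case being the empty product). Summing these over $\ell$ according to the definition of $\TAIJ(u, u+j)$ in \eqref{Tij} immediately yields
\[
x_\alpha \TAIJ(u, u+j) = \sum_{\ell=0}^{j+1} q^{\ell} x_\alpha = \STEP{j+2} x_\alpha.
\]
The identity for $\TDIJ(u+j, u)$ is proved in exactly the same way, since its summands are also products $T_{u+j} T_{u+j-1} \cdots T_{u+j-\ell}$ with $s_{u+j}, \ldots, s_{u+j-\ell} \in \fS_\alpha$.

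For part (2), I will induct on $j \ge 0$. The base case $j=0$ follows directly from \eqref{Tick-inv}(1): multiplying $T_u^{-1} = q^{-1}T_u - q^{-1}(q-1)$ on the left by $x_\alpha$ gives the stated formula, since $\TAIJ(u, u-1) = 1$ by convention. For the inductive step, note that the hypothesis $s_{u+1}, \ldots, s_{u+j} \in \fS_\alpha$ implies the corresponding hypothesis at stage $j-1$, so I may apply induction to $x_\alpha T_u^{-1} \cdots T_{u+j-1}^{-1}$ and then right-multiply by $T_{u+j}^{-1}$. The two resulting terms are treated as follows. For the term with $\TAIJ(u, u+j-2)$, every summand is a product of $T_u, T_{u+1}, \ldots, T_{u+\ell}$ with $\ell \le j-2$, hence commutes with $T_{u+j}^{-1}$; since $s_{u+j} \in \fS_\alpha$, \eqref{Tick-inv}(2) gives $x_\alpha T_{u+j}^{-1} = q^{-1} x_\alpha$, so this contribution equals $-q^{-j-1}(q-1) x_\alpha \TAIJ(u, u+j-2)$. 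For the leading term $x_\alpha T_u \cdots T_{u+j-1} T_{u+j}^{-1}$, I expand $T_{u+j}^{-1}$ via \eqref{Tick-inv}(1) to get
\[
T_{u+j-1} T_{u+j}^{-1} = q^{-1} T_{u+j-1} T_{u+j} - q^{-1}(q-1) T_{u+j-1},
\]
yielding $q^{-j-1} x_\alpha T_u \cdots T_{u+j} - q^{-j-1}(q-1) x_\alpha T_u \cdots T_{u+j-1}$. Combining the two contributions and using $\TAIJ(u, u+j-2) + T_u T_{u+1} \cdots T_{u+j-1} = \TAIJ(u, u+j-1)$ from the definition \eqref{Tij} completes the induction.

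There is no real obstacle; the only point requiring care is recognizing that $\TAIJ(u, u+j-2)$ commutes with $T_{u+j}^{-1}$ (because its summands never involve a $T_k$ with $|k-(u+j)| \le 1$), which is what allows the inductive hypothesis to clean up neatly. The rest is bookkeeping with powers of $q$ and the recursive definition of $\TAIJ$.
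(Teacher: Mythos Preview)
Your proof is correct and follows essentially the same approach as the paper, which simply indicates that (1) is clear from \eqref{Tixlambda} and (2) is proved by induction on $j$ using \eqref{Tick-inv}(1). You have filled in the details the paper omits, including the use of \eqref{Tick-inv}(2) and the commutation of $\TAIJ(u,u+j-2)$ with $T_{u+j}^{-1}$, all of which are sound.
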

\begin{proof} Assertion (1) is clear by \eqref{Tixlambda}, while assertion (2) can be proved by induction on $j$, noting \eqref{Tick-inv}(1).
\end{proof}
Following \cite[Section 4]{DW2}, we introduce the following elements in $\HCR$: for $1\leq i, j\leq r$,
\begin{equation}\label{cqij}
\aligned
	&c_{{q},i,j} = {q}^{j-i}c_i  + \cdots + {q} c_{j-1} + c_j\;\;\text{ for }i\leq j,\qquad c_{{q},i,j} =0\;\;\text{ for }i> j;\\
	&c'_{{q},i,j} = c_i + {q} c_{i+1} + \cdots + {q}^{j-i}c_j\;\;\text{ for }i\leq j,\qquad c'_{{q},i,j} =0\;\;\text{ for }i> j;.
	\endaligned
\end{equation}

\begin{lem}\label{xcT}
For {$\alpha \in \CMN(n, r)$},
{$1\leq u<r$, and $j \ge 0$}, assume
{$s_{u+1}, \cdots , s_{u+j} \in \fS_{\alpha}$} if {$j\ge 1$}.
\begin{itemize}
\item[(1)]
For {$j \ge 0$}, we have in $\HCR$
$$x_{\alpha}c_u T_{u} T_{u+1} \cdots T_{u+j}=
({q} - 1)  {q}^{j}  x_{\alpha}\sum_{k=0}^{j} {T^{-1}(u,k)} c_{u+k}+{q}^{j+1}   x_{\alpha}  {T^{-1}(u,j+1)} c_{u+j+1},$$
where $T^{-1}(u, k)=
\begin{cases}
1, &\text{ if }k=0;\\
T_{u}^{-1} T_{u+1}^{-1} T_{u+2}^{-1} \cdots T_{u+k-1}^{-1},&\text{ if }k \geq 1.
\end{cases}$

\item[(2)] Further, we have
\begin{align*}
&x_{\alpha}c_{u}  \TAIJ({u}, {u+j} )
 ={q}^{j+1} x_{\alpha} ( c_{u} + T_{u}^{-1} c_{u+1} +  T_{u}^{-1} T_{u+1}^{-1}   c_{u+2}
	+ \cdots
	+ T_{u}^{-1} T_{u+1}^{-1} T_{u+2}^{-1} \cdots T_{u+j}^{-1}  c_{u+j+1} ).
\end{align*}
Specially, if $s_u\in\fS_\alpha$ in the same time, then
$
x_{\alpha} c_{u} \TAIJ( {u}, {u+j})
= x_{\alpha} c_{q, u, u+j+1}.
$
\item[(3)] If {$s_{u},  s_{u-1} \cdots, s_{u-j} \in \fS_{\alpha}$},
then
$
x_{\alpha} c_{u+1} \TDIJ( {u}, {u-j})
= x_{\alpha} c_{q, u-j, u+1}.
$
\item[(4)] For $1\leq i\leq j\leq r$, $(c_{q,i,j})^2=-{\STEPX{j-i+1}{q^2}}$.
\end{itemize}
\end{lem}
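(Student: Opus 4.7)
The plan is to prove the four items in sequence: (1) does the main computational work by induction, (2) follows from (1) by expanding $\TAIJ(u, u+j)$ and swapping a double sum, and (3)--(4) admit more direct arguments. Throughout I will appeal to the relations \eqref{Hecke-Cliff}, \eqref{Tick-inv}, \eqref{Tixlambda} and the definition \eqref{cqij}. For (1), I induct on $j$. The base case $j=0$ is precisely \eqref{Tick-inv}(4): $c_u T_u = q T_u^{-1} c_{u+1} + (q-1) c_u$, giving the right-hand side when $j=0$. For the inductive step, multiply the expression at level $j$ on the right by $T_{u+j+1}$. In each summand $x_\alpha T^{-1}(u,k) c_{u+k}$ of the first sum (with $k \le j$), both $T^{-1}(u,k) = T_u^{-1} \cdots T_{u+k-1}^{-1}$ and $c_{u+k}$ commute with $T_{u+j+1}$ (since $u+k-1 \le u+j-1$ and $u+k < u+j+1$), so I may move $T_{u+j+1}$ all the way adjacent to $x_\alpha$; the freshly included hypothesis $s_{u+j+1} \in \fS_\alpha$ then supplies the extra factor $x_\alpha T_{u+j+1} = q x_\alpha$. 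The boundary term $q^{j+1} x_\alpha T^{-1}(u, j+1) c_{u+j+1} T_{u+j+1}$ is expanded once more by \eqref{Tick-inv}(4), yielding $q^{j+2} x_\alpha T^{-1}(u, j+2) c_{u+j+2}$ together with $(q-1) q^{j+1} x_\alpha T^{-1}(u, j+1) c_{u+j+1}$; assembling these pieces reproduces the formula at level $j+1$.

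For (2), I expand $\TAIJ(u, u+j) = 1 + \sum_{i=0}^{j} T_u T_{u+1} \cdots T_{u+i}$, apply (1) to each $i$, and exchange the order of the double sum. Writing $A_k := x_\alpha T^{-1}(u, k) c_{u+k}$, the inner geometric identity $\sum_{i=k}^{j}(q-1)q^i = q^{j+1} - q^k$ combines with the $q^k A_k$ coming from the boundary terms (after reindexing $k = i+1$ in $\sum_{i=0}^{j} q^{i+1} A_{i+1}$) so that only the coefficient $q^{j+1}$ survives for each $0 \le k \le j+1$. This collapses the sum to $q^{j+1}\sum_{k=0}^{j+1} x_\alpha T^{-1}(u,k) c_{u+k}$, which is the formula claimed. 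In the special case $s_u \in \fS_\alpha$, the full hypothesis $s_u, s_{u+1}, \ldots, s_{u+j} \in \fS_\alpha$ lets one apply \eqref{Tick-inv}(2) iteratively to obtain $x_\alpha T^{-1}(u,k) = q^{-k} x_\alpha$ for each $k$, so the sum becomes $\sum_{k=0}^{j+1} q^{j+1-k} x_\alpha c_{u+k} = x_\alpha c_{q, u, u+j+1}$ by \eqref{cqij}.

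For (3), I bypass a fresh induction by exploiting the clean relation $T_i c_i = c_{i+1} T_i$ from \eqref{Hecke-Cliff}: a one-line induction pushes the Clifford generator rightward,
\[
c_{u+1} T_u T_{u-1} \cdots T_{u-i} = T_u T_{u-1} \cdots T_{u-i} \, c_{u-i}.
\]
Since $s_u, s_{u-1}, \ldots, s_{u-i} \in \fS_\alpha$, Lemma \ref{xTinverse}(1) absorbs the $T$-product into $q^{i+1} x_\alpha$, so each summand of $x_\alpha c_{u+1} \TDIJ(u, u-j)$ reduces to $q^{i+1} x_\alpha c_{u-i}$; adding the initial $x_\alpha c_{u+1}$ recognizes the total as $x_\alpha c_{q, u-j, u+1}$. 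For (4), I square $c_{q,i,j} = \sum_{k=i}^{j} q^{j-k} c_k$: the off-diagonal pairs $q^{2j-k-l}(c_k c_l + c_l c_k)$ vanish by $c_k c_l = -c_l c_k$, leaving the diagonal contribution $\sum_{k=i}^{j} q^{2(j-k)} c_k^2 = -(1 + q^2 + \cdots + q^{2(j-i)}) = -\STEPX{j-i+1}{q^2}$. The only real obstacle is the careful combinatorial bookkeeping in the inductive step of (1) and the subsequent swap-of-sums in (2); everything else is a direct application of the relations in $\HCR$.
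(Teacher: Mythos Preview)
Your proof is correct and follows essentially the same approach as the paper: part (1) is the same repeated application of \eqref{Tick-inv}(4), part (2) is the same swap-of-sums with the telescoping identity (the paper phrases it as $q^k+\sum_{i=k}^j(q-1)q^i=q^{j+1}$), and part (3) is argued identically via $c_{i+1}T_i=T_ic_i$. Two minor remarks: in (3) the absorption $x_\alpha T_u\cdots T_{u-i}=q^{i+1}x_\alpha$ follows directly from \eqref{Tixlambda} rather than Lemma~\ref{xTinverse}(1); and in (4) you give a direct expansion where the paper says ``by induction,'' but your argument is perfectly valid (arguably cleaner).
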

\begin{proof}
We first prove (1).
Repeatedly applying \eqref{Tick-inv}(4) gives
\begin{align*}
x_{\alpha}c_u &T_{u} T_{u+1} \cdots T_{u+j} \\
&= x_{\alpha} ({q} T_{u}^{-1} c_{u+1}  + ({q} - 1) c_{u}) T_{u+1} \cdots T_{u+j}  \\
&=  {q}  x_{\alpha}  T_{u}^{-1} c_{u+1} T_{u+1} \cdots T_{u+j}   + ({q} - 1) {q}^{j}  x_{\alpha}  c_{u}   \\
&=  {q}  x_{\alpha}  T_{u}^{-1} ( {q} T_{u+1}^{-1} c_{u+2}
	+ ({q} - 1) c_{u+1} ) T_{u+2} \cdots T_{u+j}
	+ ({q} - 1) {q}^{j} x_{\alpha}  c_{u}  \\
&= {q}^2   x_{\alpha}  T_{u}^{-1} T_{u+1}^{-1} c_{u+2} T_{u+2} \cdots T_{u+j}
	+ ({q} - 1)  {q}^{j}  x_{\alpha}  (  c_{u}  +  T_{u}^{-1}c_{u+1} ) \\
& = \cdots \\
&=
		{q}^{j+1}   x_{\alpha}  T_{u}^{-1} T_{u+1}^{-1} T_{u+2}^{-1} \cdots T_{u+j}^{-1}  c_{u+j+1}\\
		& \qquad + ({q} - 1)  {q}^{j}  x_{\alpha}
			( c_{u} +  T_{u}^{-1}c_{u+1} + T_{u}^{-1} T_{u+1}^{-1} c_{u+2} + \cdots
			+  T_{u}^{-1} T_{u+1}^{-1} \cdots  T_{u+j-1}^{-1} c_{u+j}),
\end{align*}
proving (1). Now, by (1), we have
$$\aligned
x_{\alpha}c_{u}  \TAIJ({u}, {u+j} )&=x_\alpha c_u+\sum_{i=0}^jx_\alpha c_uT_u\cdots T_{u+i}\\
&=x_\alpha c_u+\sum_{i=0}^j\big(({q} - 1)  {q}^{i}  x_{\alpha}\sum_{k=0}^{i} 
	{T^{-1}(u,k) } c_{u+k}+{q}^{i+1}   x_{\alpha}  { T^{-1}(u,i+1) } c_{u+i+1}\big)\\
&=\sum_{k=0}^j\big(q^k+\sum_{i=k}^j(q-1)q^i\big)x_\alpha 
	 T^{-1}(u,k)  c_{u+k}+q^{j+1}x_\alpha  {T^{-1}(u,j+1)} c_{u+j+1}\\
&={q}^{j+1} x_{\alpha} ( c_{u} + T_{u}^{-1} c_{u+1} +  T_{u}^{-1} T_{u+1}^{-1}   c_{u+2}
	+ \cdots
	+ T_{u}^{-1} T_{u+1}^{-1} T_{u+2}^{-1} \cdots T_{u+j}^{-1}  c_{u+j+1} ),
\endaligned
$$
 since the telescoping sum $q^k+\sum_{i=k}^j(q-1)q^i=q^{j+1}$.
Hence (2) holds.

Thirdly, $s_{u-i}\in\fS_{\alpha}$ implies $x_{\alpha}T_{u-i}=qx_{\alpha}$ for $0\leq i\leq j$. 
Thus, by \eqref{Hecke-Cliff},
we obtain
\begin{align*}
x_{\alpha}c_{u+1}\TDIJ(u,u-j)&=x_{\alpha}c_{u+1}(1+T_u+T_uT_{u-1}+\cdots T_uT_{u-1}\cdots T_{u-j})\\
&=x_{\alpha}(c_{u+1}+T_uc_u+T_uT_{u-1}c_{u-1}+\cdots+T_uT_{u-1}\cdots T_{u-j}c_{u-j})\\
&=x_{\alpha}(c_{u+1}+qc_u+\cdots+q^{j+1}c_{u-j})=x_{\alpha} c_{q, u-j, u+1},
\end{align*}
proving (3).

Finally, (4) is straightforward by an induction.
\end{proof}

Let {$\ZG = \{0, 1\} \subset \ZZ$}.
We recall the following matrices introduced in \cite{DW2}:
\begin{align*}
	& \MNZN(n) = \{ A=(\SUP{A}) \where \SE{A} \in \MN(n), \SO{A} \in \MZ(n) \}, \\
	& \MNZ(n,r) = \{ A=(\SUP{A}) \in \MNZN(n) \where  \SE{A} + \SO{A} \in \MNR(n,r) \}.
\end{align*}
We use O to denote the zero matrix in {$\MN(n)$} or  {$\MZ(n)$}, and set {${O} = (\rm{O}|\rm{O}) \in \MNZN(n)$}.
For any {$A =(\SEE{a}_{i,j} | \SOE{a}_{i,j}) \in  \MNZN(n)$} with
 {$\SE{A}=(\SEE{a}_{i,j})$}, {$\SO{A}=(\SOE{a}_{i,j})$}, let
\begin{equation}\label{def_ahk_notations}
\begin{aligned}
&a_{i,j} := \SEE{a}_{i,j} + \SOE{a}_{i,j}, \quad
	 \ABSUM{A} :=(a_{i,j})\in M_n(\NN), \quad
	|A|:=|\ABSUM{A}|, \\
&	\AK(h,k)(A) =\AK(h,k)(\ABSUM{A}), \quad 
	\BK(h,k)(A) =\BK(h,k)(\ABSUM{A}), \quad 
	A^+_{h,k} :=\ABSUM{A}^+_{h,k},  \quad 
A^-_{h,k} :=\ABSUM{A}^-_{h,k}
\end{aligned}
\end{equation}
for all admissible $i,j,h,k$.
Applying the notations introduced in Lemma \ref{prop_pjshift}, we set
\begin{equation}\label{parity}
 \ro(A) := \ro(\ABSUM{A}),\quad
 \co(A): = \co(\ABSUM{A}),\quad
  \nu_{A}: = \nu_{\ABSUM{A}}, \quad
d_{A} := d_{\ABSUM{A}},\quad \parity{A} \equiv |\SO{A}| (\mathrm{mod} \ 2 ).
\end{equation}
Here the definition of the parity function $p$ will be justified in \eqref{eq cA} below.

Recall from \eqref{jmath} the shortest double coset representative $d_A$. 
This element has a reduced expression given in \eqref{d_A}. 
As a permutation, $d_A$ is given in \cite[Page 424]{Du} (or \cite[Exercise 8.2]{DDPW}). More precisely,
for $A=(a_{i,j})\in M(n,r)$ with $\lambda=\ro(A)$, the description of the pseudo-matrix associated with $A$ in loc. cit. can be easily interpreted in terms of the notations here as follow:
\begin{equation}\label{map d_A}
d_A(\widetilde{a}_{h-1,k}+ p )=\widetilde{\lambda}_{h-1} + \AK(h,k) + p ,
\end{equation}
for all $h,k\in[1,n] $ with $a_{h,k}>0$ and $p\in[1,a_{h,k}]$. 

In other words, $d_A$ is the permutation obtained by concatenating the permutations 
$$\left(\begin{matrix}& 
\tilde{a}_{i-1,j}+1 &\tilde{a}_{i-1,j}+2&\cdots&\tilde{a}_{i,j}\\
&\lambda_{i-1}+ \AK(i, j) +1
&\lambda_{i-1}+  \AK(i, j) +2
&\cdots&\lambda_{i-1}+  \AK(i, j) +a_{i,j}
\end{matrix}\right)$$
for all $a_{i,j}>0$ via the ordering from top to bottom of column 1, then top to bottom of column 2, and so on.

Thus, if $\SerA=\HCR|_{q=1}$ denotes the Sergeev algebra,
regarded as the ``semi-direct (or smash) product'' $\C_r\rtimes R\fS_r$,
 then $c_iw=wc_{w^{-1}(i)}$ in $\SerA$ and, 
 for $\AK(h,k) =\AK(h,k)(\ABSUM{A})$ and $\lambda=\ro(\widehat A)$, 
\begin{displaymath}
	c_{\widetilde{\lambda}_{h-1} + \AK(h,k) + p} {d_A} =  {d_A} c_{\widetilde{a}_{h-1,k}+ p}
\end{displaymath}
for all  $1\leq h,k\leq n$ with $a_{h,k}>0$ and $1\leq p\leq a_{h,k}$. 
However, if we replace {$d_A$} by {$T_{d_A}$},  this commutation formula may not hold in the Hecke-Clifford superalgebra $\HCR$.
So we introduce the following definition.
\begin{defn}\label{defn:SDP} 
For {$A =({a}_{i,j}) \in  \MN(n)$}, 
 or  {$A =(\SEE{a}_{i,j} | \SOE{a}_{i,j}) \in  \MNZN(n)$}  with {$a_{h,k}=\SEE{a}_{i,j}+\SOE{a}_{i,j}$}, 
let {$\lambda=\ro(A)$}, {$r=|A|$} and $\AK(h,k)=\AK(h,k)(A)$,
if $a_{h,k}>0$ and
\begin{displaymath}
	c_{\widetilde{\lambda}_{h-1}+\AK(h,k)+p} {T_{d_A}}
	 =  {T_{d_A}} c_{\widetilde{a}_{h-1,k} + p}\qquad(\mbox{in }\HCR)
\end{displaymath}
for each {$p \in [1,a_{h,k}]$}, then {$A$} is said to
satisfy the {\it semi-direct product} (SDP) {\it condition} at {$(h, k)$}.
If {$A$} satisfies the  SDP condition at {$(h, k)$} for every {$k\in [1,n]$} with {$a_{h,k} \ge 1$},
then {$A$} is said to satisfy the SDP condition on the $h$-th row.
\end{defn}

There are two ``extreme'' cases where the SDP condition is satisfied.
\begin{lem}\label{lem_dA1}
Let {$A =(\SEE{a}_{i,j} | \SOE{a}_{i,j})  \in \MNZ(n, r)$}.
If  {$d_A = 1$}, then, for any  {$h,k \in [1, n]$}  satisfying {${a}_{h, k}>0$},
$A$ satisfies the SDP condition at {$(h,k)$}.
\end{lem}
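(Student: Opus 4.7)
The plan is to exploit the explicit permutation description of $d_A$ given in formula \eqref{map d_A} and reduce the SDP condition, in the case $d_A=1$, to a trivial index identity.

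First, I would observe that $d_A = 1$ (as a permutation) forces $T_{d_A} = 1$ in $\HCR$. So the SDP condition at $(h,k)$ becomes
\begin{displaymath}
c_{\widetilde{\lambda}_{h-1}+\AK(h,k)+p} = c_{\widetilde{a}_{h-1,k}+p}
\end{displaymath}
for every $p \in [1, a_{h,k}]$, which is purely a statement about indices, namely
\begin{displaymath}
\widetilde{\lambda}_{h-1}+\AK(h,k) = \widetilde{a}_{h-1,k}.
\end{displaymath}

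Second, I would invoke \eqref{map d_A}, which asserts that for every $h,k \in [1,n]$ with $a_{h,k}>0$ and every $p \in [1,a_{h,k}]$,
\begin{displaymath}
d_A(\widetilde{a}_{h-1,k}+p) = \widetilde{\lambda}_{h-1}+\AK(h,k)+p.
\end{displaymath}
Specialising to $d_A = 1$ (so that $d_A(i) = i$ for all $i$) immediately yields $\widetilde{a}_{h-1,k}+p = \widetilde{\lambda}_{h-1}+\AK(h,k)+p$, i.e.\ the desired index equality. Hence both sides of the SDP defining identity equal the single Clifford generator $c_{\widetilde{a}_{h-1,k}+p}$, and the condition holds at $(h,k)$.

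There is essentially no obstacle in this proof: the main content is packaged in the combinatorial formula \eqref{map d_A}, which has already been established (following \cite{Du} and \cite{DDPW}). The only care needed is the boundary conventions $\widetilde{a}_{0,1}=0$ and $\widetilde{a}_{0,j}=\widetilde{a}_{n,j-1}$ for $j\geq 2$ (from the excerpt just before Remark \ref{rem:dA=1}), together with $\AK(h,1)=0$, but these are precisely what makes \eqref{map d_A} valid for all permissible $(h,k)$, so the argument goes through uniformly in $h$ and $k$.
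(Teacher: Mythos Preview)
Your argument is correct and is considerably shorter than the paper's own proof. Both proofs ultimately establish the same index identity $\widetilde{\lambda}_{h-1}+\AK(h,k)=\widetilde{a}_{h-1,k}$ whenever $d_A=1$ and $a_{h,k}>0$; the difference is how this identity is obtained. The paper expands the difference $\widetilde{\lambda}_{h-1}+\AK(h,k)-\widetilde{a}_{h-1,k}$ directly as
\[
\sum_{\substack{1\le i\le h-1\\ k+1\le j\le n}} a_{i,j}\;-\;\sum_{\substack{h+1\le i\le n\\ 1\le j\le k-1}} a_{i,j},
\]
and then shows each of these two sums vanishes by a five-case analysis based on Remark~\ref{rem:dA=1} (the characterisation of $d_A=1$ in terms of vanishing of certain entries). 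You instead read the identity off immediately from the permutation description \eqref{map d_A}: since $d_A$ sends $\widetilde{a}_{h-1,k}+p$ to $\widetilde{\lambda}_{h-1}+\AK(h,k)+p$, setting $d_A=\mathrm{id}$ forces the indices to coincide. Your route is cleaner because it leverages a fact (\eqref{map d_A}) that the paper has already recorded; the paper's route is more self-contained in that it works only from the reduced expression \eqref{d_A}--\eqref{wij} and the partial-sum combinatorics, without invoking the permutation picture.
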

\begin{proof}
Suppose $d_A=1$. 
Then by \eqref{d_A} and \eqref{sigma-a} we have the following observation which will be helpful later in the proof: 
for any {$i \in [2, n]$}, {$j \in [1 , n-1]$} satisfying {$a_{i,j}> 0$},
Remark \ref{rem:dA=1} implies
 {$\sigma_{i-1,j}-\tilde{a}_{i-1,j}=\sum_{u \le i-1 , x \geq j+1 }a_{u,x} = 0$}, and hence {$a_{u,x} = 0$} for any {$u \in [1, i-1]$}, {$x \in [j+1, n]$}.
Conversely,
for any {$u \in [1, n-1]$}, {$x \in [2, n]$} satisfying {$a_{u,x} > 0$},
we have {$a_{i,j}= 0$} for all {$i \in [u+1, n]$}, {$j \in [1, x-1]$},
otherwise one can obtain {$w_{i,j} \ne 1$} and hence $d_A \ne 1$, which contradicts to the assumption $d_A=1$.

Set $\lambda=\ro(A)$.
Fix $h,k \in [1, n]$ with {${a}_{h, k}>0$} and let $p \in [0, {a}_{h, k} - 1]$. Then a direct calculation using \eqref{latilde}, \eqref{prsum} and \eqref{mtildehk} shows
\begin{align*}
 \Big(\tilde{\lambda}_{h-1}+\AK(h,k) + p + 1\Big) -   \Big( \tilde{a}_{h-1,k} + p + 1\Big)
=
	 \sum_{ 1 \le i \le h-1 \atop   k+1 \le j \le n}a_{ i ,  j }
		- \sum_{ h+1 \le i \le n \atop  1 \le j  \le k-1}a_{ i ,  j }.
\end{align*}
Therefore by Definition \ref{defn:SDP} it suffices to prove
\begin{equation}\label{relation_c_d_1}
\begin{aligned}
 \sum_{ 1 \le i \le h-1 \atop   k+1 \le j \le n}a_{ i ,  j }
		- \sum_{ h+1 \le i \le n \atop  1 \le j  \le k-1}a_{ i ,  j }
 = 0.
\end{aligned}
\end{equation}
To show this holds, we need to consider the following five cases:
\begin{enumerate}
\item
If {$h = k = 1$} or {$h = k = n$},
 we have
 {$	 \sum_{ 1 \le i \le h-1 \atop   k+1 \le j \le n}a_{ i ,  j }
 =  \sum_{ h+1 \le i \le n \atop  1 \le j  \le k-1}a_{ i ,  j }
 =0
 $}.

\item
If {$h,k \in [2, n-1]$}, then by the above observation
{$a_{h,k}> 0$} implies
{$a_{i,j} = 0$} for any {$i \in [1, h-1]$} and  {$j \in [k+1, n]$},
 {$a_{i,j}= 0$} for all {$i \in [h+1, n]$} and  {$j \in [1, k-1]$},
 hence  \eqref{relation_c_d_1}  holds.
\item
If {$h=1$}
and  {$k \in [2, n]$},
by \eqref{d_A} and \eqref{sigma-a} we have  {$	 \sum_{ 1 \le i \le h-1 \atop   k+1 \le j \le n}a_{ i ,  j }
 =0
 $}.
Meanwhile by the above observation
{$a_{1,k}> 0$} implies
 {$a_{i,j}= 0$} for all {$i \in [2, n]$}, {$j \in [1, k-1]$},
 hence
 {$\sum_{ h+1 \le i \le n \atop  1 \le j  \le k-1}a_{ i ,  j }
 =\sum_{   2 \le i \le n \atop  1 \le j  \le k-1}a_{ i ,  j }
 = 0$}.
Then \eqref{relation_c_d_1} is proved for {$h=1$} and {$k \in [1,n]$}.

 \item
If  {$h\in [2, n]$} and {$k=1$},
or
  {$h=n$} and  {$k\in [2, n-1]$},
by \eqref{d_A} and \eqref{sigma-a} we have {$ \sum_{ h+1 \le i \le n \atop  1 \le j  \le k-1}a_{ i ,  j } = 0$}.
On the other hand, by the above observation
 {$a_{h,k}> 0$} implies
{$a_{i,j} = 0$} for any {$i \in [1, h-1]$}, {$j \in [k+1, n]$}
and
 {$  \sum_{ 1 \le i \le h-1 \atop   k+1 \le j \le n}a_{ i ,  j } = 0$}.
Then we conclude  \eqref{relation_c_d_1}  holds in this case.

 \item
If  {$h\in [2, n-1]$} and {$k=n$}, by \eqref{d_A} and \eqref{sigma-a}
we have {$ \sum_{ 1 \le i \le h-1 \atop   k+1 \le j \le n}a_{ i ,  j }  = 0$}.
On the other hand, by the above observation
{$a_{h,k}> 0$} implies
 {$a_{i,j}= 0$} for all {$i \in [h+1, n]$}, {$j \in [1, n-1]$},
 hence   \eqref{relation_c_d_1}  holds.
\end{enumerate}
\end{proof}

\begin{exam}\label{exam_shift}
Let {$\lambda \in \CMN(n, r)$} for some {$r > 0$}.
If {$A=(a_{i,j})={\diag}(\lambda)$}, or
{${\diag}(\lambda) + \sum_{i=1}^{n-1} u E_{i, i+1}$},
or  {${\diag}(\lambda) + \sum_{i=1}^{n-1} u E_{i+1, i}$},
with all {$u \in \NN$},
then {$d_A = 1$},
and $A$ satisfies the SDP condition at {$(h,k)$}
for any {$h,k$}  satisfying {${a}_{h, k}>0$},
\end{exam}

\begin{lem}\label{shiftonN}
Every {$A =(\SEE{a}_{i,j} | \SOE{a}_{i,j})  \in \MNZ(n, r)$} satisfies the SDP condition on the $n$-th row.
\end{lem}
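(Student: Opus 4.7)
The plan is to fix $k \in [1,n]$ with $a_{n,k} > 0$ and $p \in [1, a_{n,k}]$, set $i_0 := \widetilde{a}_{n-1,k}+p$ and $i_1 := \widetilde{\lambda}_{n-1}+\AK(n,k)+p$, and verify the SDP identity $T_{d_A} c_{i_0} = c_{i_1} T_{d_A}$ by commuting $c_{i_0}$ from right to left through the reduced expression of $T_{d_A}$ prescribed by \eqref{d_A} and \eqref{wij}. Note that $i_1 = \sigma_{n-1,k}+p$ and $d_A(i_0) = i_1$ by \eqref{map d_A}, and a direct calculation gives $i_1 - i_0 = \sum_{u \le n-1,\, j \ge k+1} a_{u,j} \ge 0$. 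The key observation is that the Hecke--Clifford relations \eqref{Hecke-Cliff} allow a ``clean'' commutation of $c_j$ past $T_m$ whenever $m \ne j-1$: either $T_m c_j = c_j T_m$ (if $m \ne j$), or $T_j c_j = c_{j+1} T_j$ (raising the $c$-index by one). Only $m = j-1$ produces the correction term $-(q-1)(c_{j-1}-c_j)$. Thus the SDP identity will follow once we verify that each simple reflection $T_m$ met along the traversal either commutes trivially with the current $c_j$ or effects a clean raise, and never the bad move.

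The reduced expression factors by columns as $d_A = D_1 \cdots D_{n-1}$ with $D_j = w_{2,j}\cdots w_{n,j}$, so right-to-left traversal hits $D_{n-1}, D_{n-2}, \ldots, D_1$ in turn. I split the analysis into four phases. In \emph{Phase~1} (factors $w_{i,j}$ with $j > k$), every generator index satisfies $m \ge \widetilde{a}_{i-1,j}+1 \ge \widetilde{a}_{n,k}+1 = \widetilde{a}_{n-1,k}+a_{n,k}+1 > i_0$, so each $T_m$ commutes past $c_{i_0}$. In \emph{Phase~2} (the factor $w_{n,k}$), formula \eqref{wij} writes $w_{n,k}$ as a left-to-right product, over $p' = 0, 1, \ldots, a_{n,k}-1$, of descending chains $s_{\sigma_{n-1,k}+p'} s_{\sigma_{n-1,k}+p'-1}\cdots s_{\widetilde{a}_{n-1,k}+p'+1}$. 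Read right to left: chains with $p' > p-1$ have smallest index $> i_0$ and are inert on $c_{i_0}$; the chain with $p' = p-1$ starts (from the right) with $s_{i_0}, s_{i_0+1}, \ldots, s_{i_1-1}$, and at each step the clean relation $T_j c_j = c_{j+1} T_j$ raises the $c$-index, taking $c_{i_0}$ successively through $c_{i_0+1}, \ldots, c_{i_1}$; the chains with $p' < p-1$ have largest index $\sigma_{n-1,k}+p' \le i_1-2$ and are inert on $c_{i_1}$.

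In \emph{Phase~3} (factors $w_{i,k}$ with $i < n$), the bounds $\sigma_{i-1,k}+a_{i,k} \le \sigma_{i,k} \le \sigma_{n-1,k}$ force every generator index to be at most $\sigma_{n-1,k}-1 = i_1-p-1 \le i_1-2$, so each $T_m$ is inert on $c_{i_1}$. In \emph{Phase~4} (factors $w_{i,j}$ with $j < k$, any $i$), the maximum generator index $\sigma_{i-1,j}+a_{i,j}-1$ is bounded by $\sigma_{n-1,j}-1 = \widetilde{\lambda}_{n-1}+\AK(n,j)-1$ when $i < n$, and equals $\widetilde{\lambda}_{n-1}+\AK(n,j+1)-1$ when $i = n$; since $j+1 \le k$ and $\AK(n,k)$ is non-decreasing in $k$, both quantities are at most $\widetilde{\lambda}_{n-1}+\AK(n,k)-1 = i_1-p-1 \le i_1-2$, so $c_{i_1}$ sweeps through untouched. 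Combining the four phases gives $T_{d_A} c_{i_0} = c_{i_1} T_{d_A}$, the desired SDP identity at $(n,k)$.

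The main obstacle is really careful bookkeeping with the partial sums $\widetilde{a}_{i,j}$, $\AK(n,k)$, and $\sigma_{i,j}$; structurally the argument succeeds because ``row $n$'' is terminal. Indeed, $d_A$ sends row-$n$ positions into the top block $[\widetilde{\lambda}_{n-1}+1,\, r]$, which lies strictly above the generator indices appearing in every $w_{i,j}$ with $i<n$ or $j<k$, while inside $w_{n,k}$ itself the descending-chain shape of \eqref{wij} is tailored precisely to lift $c_{i_0}$ to $c_{i_1}$ using only the clean relation. Both $p \ge 1$ (giving $i_1-p-1 \le i_1-2$ in Phases~2--4) and $p \le a_{n,k}$ (giving $\widetilde{a}_{n,k}+1 > i_0$ in Phase~1) enter essentially.
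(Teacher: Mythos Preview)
Your proof is correct and follows essentially the same approach as the paper: commute the Clifford generator through the reduced expression \eqref{d_A} of $T_{d_A}$ factor by factor, using index bounds to ensure each $T_m$ either commutes trivially or effects a clean raise via $T_j c_j = c_{j+1}T_j$. The only cosmetic difference is direction: the paper pushes $c_{i_1}$ left-to-right through $T_{d_A}$ (first past the columns $j<k$ and $(j=k,\,i<n)$, then through $w_{n,k}$, then past columns $j>k$), whereas you push $c_{i_0}$ right-to-left, which simply reverses the order of your Phases 1--4 relative to the paper's three steps; the underlying inequalities $\sigma_{i-1,j}+a_{i,j}-1 \le i_1-2$ and $\widetilde{a}_{i-1,j}+1 > i_0$ are identical.
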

\begin{proof}
Suppose $1\leq k\leq n$ and $1\leq p<a_{n,k}$.
By \eqref{sigmaij}  we observe that $\widetilde{\lambda}_{n-1}+\AK(n,k)+p+1>\sigma_{i-1,j}+a_{i,j}$
for any $2\leq i\leq n$ and $1\leq j\leq k-1$,
or $2\leq i\leq n-1$ and $j=k$.
Therefore by \eqref{wij} we obtain
 that $c_{\widetilde{\lambda}_{n-1}+\AK(n,k)+p+1}$ commutes
with $T_{w_{ij}}$ for each pair of $i,j$ satisfying $2\leq i\leq n$ and $1\leq j\leq k-1$,
or $2\leq i\leq n-1$ and $j=k$. Meanwhile direct calculation show via \eqref{Hecke-Cliff} that the following holds
\begin{align*}
 c_{\widetilde{\lambda}_{n-1} +  \sum^{k-1}_{u=1}a_{n, u}+ p +1} T_{w_{n,k}}
=T_{w_{n,k}} c_{\widetilde{a}_{n-1,k} +  p +1}.
\end{align*}
Furthermore, observe that
$$
\widetilde{a}_{n-1,k} +  p +1<\widetilde{a}_{i-1,j}+1
$$
for all {$2\leq i\leq n$} and $k+1\leq j\leq n-1$,
which implies that $c_{\widetilde{a}_{n-1,k} +  p +1}$ commutes with $T_{w_{i,j}}$ for each pair of $i,j$ satisfying  {$2 \leq i\leq n$} and $k+1\leq j\leq n-1$.
Putting together the lemma is proved by \eqref{d_A}.
\end{proof}
\begin{rem}The SDP condition is the key to derive some multiplication formulas in Section 5. 
It would be interesting to know which matrices satisfying the SDP condition. 
More examples of such matrices are given in Lemma \ref{triang_aaa} and Corollary \ref{triang_aaa_cor}.
\end{rem}


\spaceintv
\section{Queer $q$-Schur superalebras and quantum queer supergroups}\label{sec_qqschur}
Given ${R}$-free supermodule\footnote{By an $R$-free supermodule, we mean an $R$-free module with a basis consisting of homogeneous elements.}  $V,W$,
let   $\Hom_{{R}}(V,W)$  be the set of  all ${R}$-linear  maps from $V$ to $W$.
We make $\Hom_{{R}}(V,W)$  into an $R$-free supermodule by declaring
${\Hom_{{R}}(V,W)}_{\ol{i}}$
to be  the set of  homogeneous map of parity $i\in \ZG$,
that is,
the linear maps $\theta:V\rightarrow W$ with $\theta(V_{\ol{j}})\subset W_{\overline{i}+\ol{j}}$ for {$j\in\ZG$}.
Note it is typical in a superalgebra to write the expressions
which only make sense for homogeneous elements,
and the expected meaning for arbitrary elements
 is obtained by extending linearly from the homogeneous case.

For any homogeneous element {$h \in \HCR$}, let {$\parity{h}$} be its parity, more precisely,  {$\parity{h} = 0$} if  {$h$} is even, or {$\parity{h} = 1$} if {$h$} is odd.
Following \cite{DW2},  the {\it queer $q$-Schur superalgebra} over {$R$} is defined to be
\begin{equation}\label{QqnrR}
	\QqnrR := \End_{\HCR}\Big(\bigoplus_{\lambda \in \CMN(n,r)} x_{\lambda} \HCR\Big),\qquad
	\Qqnr=\mathcal Q_{{v}^2}(n,r;\mathcal Z).
\end{equation}
Here we view $M=\bigoplus_{\lambda \in \CMN(n,r)} x_{\lambda} \HCR$ as a right $\HCR$-module and homomorphisms are $\HCR$-module homomorphisms, i.e., every $R$-linear $\phi:M\to M$ satisfies $\phi(mh)=\phi(m)h$ for all $m\in M$ and $h\in\HCR$.

We now describe a standard basis for $\QqnrR$.
For {$\lambda= (\lambda_1, \cdots , \lambda_m)\in {\ZZ}_+^m$} and {$\alpha \in \ZG^m $} with $m\geq 1$, recall \eqref{cqij} and the following elements introduced in \cite[Section 4]{DW2}
\begin{equation}\label{eq_c_a}
\begin{aligned}
	&c^{\alpha}_{\lambda} = {(c_{{q}, 1, \widetilde{\lambda}_1})}^{\alpha_1}
				{(c_{ {q}, \widetilde{\lambda}_1+1 , \widetilde{\lambda}_2} )}^{\alpha_2}
				\cdots
				{(c_{{q}, \widetilde{\lambda}_{m-1}+1,  \widetilde{\lambda}_m} )}^{\alpha_m}, \\
	&(c^{\alpha}_{\lambda})' = {(c'_{{q}, 1, \widetilde{\lambda}_1})}^{\alpha_1}
				{(c'_{{q}, \widetilde{\lambda}_1 + 1,  \widetilde{\lambda}_2} )}^{\alpha_2}
				\cdots
				{(c'_{{q}, \widetilde{\lambda}_{m-1} + 1,  \widetilde{\lambda}_m })}^{\alpha_m} .
\end{aligned}
\end{equation}
Here we use the convention $0^0=1$.
By \cite[Lemma 4.1 and Corollary 4.3]{DW2},
we have, for all $\alpha\leq\lambda$ (meaning $\alpha_i\leq\lambda_i $ for any $i$),
\begin{align}\label{xc}
	x_{\lambda} c_{\lambda}^{\alpha} = (c_{\lambda}^{\alpha})'  x_{\lambda}.
\end{align}
For $A =(\SEE{a}_{i,j} | \SOE{a}_{i,j}) \in\MNZ(n,r)$,
recall the notations  in \eqref{def_ahk_notations} and \eqref{parity},
and recall the partition $\nu=\nu_A=(a_{1,1},\ldots,a_{n,1},\ldots,a_{1,n},\ldots,a_{n,n})$ defined
in \eqref{nuA}. Let
\begin{equation}\label{eq cA}
	c_{A} =  c_{\nu}^{\alpha}, \qquad c'_{A} =  (c_{\nu}^{\alpha})', \qquad
	T_{A} =  x_{\ro(A)} T_{d_{A}} c_{A}
			\sum _{{\sigma} \in \D_{{\nu}_{A}} \cap \fS_{\co(A)}} T_{\sigma},
\end{equation}
where $\alpha=\nu_{A^{\bar1}}:=(\SOE{a}_{1,1},\ldots, \SOE{a}_{n,1},\ldots, \SOE{a}_{1,n},\ldots, \SOE{a}_{n,n})$.
Note $p(T_A)=p(A)$.

By \cite[Proposition 5.2]{DW2}, for any $\lambda,\mu\in\Lambda(n,r)$, the intersection $x_\lambda\HCR\cap \HCR x_\mu$ is a free $R$-modules with basis $\{T_A\mid A  \in\MNZ(n,r), \lambda=\ro(A), \mu=\co(A)\}$. Thus, we have the following.

\begin{prop}[{\cite[Theorem 5.3]{DW2}\label{DW-basis}}]
The queer $q$-Schur superalgebra $\QqnrR$ is a free $R$-module with basis $\{ \phi_{A} \where  A \in \MNZ(n,r)\}$, where {$\phi_{A} $} is defined by
\begin{equation*}
\phi_{A}: \bigoplus_{\lambda \in \CMN(n,r)} x_{\lambda} \HCR \longrightarrow \bigoplus_{\lambda \in \CMN(n,r)} x_{\lambda} \HCR,\;\;
 x_{\mu}h \longmapsto \delta_{\mu, \co(A)} T_{A}h,\;\forall  \mu \in \CMN(n,r), h \in \HCR,
\end{equation*}
 and $\phi_A$ has parity $p(A)$.
\end{prop}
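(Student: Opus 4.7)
The plan is to decompose $\QqnrR$ as a direct sum of Hom-spaces between the summands, identify each with a corresponding intersection in $\HCR$, and then apply \cite[Proposition 5.2]{DW2} cited immediately before the proposition.

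First I would use the standard identification
$$\QqnrR = \End_{\HCR}\Big(\bigoplus_{\lambda} x_{\lambda}\HCR\Big) \cong \bigoplus_{\lambda, \mu \in \CMN(n,r)} \Hom_{\HCR}(x_\mu \HCR,\, x_\lambda \HCR),$$
so it suffices to produce an $R$-basis of each summand. Under this decomposition, the endomorphism that vanishes on $x_{\mu'}\HCR$ for all $\mu' \neq \mu$ and sends $x_\mu h \mapsto y\, h$ for a fixed $y \in x_\lambda\HCR$ corresponds to the $(\lambda,\mu)$-piece only; this matches the definition of $\phi_A$ in the statement with $(\lambda,\mu) = (\ro(A), \co(A))$ and $y = T_A$.

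Next I would identify $\Hom_{\HCR}(x_\mu \HCR, x_\lambda \HCR)$ with $x_\lambda \HCR \cap \HCR x_\mu$ via the evaluation map $\phi \mapsto \phi(x_\mu)$. Injectivity is immediate since $x_\mu$ generates $x_\mu \HCR$ as a right $\HCR$-module. Any $y = h' x_\mu \in \HCR x_\mu$ defines a well-defined map $x_\mu h \mapsto h' x_\mu h$, because $x_\mu h_1 = x_\mu h_2$ implies $h' x_\mu h_1 = h' x_\mu h_2$. Conversely, well-definedness of $\phi$ forces $y = \phi(x_\mu)$ to satisfy $yh = 0$ whenever $x_\mu h = 0$; a standard argument using the explicit Hecke-Clifford basis then shows that this membership condition is equivalent to $y \in \HCR x_\mu$.

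With the identification in place, \cite[Proposition 5.2]{DW2} supplies the $R$-basis $\{T_A : A \in \MNZ(n,r),\, \ro(A) = \lambda,\, \co(A) = \mu\}$ of $x_\lambda \HCR \cap \HCR x_\mu$, and the homomorphism corresponding to $T_A$ is precisely $\phi_A$. Assembling over all pairs $(\lambda,\mu) \in \CMN(n,r)^2$ and using the bijection $\mathfrak{j}$ in \eqref{jmath} yields the basis $\{\phi_A : A \in \MNZ(n,r)\}$. For the parity claim, inspect \eqref{eq cA}: the factors $x_{\ro(A)}$, $T_{d_A}$, and $\sum_\sigma T_\sigma$ are all even, whereas $c_A = c_\nu^{\alpha}$ with $\alpha = \nu_{A^{\bar 1}}$ is a product of $|\alpha| = |\SO A|$ odd elements $c_{q,i,j}$; hence $T_A$, and therefore $\phi_A$, has parity $|\SO A| \pmod 2 = p(A)$. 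The only subtle point is the identification step — showing that every $y \in x_\lambda \HCR$ whose right multiplication passes to the quotient $x_\mu \HCR$ already lies in $\HCR x_\mu$ — and once this is granted the remainder of the proof is bookkeeping built on \cite[Proposition 5.2]{DW2}.
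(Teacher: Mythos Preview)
Your proposal is correct and follows exactly the approach the paper indicates: the sentence immediately preceding the proposition invokes \cite[Proposition~5.2]{DW2} for the basis of $x_\lambda\HCR\cap\HCR x_\mu$ and then states the result, so the paper treats this as a direct consequence of that cited fact via the standard $\Hom$-intersection identification you spell out. The paper gives no further proof since this is a quoted result from \cite{DW2}.
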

We have, for $A,B\in \MNZ(n,r)$,
\begin{equation}\label{co=ro}
\phi_B\phi_A \ne  0 \implies \co(B) = \ro(A).
\end{equation}

For the convenience of later use, we now replace ``module homomorphisms" by ``supermodule homomorphisms'' in the definition of queer $q$-Schur superalgebras to introduce a twisted version of $\QqnrR$.

 Following \cite[Definition 1.1]{CW}),
 we define, for the right $\HCR$-modules $M,N$,
 $\Hom^s_{\HCR}(M,N)$ to be the $R$-supermodule generated by
 all homogeneous  $R$-linear maps $\Phi:M\to M$ satisfying
\begin{equation}\label{eq:twist}
 \Phi(mh)=(-1)^{\parity{\Phi}\parity{h}}\Phi(m)h
\end{equation}
for all homogeneous $m\in M$ and $h\in\HCR$. 
In particular, for $M=N=\bigoplus_{\lambda \in \CMN(n,r)} x_{\lambda} \HCR$, 
let $\End^s_{\HCR}(M):=\Hom^s_{\HCR}(M,M)$.
 One checks easily that, if
 $\Phi\in\End^s_{\HCR}(M)_{\bar i}$ and $\Psi\in\End^s_{\HCR}(M)_{\bar j}$, then $\Psi\Phi\in\End^s_{\HCR}(M)_{\bar i+\bar j}$. Thus, we obtain a new (associative) superalgebra
\begin{equation}\label{SQqnrR}
	\SQqnrR
	:= \End^s_{\HCR}\Big(\bigoplus_{\lambda \in \CMN(n,r)} x_{\lambda} \HCR\Big)
	=\bigoplus_{\lambda, \mu \in \CMN(n,r)} \Hom^s_{\HCR}(x_{\lambda}\HCR,x_{\mu}\HCR).
\end{equation}
This is called the {\it twisted queer $q$-Schur superalgebra}.

In the next two results, we will see that this twisted version is linearly isomorphic to $\QqnrR$ and if in addition $\sqrt{-1}\in R$, then $\SQqnrR$ is isomorphic to $\QqnrR$ as $R$-superalgebras.

\begin{prop}\label{prop_PhiAPhiB}
The twisted queer $q$-Schur superalgebra $\SQqnrR$ is $R$-free with basis $\{ \Phi_{A} \where  A \in \MNZ(n,r)\}$, where {$\Phi_{A} $} is defined by
\begin{equation}\label{PhiA}
 \Phi_{A} : \bigoplus_{\mu \in \CMN(n, r)} x_{\mu}\HCR  \longrightarrow \bigoplus_{\mu \in \CMN(n, r)} x_{\mu}\HCR,\;\;
		x_{\mu} h  \longmapsto   {(-1)}^{\parity{A} \cdot \parity{ h}} \delta_{\mu, \co(A)} T_{A} \cdot  h,
\end{equation}
Moreover,
for any  {$A, B \in \MNZ(n, r)$} with  {$\co(A) = \ro(B)$},
if $\phi_B \phi_A= \sum_{M} \gamma^M_{B,A} \phi_M$ ($\gamma^M_{B,A} \in R$), then
\begin{equation}\label{Phi_structure}
\Phi_B \Phi_A
 =  {(-1)}^{\parity{A} \cdot  \parity{ B } }  \sum_{M}    \gamma^M_{B,A} \Phi_M  .
\end{equation}
\end{prop}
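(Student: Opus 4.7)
The plan is to verify directly that each $\Phi_A$ is a well-defined element of $\SQqnrR$ of parity $p(A)$, then to establish the claimed basis property by reducing to the known basis $\{T_A\}$ of $x_{\ro(A)}\HCR\cap \HCR x_{\co(A)}$ from \cite[Proposition 5.2]{DW2}, and finally to obtain \eqref{Phi_structure} by a direct sign-tracking computation comparing the action of $\Phi_B\Phi_A$ and $\phi_B\phi_A$ on $x_\mu h$.

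First I would check that $\Phi_A$ satisfies \eqref{eq:twist}. For homogeneous $h,h'$, one computes
\[
\Phi_A(x_\mu h\cdot h')=(-1)^{p(A)(p(h)+p(h'))}\delta_{\mu,\co(A)}T_A hh'=(-1)^{p(A)p(h')}\Phi_A(x_\mu h)\,h',
\]
confirming $p(\Phi_A)=p(A)$. Linear independence of $\{\Phi_A\}$ follows by evaluating $\sum_A c_A\Phi_A$ at each $x_\mu$: the result lies in $\bigoplus_\lambda x_\lambda\HCR$, and using the direct sum decomposition and the basis $\{T_A:\ro(A)=\lambda,\co(A)=\mu\}$ of $x_\lambda\HCR\cap\HCR x_\mu$, every $c_A$ must vanish. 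For spanning, let $\Phi\in\SQqnrR$ be homogeneous of parity $\bar i$; write $\Phi(x_\mu)=\sum_\lambda m_{\lambda,\mu}$ with $m_{\lambda,\mu}\in x_\lambda\HCR$. Since $x_\mu$ is even and $\Phi$ has parity $\bar i$, each $m_{\lambda,\mu}$ is homogeneous of parity $\bar i$. The twist condition forces $\Phi(x_\mu h)=(-1)^{\bar i p(h)}m_{\lambda,\mu}h$ on homogeneous $h$, so well-definedness translates into $m_{\lambda,\mu}\in x_\lambda\HCR\cap\HCR x_\mu$ (exactly the same condition as for ordinary module maps, since $x_\mu$ is even). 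Expanding $m_{\lambda,\mu}=\sum c_A T_A$ over the $T_A$-basis with $\ro(A)=\lambda$, $\co(A)=\mu$, and noting $p(T_A)=p(A)$, we see $c_A=0$ unless $p(A)=\bar i$, and then $\Phi=\sum_A c_A\Phi_A$.

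For \eqref{Phi_structure}, assume $\co(A)=\ro(B)$ (otherwise both sides vanish by \eqref{co=ro}-type reasoning). Applying $\Phi_B\Phi_A$ to $x_\mu h$ with $h$ homogeneous, write $T_A h=x_{\ro(A)}\cdot k$ with $k=T_{d_A}c_A\bigl(\sum_{\sigma\in\D_{\nu_A}\cap\fS_{\co(A)}}T_\sigma\bigr)h$, whence $p(k)=p(A)+p(h)$. Then
\[
\Phi_B\Phi_A(x_\mu h)=(-1)^{p(A)p(h)}(-1)^{p(B)(p(A)+p(h))}\delta_{\mu,\co(A)}\delta_{\ro(A),\co(B)}\,T_B k.
\]
The analogous computation for $\phi_B\phi_A$ produces the identical element $T_B k$ without the sign factors, so $\phi_B\phi_A(x_\mu h)=\delta_{\mu,\co(A)}\delta_{\ro(A),\co(B)}T_B k=\sum_M\gamma_{B,A}^M T_M h$. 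Substituting back and using that the parity grading on $x_{\ro(B)}\HCR$ forces $p(M)=p(A)+p(B)$ whenever $\gamma_{B,A}^M\neq 0$ (since $T_B k$ is homogeneous of parity $p(A)+p(B)+p(h)$ and the $T_M$ are parity-homogeneous), we obtain
\[
\Phi_B\Phi_A(x_\mu h)=(-1)^{p(A)p(B)}\sum_M\gamma_{B,A}^M(-1)^{p(M)p(h)}\delta_{\mu,\co(M)}T_M h=(-1)^{p(A)p(B)}\sum_M\gamma_{B,A}^M\Phi_M(x_\mu h),
\]
which is \eqref{Phi_structure}.

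The only subtle point in the argument is the bookkeeping of signs in the last step, in particular the observation that $p(M)=p(A)+p(B)$ for non-zero structure constants $\gamma_{B,A}^M$; this is where one needs to invoke the parity grading rather than just the underlying $R$-module structure. The rest is essentially unwinding definitions and using the already-established basis theorem of \cite[Theorem 5.3]{DW2} together with the identification $\Hom_{\HCR}(x_\mu\HCR,x_\lambda\HCR)\simeq x_\lambda\HCR\cap\HCR x_\mu$.
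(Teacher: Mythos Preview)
Your proof is correct. For the multiplication formula \eqref{Phi_structure}, your argument is essentially identical to the paper's: both compute $\Phi_B\Phi_A(x_\mu h)$ by tracking the two sign contributions from the definition \eqref{PhiA} and then invoke $p(M)=p(A)+p(B)$ to rewrite the result as $(-1)^{p(A)p(B)}\sum_M\gamma^M_{B,A}\Phi_M(x_\mu h)$.

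For the basis statement the paper takes a more economical route. It introduces explicitly the $R$-linear bijection $(\;\;)^\dagger:\QqnrR\to\SQqnrR$, defined on homogeneous $\phi$ by $\phi^\dagger(m)=(-1)^{p(\phi)p(m)}\phi(m)$, checks that $\dagger\dagger=1$, observes that $\Phi_A=\phi_A^\dagger$, and then transports the basis of Proposition~\ref{DW-basis} in one line. Your direct independence/spanning verification is correct but amounts to reconstructing this bijection by hand: your spanning step, which reduces the condition on $\Phi(x_\mu)$ to the untwisted one via the evenness of $x_\mu$, is precisely the statement that $\Phi\mapsto\Phi^\dagger$ lands back in $\QqnrR$. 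The paper's packaging has the virtue of making the dagger map explicit for immediate reuse in Corollary~\ref{super_al_iso}, while yours is self-contained and does not need to name the map.
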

\begin{proof} The freeness follows easily from the linear isomorphism
\begin{equation}\label{dagger}
(\;\;)^\dagger:\QqnrR\to\SQqnrR
\end{equation} as defined in \cite[(1.1)]{CW} or, more precisely, for a homogeneous $\phi\in\QqnrR$ of parity $\parity{\phi}$ and $M=\bigoplus_{\lambda \in \CMN(n,r)} x_{\lambda} \HCR$, $\phi^\dagger:M\to M$ is defined by \begin{align}\label{phi-dag}
\phi^{\dag}(m)=(-1)^{\parity{\phi}\cdot\parity{m}}\phi(m)
\end{align}
for homogeneous $m\in M$. It is straightforward to check  $\phi^{\dag}\in\SQqnrR $ and $\dagger\dagger=1$. The basis assertion is clear by setting  $\Phi_A=\phi_A^{\dag}$ for
all {$A \in \MNZ(n, r)$}.

Finally,
 if we write $T_A=x_{\ro(A)}h'$,
where $h'={T_{d_A}} c_{A}  \sum _{{\sigma} \in \D_{{\nu}_{_A}} \cap \fS_{\co(A)}} T_{\sigma}$ (see \eqref{eq cA}),
then $h'$ is homogeneous with parity $p(h')=p(A)$
and the structure constants $\gamma^M_{B,A}$
appearing in $\phi_B\phi_A = \sum_{M} \gamma^M_{B,A} \phi_M$
are determined by writing $T_Bh'\in x_{\ro(B)}\HCR\cap \HCR x_{\co(A)}$
as a linear combination of $T_M$'s. In other words,
\begin{equation}\label{eq_gB}
z(B,A):=\phi_B\phi_A(x_{\co(A)})=T_Bh'= \sum_{M \in \MNZ(n,r)} \gamma_{B,A}^M T_M,
\end{equation}
and for each {$M$}, we have {$\parity{M} = \parity{A} + \parity{B}$}.

Thus,
for any homogeneous element {$h \in \HCR$}
and  {$ \mu \in \CMN(n, r)$}, by \eqref{PhiA},
we have
\begin{align*}
\Phi_B \Phi_A (x_{\mu} h)
& = \delta_{\mu, \co(A)} {(-1)}^{\parity{A} \cdot \parity{ h }}  {(-1)}^{\parity{B} \cdot ( \parity{ A } + \parity{h})}   \phi_B (T_{A} h ) \\
& = \delta_{\mu, \co(A)} {(-1)}^{\parity{A} \cdot \parity{ h }}  {(-1)}^{\parity{B} \cdot ( \parity{ A } + \parity{h})}    \sum_{M} \gamma^M_{B,A} T_M \cdot h  \\
& = \delta_{\mu, \co(A)}  {(-1)}^{\parity{B} \cdot  \parity{ A } }   \sum_{M} {(-1)}^{\parity{M}\cdot \parity{ h }}   \gamma^M_{B,A} \phi_M(x_\mu) \cdot h \\
& = \delta_{\mu, \co(A)} {(-1)}^{\parity{A} \cdot  \parity{ B } }   \sum_{M}   \gamma^M_{B,A} \Phi_M (x_{\mu} h ),
\end{align*}
as desired.
\end{proof}
\begin{cor}\label{super_al_iso}
If {$\sqrt{-1} \in R$},
then the $R$-linear isomorphism $(\;\;)^\dagger$ in \eqref{dagger} induces  an
isomorphism of $R$-superalgebras
$$\fkf:  \QqnrR  \longrightarrow \SQqnrR,\;\;
{\phi}_{A}  \longmapsto  {(\sqrt{-1})}^{\parity{{A}}}  {\Phi}_{A},\;\;
\forall A\in\MNZ(n,r).$$
\end{cor}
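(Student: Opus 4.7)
The plan is to establish three properties of $\fkf$: $R$-linearity and bijectivity, preservation of parity, and multiplicativity. I expect the first two to be essentially formal consequences of constructions already in place, and the multiplicativity check to reduce to a single scalar identity that is the one substantive point.

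For $R$-linearity and bijectivity, I would observe that $\fkf$ is the composition of the $R$-linear bijection $(\;\;)^\dagger$ of \eqref{dagger} (i.e.\ $\phi_A \mapsto \Phi_A$) with the diagonal rescaling $\Phi_A \mapsto (\sqrt{-1})^{\parity{A}}\Phi_A$. Since $\mathrm{char}\,R \neq 2$ and $(\sqrt{-1})^2 = -1$, $\sqrt{-1}$ is a unit in $R$, so the rescaling (which acts by a unit on each basis vector of $\SQqnrR$) is itself an $R$-linear bijection. Parity preservation is equally formal: $\Phi_A=\phi_A^\dagger$ is homogeneous of parity $\parity{A}$, and multiplying by the scalar $(\sqrt{-1})^{\parity{A}} \in R$ does not change parity, so $\fkf(\phi_A)$ has parity $\parity{A}$, matching that of $\phi_A$ recorded in Proposition \ref{DW-basis}.

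The main step is multiplicativity. Given $A, B \in \MNZ(n,r)$ with $\co(B) = \ro(A)$, write $\phi_B\phi_A = \sum_M \gamma^M_{B,A}\phi_M$ as in Proposition \ref{DW-basis}, so that only $M$ with $\parity{M} \equiv \parity{A}+\parity{B} \pmod 2$ contribute. Applying $\fkf$ to the left-hand side gives
\begin{equation*}
\fkf(\phi_B\phi_A)=\sum_M \gamma^M_{B,A}(\sqrt{-1})^{\parity{M}}\,\Phi_M,
\end{equation*}
while \eqref{Phi_structure} yields
\begin{equation*}
\fkf(\phi_B)\fkf(\phi_A) = (\sqrt{-1})^{\parity{A}+\parity{B}}\,\Phi_B\Phi_A = (\sqrt{-1})^{\parity{A}+\parity{B}}(-1)^{\parity{A}\parity{B}}\sum_M \gamma^M_{B,A}\,\Phi_M.
\end{equation*}
The equality of the two expressions therefore reduces to the scalar identity $(\sqrt{-1})^{\parity{A}+\parity{B}}(-1)^{\parity{A}\parity{B}} = (\sqrt{-1})^{\parity{M}}$ for each contributing $M$, which is immediate since for $a,b \in \{0,1\}$ one has $a+b = ((a+b) \bmod 2) + 2ab$ together with $(\sqrt{-1})^2 = -1$ (the only nontrivial case being $a=b=1$, where $(\sqrt{-1})^2=-1$ cancels exactly against $(-1)^{ab}=-1$).

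The only genuine obstacle is recognising that the sign $(-1)^{\parity{A}\parity{B}}$ appearing in \eqref{Phi_structure} is precisely what gets absorbed by the discrepancy between $(\sqrt{-1})^{\parity{M}}$ and $(\sqrt{-1})^{\parity{A}+\parity{B}}$; the normalisation $(\sqrt{-1})^{\parity{A}}$ chosen in the definition of $\fkf$ is exactly what makes this absorption happen, which is also the reason the hypothesis $\sqrt{-1}\in R$ is needed (without a square root of $-1$, no such rescaling of the standard basis can convert the associative product of $\QqnrR$ into the twisted one on $\SQqnrR$).
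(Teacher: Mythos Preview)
Your proof is correct and follows essentially the same approach as the paper: both reduce multiplicativity to the scalar identity $(\sqrt{-1})^{\parity{A}}(\sqrt{-1})^{\parity{B}}(-1)^{\parity{A}\parity{B}} = (\sqrt{-1})^{\parity{M}}$ using \eqref{Phi_structure} and the fact that every contributing $M$ has $\parity{M}\equiv\parity{A}+\parity{B}\pmod 2$. Your write-up is in fact a bit more explicit than the paper's, which simply records facts (a) and (b) and leaves the reader to assemble them.
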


\begin{proof} Clearly, $\fkf$ is an $R$-linear isomorphism. 
It remains to check {$ \fkf({\phi}_{A} {\phi}_{B}) =   \fkf ( {\phi}_{A} )  \fkf( {\phi}_{B}) $} 
for all $A,B\in\MNZ(n,r)$. By writing $\phi_B \phi_A
= \sum_{M} \gamma^M_{B,A} \phi_M$, as in Lemma \ref{prop_PhiAPhiB}, 
this can be easily proved by the following facts: \\
\qquad(a)\;\;${(\sqrt{-1})}^{\parity{A}}  {(\sqrt{-1})}^{\parity{B}}
	 {(-1)}^{ \parity{A} \parity{B} }
 = {(\sqrt{-1})}^{\parity{{\phi}_{A} {\phi}_{B}}}$;\\
 \qquad(b)\;\; If $\phi_B \phi_A
= \sum_{M} \gamma^M_{B,A} \phi_M$, 
as in Lemma \ref{prop_PhiAPhiB}, 
then {$\parity{{\phi}_{B}{\phi}_{A}}  = \parity{{\phi}_{M}} $} for each $M$.
\end{proof}
\begin{rem}We will use these twisted queer $q$-Schur superalgebras $\SQqnrR$ 
to construct the quantum queer supergroup $\Uvqn$ 
over $\Qv$ in Section \ref{sec_generators}. 
If the queer $q$-Schur superalgebras $\QqnrR$ were used, 
	then we would require the quantum queer supergroup $\Uvqn$ be defined over $\QQ[\sqrt{-1}]({v})$.
\end{rem}

We end this section with a display of generators and relations for queer $q$-Schur superalgebras via the quantum queer supergroup; see  \cite[Theorem 9.2]{DW1}.

In \cite[Section 4]{Ol}, Olshanski introduced a quantum deformation $\Uvqn$ of the  universal enveloping algebra of the queer Lie superalgebra $\qn$ using a modification of the Reshetikhin-Takhtajan-Faddeev method. The following definition of the quantum queer supergroup $\Uvqn$  is taken from 
\cite[Proposition 5.2]{DW1}.
Let  {$\{ \bs{\ep}_i \where  i \in [1,n]\}$} be the standard basis for ${\ZZ}^n$ and define the ``dot product'' $\bs\ep_i\centerdot \bs\ep_j=\delta_{i,j}$, linearly extended to ${\ZZ}^n$.

\begin{defn}\label{defqn}
The quantum queer supergroup $\Uvqn$ is the (Hopf) superalgebra
over $\Qv$  generated by
even generators  {${\genK}_{i}$}, {${\genK}_{i}^{-1}$},  {${\genE}_{j}$},  {${\genF}_{j}$},
and odd generators  {${\genK}_{\ol{i}}$},  {${\genE}_{\ol{j}}$}, {${\genF}_{\ol{j}}$},
for   {$ 1 \le i \le n$}, {$ 1 \le j \le n-1$}, subject to the following relations:
\begin{align*}
({\rm QQ1})\quad
&	{\genK}_{i} {\genK}_{i}^{-1} = {\genK}_{i}^{-1} {\genK}_{i} = 1,  \qquad
	{\genK}_{i} {\genK}_{j} = {\genK}_{j} {\genK}_{i} , \qquad
	{\genK}_{i} {\genK}_{\ol{j}} = {\genK}_{\ol{j}} {\genK}_{i}, \\
&	{\genK}_{\ol{i}} {\genK}_{\ol{j}} + {\genK}_{\ol{j}} {\genK}_{\ol{i}}
	= 2 {\delta}_{i,j} \frac{{\genK}_{i}^2 - {\genK}_{i}^{-2}}{{v}^2 - {v}^{-2}}; \\
({\rm QQ2})\quad
& 	{\genK}_{i} {\genE}_{j} = {v}^{\bs{\ep}_i\centerdot\alpha_j} {\genE}_{j} {\genK}_{i}, \qquad
	{\genK}_{i} {\genE}_{\ol{j}} = {v}^{\bs{\ep}_i\centerdot\alpha_j} {\genE}_{\ol{j}} {\genK}_{i}, \\
& 	{\genK}_{i} {\genF}_{j} = {v}^{-(\bs{\ep}_i\centerdot\alpha_j)} {\genF}_{j} {\genK}_{i}, \qquad
	{\genK}_{i} {\genF}_{\ol{j}} = {v}^{-(\bs{\ep}_i\centerdot \alpha_j)} {\genF}_{\ol{j}} {\genK}_{i}; \;
	(\mbox{here }\alpha_j=\bs\ep_j-\bs\ep_{j+1})\\
({\rm QQ3})\quad
& {\genK}_{\ol{i}} {\genE}_{i} - {v} {\genE}_{i} {\genK}_{\ol{i}} = {\genE}_{\ol{i}} {\genK}_{i}^{-1}, \qquad
	{v} {\genK}_{\ol{i}} {\genE}_{i-1} -  {\genE}_{i-1} {\genK}_{\ol{i}} = - {\genK}_{i}^{-1} {\genE}_{\ol{i-1}}, \\
& {\genK}_{\ol{i}} {\genF}_{i} - {v} {\genF}_{i} {\genK}_{\ol{i}} = - {\genF}_{\ol{i}} {\genK}_{i}, \qquad
	{v} {\genK}_{\ol{i}} {\genF}_{i-1} -  {\genF}_{i-1} {\genK}_{\ol{i}} = {\genK}_{i} {\genF}_{\ol{i-1}},\\
& {\genK}_{\ol{i}} {\genE}_{\ol{i}} + {v} {\genE}_{\ol{i}} {\genK}_{\ol{i}} = {\genE}_{i} {\genK}_{i}^{-1}, \qquad
	{v} {\genK}_{\ol{i}} {\genE}_{\ol{i-1}} +  {\genE}_{\ol{i-1}} {\genK}_{\ol{i}} =   {\genK}_{i}^{-1} {\genE}_{i-1}, \\
& {\genK}_{\ol{i}} {\genF}_{\ol{i}} + {v} {\genF}_{\ol{i}} {\genK}_{\ol{i}} =   {\genF}_{i} {\genK}_{i}, \qquad
	{v} {\genK}_{\ol{i}} {\genF}_{\ol{i-1}} +  {\genF}_{\ol{i-1}} {\genK}_{\ol{i}} = {\genK}_{i} {\genF}_{i-1}, \\
& 
 {\genK}_{\ol{i}} {\genE}_{j} - {\genE}_{j} {\genK}_{\ol{i}} =  {\genK}_{\ol{i}} {\genF}_{j} - {\genF}_{j} {\genK}_{\ol{i}}
 = {\genK}_{\ol{i}} {\genE}_{\ol{j}} + {\genE}_{\ol{j}} {\genK}_{\ol{i}} =  {\genK}_{\ol{i}} {\genF}_{\ol{j}} + {\genF}_{\ol{j}} {\genK}_{\ol{i}}
	= 0, \mbox{ for } j \ne i, i-1; \\
({\rm QQ4}) \quad
& {\genE}_{i} {\genF}_{j} - {\genF}_{j} {\genE}_{i}
	= \delta_{i,j}  \frac{{\genK}_{i} {\genK}_{i+1}^{-1} - {\genK}_{i}^{-1}{\genK}_{i+1}}{{v} - {v}^{-1}}, \\
&
{\genE}_{\ol{i}} {\genF}_{\ol{j}} + {\genF}_{\ol{j}} {\genE}_{\ol{i}}
	= \delta_{i,j}  ( \frac{{\genK}_{i} {\genK}_{i+1} - {\genK}_{i}^{-1} {\genK}_{i+1}^{-1}}{{v} - {v}^{-1}}
	 + ({v} - {v}^{-1}) {\genK}_{\ol{i}} {\genK}_{\ol{i+1}} )  ,\\
& {\genE}_{i} {\genF}_{\ol{j}} - {\genF}_{\ol{j}} {\genE}_{i}
	= \delta_{i,j}  ( {\genK}_{i+1}^{-1} {\genK}_{\ol{i}}  - {\genK}_{\ol{i+1}} {\genK}_{i}^{-1} ) , \qquad
 {\genE}_{\ol{i}} {\genF}_{j} - {\genF}_{j} {\genE}_{\ol{i}}
	= \delta_{i,j}  ( {\genK}_{i+1} {\genK}_{\ol{i}}  - {\genK}_{\ol{i+1}} {\genK}_{i} ) ;\\
({\rm QQ5}) \quad
&{\genE}_{\ol{i}}^2 = -\frac{ {v} - {v}^{-1} }{{v} + {v}^{-1}} {\genE}_{i}^2, \quad
	{\genF}_{\ol{i}}^2 = \frac{ {v} - {v}^{-1} }{{v} + {v}^{-1}} {\genF}_{i}^2, \\
&
{\genE}_{i} {\genE}_{\ol{j}} - {\genE}_{\ol{j}} {\genE}_{i}
	=  {\genF}_{i} {\genF}_{\ol{j}} - {\genF}_{\ol{j}} {\genF}_{i}
	= 0 ,  \quad \mbox{ for } |i - j| \ne 1,
\\
& 
{\genE}_{i} {\genE}_{j} - {\genE}_{j} {\genE}_{i} = {\genF}_{i} {\genF}_{j} - {\genF}_{j} {\genF}_{i}
= {\genE}_{\ol{i}}{\genE}_{\ol{j}}  + {\genE}_{\ol{j}}  {\genE}_{\ol{i}}= {\genF}_{\ol{i}} {\genF}_{\ol{j}}  + {\genF}_{\ol{j}} {\genF}_{\ol{i}}
	= 0 \quad \mbox{ for }\quad |i-j|  > 1, 	
\\
& {\genE}_{i} {\genE}_{i+1} - {v} {\genE}_{i+1} {\genE}_{i}
	= {\genE}_{\ol{i}} {\genE}_{\ol{i+1}} + {v} {\genE}_{\ol{i+1}} {\genE}_{\ol{i}}, \qquad
  {\genE}_{i} {\genE}_{\ol{i+1}} - {v} {\genE}_{\ol{i+1}} {\genE}_{i}
	= {\genE}_{\ol{i}} {\genE}_{i+1} - {v} {\genE}_{i+1} {\genE}_{\ol{i}}, \\
& {\genF}_{i} {\genF}_{i+1} - {v} {\genF}_{i+1} {\genF}_{i}
	= - ({\genF}_{\ol{i}} {\genF}_{\ol{i+1}} + {v} {\genF}_{\ol{i+1}} {\genF}_{\ol{i}}), \qquad
  {\genF}_{i} {\genF}_{\ol{i+1}}  - {v} {\genF}_{\ol{i+1}}  {\genF}_{i}
	=  {\genF}_{\ol{i}} {\genF}_{i+1} - {v} {\genF}_{i+1} {\genF}_{\ol{i}} ;\\
({\rm QQ6}) \quad
& {\genE}_{i}^2 {\genE}_{j} - ( {v} + {v}^{-1} ) {\genE}_{i} {\genE}_{j} {\genE}_{i} + {\genE}_{j}  {\genE}_{i}^2 = 0, \qquad
	{\genF}_{i}^2 {\genF}_{j} - ( {v} + {v}^{-1} ) {\genF}_{i} {\genF}_{j} {\genF}_{i} + {\genF}_{j}  {\genF}_{i}^2 = 0, \\
& {\genE}_{i}^2 {\genE}_{\ol{j}} - ( {v} + {v}^{-1} ) {\genE}_{i} {\genE}_{\ol{j}} {\genE}_{i} + {\genE}_{\ol{j}}  {\genE}_{i}^2 = 0, \qquad
	{\genF}_{i}^2 {\genF}_{\ol{j}} - ( {v} + {v}^{-1} ) {\genF}_{i} {\genF}_{\ol{j}} {\genF}_{i} + {\genF}_{\ol{j}}  {\genF}_{i}^2 = 0,  \\
& \qquad \mbox{ where } \quad |i-j| = 1.
\end{align*}
\end{defn}

\begin{rem}\label{induction_N}
Observe that the relations in (QQ3) imply
\begin{align*}
{\genE}_{\ol{j}}
&= - {v} {\genK}_{j+1} {\genK}_{\ol{j+1}} {\genE}_{j} + {v}^{-1} {\genE}_{j} {\genK}_{j+1}  {\genK}_{\ol{j+1}} , \\
{\genF}_{\ol{j}}
&= {v}  {\genK}_{j+1}^{-1} {\genK}_{\ol{j+1}} {\genF}_{j} - {v}^{-1} {\genF}_{j} {\genK}_{j+1}^{-1}  {\genK}_{\ol{j+1}},
\end{align*}
and the third relation  in  (QQ4) implies
\begin{align*}
{\genK}_{\ol{j}}
&=
	{\genE}_{j} {\genF}_{\ol{j}}  {\genK}_{j+1}  - {\genF}_{\ol{j}} {\genE}_{j} {\genK}_{j+1}  +  {\genK}_{j}^{-1} {\genK}_{\ol{j+1}} {\genK}_{j+1}\\
&= {\genE}_{j}  {\genK}_{\ol{j+1}} {\genF}_{j}
	 - {v}^{-1} {\genE}_{j}  {\genF}_{j}  {\genK}_{\ol{j+1}}
		- {v}  {\genK}_{\ol{j+1}} {\genF}_{j}  {\genE}_{j}
		+  {\genF}_{j}  {\genK}_{\ol{j+1}} {\genE}_{j}
		+  {\genK}_{j}^{-1} {\genK}_{\ol{j+1}} {\genK}_{j+1} .
\end{align*}
Hence by induction on {$j$} in descending order  from ({$n-1$}) to $1$,
one can obtain all other odd generators in $\Uvqn$. In other words, 
$\Uvqn$ can be generated by $\{ {\genE}_j, {\genF}_j, {\genK}_i^{\pm1}, {\genK}_{\ol{n}}\where  1\le j \le n-1, \  1 \le i \le n \}$. 
Moreover, as seen in \cite{GJKKK} (cf., \cite{DLZ}), 
one may use a subset of the relations (QQ1)--(QQ6) as the defining relations.
We will see more details later in Section \ref{sec_generators}.
\end{rem}

Following \cite{Ol},
there is  a Schur-Weyl type duality
between $\Uvqn$ and $\HCR$ in the case {$R=\QQ(v)$}
 on the tensor space $V(n|n)^{\otimes r}$ 
with $V(n|n)$ being the superspace of both even and odd dimension $n$. 
Meanwhile by \cite{DW1}, the tensor space $V(n|n)^{\otimes r}$ 
can be identified with $\oplus_{\mu\in\Lambda(n,r)}x_{\mu}\HCR$ as $\HCR$-supermdoules. 
Olshanski proved that there exists an action of $\Uvqn$ on $V(n|n)^{\otimes r}$ satisfying \eqref{eq:twist} 
(which can be shown by a direct calculation even though it wasn't written down clearly in \cite{Ol}) 
and hence there exists a surjective homomorphism $\eta_r:
\Uvqn\to\bsSQvnr$.
\footnote{It is worthwhile to clarify that the superalgebra defined in \cite[(9.4)]{DW1} is actually the twisted version $\bsSQvnr$ here even though the same term ``quantum queer Schur superalgebra'' was used therein.}
Then  by   \cite[Theorem 9.2]{DW1}, we have the following presentation for
$\bsSQvnr:=\SQqnrR$ where $R=\Qv$ and $q={v}^2$.

\begin{prop}\label{relations}
There is a superalgebra epimorphism $\bs{\eta}_r:\Uvqn\to\bsSQvnr$ such that  $\ker(\bs{\eta}_r)$ is generated by the elements
$${\genK}_1\cdots {\genK}_n-{v}^r,\;\; ({\genK}_i-1)({\genK}_i-{v})\cdots({\genK}_i-{v}^{r-1}), \;\;{\genK}_{\bar i} ({\genK}_i-1)({\genK}_i-{v})\cdots({\genK}_i-{v}^{r-1}),$$
for all $1\leq i\leq n$.
\end{prop}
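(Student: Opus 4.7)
The plan is to build $\bs{\eta}_r$ from the Schur-Weyl-Olshanski duality. Specifically, one starts with Olshanski's action of $\Uvqn$ on the tensor superspace $V(n|n)^{\otimes r}$, which super-commutes with the right $\HCR$-action, i.e., $u\cdot(m\cdot h)=(-1)^{\parity{u}\parity{h}}(u\cdot m)\cdot h$ for homogeneous $u\in\Uvqn$ and $h\in\HCR$. Under the identification $V(n|n)^{\otimes r}\cong\bigoplus_{\mu\in\CMN(n,r)}x_\mu\HCR$ of right $\HCR$-supermodules, each $u\in\Uvqn$ therefore defines an element of $\End^s_{\HCR}(\bigoplus_{\mu}x_\mu\HCR)=\bsSQvnr$ satisfying \eqref{eq:twist}, producing the desired superalgebra homomorphism $\bs{\eta}_r$.

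For surjectivity, I would appeal to the Schur-Weyl-Olshanski double-centralizer property established for $R=\Qv$, which identifies the image of $\Uvqn$ with the full super-endomorphism algebra $\bsSQvnr$. Alternatively, one can verify directly, via the short multiplication formulas developed in the subsequent sections of the paper, that the generators $\genE_j,\genF_j,\genK_i^{\pm1},\genK_{\bar n}$, which suffice to generate $\Uvqn$ by Remark \ref{induction_N}, map onto a generating set of $\bsSQvnr$.

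For the description of $\ker(\bs{\eta}_r)$, first I would verify that the three families of elements annihilate $V(n|n)^{\otimes r}$. The product $\genK_1\cdots\genK_n$ acts as $v^r$ on every weight space $V_\mu$ ($\mu\in\CMN(n,r)$) since $|\mu|=r$; each $\genK_i$ acts semisimply and, combined with the first relation, the prescribed polynomial in $\genK_i$ annihilates the tensor space; finally, the odd Cartan $\genK_{\bar i}$ satisfies $\genK_{\bar i}^2=(\genK_i^2-\genK_i^{-2})/(v^2-v^{-2})$ by (QQ1), and its interplay with the even polynomial relation yields the third family. Let $J$ denote the two-sided ideal of $\Uvqn$ generated by these elements; then $\bs{\eta}_r$ factors through the quotient map $\Uvqn\to\Uvqn/J$.

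The nontrivial step is to show that the induced map $\Uvqn/J\to\bsSQvnr$ is injective. Given surjectivity, it suffices to prove $\dim_{\Qv}(\Uvqn/J)\le\dim_{\Qv}\bsSQvnr$. For this one constructs a spanning set of $\Uvqn/J$ via PBW-type straightening: the polynomial relations on the $\genK_i$ truncate the $\genK$-monomials to a finite range, the (QQ3) relations express all odd Cartans $\genK_{\bar i}$ ($i<n$) in terms of the other generators (cf.\ Remark \ref{induction_N}), and the Serre-type relations (QQ5)-(QQ6) bound the degrees in the odd and even root vectors. The main obstacle is to match this spanning set with the basis $\{\Phi_A\mid A\in\MNZ(n,r)\}$ of $\bsSQvnr$ from Proposition \ref{prop_PhiAPhiB} in a size-preserving way; this combinatorial bookkeeping is the content of \cite[Theorem 9.2]{DW1}.
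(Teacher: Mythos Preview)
Your proposal is correct and matches the paper's approach exactly: the paper does not prove this proposition independently but cites it as \cite[Theorem 9.2]{DW1}, after noting (as you do) that $\bs{\eta}_r$ comes from Olshanski's super-commuting action on $V(n|n)^{\otimes r}\cong\bigoplus_\mu x_\mu\HCR$. Your outline of the kernel computation (containment plus dimension count via PBW straightening) is precisely the standard strategy carried out in \cite{DW1}.
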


\begin{rem} The superalgebra $\QqnrR$ introduced in \cite{DW2} by taking advantage of the tensor space 
is related to $\bsSQvnr$ in the way given in Corollary \ref{super_al_iso}, 
so it is reasonable to identify them 
when considering representation theory 
	over a field extension of the complex field as did in \cite[Proposition 6.4, Page 337]{DW2}.
\end{rem}

\spaceintv
\section{Multiplication formulas in {$\QqnrR$} and $\SQqnrR$: the even cases}\label{even case}

We compute, for any $A =  ({\SEE{a}_{i,j}} | {\SOE{a}_{i,j}})  \in\MNZ(n,r)$, the structure constants that appear in $\phi_X\phi_A$, where $X$ is one of the following six matrices:
\begin{equation}\label{even-odd}
\aligned
\text{\bf the even cases:}\;\;&(\lambda|\mathrm{O}),\;\; (\lambda + E_{h, h+1} -E_{h+1, h+1} |\mathrm{O}), \;\;(\lambda-E_{h,h}+E_{h+1,h}|\mathrm{O});\\
\text{\bf the\, odd\, cases:}\;\;& (\lambda-E_{h,h}|E_{h,h}),\;\;(\lambda-E_{h+1, h+1}| E_{h, h+1} ),\;\;(\lambda-E_{h,h}|E_{h+1,h}).
\endaligned
\end{equation}
 Here $1\leq j\leq n$ and $1\leq h\leq n-1$, and we regard any $\lambda \in \Lambda(n,r)$
 as a diagonal matrix $\lambda={\diag}(\lambda_1,\ldots,\lambda_n)$ in $\MN(n)$
 and write $\lambda+M:={\diag}(\lambda_1,\ldots,\lambda_n)+M$. In this section, we deal with the even case.

\begin{conv}\label{CONV}
The terms $\phi_M$ appearing in $\phi_X\phi_A$ have the form
$M=(\SE{A} \pm E|\SO{A} \pm E')$ for some {$E  \in \MN(n)$},   {$E'  \in M_n(\ZZ_2)$}. If $M\not\in \MNZN(n)$ (i.e., $M$ has a negative entry or $\SO{A} \pm E'$ has an entry $>1$), then we set $\phi_M$ to be 0. We make a similar convention for the elements $T_M$ in \eqref{eq cA} that defines $\phi_M$.
\end{conv}

The following notations will be used in the proofs of the multiplication formulas in this and next sections.

\begin{notn}[for the proofs in Sections \ref{even case} and \ref{odd case}]\label{rem_phi_short_0}
Let $A = ({\SEE{a}_{i,j}} | {\SOE{a}_{i,j}})  \in \MNZN(n)$. For fixed $1\leq h,k\leq n$, we decompose the elements $c_A, c'_A$ defined \eqref{eq cA} as a product of three segments:
\begin{align}\label{cA-decomp}
	&c_A =  {\cbefore} \cdot  {\cmiddle} \cdot {\cafter},\qquad c'_A =  ({\cbefore})' \cdot  ({\cmiddle})' \cdot ({\cafter})'
\end{align}
where ${\cmiddle} := c^{\SOE{a}_{h,k}}_{{q}, {\widetilde{a}}_{h-1,k}+1, {\widetilde{a}}_{h,k}}
		\cdot c^{\SOE{a}_{h+1,k}}_{{q}, {\widetilde{a}}_{h,k}+1, {\widetilde{a}}_{h+1,k}},$
and
{$ {\cbefore} $} (resp., {$ {\cafter} $}) are the segment of the product before (resp., after) $\cmiddle$. The decomposition for $c'_A$ is similar.

Recall the notations in  \eqref{def_ahk_notations} and \eqref{parity}, 
and let {$\AK(h+1,k)=\AK(h+1,k)(A)$},  $\BK(h,k)=\BK(h,k)(A)$. Then
 {$\ro(\AP)=\lambda + \bs{\ep}_{h} - \bs{\ep}_{h+1} =:\lambda_h^+$},  
 {$\co(\AM)=\lambda - \bs{\ep}_{h} + \bs{\ep}_{h+1} =:\lambda^-_h$}, and
 {$\co(\AP)  = \co(\AM) = \co(A)$}.
Let $\nu = \nu_{A} = (  \cdots ,a_{h,k},  a_{h+1,k}, \cdots )$ be as in \eqref{nuA} and let
\begin{equation}\label{nu+nu-}
\aligned
	&\nu^+ = \nu_{\AP} = (  \cdots,a_{h,k}+1,  a_{h+1,k}-1, \cdots ), \quad
	\delta^+ = ( \cdots, a_{h,k}, 1, a_{h+1,k}-1, \cdots ),\\
	&\nu^- = \nu_{\AM} = (  \cdots ,  a_{h,k}-1, a_{h+1,k}+1, \cdots ), \quad
	\delta^-= ( \cdots ,a_{h,k}-1, 1, a_{h+1,k} , \cdots ).
	\endaligned
\end{equation}
Here the omitted components are identical. Clearly, $\fS_{\delta^\pm}\subset \fS_{\nu^\pm}$.
If {${\D_{\pm}} $} denote the sets of the shortest representatives  of the left cosets of  {$\fS_{\delta^\pm}$} in {$\fS_{\nu^\pm}$}, then, putting {$u = \widetilde{a}_{h,k}$}, {$s = a_{h,k}$} and $t = a_{h+1,k}$, we have
\begin{equation}\label{subsubgroup}
\begin{aligned}
&	x_{\nu^+} = x_{\D_{+}} x_{\delta^+} = x_{\delta^+} \TDIJ({u},  {u-s+1}),\\
& x_{\nu^-} = x_{\D_{-}} x_{\delta^-} =x_{\delta^-} \TAIJ({u},  {u-s+1}),
\end{aligned}
\qquad x_{\nu} =  x_{\delta^+} \TAIJ( {u+1}, {u+t-1})
	=  x_{\delta^-} \TDIJ({u-1},  {u-s +1}).
\end{equation}
Let $'\!\nu^+ = \nu_{'\!\AP}$ and $'\!\nu^- = \nu_{'\!\AM}$ be defined as in \eqref{nuA} by using the transposes $'\!\AP$, $'\!\AM$ of $\AP$, $\AM$, respectively.
Then $\fS_{'\!\nu^\pm}\subseteq \fS_{\lambda_h^\pm}$ and we have (cf. \cite[Corollary 3.5]{DW2})
\begin{equation}\label{eq_ta+}
\aligned
	x_{{{\lambda}_{h}^{+}}} {T_{d_{\AP}}}
	&=  x_{{\lambda}_{h}^{+}\backslash{}'\!\nu^+}  {T_{d_{\AP}}}  x_{\nu^+}, \quad\text{where}\quad x_{{{\lambda}_{h}^{+}}\backslash{}'\! \nu^+}  := \sum_{u' \in {\D}^{-1}_{{}'\!\nu^+} \cap \fS_{{{\lambda}_{h}^{+}}}} T_{u'};\\
x_{{{\lambda}_{h}^{-}}} {\TDAM}
	&= x_{{{\lambda}_{h}^{-}}\backslash{}'\!\nu^-}  {\TDAM}  x_{\nu^-}, \quad\text{where}\quad
	x_{{{\lambda}_{h}^{-}}\backslash{}'\!\nu^-} := \sum_{u' \in {\D}^{-1}_{{}'\!\nu^-} \cap \fS_{{{\lambda}_{h}^{-}}}} T_{u'}.
\endaligned
\end{equation}

\end{notn}
We are now ready to derive families of multiplication formulas for $\phi_X\phi_A$ in the queer $q$-Schur superalgebra $\QqnrR$ (and, hence, in $\SQqnrR$ by Proposition \ref{prop_PhiAPhiB}). We first assume  $X=(\lambda|\mathrm{O})$,
 {$(\lambda + E_{h, h+1}-E_{h+1, h+1}|\mathrm{O})$} or {$(\lambda-E_{h,h} + E_{h+1,h} |\mathrm{O})$}. Recall the notation in \eqref{stepd}.

\begin{prop}\label{phiupper0}
Let {$A =  ({\SEE{a}_{i,j}} | {\SOE{a}_{i,j}})   \in \MNZ(n,r)$} with {$a_{i,j}=\SEE{a}_{i,j} + \SOE{a}_{i,j}$}, 
{$h \in [1,n-1]$}  and
	 {${\lambda} = \ro(A)$}. Then the following multiplication holds in $\QqnrR$:
\begin{align*}
(1)\;\;& \phi_{(\lambda |\mathrm{O})} \phi_A = \delta_{\lambda,\ro(A)}\phi_{A},\qquad
	 \phi_A \phi_{(\mu |\mathrm{O})}  =\delta_{\mu,\co(A)}\phi_{A}.\\
(2)\;\;& \phi_{(\lambda + E_{h, h+1}-E_{h+1, h+1} |\mathrm{O})}  \phi_A \\
 & = \sum_{k=1}^n \Big\{
	{q}^{\BK(h,k) + \SOE{a}_{h+1,k}}  \STEP{ \SEE{a}_{h,k} + 1}
		\phi_{(\SE{A} + E_{h,k}  - E_{h+1, k} | \SO{A} )}
	+
	{q}^{\BK(h,k)}  \phi_{(\SE{A}  | \SO{A} + E_{h,k} - E_{h+1, k}  )}  \\
	&\hspace{6cm} +
	{q}^{\BK(h,k)-1} \STEPPD{a_{h,k}+1}
	\phi_{(\SE{A} + 2E_{h,k} | \SO{A}  -E_{h,k} - E_{h+1,k} )}\Big
\}.\\
(3)\;\;& \phi_{( \lambda -E_{h, h} + E_{h+1, h} |\mathrm{O})}  \phi_A \\
&=\sum_{k=1}^n \Big\{
	 {q}^{\AK(h+1,k) }  \STEP{ \SEE{a}_{h+1, k} +1}
	\phi_{(\SE{A} - E_{h,k} + E_{h+1, k} | \SO{A} )}
	 +
	{q}^{\AK(h+1,k)  + a_{h, k} - 1} \phi_{(\SE{A}  |\SO{A} - E_{h,k} + E_{h+1,k})} \\
	& \hspace{4.5cm} -
	{q}^{\AK(h+1,k)  + a_{h, k} -2} \STEPPDR{a_{h+1, k} +1}
	\phi_{(\SE{A} + 2E_{h+1,k} | \SO{A}  - E_{h,k} - E_{h+1,k})}
\Big\}.
\end{align*}
\end{prop}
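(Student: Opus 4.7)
The strategy is to compute $\phi_X\phi_A(x_{\co(A)}) = T_X\cdot h_A$ directly inside $\HCR$, where $h_A := T_{d_A}c_A\sum_{\tau\in\D_{\nu_A}\cap\fS_{\co(A)}}T_\tau$, and then read off the expansion in the basis $\{T_M\}$ of $x_{\ro(X)}\HCR\cap\HCR x_{\co(A)}$ guaranteed by Proposition \ref{DW-basis}. Part (1) is immediate: $X = (\lambda|\mathrm{O})$ yields $d_X = 1$, $c_X = 1$, and a trivial coset sum, so $T_X = x_\lambda$ and $\phi_{(\lambda|\mathrm{O})}$ is simply the projection onto $x_\lambda\HCR$; the second identity in (1) is immediate from the defining rule $\phi_A(x_{\co(A)}) = T_A$.

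For part (2), take $X = (\lambda + E_{h,h+1} - E_{h+1,h+1}|\mathrm{O})$. Using Remark \ref{rem:dA=1} one checks $d_X = 1$, and a short count gives the coset sum $\TAIJ(\widetilde\lambda_h+1,\widetilde\lambda_{h+1}-1)$, hence $T_X = x_{\lambda_h^+}\,\TAIJ(\widetilde\lambda_h+1,\widetilde\lambda_{h+1}-1)$. Partitioning this sum over columns via $\lambda_{h+1} = \sum_k a_{h+1,k}$ and applying Lemma \ref{prop_pjshift}(1) to each column block moves the generators past $T_{d_A}$, producing the factor $T_{d_{A^+_{h,k}}}$ with a residual Hecke word $T_{\widetilde a_{h,k}+1}\cdots T_{\widetilde a_{h,k}+p_j}$ on the right, while the left factor $T_{\widetilde\lambda_h}\cdots T_{\widetilde\lambda_h-\BK(h,k)+1}$ is absorbed into $x_{\lambda_h^+}$ via \eqref{Tixlambda} and contributes the prefactor $q^{\BK(h,k)}$. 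The residual Hecke word is then commuted past $c_A$; by the column-local decomposition $c_A = \cbefore\cdot\cmiddle\cdot\cafter$ of Notation \ref{rem_phi_short_0}, only the middle segment $\cmiddle$ interacts nontrivially, and Lemma \ref{xcT}(2) yields exactly three contributions --- a clean pass-through (giving the first term, with coefficient $q^{\SOE{a}_{h+1,k}}\STEP{\SEE{a}_{h,k}+1}$), a shift of one odd unit from $(h+1,k)$ to $(h,k)$ (the second term), and a Clifford-square interaction using $(c_{q,\cdot,\cdot})^2 = -\STEPX{a_{h,k}+1}{q^2}$ from Lemma \ref{xcT}(4) (the third term, with scalar $\STEPPD{a_{h,k}+1}$ produced by combining this square with the $q^{-1}$ corrections from \eqref{Tick-inv}(1)). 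Finally, Lemma \ref{Tnsum}(1) absorbs the leftover arithmetic-progression sum together with the $\D_{\nu_A}\cap\fS_{\co(A)}$-coset sum into $\sum_{\sigma\in\D_{\nu_{A^+_{h,k}}}\cap\fS_{\co(A)}}T_\sigma$, recovering the three basis elements $T_M$ with the stated coefficients.

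Part (3) follows by the mirror argument: Remark \ref{rem:dA=1} again gives $d_X = 1$, a similar coset count gives $T_X = x_{\lambda_h^-}\,\TDIJ(\widetilde\lambda_h,\widetilde\lambda_{h-1}+1)$, and this is partitioned via $\lambda_h = \sum_k a_{h,k}$. Lemma \ref{prop_pjshift}(2), Lemma \ref{xcT}(3), and Lemma \ref{Tnsum}(2) then play the roles of their part-(2) counterparts. The main obstacle throughout is the Clifford bookkeeping in the middle step: correctly identifying which combinations of the residual Hecke generators with $\cmiddle$ yield each of the three target matrices, and matching the exact scalars --- in particular the factor $q^{\BK(h,k)-1}\STEPPD{a_{h,k}+1}$ in the third term of (2) and its mirror $q^{\AK(h+1,k)+a_{h,k}-2}\STEPPDR{a_{h+1,k}+1}$ in (3) --- requires careful tracking of the $q^{\pm1}$ corrections from the Hecke relations together with the Clifford-square identity of Lemma \ref{xcT}(4).
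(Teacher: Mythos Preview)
Your overall route matches the paper's: partition $\TAIJ(\widetilde\lambda_h+1,\widetilde\lambda_{h+1}-1)$ column by column, apply Lemma~\ref{prop_pjshift}(1), absorb the left factor into $x_{\lambda_h^+}$ to extract $q^{\BK(h,k)}$, and eventually use Lemma~\ref{Tnsum}(1) to rewrite the coset sum. But the central Clifford step is not as you describe, and the order you propose does not actually work.

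After Lemma~\ref{prop_pjshift} one is looking at $q^{\BK(h,k)}\,x_{\lambda_h^+}\,T_{d_{A^+_{h,k}}}\,\TAIJ(u+1,u+t-1)\,c_A\sum_{\sigma\in\D_{\nu_A}\cap\fS_\mu}T_\sigma$ (with $u=\widetilde a_{h,k}$, $t=a_{h+1,k}$). You cannot simply ``commute the residual Hecke word past $c_A$'' and then apply Lemma~\ref{Tnsum}: the factor $c_{q,u+1,u+t}^{\,\SOE a_{h+1,k}}$ inside $\cmiddle$ does not commute with $T_{u+1},\dots,T_{u+t-1}$, and forcing it through destroys the $\TAIJ$ shape that Lemma~\ref{Tnsum} needs. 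The paper instead first invokes the factorization \eqref{eq_ta+}, $x_{\lambda_h^+}T_{d_{A^+_{h,k}}}=x_{\lambda_h^+\backslash{}'\!\nu^+}T_{d_{A^+_{h,k}}}x_{\nu^+}$, together with \eqref{subsubgroup} and the symmetrizer identity \eqref{xc}, which replaces $x_\nu c_A$ by $(c_A)'x_\nu$; only \emph{after} this does it apply Lemma~\ref{Tnsum}(1). The result is equation~\eqref{zkupper}, with $(\cmiddle)'$ sitting between $x_{\D_+}$ and $x_{\delta^+}\TDIJ(u,u-s+1)$. From there the paper does not get ``three contributions'' from one lemma: it performs a \emph{four}-case analysis on $(\SOE a_{h,k},\SOE a_{h+1,k})\in\{0,1\}^2$, using Lemma~\ref{xTinverse}(1) and Lemma~\ref{xcT}(3),(4) in each case, and the single three-term formula is recovered uniformly across the cases via Convention~\ref{CONV}. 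Your attribution to Lemma~\ref{xcT}(2) is off --- that part plays no role here. (Minor: in part (3) the coset sum is $\TDIJ(\widetilde\lambda_h-1,\widetilde\lambda_{h-1}+1)$, not $\TDIJ(\widetilde\lambda_h,\widetilde\lambda_{h-1}+1)$.)
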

\begin{proof} Formula (1) is a special case of \eqref{co=ro}, noting that $\phi_{(\lambda|O)}$ is the identity map on $x_\lambda\HCR$.

We now prove (2).
Let {$X=X^+ = (\lambda + E_{h, h+1} -E_{h+1, h+1} |\mathrm{O})$}.
Then {$\ro(X) =\lambda+ \alpha_h = {\lambda}_{h}^{+}$},
{$d_X = 1$}, {$T_{d_X} = 1$},  {$c_X  = 1$},
and
{$
	 \D_{{\nu}_{X}} \cap \fS_{{\lambda}}   = \{ 1, s_{{\widetilde{\lambda}_{h}+1}},
		s_{{\widetilde{\lambda}_{h}+1}} s_{{\widetilde{\lambda}_{h}+2}},
		\cdots,
		s_{{\widetilde{\lambda}_{h}+1}} s_{{\widetilde{\lambda}_{h}+2}} \cdots s_{{\widetilde{\lambda}_{h+1}-1}}
	\}
$}.

Then,  for $\mu=\co(A)$, \eqref{eq_gB} with $B=X$ becomes
$$
z	:= x_{{{\lambda}_{h}^{+}}} \TAIJ( {\widetilde{\lambda}_{h}+1}, {\widetilde{\lambda}_{h+1}-1}) {T_{d_A}} c_{A}
		\sum _{{\sigma} \in \D_{{\nu}_{_A}} \cap \fS_{\mu}} T_{\sigma}
	 = 	\sum_{{1\leq k\leq n}\atop {a_{h+1,k}>0}} z_k,
$$
where, for each {$k \in [1, n]$} with $a_{h+1,k}>0$,
\begin{equation}\label{z_k}
\aligned
	z_k&=
		x_{{{\lambda}_{h}^{+}}}
		T_{\widetilde{\lambda}_{h}+1} T_{\widetilde{\lambda}_{h}+2} \cdots T_{\widetilde{\lambda}_{h}+\AK(h+1,k)}
		\TAIJ({\AK(h+1,k)+1},{\AK(h+1,k)+a_{h+1,k}-1}){T_{d_A}} c_{A}
		\sum _{{\sigma} \in \D_{{\nu}_{_A}} \cap \fS_{\mu}} T_{\sigma}\\
	     &=\sum_{j=\AK(h+1,k)}^{\AK(h+1,k+1)-1}x_{{{\lambda}_{h}^{+}}}
		T_{\widetilde{\lambda}_{h}+1} T_{\widetilde{\lambda}_{h}+2} \cdots  T_{\widetilde{\lambda}_{h}+j} \cdot
		{T_{d_A}} c_{A}
		\sum _{{\sigma} \in \D_{{\nu}_{_A}} \cap \fS_{\mu}} T_{\sigma}.
\endaligned
\end{equation}

Assume now {$t = a_{h+1,k}>0$}. For $\AK(h+1,k)\leq j\leq \AK(h+1,k+1)-1$, let {$p_j = j - \AK(h+1,k) $}.
Then, with {$u = \widetilde{a}_{h,k}$}, {$s = a_{h,k}$},  $t = a_{h+1,k}$, and $\nu=\nu_A$, \eqref{z_k} becomes
\begin{align*}
	z_k
&=
		\sum_{j=\AK(h+1,k)}^{\AK(h+1,k+1) - 1}
		x_{{{\lambda}_{h}^{+}}}
		T_{\widetilde{\lambda}_{h}} T_{\widetilde{\lambda}_{h}-1}\cdots T_{\widetilde{\lambda}_{h}-\BK(h,k)+1}
		{{\TDAP}}
		T_{\widetilde{a}_{h,k}+1} \cdots T_{\widetilde{a}_{h,k}+p_j}
		c_{A}
		\sum _{{\sigma} \in \D_{{\nu}} \cap \fS_{\mu}} T_{\sigma} \quad
		\mbox{(by Lemma  \ref{prop_pjshift})}
		\\
&=
	{q}^{\BK(h,k)} x_{{{\lambda}_{h}^{+}}}  {{\TDAP}}
	\TAIJ( {u+1}, {u+t-1})	c_A
	\sum _{{\sigma} \in \D_{{\nu}} \cap \fS_{\mu}} T_{\sigma}
	\quad ( \mbox{since } s_{\widetilde{\lambda}_{h}}, s_{\widetilde{\lambda}_{h}-1}, \cdots,
		s_{\widetilde{\lambda}_{h}-\BK(h,k)+1} \in {\fS}_{{{\lambda}_{h}^{+}}})
	\\
&=
	{q}^{\BK(h,k)} x_{{{\lambda}_{h}^{+}}\backslash{}'\!\nu^+} {{\TDAP}}
	x_{{\D_{+}}} x_{\delta^+} \TAIJ( {u+1}, {u+t-1}) c_A
	\sum _{{\sigma} \in \D_{{\nu}} \cap \fS_{\mu}} T_{\sigma}
	\quad
 {
	\mbox{(by \eqref{eq_ta+}  and \eqref{subsubgroup})}
 }
	\\
&=
	{q}^{\BK(h,k)} x_{{{\lambda}_{h}^{+}}\backslash{}'\!\nu^+} {{\TDAP}}
	x_{{\D_{+}}}  (c_A)' x_{\nu}
	\sum _{{\sigma} \in \D_{{\nu}} \cap \fS_{\mu}} T_{\sigma}
	\quad
 {
	\mbox{(by $x_{\delta^+} \TAIJ( {u+1}, {u+t-1})  = x_{\nu}$ in \eqref{subsubgroup} and \eqref{xc} )}
 }.
\end{align*}
By writing $x_{\nu}=x_{\delta^+} \TAIJ( {u+1}, {u+t-1})$ again and applying Lemma  \ref{Tnsum}(1), 
we have
 \begin{equation}\label{zkupper}
	\begin{aligned}
&z_k=
	{q}^{\BK(h,k)} x_{{{\lambda}_{h}^{+}}\backslash{}'\!\nu^+} {{\TDAP}}
	x_{{\D_{+}}}   (c_A)'  x_{\delta^+}  \TDIJ({u},  {u-s+1})
	\sum _{{\sigma} \in \D_{\nu^+} \cap \fS_{\mu}} T_{\sigma}
	\\
&=
	{q}^{\BK(h,k)} x_{{{\lambda}_{h}^{+}}\backslash{}'\!\nu^+} {{\TDAP}}
	x_{{\D_{+}}}   ({\cbefore})' ({\cmiddle})' ({\cafter})' x_{\delta^+}  \TDIJ({u},  {u-s+1})
	\sum _{{\sigma} \in \D_{\nu^+} \cap \fS_{\mu}} T_{\sigma} \\
&=
	{(-1)}^{(\SOE{a}_{h,k} + \SOE{a}_{h+1,k}) \YK } {q}^{\BK(h,k)} x_{{{\lambda}_{h}^{+}}\backslash{}'\!\nu^+} {{\TDAP}}
	x_{{\D_{+}}}  ({\cmiddle})'  x_{\delta^+}  \TDIJ({u},  {u-s+1}) {\cbefore} {\cafter}
	\sum _{{\sigma} \in \D_{\nu^+} \cap \fS_{\mu}} T_{\sigma} .
\end{aligned}
\end{equation}
Here, for the last equality, we used the following relations:
\begin{enumerate}[(a)]
\item
$({\cbefore})' ({\cmiddle})' ({\cafter})' ={(-1)}^{(\SOE{a}_{h,k} + \SOE{a}_{h+1,k}) \YK }({\cmiddle})' ({\cbefore})' ({\cafter})' $ (noting that $\YK$ is the partial sum at the $(h-1,k)$ position associated with $\nu_{A^{\bar 1}}$);
\item
$({\cbefore})' ({\cafter})' =(c_{\delta^+}^\alpha)'$ for some $\alpha\leq\delta^+$ with $\alpha_i=0,\;i\in[\widetilde a_{h-1,k}+1,\widetilde a_{h+1,k}]$,
and hence  $({\cbefore})' ({\cafter})' x_{\delta^+}=x_{\delta^+}({\cbefore})({\cafter}) $ by \eqref{xc}; 
\item
$({\cbefore})({\cafter})$ commutes with $\TDIJ({u},  {u-s+1})$ since $T_ic_j=c_jT_i$ for all $j\neq i,i+1$.
\end{enumerate}

To complete the computation of $z_k$, we need to consider the following four cases.

\noindent
 {\bf Case 1.} {$\SOE{a}_{h,k} = 0$}, {$\SOE{a}_{h+1,k} = 0$}.
In this case,  $(\cmiddle)' = 1$, $c_{A} ={\cbefore} {\cafter} =c_{A^+_{h,k}}$ and
\begin{align*}
z_k
&=
	{q}^{\BK(h,k)} x_{{{\lambda}_{h}^{+}}\backslash{}'\!\nu^+} {{\TDAP}}
	x_{\nu^+}  \TDIJ({u},  {u-s+1}) {\cbefore} {\cafter}
	\sum _{\sigma\in \D_{\nu^+ \cap \fS_\mu}} T_{\sigma}
	\qquad \mbox{ (by $x_{{\D_{+}}}  x_{\delta^+}  = x_{\nu^+} $) }
	\\
&=
	{q}^{\BK(h,k)} \STEP{s+1} x_{{{\lambda}_{h}^{+}}} {{\TDAP}}   {\cbefore} {\cafter}
	 \sum _{{\sigma} \in \D_{\nu^+ } \cap \fS_{\mu}}  T_{\sigma}
	\qquad  \mbox{(by Lemma \ref{xTinverse}(1))}
	\\
&=
	{q}^{\BK(h,k)} \STEP{ \SEE{a}_{h,k} +1} T_{(\SE{A} + E_{h,k} - E_{h+1, k} | \SO{A})} .
\end{align*}

\noindent
 {\bf Case 2.} {$\SOE{a}_{h,k} = 0$}, {$\SOE{a}_{h+1,k} = 1$}.
Then $(\cmiddle)' = c'_{{q}, u +1, u +t}$ and
\begin{align*}
z_k &=
	{(-1)}^{\YK } {q}^{\BK(h,k)} x_{{{\lambda}_{h}^{+}}\backslash{}' \nu^+} {{\TDAP}}\cdot
	x_{{\D_{+}}}  (c_{{q}, u +1, u +t})'  x_{\delta^+}  \TDIJ({u},  {u-s+1}) \cdot{\cbefore} {\cafter}
	\sum _{{\sigma} \in \D_{\nu^+} \cap \fS_{\mu}} T_{\sigma} .
\end{align*}
Direct calculation using \eqref{subsubgroup}, \eqref{xc}, and \eqref{cqij} shows
\begin{align*}
	x_{{\D_{+}}} (c_{{q}, u +1, u +t})'  &x_{\delta^+}  \TDIJ({u},  {u-s+1}) = x_{{\D_{+}}} ({q} (c_{{q}, u +2, u +t})' +  c_{u+1}) x_{\delta^+}  \TDIJ({u},  {u-s+1}) \\
	&= {q} x_{{\D_{+}}} x_{\delta^+} c_{{q}, u +2, u +t}  \TDIJ({u},  {u-s+1}) +  x_{{\D_{+}}} x_{\delta^+}c_{u+1} \TDIJ({u},  {u-s+1})  \\
	&=
		{q} \STEP{s +1}  x_{\nu^+} c_{{q}, u +2, u +t}  +  x_{\nu^+}c_{{q}, u-s+1, u +1} \qquad (\text{by Lemmas \ref{xTinverse}(1) and \ref{xcT}(3)}).
\end{align*}
Recall from \eqref{cqij} that $(c_{{q}, u +2, u +t})' =0=c_{{q}, u +2, u +t}$ if $a^{\bar 0}_{h+1,k}=0$ (i.e., $t=1$).
Hence, by noting ${\cbefore} c_{{q}, u +2, u +t}  {\cafter}=c_{\nu^+}^\alpha$ with $\alpha=\nu_{A^{\bar1}}$ and ${\cbefore} c_{{q}, u-s+1, u +1} {\cafter}=c_{\nu^+}^\beta$ with $\beta=\nu_{\SO{A} - E_{h+1, k}  + E_{h,k}}$, we have
\begin{align*}
z_k
&=
	{q}^{\BK(h,k)+1} \STEP{s+1}  x_{{{\lambda}_{h}^{+}}} {{\TDAP}}
	{\cbefore} c_{{q}, u +2, u +t}  {\cafter}
	\sum _{{\sigma} \in \D_{{\nu^+}} \cap \fS_{\mu}} T_{\sigma}  \\
	&\qquad + {q}^{\BK(h,k)} x_{{{\lambda}_{h}^{+}}} {{\TDAP}}
	{\cbefore} c_{{q}, u-s+1, u +1} {\cafter}
	\sum _{{\sigma} \in \D_{{\nu^+}} \cap \fS_{\mu}} T_{\sigma}  \\
&=
	{q}^{\BK(h,k)+1} \STEP{ \SEE{a}_{h,k} +1}  T_{(\SE{A} + E_{h,k} - E_{h+1, k} | \SO{A}) }
	+ {q}^{\BK(h,k)} T_{(\SE{A}  | \SO{A}  + E_{h,k} - E_{h+1, k} ) } .
\end{align*}
Here the sign in $z_k$ is cancelled after swapping $ c_{{q}, u +2, u +t}\cbefore$.
Note that, 
by Convention \ref{CONV}, $T_{(\SE{A} + E_{h,k} - E_{h+1, k} | \SO{A}) }=0$ if $a^{\bar 0}_{h+1,k}=0$.

\noindent
 {\bf Case 3.} {$\SOE{a}_{h,k} = 1$}, {$\SOE{a}_{h+1,k} = 0$}. Then we have {${\cmiddle} = c_{{q}, u-s+1, u}$} and
\begin{align*}
z_k &=
	{(-1)}^{\YK } {q}^{\BK(h,k)} x_{{{\lambda}_{h}^{+}}\backslash{}'\nu^+} {{\TDAP}}\cdot
	x_{{\D_{+}}}  (c_{{q}, u-s+1, u})'  x_{\delta^+}  \TDIJ({u},  {u-s+1}) \cdot{\cbefore} {\cafter}
	\sum _{{\sigma} \in \D_{{\nu^+}} \cap \fS_{\mu}} T_{\sigma}.
\end{align*}
Again using \eqref{subsubgroup}, \eqref{xc}, \eqref{cqij}, Lemma \ref{xTinverse}(1) and Lemma \ref{xcT}(3), one can deduce
\begin{align*}
 {q} x_{{\D_{+}}}  (c_{{q}, u-s+1, u})' & x_{\delta^+}  \TDIJ({u},  {u-s+1})
	=  x_{{\D_{+}}}  x_{\delta^+} \cdot {q} c_{{q}, u-s+1, u} \cdot  \TDIJ({u},  {u-s+1}) \\
	&=  x_{\nu^+}  c_{{q}, u-s+1, u +1} \TDIJ({u},  {u-s+1}) -x_{\nu^+}  c_{u+1} \TDIJ({u},  {u-s+1}) \\
	&=   (c_{{q}, u-s+1, u +1})'  x_{\nu^+} \TDIJ({u},  {u-s+1}) -x_{\nu^+}  c_{{q}, u-s+1, u +1} \\
	&= ( \STEP{s+1} -1 )x_{\nu^+} c_{{q}, u-s+1, u +1}.
\end{align*}
This leads to
\begin{align*}
z_k
&=
	{q}^{\BK(h,k)-1} ( \STEP{s+1} -1 ) x_{{{\lambda}_{h}^{+}}} {{\TDAP}}
	{\cbefore}c_{{q}, u-s+1, u +1} {\cafter}
	\sum _{{\sigma} \in \D_{{\nu^+}} \cap \fS_{\mu}} T_{\sigma} \\
&=
	{q}^{\BK(h,k) }   \STEP{ \SEE{a}_{h,k} +1}
	T_{(\SE{A}+ E_{h,k}  -E_{h+1, k}  |\SO{A})} .
\end{align*}

\noindent
 {\bf Case 4.} {$\SOE{a}_{h,k} = 1$}, {$\SOE{a}_{h+1,k} = 1$}.
Then {$(\cmiddle)' = c'_{{q}, u-s+1, u} c'_{{q}, u +1, u +t}$} and
\begin{align*}
z_k &=
	{q}^{\BK(h,k)} x_{{{\lambda}_{h}^{+}}\backslash{}'\nu^+} {{\TDAP}}\cdot
	x_{{\D_{+}}}  c'_{{q}, u-s+1, u} c'_{{q}, u +1, u +t}  x_{\delta^+}  \TDIJ({u},  {u-s+1})\cdot {\cbefore} {\cafter}
	\sum _{{\sigma} \in \D_{{\nu^+}} \cap \fS_{\mu}} T_{\sigma}.
\end{align*}
Using \eqref{subsubgroup}, \eqref{xc}, \eqref{cqij},
Lemma \ref{xTinverse}(1) and Lemma \ref{xcT}(3), we get
\begin{align*}
 {q} x_{{\D_{+}}} & c'_{{q}, u-s+1, u} c'_{{q}, u +1, u +t}  x_{\delta^+}  \TDIJ({u},  {u-s+1}) =-x_{{\D_{+}}}  c'_{{q}, u +1, u +t}(qc'_{{q}, u-s+1, u})   x_{\delta^+}  \TDIJ({u},  {u-s+1}) \\
&= - x_{{\D_{+}}} x_{\delta^+} ({q} (c_{{q}, u +2, u +t}) +  c_{u+1})  (c_{{q}, u-s+1, u +1} -  c_{u+1})  \TDIJ({u},  {u-s+1})  \\
&=	- {q}  x_{\nu^+}  c_{{q}, u +2, u +t}c_{{q}, u-s+1, u +1} \TDIJ({u},  {u-s+1})
	+ {q}  x_{\nu^+}  c_{{q}, u +2, u +t}  c_{u+1}  \TDIJ({u},  {u-s+1})  \\
	&\qquad -  x_{\nu^+} c_{u+1} c_{{q}, u-s+1, u +1} \TDIJ({u},  {u-s+1})
	 +  x_{\nu^+}c_{u+1} c_{u+1}  \TDIJ({u},  {u-s+1})  \\
&=	- {q}  (c_{{q}, u +2, u +t})' (c_{{q}, u-s+1, u +1})' x_{\nu^+}   \TDIJ({u},  {u-s+1})
	+ {q} (c_{{q}, u +2, u +t})'  x_{\nu^+} c_{u+1}  \TDIJ({u},  {u-s+1})  \\
	&\qquad + x_{\nu^+} (c_{{q}, u-s+1, u +1} c_{u +1}  + 2) \TDIJ({u},  {u-s+1})
	 - x_{\nu^+}  \TDIJ({u},  {u-s+1})  \\
&=	- {q}  \STEP{s+1} x_{\nu^+}  c_{{q}, u +2, u +t} c_{{q}, u-s+1, u +1}
	+ {q} x_{\nu^+} c_{{q}, u +2, u +t}  c_{{q}, u-s+1, u +1}   \\
	&\qquad + (c_{{q}, u-s+1, u +1} )' x_{\nu^+} c_{u +1}  \TDIJ({u},  {u-s+1})
	+ \STEP{s+1} x_{\nu^+}   \\
&=	{q}( \STEP{s+1} - 1 ) x_{\nu^+}  c_{{q}, u-s+1, u +1} c_{{q}, u +2, u +t} + (\STEP{s+1} - {\STEPP{s+1}} ) x_{\nu^+} \;\;(\text{by Lemma} \ref{xcT}(3)(4)),
\end{align*}
then, noting ${q}( \STEP{s+1} - 1 )=q^2\STEP{s}$, we have, in this case,
\begin{align*}
z_k
&=
	{q}^{\BK(h,k) + 1}  \STEP{s}
	x_{{{\lambda}_{h}^{+}}} {{\TDAP}}
	{\cbefore} c_{{q}, u-s+1, u +1} c_{{q}, u +2, u +t}  {\cafter}
	\sum _{{\sigma} \in \D_{{\nu^+}} \cap \fS_{\mu}} T_{\sigma} \\
	&\qquad + {q}^{\BK(h,k)-1} (\STEP{s+1} - {\STEPP{s+1}} )
	x_{{{\lambda}_{h}^{+}}} {{\TDAP}} {\cbefore} {\cafter}
	\sum _{{\sigma} \in \D_{{\nu^+}} \cap \fS_{\mu}} T_{\sigma} \\
&=
	{q}^{\BK(h,k) + 1}   \STEP{ \SEE{a}_{h,k}+1}
	T_{(\SE{A} + E_{h,k} - E_{h+1,k} | \SO{A})}
	  + {q}^{\BK(h,k)-1} \STEPPD{a_{h,k}+1}
	T_{( \SE{A} + 2E_{h,k} | \SO{A}  -E_{h,k} - E_{h+1,k} )} .
\end{align*}
We now combine the four cases above into a single expression.
Let
\begin{align*}
z'_k&=
	{q}^{\BK(h,k) + \SOE{a}_{h+1,k}}  \STEP{ \SEE{a}_{h,k} + 1}
		T_{(\SE{A}  + E_{h,k} - E_{h+1, k}| \SO{A} )}
	+
	{q}^{\BK(h,k)} T_{(\SE{A}  | \SO{A} + E_{h,k}- E_{h+1, k}   )}  \\
	&\hspace{5cm} +
	{q}^{\BK(h,k)-1} \STEPPD{a_{h,k}+1}
	T_{(\SE{A} + 2E_{h,k} | \SO{A}  -E_{h,k} - E_{h+1,k} )}.
\end{align*}
Then, by the notational Convention \ref{CONV}, $z'_k=0$ if $a_{h+1,k}=0$. A case-by-case argument shows that $z'_k=z_k$ if $a_{h+1,k}>0$. This proves (2).

Finally, the proof of (3) is symmetric. Observe that,
 for {$X=X^- = ( \lambda -E_{h, h} + E_{h+1, h}| \mathrm{O})$}, we have $\mathcal{D}_{\nu_X}\cap \mathfrak{S}_\lambda
= \{ 1,  s_{\widetilde{\lambda}_h - 1}, s_{\widetilde{\lambda}_h - 1}s_{\widetilde{\lambda}_h-2},
	\cdots,
	s_{\widetilde{\lambda}_h-1}s_{\widetilde{\lambda}_h-2}\cdots s_{\widetilde{\lambda}_{h-1}+1}\}.$
Via
replacing the ($+$) notations by the ($-$) notations in various places in the proof of (2),
we can obtain a formula for $z_k$ similar to \eqref{zkupper}.
Then an explicit computation for $z_k$ can also be divided into four cases and hence (3) follows easily. 
We leave the details to the reader.
\end{proof}

\spaceintv
\section{Multiplication formulas in {$\QqnrR$} and $\SQqnrR$: the odd cases}\label{odd case}

In this section, we shall derive the multiplication formulas for $\phi_X\phi_A$ with $X= (\lambda-E_{h,h}|E_{h,h})$,
 $(\lambda-E_{h+1, h+1}| E_{h, h+1} )$ or $(\lambda-E_{h,h}|E_{h+1,h})$ under certain conditions.
Recall Notation \ref{rem_phi_short_0} and the SDP condition  in Definition \ref{defn:SDP}.
\begin{prop}\label{phidiag1}
Let {$A =  ({\SEE{a}_{i,j}} | {\SOE{a}_{i,j}})   \in \MNZ(n,r)$}, {$h \in [1,n]$}, and
	 {${\lambda} = \ro(A)$}.
	Assume {$A$}  satisfies the SDP condition on the $h$-th row.
	 Then we have in $\QqnrR$
\begin{align*}
\phi_{(\lambda-E_{h, h}| E_{h, h})} \phi_A
	&=
\sum_{ k=1}^n
{(-1)}^{ {\SOE{\widetilde{a}}}_{h-1,k} } {q}^{\BK(h,k) }
\Big\{
	 \phi_{(\SE{A} -E_{h,k}|\SO{A}+ E_{h,k})}
	- {\STEPP{a_{h,k}}}
	\phi_{(\SE{A} +  E_{h,k}| \SO{A} - E_{h,k})}
\Big\}.
\end{align*}
\end{prop}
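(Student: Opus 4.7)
The plan is to adapt the strategy of Proposition \ref{phiupper0}, with the SDP hypothesis serving as the key new tool that bypasses the complications of commuting Clifford elements past $T_{d_A}$. Setting $X = (\lambda - E_{h,h}|E_{h,h})$, one has $\widehat{X}=\lambda$ diagonal, so by Example \ref{exam_shift} $d_X = 1$ and $\mathcal{D}_{\nu_X}\cap\fS_\lambda = \{1\}$, giving $T_X = x_\lambda \, c_{q, \widetilde{\lambda}_{h-1}+1, \widetilde{\lambda}_h}$. Partitioning the running index $\widetilde{\lambda}_{h-1}+j$ ($1 \le j \le \lambda_h$) according to which column $k$ of row $h$ of $A$ it falls into---so $j = \AK(h,k)+p$ for $p \in [1, a_{h,k}]$---the identity $\widetilde{\lambda}_h - j = \BK(h,k) + (a_{h,k}-p)$ produces the column decomposition
\begin{equation*}
c_{q,\, \widetilde{\lambda}_{h-1}+1,\, \widetilde{\lambda}_h}
 = \sum_{k=1}^{n} q^{\BK(h,k)} \, c_{q,\, \widetilde{\lambda}_{h-1}+\AK(h,k)+1,\, \widetilde{\lambda}_{h-1}+\AK(h,k)+a_{h,k}},
\end{equation*}
with terms satisfying $a_{h,k}=0$ vanishing by the convention in \eqref{cqij}. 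Substituting into \eqref{eq_gB} splits $z := \phi_X\phi_A(x_{\co(A)})$ as $\sum_{k=1}^n z_k$.

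For each $k$ with $a_{h,k}>0$, linearly extending the SDP relation of Definition \ref{defn:SDP} over $p$ with weight $q^{a_{h,k}-p}$ yields the crucial commutation
\begin{equation*}
c_{q,\, \widetilde{\lambda}_{h-1}+\AK(h,k)+1,\, \widetilde{\lambda}_{h-1}+\AK(h,k)+a_{h,k}} \, T_{d_A}
 = T_{d_A}\, c_{q,\, \widetilde{a}_{h-1,k}+1,\, \widetilde{a}_{h,k}},
\end{equation*}
using $\widetilde{a}_{h-1,k}+a_{h,k} = \widetilde{a}_{h,k}$. This is the only step requiring the SDP hypothesis: without it, commuting through $T_{d_A}$ would produce error terms. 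I would then invoke the decomposition $c_A = \cbefore \cdot (c_{q,\, \widetilde{a}_{h-1,k}+1,\, \widetilde{a}_{h,k}})^{\SOE{a}_{h,k}} \cdot \cafter$ from Notation \ref{rem_phi_short_0} and anti-commute the newly injected odd Clifford factor past $\cbefore$. Since $\cbefore$ collects the Clifford monomials at positions strictly preceding $(h,k)$ in the column-first ordering, its parity equals $\SOE{\widetilde{a}}_{h-1,k}\bmod 2$, producing exactly the sign $(-1)^{\SOE{\widetilde{a}}_{h-1,k}}$ of the target formula.

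It remains to split on the parity of $\SOE{a}_{h,k}$. If $\SOE{a}_{h,k}=0$, then $(c_{q,\, \widetilde{a}_{h-1,k}+1,\, \widetilde{a}_{h,k}})^{0+1}$ becomes the $(h,k)$-Clifford segment of $c_M$ for $M = (\SE{A} - E_{h,k}|\SO{A}+E_{h,k})$; since $\nu_M = \nu_A$ and hence $d_M = d_A$ together with the same inner coset data, one obtains $z_k = (-1)^{\SOE{\widetilde{a}}_{h-1,k}} q^{\BK(h,k)} T_M$. If $\SOE{a}_{h,k}=1$, Lemma \ref{xcT}(4) collapses $(c_{q,\, \widetilde{a}_{h-1,k}+1,\, \widetilde{a}_{h,k}})^2 = -\STEPP{a_{h,k}}$ into a central scalar and the $(h,k)$-Clifford segment disappears, yielding $z_k = -(-1)^{\SOE{\widetilde{a}}_{h-1,k}} q^{\BK(h,k)} \STEPP{a_{h,k}} T_{M'}$ for $M' = (\SE{A} + E_{h,k}|\SO{A}-E_{h,k})$. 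Convention \ref{CONV} forces the unused target matrix to give a zero term in each case, so both cases merge into the claimed formula after summing over $k$. The main obstacle I anticipate is the careful bookkeeping of the anti-commutation sign and verifying that the column decomposition of $c_{q,\widetilde{\lambda}_{h-1}+1,\widetilde{\lambda}_h}$ is correctly paired with the SDP relation; compared with Proposition \ref{phiupper0}, however, the SDP hypothesis dramatically simplifies the structure by replacing the four-case analysis of the even proof with this clean two-case split indexed by $\SOE{a}_{h,k}$.
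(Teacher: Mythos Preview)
Your proof is correct and follows essentially the same route as the paper: the same setup for $X$, the same column-wise decomposition of $c_{q,\widetilde\lambda_{h-1}+1,\widetilde\lambda_h}$ yielding $z=\sum_k z_k$, the SDP condition applied termwise to commute the Clifford factor past $T_{d_A}$, anti-commutation past the ``before'' segment producing the sign $(-1)^{\SOE{\widetilde a}_{h-1,k}}$, and the same two-case split on $\SOE a_{h,k}$ via Lemma~\ref{xcT}(4). The only nitpick is that Notation~\ref{rem_phi_short_0} literally defines $\cmiddle$ as the product of the factors at positions $(h,k)$ and $(h+1,k)$, whereas here you need the variant with a single middle factor at $(h,k)$ (the paper introduces the ad hoc notation $\ckafter$ for this); your intended meaning is clear, so this is purely notational.
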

\begin{proof}
	Let  {$X=({\lambda-E_{h, h}| E_{h, h}})$}.
	Clearly  {$\ro(X) = \lambda$}, {$d_X = 1$},
	{$\D_{{\nu}_{_X}} \cap \fS_{\lambda}  = \{ 1\} $},
	and
	{$ c_X = c_{{q}, \widetilde{\lambda}_{h-1}+1, \widetilde{\lambda}_{h}} $}.
Then, for $B=X$ and $\mu=\co(A)$,   \eqref{eq_gB} becomes
\begin{align*}
z
:=\phi_X\phi_A(x_\mu)&=   x_{\lambda}   c_{{q}, \tilde{\lambda}_{h-1}+1, \tilde{\lambda}_{h}}
		{T_{d_A}} c_{A}
		\sum _{{\sigma} \in \D_{{\nu}_{_A}} \cap \fS_{\mu}} T_{\sigma} \\
&=   x_{\lambda}
		( {q}^{{\lambda}_{h} - 1} c_{\tilde{\lambda}_{h-1}+1}
		+ {q}^{{\lambda}_{h} - 2} c_{\tilde{\lambda}_{h-1}+2}+
		 \cdots+
		 {q} c_{\tilde{\lambda}_{h}- 1}
		+  c_{\tilde{\lambda}_{h}}
		)
		{T_{d_A}} c_{A}
		\sum _{{\sigma} \in \D_{{\nu}_{_A}} \cap \fS_{\mu}} T_{\sigma} \\
&=	 \sum_{{1\leq k\leq n}\atop {a_{h,k}>0}}
	 \sum_{j= {\AK(h,k) }+1 }^{ {\AK(h,k+1) }}
	 x_{\lambda}  {q}^{{\lambda_{h}}-j} c_{ \tilde{\lambda}_{h-1} +j} {T_{d_A}} c_{A}
	\sum _{{\sigma} \in \D_{{\nu}_{_A}} \cap \fS_{\mu}} T_{\sigma}.
\end{align*}
	For each
	$j \in [ {\AK(h,k) } +1 ,  { {\AK(h,k+1) }} ]$,
	let {$p_j = j- {\AK(h,k) } -1$},
	then {$0 \le p_j < {a_{h,k}} $} and
\begin{align*}
z
	&=  \sum_{{1\leq k\leq n}\atop {a_{h,k}>0}}  \sum_{j= {\AK(h,k) }+1 }^{ {\AK(h,k+1) }} x_{\lambda}
		{q}^{{\lambda_h}-j}
		c_{ \tilde{\lambda}_{h-1} +  {\AK(h,k) } + p_j + 1} {T_{d_A}} c_{A}
	\sum _{{\sigma} \in \D_{{\nu}_{_A}} \cap \fS_{\mu}} T_{\sigma}
	 \\
	&=  \sum_{{1\leq k\leq n}\atop {a_{h,k}>0}}  \sum_{j= {\AK(h,k) }+1 }^{ {\AK(h,k+1) }} x_{\lambda}  {q}^{{\lambda_h}-j}{T_{d_A}}  c_{\tilde{a}_{h-1,k} + p_j + 1}   c_{A}
	\sum _{{\sigma} \in \D_{{\nu}_{_A}} \cap \fS_{\mu}} T_{\sigma}
		\quad \mbox{(by the SDP condition)}
	\\
	&=  \sum_{{1\leq k\leq n}\atop {a_{h,k}>0}}  \sum_{j= {\AK(h,k) }+1 }^{ {\AK(h,k+1) }} x_{\lambda}
	{T_{d_A}}
	{q}^{{\lambda_h}-j} c_{\tilde{a}_{h-1,k} +j- {\AK(h,k) } }   c_{A}
	\sum _{{\sigma} \in \D_{{\nu}_{_A}} \cap \fS_{\mu}} T_{\sigma}
	\\
	&=  \sum_{{1\leq k\leq n}\atop {a_{h,k}>0}}  x_{\lambda}  {T_{d_A}}
	( {q}^{{\lambda_h} -  {\AK(h,k) } - 1} c_{\tilde{a}_{h-1,k} + 1 } + \cdots + {q}^{{\lambda_h} -  {\AK(h,k+1) }} c_{\tilde{a}_{h-1,k} + a_{h,k} })
	c_{A}
	\sum _{{\sigma} \in \D_{{\nu}_{_A}} \cap \fS_{\mu}} T_{\sigma} \\
	& = \sum_{ 1\leq k\leq n, a_{h,k}>0} z_k,
\end{align*}
where for each $1\leq k\leq n$ with $a_{h,k}>0$,
\begin{align*}
z_k
	&=   x_{\lambda}  {T_{d_A}}
	( {q}^{{\lambda_h} -  {\AK(h,k) } - 1} c_{\tilde{a}_{h-1,k} + 1 } + \cdots + {q}^{{\lambda_h} -  {\AK(h,k+1) }} c_{\tilde{a}_{h-1,k} + a_{h,k} })
	c_{A}
	\sum _{{\sigma} \in \D_{{\nu}_{_A}} \cap \fS_{\mu}} T_{\sigma}\\
&={q}^{{\lambda_h} -  {\AK(h,k+1) }}  x_{\lambda}  {T_{d_A}}
	c_{{q}, \tilde{a}_{h-1,k} + 1 , \tilde{a}_{h-1,k} + a_{h,k} }
	{\ckbefore} c^{\SOE{a}_{h,k}}_{{q}, \tilde{a}_{h-1,k} + 1 , \tilde{a}_{h-1,k} + a_{h,k} } {\ckafter}
	\sum _{{\sigma} \in \D_{{\nu}_{_A}} \cap \fS_{\mu}} T_{\sigma},
\end{align*}
due to the decomposition $c_A = {\ckbefore} c^{\SOE{a}_{h,k}}_{{q}, \tilde{a}_{h-1,k} + 1 , \tilde{a}_{h-1,k} + a_{h,k} } {\ckafter}$ similar to \eqref{cA-decomp}.

{\bf Case 1.}  {$\SOE{a}_{h,k} = 0$}.
In this case,  we have {$ c^{\SOE{a}_{h,k}}_{{q}, \tilde{a}_{h-1,k} + 1 , \tilde{a}_{h-1,k} + a_{h,k} } = 1$}  and  have
\begin{align*}
z_k
&=  {q}^{{\lambda_h} -  {\AK(h,k+1) }}  x_{\lambda}  {T_{d_A}}
	c_{{q}, \tilde{a}_{h-1,k} + 1 , \tilde{a}_{h-1,k} + a_{h,k} }
	{\ckbefore} {\ckafter}
	\sum _{{\sigma} \in \D_{{\nu}_{_A}} \cap \fS_{\mu}} T_{\sigma} \\
&=  {(-1)}^{\YK} {q}^{{\lambda_h} -  {\AK(h,k+1) }}  x_{\lambda}  {T_{d_A}}
	{\ckbefore}
	c_{{q}, \tilde{a}_{h-1,k} + 1 , \tilde{a}_{h-1,k} + a_{h,k} }
	{\ckafter}
	\sum _{{\sigma} \in \D_{{\nu}_{_A}} \cap \fS_{\mu}} T_{\sigma} \\
&=
 {(-1)}^{\YK} {q}^{{\lambda_h} -  {\AK(h,k+1) }} T_{( \SE{A} -E_{h,k}|\SO{A}+ E_{h,k} )} .
\end{align*}

{\bf Case 2.}  {$\SOE{a}_{h,k} = 1$}.
 In this case,
we have {$ c^{\SOE{a}_{h,k}}_{{q}, \tilde{a}_{h-1,k} + 1 , \tilde{a}_{h-1,k} + a_{h,k} } = c_{{q}, \tilde{a}_{h-1,k} + 1 , \tilde{a}_{h-1,k} + a_{h,k} } $}
and
$
{(c_{{q}, \widetilde{a}_{h-1,k} + 1 , \widetilde{a}_{h-1,k} + a_{h,k} })}^2 = -{\STEPP{a_{h,k}}},$ by Lemma \ref{xcT}(4).
 Hence,
\begin{align*}
z_k
&= {q}^{{\lambda_h} -  {\AK(h,k+1) }}  x_{\lambda}  {T_{d_A}}
	c_{{q}, \tilde{a}_{h-1,k} + 1 , \tilde{a}_{h-1,k} + a_{h,k} }
	{\ckbefore} c_{{q}, \tilde{a}_{h-1,k} + 1 , \tilde{a}_{h-1,k} + a_{h,k} } {\ckafter}
	\sum _{{\sigma} \in \D_{{\nu}_{_A}} \cap \fS_{\mu}} T_{\sigma} \\
&= {(-1)}^{\YK} {q}^{{\lambda_h} -  {\AK(h,k+1) }}  x_{\lambda}  {T_{d_A}}
	{(c_{{q}, \tilde{a}_{h-1,k} + 1 , \tilde{a}_{h-1,k} + a_{h,k} })^2}
	{\ckbefore} {\ckafter}
	\sum _{{\sigma} \in \D_{{\nu}_{_A}} \cap \fS_{\mu}} T_{\sigma}  \\
&= {(-1)}^{\YK+1} {q}^{{\lambda_h} -  {\AK(h,k+1) }}  {\STEPP{a_{h,k}}} x_{\lambda}  {T_{d_A}}
	{\ckbefore} {\ckafter}
	\sum _{{\sigma} \in \D_{{\nu}_{_A}} \cap \fS_{\mu}} T_{\sigma} \\
&=
 {(-1)}^{\YK+1} {q}^{{\lambda_h} -  {\AK(h,k+1) }}  {\STEPP{a_{h,k}}}   T_{ (\SE{A} + E_{h,k}|\SO{A}- E_{h,k} )}.
\end{align*}
Finally, by Convention \ref{CONV}, a case-by-case argument  proves  the formula.
\end{proof}

\begin{rem}\label{phidiagN}
By Lemma \ref{shiftonN}, if $h=n$, then the hypothesis that ``$A$ satisfies the SDP condition on the $n$-th row'' may be dropped.
\end{rem}

For the remaining odd cases,
the proof is analogous to the even cases in Section \ref{even case}.
\begin{prop}\label{phiupper1}
	Let {$A =  ({\SEE{a}_{i,j}} | {\SOE{a}_{i,j}})   \in \MNZ(n,r)$}, {$h \in [1,n-1]$}  and
	 {${\lambda} = \ro(A)$}.
	Assume, for each {$k \in [1,n]$} with {$a_{h+1,k} \ge 1$},
	{${A}^+_{h,k}$}  satisfies the SDP condition at $(h,k)$.
	Then
	we have in $\QqnrR$
\begin{align*}
	\phi_{(\lambda-E_{h+1, h+1}| E_{h, h+1})} \phi_A
	&= \sum_{k=1}^n \Big \{
	{(-1)}^{{\SOE{\widetilde{a}}}_{h-1,k}} {q}^{\BK(h,k) + \SOE{a}_{h+1,k} } \phi_{(\SE{A} - E_{h+1, k}| \SO{A}  + E_{h,k} )} \\
	& \qquad +
	{(-1)}^{{\SOE{\widetilde{a}}}_{h-1,k} + 1 - \SOE{a}_{h,k} }
	{q}^{\BK(h,k)  } \STEP{ \SEE{a}_{h,k} +1}
		\phi_{(\SE{A} + E_{h,k}| \SO{A} - E_{h+1, k} )}  \\
	& \qquad +
	{(-1)}^{{\SOE{\widetilde{a}}}_{h-1,k} } {q}^{\BK(h,k)-1 + \SOE{a}_{h+1,k}} \STEPPDR{a_{h,k}+1}
		\phi_{(\SE{A} + 2 E_{h,k}  -E_{h+1, k} |\SO{A} - E_{h,k})}
\Big\}.
\end{align*}
\end{prop}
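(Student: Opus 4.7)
The plan is to follow the template of the proof of Proposition \ref{phiupper0}(2) while carrying the extra odd Clifford factor coming from $c_X$. Setting $X=(\lambda-E_{h+1,h+1}|E_{h,h+1})$, a direct calculation using Remark \ref{rem:dA=1} gives $\ro(X)=\lambda_h^+$, $\co(X)=\lambda$, $d_X=1$, $c_X=c_{\widetilde{\lambda}_h+1}$, and $\sum_{\sigma\in\D_{\nu_X}\cap\fS_\lambda}T_\sigma=\TAIJ(\widetilde{\lambda}_h+1,\widetilde{\lambda}_{h+1}-1)$. Via \eqref{eq_gB} one finds
\begin{align*}
z:=\phi_X\phi_A(x_{\co(A)}) = x_{\lambda_h^+}\, c_{\widetilde{\lambda}_h+1}\, \TAIJ(\widetilde{\lambda}_h+1,\widetilde{\lambda}_{h+1}-1)\, T_{d_A}\, c_A\sum_{\sigma\in\D_{\nu_A}\cap\fS_{\co(A)}}T_\sigma,
\end{align*}
which differs from the starting expression of the even-case proof only by the insertion of $c_{\widetilde{\lambda}_h+1}$ on the left. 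As in Proposition \ref{phiupper0}(2), I will expand $\TAIJ$ and group summands by $k$ with $a_{h+1,k}>0$ to write $z=\sum_k z_k$, then apply Lemma \ref{prop_pjshift}(1) inside each $z_k$ to convert $T_{d_A}$ into $T_{d_{A^+_{h,k}}}$, picking up the tail $T_{\widetilde{\lambda}_h}\cdots T_{\widetilde{\lambda}_h-\BK(h,k)+1}$ on the left.

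The key new step is to transport $c_{\widetilde{\lambda}_h+1}$ rightward through this tail and then through $T_{d_{A^+_{h,k}}}$. Iterating the clean relation $c_iT_{i-1}=T_{i-1}c_{i-1}$ (obtained by reading the second identity in \eqref{Hecke-Cliff} backwards) gives
\begin{align*}
c_{\widetilde{\lambda}_h+1}\, T_{\widetilde{\lambda}_h}\cdots T_{\widetilde{\lambda}_h-\BK(h,k)+1}
= T_{\widetilde{\lambda}_h}\cdots T_{\widetilde{\lambda}_h-\BK(h,k)+1}\, c_{\widetilde{\lambda}_h-\BK(h,k)+1},
\end{align*}
so the standard absorption $x_{\lambda_h^+}T_{\widetilde{\lambda}_h}\cdots T_{\widetilde{\lambda}_h-\BK(h,k)+1}=q^{\BK(h,k)}x_{\lambda_h^+}$ of the even case still works. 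The Clifford index has now landed on $\widetilde{\lambda}_h-\BK(h,k)+1=\widetilde{\lambda}_{h-1}+\AK(h,k)+a_{h,k}+1$, which is precisely the extreme position $p=a_{h,k}+1$ covered by the hypothesized SDP condition on $A^+_{h,k}$ at $(h,k)$. Invoking it yields $c_{\widetilde{\lambda}_h-\BK(h,k)+1}\,T_{d_{A^+_{h,k}}}=T_{d_{A^+_{h,k}}}\,c_{\widetilde{a}_{h,k}+1}$, and hence
\begin{align*}
z_k = q^{\BK(h,k)}\, x_{\lambda_h^+}\, T_{d_{A^+_{h,k}}}\, c_{u+1}\, \TAIJ(u+1,u+t-1)\, c_A\sum_\sigma T_\sigma,
\end{align*}
with $u=\widetilde{a}_{h,k}$ and $t=a_{h+1,k}$. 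This is exactly the expression from the even case with a single extra $c_{u+1}$ inserted between $T_{d_{A^+_{h,k}}}$ and $\TAIJ(u+1,u+t-1)$.

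From here the rest is a four-case analysis in $(\SOE{a}_{h,k},\SOE{a}_{h+1,k})\in\{0,1\}^2$, parallel to the proof of Proposition \ref{phiupper0}(2), using \eqref{eq_ta+}, \eqref{subsubgroup}, \eqref{xc}, and parts (2)--(4) of Lemma \ref{xcT}. I would first combine $c_{u+1}$ with the second factor of $\cmiddle$ (or $(\cmiddle)'$) and then reorder the product past $\cbefore$; this last reordering $c_{u+1}\cbefore=(-1)^{\SOE{\widetilde{a}}_{h-1,k}}\cbefore c_{u+1}$ produces the sign $(-1)^{\SOE{\widetilde{a}}_{h-1,k}}$ present in every summand of the statement. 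The three terms of the claimed formula arise, respectively, from (i) the contribution where $c_{u+1}$ is absorbed as a new odd Clifford in the $(h,k)$-block of $A^+_{h,k}$ (giving the $\SO{A}+E_{h,k}$ term), (ii) the contribution where it merges with an existing odd piece at the top of the $(h+1,k)$-segment of $\cmiddle$ (giving the $\SO{A}-E_{h+1,k}$ term, whose additional sign $(-1)^{1-\SOE{a}_{h,k}}$ comes from a further reordering past the $(h,k)$ odd piece), and (iii) the ``squaring'' contribution produced by Lemma \ref{xcT}(4) (giving the $\STEPPDR{a_{h,k}+1}$ summand).

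The hard part of the proof will be the alignment between the shift formula and the SDP condition: only because the iterated identity $c_iT_{i-1}=T_{i-1}c_{i-1}$ transports the Clifford index from $\widetilde{\lambda}_h+1$ exactly onto $\widetilde{\lambda}_{h-1}+\AK(h,k)+a_{h,k}+1$ does the single hypothesis ``$A^+_{h,k}$ satisfies SDP at $(h,k)$'' suffice; any naive attempt to push $c_{\widetilde{\lambda}_h+1}$ directly through $T_{d_A}$ would instead require an SDP hypothesis on $A$ itself at a non-standard position. Once this alignment is recognized, the remaining Clifford bookkeeping is a (lengthy but routine) extension of the even-case argument.
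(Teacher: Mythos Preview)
Your proposal is correct and follows essentially the same route as the paper's proof: the paper also transports $c_{\widetilde{\lambda}_h+1}$ leftward through the tail via the clean relation $c_{i+1}T_i=T_ic_i$, then invokes the SDP condition for $A^+_{h,k}$ at $(h,k)$ with $p=a_{h,k}+1$ to arrive at exactly the expression $z_k=q^{\BK(h,k)}x_{\lambda_h^+}T_{d_{A^+_{h,k}}}c_{u+1}\TAIJ(u+1,u+t-1)c_A\sum_\sigma T_\sigma$, and finishes with the same four-case analysis in $(\SOE{a}_{h,k},\SOE{a}_{h+1,k})$. The only cosmetic difference is that the paper first pulls the sign $(-1)^{(\SOE{a}_{h,k}+\SOE{a}_{h+1,k})\YK}$ out by commuting $(\cmiddle)'$ past $(\cbefore)'$ (as in \eqref{zkupper}) and then picks up the residual sign inside each case, whereas you attribute the global $(-1)^{\SOE{\widetilde a}_{h-1,k}}$ to moving $c_{u+1}$ past $\cbefore$; both accountings give the same answer.
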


\begin{proof}
The proof is quite similar to Proposition  \ref{phiupper0}{\rm (2)}.
More precisely,  let {$X= (\lambda-E_{h+1, h+1} | E_{h, h+1})$},
then we have
{$\ro(X) = {{{\lambda}_{h}^{+}}}  $},
 {$c_X = c_{\widetilde{\lambda}_{h}+1}$}.
Similar to Proposition \ref{phiupper0}(2),
with $B=X$ the equation \eqref{eq_gB} becomes
\begin{align*}
z = \sum_{{1\leq k\leq n}\atop {a_{h+1,k}>0}} z_k,
\qquad \text{ where }	z_k&=
		\sum_{j=\AK(h+1,k)}^{\AK(h+1,k+1) - 1}
		x_{{{\lambda}_{h}^{+}}}    c_{\widetilde{\lambda}_{h}+1}
		T_{\widetilde{\lambda}_{h}+1} T_{\widetilde{\lambda}_{h}+2} \cdots T_{\widetilde{\lambda}_{h}+j}
		{T_{d_A}} c_{A}
		\sum _{{\sigma} \in \D_{{\nu}_{_A}} \cap \fS_{\mu}} T_{\sigma}.
\end{align*}

For each {$k \in [1, n]$} with $a_{h+1,k}>0$,
using the notations in  the proof of Proposition \ref{phiupper0}(2),
Lemma \ref{prop_pjshift} implies
\begin{align*}
	z_k
&=
		\sum_{j=\AK(h+1,k)}^{\AK(h+1,k+1) - 1}
		x_{{{\lambda}_{h}^{+}}}  c_{\widetilde{\lambda}_{h}+1}
		(T_{\widetilde{\lambda}_{h}} T_{\widetilde{\lambda}_{h}-1}\cdots T_{\widetilde{\lambda}_{h}-\BK(h,k)+1}  {{\TDAP}}
		T_{\widetilde{a}_{h,k}+1} \cdots T_{\widetilde{a}_{h,k}+p_j})
		c_{A}
		\sum _{{\sigma} \in \D_{{\nu}_{_A}} \cap \fS_{\mu}} T_{\sigma} \\
	&=
		\sum_{j=\AK(h+1,k)}^{\AK(h+1,k+1) - 1}
		x_{{{\lambda}_{h}^{+}}}
		T_{\widetilde{\lambda}_{h}} T_{\widetilde{\lambda}_{h}-1}\cdots T_{\widetilde{\lambda}_{h}-\BK(h,k)+1}
		c_{\widetilde{\lambda}_{h}-\BK(h,k) + 1}
		{{\TDAP}}
		T_{\widetilde{a}_{h,k}+1} \cdots T_{\widetilde{a}_{h,k}+p_j}
		c_{A}
		\sum _{{\sigma} \in \D_{{\nu}_{_A}} \cap \fS_{\mu}} T_{\sigma} \\
	&=
		\sum_{j=\AK(h+1,k)}^{\AK(h+1,k+1) - 1}
		{q}^{\BK(h,k)} x_{{{\lambda}_{h}^{+}}}
		c_{\widetilde{\lambda}_{h}-\BK(h,k) + 1}
		{{\TDAP}}
		T_{\widetilde{a}_{h,k}+1} \cdots T_{\widetilde{a}_{h,k}+p_j}
		c_{A}
		\sum _{{\sigma} \in \D_{{\nu}_{_A}} \cap \fS_{\mu}} T_{\sigma} .
\end{align*}
By th assumption that  
	{${A}^+_{h,k}$}  satisfies the SDP condition at $(h,k)$, we obtain
\begin{align*}
z_k
&= {q}^{\BK(h,k)} x_{{{\lambda}_{h}^{+}}} {{\TDAP}} c_{u +1} \TAIJ( {u+1}, {u+t-1}) c_A
	\sum _{{\sigma} \in \D_{{\nu}_{_A}} \cap \fS_{\mu}} T_{\sigma} .
\end{align*}
Then,
similar to  the even case,
by applying \eqref{eq_ta+}, \eqref{subsubgroup},
Lemma  \ref{Tnsum},
and  \eqref{xc},
we obtain
\begin{align*}
z_k&
=
	{(-1)}^{(\SOE{a}_{h,k} + \SOE{a}_{h+1,k}) \YK } {q}^{\BK(h,k)} x_{{{\lambda}_{h}^{+}}\backslash{}'\!\nu^+} {{\TDAP}}
	x_{{\D_{+}}}  c_{u +1} ({\cmiddle})'  x_{\delta}   \TDIJ({u},  {u-s+1}) {\cbefore} {\cafter}
	\!\!  \sum _{{\sigma} \in \D_{\nu^+} \cap \fS_{\mu}} \!\! \!\! T_{\sigma} .
\end{align*}

Similar to the even situation, we need to consider four cases.

{\bf Case 1.}  {$\SOE{a}_{h,k} = 0$}, {$\SOE{a}_{h+1,k} = 0$}.
In this case we have   {${\cmiddle} = 1$} and
\begin{align*}
z_k &=
	{q}^{\BK(h,k)} x_{{{\lambda}_{h}^{+}}\backslash{}'\!\nu^+} {{\TDAP}}
	x_{{\D_{+}}}  c_{u +1} x_{\delta}  \TDIJ({u},  {u-s+1}) {\cbefore} {\cafter}
	\sum _{{\sigma} \in \D_{\nu^+} \cap \fS_{\mu}}  T_{\sigma} \\
&=
	{q}^{\BK(h,k)} x_{{{\lambda}_{h}^{+}}\backslash{}'\!\nu^+} {{\TDAP}}
	x_{\nu^+} c_{u +1} \TDIJ({u},  {u-s+1}) {\cbefore} {\cafter}
	 \sum _{{\sigma} \in \D_{\nu^+} \cap \fS_{\mu}}  T_{\sigma} \\
&=
	{(-1)}^{\YK} {q}^{\BK(h,k)} x_{{{\lambda}_{h}^{+}}\backslash{}'\!\nu^+} {{\TDAP}}
	x_{\nu^+}  {\cbefore}  c_{{q}, u-s+1, u +1}  {\cafter}
	\sum _{{\sigma} \in \D_{\nu^+} \cap \fS_{\mu}}  T_{\sigma} \\
&=
	{(-1)}^{\YK} {q}^{\BK(h,k)} T_{(\SE{A} - E_{h+1, k}| \SO{A}  + E_{h,k} )}.
\end{align*}

{\bf Case 2.}
 {$\SOE{a}_{h,k} = 0$}, {$\SOE{a}_{h+1,k} = 1$}.
Then {${\cmiddle} = c_{{q}, u +1, u +t}$} and
\begin{align*}
z_k &=
	{(-1)}^{\YK } {q}^{\BK(h,k)} x_{{{\lambda}_{h}^{+}}\backslash{}'\!\nu^+} {{\TDAP}}
	x_{{\D_{+}}}  c_{u +1} (c_{{q}, u +1, u +t})'  x_{\delta}  \TDIJ({u},  {u-s+1})  {\cbefore} {\cafter}
	 \sum _{{\sigma} \in \D_{\nu^+} \cap \fS_{\mu}}  T_{\sigma} .
\end{align*}
Meanwhile by \eqref{cqij}, \eqref{subsubgroup}, Lemma \ref{xcT}(2) and Lemma \ref{xcT}(3), we deduce that
\begin{align*}
	&	x_{{\D_{+}}} c_{u +1}  (c_{{q}, u +1, u +t})'  x_{\delta}   \TDIJ({u},  {u-s+1}) \\
	&= {q} x_{{\D_{+}}} c_{u +1} (c_{{q}, u +2, u +t})'x_{\delta} \TDIJ({u},  {u-s+1})  +  x_{{\D_{+}}} c_{u +1} c_{u+1} x_{\delta}  \TDIJ({u},  {u-s+1}) \\
	&= {q} x_{\nu^+}  c_{u +1} c_{{q}, u +2, u +t}  \TDIJ({u},  {u-s+1}) -  x_{\nu^+}  \TDIJ({u},  {u-s+1}) \\
	&= {q} x_{\nu^+}   c_{{q}, u-s+1, u +1} c_{{q}, u +2, u +t} -  \STEP{s+1}  x_{\nu^+},
\end{align*}
which  implies
\begin{align*}
z_k
&=
	{(-1)}^{\YK } {q}^{\BK(h,k)+1} x_{{{\lambda}_{h}^{+}}\backslash{}'\!\nu^+} {{\TDAP}}
	x_{\nu^+}   c_{{q}, u-s+1, u +1}  c_{{q}, u +2, u +t} \cdot {\cbefore} {\cafter}
	 \sum _{{\sigma} \in \D_{\nu^+} \cap \fS_{\mu}}  T_{\sigma} \\
&\qquad - {(-1)}^{\YK } {q}^{\BK(h,k)}  \STEP{s+1}  x_{{{\lambda}_{h}^{+}}\backslash{}'\!\nu^+} {{\TDAP}}
 	x_{\nu^+} {\cbefore} {\cafter}
	\sum _{{\sigma} \in \D_{\nu^+} \cap \fS_{\mu}}  T_{\sigma}  \\
&=
	{(-1)}^{\YK } {q}^{\BK(h,k)+1}
	T_{(\SE{A} - E_{h+1, k}| \SO{A}  + E_{h,k})} \\
	& \qquad +
	{(-1)}^{\YK +1 } {q}^{\BK(h,k)} \STEP{ \SEE{a}_{h,k} +1}
	T_{(\SE{A} + E_{h,k}| \SO{A} - E_{h+1, k} )} .
\end{align*}

{\bf Case 3.}
  {$\SOE{a}_{h,k} = 1$}, {$\SOE{a}_{h+1,k} = 0$}.
It follows {${\cmiddle} = c_{{q}, u-s+1, u}$} and
\begin{align*}
z_k &=
	{(-1)}^{\YK } {q}^{\BK(h,k)} x_{{{\lambda}_{h}^{+}}\backslash{}'\!\nu^+} {{\TDAP}}
	x_{{\D_{+}}}  c_{u +1}  (c_{{q}, u-s+1, u})'  x_{\delta}   \TDIJ({u},  {u-s+1}) {\cbefore} {\cafter}
	 \sum _{{\sigma} \in \D_{\nu^+} \cap \fS_{\mu}} T_{\sigma} .
\end{align*}
Using \eqref{cqij}, \eqref{subsubgroup}, Lemma \ref{xcT}(2) and Lemma \ref{xcT}(3), we have
\begin{align*}
&{q} x_{{\D_{+}}}  c_{u +1} (c_{{q}, u-s+1, u})'  x_{\delta}  \TDIJ({u},  {u-s+1})  \\
	&=  -x_{{\D_{+}}}   x_{\delta}  {q} c_{{q}, u-s+1, u} c_{u +1} \TDIJ({u},  {u-s+1}) \\
	&=  	- x_{\nu^+}  c_{{q}, u-s+1, u +1} c_{u +1} \TDIJ({u},  {u-s+1})
		+ x_{\nu^+}  c_{u+1} c_{u +1} \TDIJ({u},  {u-s+1}) \\
	&=  	- ( c_{{q}, u-s+1, u +1} )' x_{\nu^+}  c_{u +1} \TDIJ({u},  {u-s+1})
		- x_{\nu^+}  \TDIJ({u},  {u-s+1}) \\
	&=  	- x_{\nu^+}  c_{{q}, u-s+1, u +1}  c_{{q}, u-s+1, u +1}
		- \STEP{s+1} x_{\nu^+}  \\
	&=   ( {\STEPP{s+1}} - \STEP{s+1} ) x_{\nu^+},
\end{align*}
and hence
\begin{align*}
z_k
&=
	{(-1)}^{\YK } {q}^{\BK(h,k)-1}	( {\STEPP{a_{h,k}+1}}  - \STEP{a_{h,k}+1} )
	 T_{(\SE{A}+ 2 E_{h,k}  -E_{h+1, k}  |\SO{A} - E_{h,k})}.
\end{align*}

{\bf Case 4.}  {$\SOE{a}_{h,k} = 1$}, {$\SOE{a}_{h+1,k} = 1$}.
Then {${\cmiddle} = c_{{q}, u-s+1, u} c_{{q}, u +1, u +t}$} and
\begin{align*}
z_k &=
	{q}^{\BK(h,k)} x_{{{\lambda}_{h}^{+}}\backslash{}'\!\nu^+} {{\TDAP}}
	x_{{\D_{+}}}  c_{u +1} (c_{{q}, u-s+1, u} c_{{q}, u +1, u +t})'  x_{\delta}  \TDIJ({u},  {u-s+1}) {\cbefore} {\cafter}
	\sum _{{\sigma} \in \D_{\nu^+} \cap \fS_{\mu}}  T_{\sigma}.
\end{align*}
With  \eqref{cqij},
we have
\begin{align*}
&{q}  c_{u +1} (c_{{q}, u-s+1, u} c_{{q}, u +1, u +t})'  x_{\delta}  \\
&= -  c_{u +1} (c_{{q}, u +1, u +t})' x_{\delta} {q} c_{{q}, u-s+1, u}  \\
&=	-  c_{u +1} {q} (c_{{q}, u +2, u +t})' x_{\delta} c_{{q}, u-s+1, u +1}
	+   c_{u +1} {q} (c_{{q}, u +2, u +t})'  x_{\delta} c_{u+1}    \\
	&\qquad -  c_{u +1} c_{u+1} x_{\delta} c_{{q}, u-s+1, u +1}
	 +   c_{u +1} c_{u+1} x_{\delta} c_{u+1}    \\
&= 	{q}   x_{\delta} c_{{q}, u +2, u +t} c_{u +1} c_{{q}, u-s+1, u +1}
	+ {q}   x_{\delta} c_{{q}, u +2, u +t}
	+    x_{\delta} c_{{q}, u-s+1, u +1}
	 -    x_{\delta} c_{u+1}  .
\end{align*}
With the fact $c_{u +1} c_{{q}, u-s+1, u +1}= -c_{{q}, u-s+1, u +1} c_{u +1}  -2$
 and  \eqref{subsubgroup}, Lemma \ref{xcT}, 
 we have
\begin{align*}
&{q} x_{{\D_{+}}}  c_{u +1} (c_{{q}, u-s+1, u} c_{{q}, u +1, u +t})'  x_{\delta}  \TDIJ({u},  {u-s+1}) \\
&=	{q} x_{\nu^+} c_{{q}, u +2, u +t} (-c_{{q}, u-s+1, u +1} c_{u +1}  -2) \TDIJ({u},  {u-s+1})
	+ {q} x_{\nu^+} c_{{q}, u +2, u +t} \TDIJ({u},  {u-s+1})  \\
	&\qquad + x_{\nu^+} c_{{q}, u-s+1, u +1} \TDIJ({u},  {u-s+1})
	 - x_{\nu^+} c_{u+1}  \TDIJ({u},  {u-s+1})  \\
&=	 -{q} x_{\nu^+} c_{{q}, u +2, u +t} c_{{q}, u-s+1, u +1} c_{u +1}  \TDIJ({u},  {u-s+1})
	- {q} x_{\nu^+} c_{{q}, u +2, u +t}  \TDIJ({u},  {u-s+1}) \\
	&\qquad  + x_{\nu^+} c_{{q}, u-s+1, u +1} \TDIJ({u},  {u-s+1})
	- x_{\nu^+} c_{u+1}  \TDIJ({u},  {u-s+1})  \\
&=	-{q} (c_{{q}, u +2, u +t})' (c_{{q}, u-s+1, u +1})' x_{\nu^+}  c_{{q}, u-s+1, u +1}
	- {q} (c_{{q}, u +2, u +t} )' \STEP{s+1} x_{\nu^+}  \\
	&\qquad  + (c_{{q}, u-s+1, u +1})'\STEP{s+1}  x_{\nu^+}
	- x_{\nu^+} c_{{q}, u-s+1, u +1}   \\
&=	{q} ({\STEPP{s+1}} - \STEP{s+1}) x_{\nu^+} c_{{q}, u +2, u +t}
	+(\STEP{s+1} -1)  x_{\nu^+} c_{{q}, u-s+1, u +1}.
\end{align*}
Then
\begin{align*}
z_k
&=
	{(-1)}^{\YK} {q}^{\BK(h,k)} ({\STEPP{s+1}} - \STEP{s+1}) x_{{{\lambda}_{h}^{+}}\backslash{}'\!\nu^+} {{\TDAP}}
	 x_{\nu^+} {\cbefore} c_{{q}, u +2, u +t} {\cafter}
	\sum _{{\sigma} \in \D_{\nu^+} \cap \fS_{\mu}} T_{\sigma} \\
	& \qquad +
	{(-1)}^{\YK} {q}^{\BK(h,k)-1} (\STEP{s+1} -1) x_{{{\lambda}_{h}^{+}}\backslash{}'\!\nu^+} {{\TDAP}}
	 x_{\nu^+} {\cbefore} c_{{q}, u-s+1, u +1} {\cafter}
	\sum _{{\sigma} \in \D_{\nu^+} \cap \fS_{\mu}}  T_{\sigma} \\
&=
	{(-1)}^{\YK} {q}^{\BK(h,k)} ({\STEPP{a_{h,k} + 1}} - \STEP{ a_{h,k} +1})
	T_{( \SE{A}+2E_{h,k} - E_{h+1, k} | \SO{A}  - E_{h,k} ) } \\
	& \qquad +
	{(-1)}^{\YK} {q}^{\BK(h,k)} \STEP{ \SEE{a}_{h,k} + 1}
	T_{ ( \SE{A} + E_{h,k} | \SO{A}  - E_{h+1,k} ) } .
\end{align*}
Then,
Convention \ref{CONV} and a case-by-case argument  proves the result.
\end{proof}

\begin{prop}\label{philower1}
Let {$A =  ({\SEE{a}_{i,j}} | {\SOE{a}_{i,j}})   \in \MNZ(n,r)$}, {$h \in [1,n-1]$}  and
{${\lambda} = \ro(A)$}.
Suppose   {$A$}  satisfies the SDP condition on the $h$-th row,
then we have
\begin{align*}
\phi_{(\lambda-E_{h, h}| E_{h+1, h})} \phi_A
 &=\sum_{k=1}^n \Big\{
	{(-1)}^{{\SOE{\widetilde{a}}}_{h-1,k} + \SOE{a}_{h,k}} {q}^{\AK(h+1,k) } 	\phi_{(\SE{A} - E_{h,k}| \SO{A}+ E_{h+1, k})} \\
	& \qquad +
	{(-1)}^{{\SOE{\widetilde{a}}}_{h-1,k} + \SOE{a}_{h,k}+1}
	{q}^{\AK(h+1,k) -1}  \STEPPDR{a_{h+1, k} +1}
	\phi_{(\SE{A} - E_{h,k} + 2E_{h+1, k} | \SO{A}  -E_{h+1, k})} \\
	& \qquad +
	{(-1)}^{{\SOE{\widetilde{a}}}_{h-1,k}+1} {q}^{\AK(h+1,k)  + a_{h,k}-1}
	\STEP{ \SEE{a}_{h+1,k}+1}
	\phi_{(\SE{A} + E_{h+1,k} | \SO{A}  - E_{h,k} )}
\Big\}.
\end{align*}
\end{prop}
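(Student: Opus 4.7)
The proof parallels that of Proposition \ref{phiupper1} with every ``upper'' datum replaced by its ``lower'' mirror. Set $X = (\lambda-E_{h,h}\,|\,E_{h+1,h})$. Inspecting the column-major ordering of $\nu_X$ yields $\ro(X)=\lambda_h^-$, $d_X=1$, $c_X = c_{\widetilde{\lambda}_h}$, and $\sum_{\sigma\in\D_{\nu_X}\cap\fS_\lambda}T_\sigma = \TDIJ(\widetilde{\lambda}_h-1,\widetilde{\lambda}_{h-1}+1)$, so \eqref{eq_gB} gives
\begin{equation*}
z \;=\; x_{\lambda_h^-}\,c_{\widetilde{\lambda}_h}\,\TDIJ(\widetilde{\lambda}_h-1,\widetilde{\lambda}_{h-1}+1)\,T_{d_A}\,c_A\!\!\sum_{\sigma\in\D_{\nu_A}\cap\fS_\mu}\!\! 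T_\sigma.
\end{equation*}
Expanding the $\TDIJ$-sum and grouping by columns $k$ with $a_{h,k}>0$ (the range for column $k$ being $j\in[\BK(h,k),\BK(h,k-1)-1]$) yields $z=\sum_k z_k$.

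For each such $k$ I plan a four-move reduction. First, the Hecke--Clifford identity $c_u T_{u-1} = T_{u-1} c_{u-1}$ (a specialization of $T_i c_i = c_{i+1} T_i$ from \eqref{Hecke-Cliff}) pushes $c_{\widetilde{\lambda}_h}$ rightward through the descending chain $T_{\widetilde{\lambda}_h-1}\cdots T_{\widetilde{\lambda}_h-j}$ to produce $c_{\widetilde{\lambda}_h-j}$ just left of $T_{d_A}$. Second, writing $\widetilde{\lambda}_h - j = \widetilde{\lambda}_{h-1} + \AK(h,k) + p$ with $p = a_{h,k} - q_j \in [1,a_{h,k}]$ (where $q_j = j - \BK(h,k)$), the SDP hypothesis on $A$ at $(h,k)$ gives $c_{\widetilde{\lambda}_h-j}\,T_{d_A} = T_{d_A}\,c_{\widetilde{a}_{h,k}-q_j}$. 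Third, the single-$j$ identity underlying Lemma \ref{prop_pjshift}(2) replaces $T_{\widetilde{\lambda}_h-1}\cdots T_{\widetilde{\lambda}_h-j}\,T_{d_A}$ by $T_{\widetilde{\lambda}_h}\cdots T_{\widetilde{\lambda}_h+\AK(h+1,k)-1}\,T_{d_{\AM}}\,T_{\widetilde{a}_{h,k}-1}\cdots T_{\widetilde{a}_{h,k}-q_j}$. Fourth, apply $T_i c_i = c_{i+1} T_i$ again, $q_j$ times, to push $c_{\widetilde{a}_{h,k}-q_j}$ leftward back through the new descending tail, producing the $j$-independent element $c_{\widetilde{a}_{h,k}}$. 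After summing over $j$ the tail collects into $\TDIJ(\widetilde{a}_{h,k}-1,\widetilde{a}_{h,k}-a_{h,k}+1)$, and $x_{\lambda_h^-}$ absorbs the ascending chain as the scalar $q^{\AK(h+1,k)}$ (every $s_i$ in that chain lies in $\fS_{\lambda_h^-}$), giving
\begin{equation*}
z_k \;=\; q^{\AK(h+1,k)}\,x_{\lambda_h^-}\,T_{d_{\AM}}\,c_{\widetilde{a}_{h,k}}\,\TDIJ(\widetilde{a}_{h,k}-1,\widetilde{a}_{h,k}-a_{h,k}+1)\,c_A\!\!\sum_{\sigma\in\D_{\nu_A}\cap\fS_\mu}\!\! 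T_\sigma,
\end{equation*}
the exact mirror of the configuration entering the four-case analysis in the proof of Proposition \ref{phiupper1}.

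From this point I would apply Lemma \ref{Tnsum}(2) to convert $\TDIJ(u-1,u-s+1)\sum_{\sigma\in\D_{\nu_A}\cap\fS_\mu}T_\sigma$ into $\TAIJ(u,u+t-1)\sum_{\sigma\in\D_{\nu_{\AM}}\cap\fS_\mu}T_\sigma$ (with $u=\widetilde{a}_{h,k}$, $s=a_{h,k}$, $t=a_{h+1,k}$), decompose $c_A = \ckbefore\,\cmiddle\,\ckafter$ via \eqref{cA-decomp} with $\cmiddle = (c_{q,u-s+1,u})^{\SOE{a}_{h,k}}(c_{q,u+1,u+t})^{\SOE{a}_{h+1,k}}$, and split into the four cases $(\SOE{a}_{h,k},\SOE{a}_{h+1,k})\in\{0,1\}^2$. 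Each case will be evaluated in verbatim analogy to Proposition \ref{phiupper1} using \eqref{subsubgroup}, \eqref{xc}, \eqref{cqij}, and Lemmas \ref{xTinverse}, \ref{xcT}(2)-(4). The sign $(-1)^{\SOE{\widetilde{a}}_{h-1,k}+\SOE{a}_{h,k}}$ in the statement arises by commuting the new $c$-factor past $\ckbefore$ (factor $(-1)^{\SOE{\widetilde{a}}_{h-1,k}}$) and then past the first factor of $\cmiddle$ when $\SOE{a}_{h,k}=1$ (extra factor $(-1)^{\SOE{a}_{h,k}}$). Convention \ref{CONV} absorbs the degenerate entries to assemble the four outputs into the claimed uniform formula.

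The main obstacle is careful sign tracking through the four-case analysis and matching the $q$-powers together with the $\STEP{\cdot}$ and $\STEPPDR{\cdot}$ coefficients to the statement; the subtlest point is recognising that the \emph{double} push of $c$ (rightward through the original descending chain and then leftward through the descending tail produced by Lemma \ref{prop_pjshift}(2)) combines to yield the $j$-independent $c_{\widetilde{a}_{h,k}}$ adjacent to $T_{d_{\AM}}$, which is exactly what makes the remaining sum collapse into a single $\TDIJ$ factor. No essentially new technique is required beyond those already used in the proofs of Propositions \ref{phiupper0}, \ref{phidiag1}, and \ref{phiupper1}.
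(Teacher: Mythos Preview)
Your proposal is correct and follows essentially the same route as the paper's own proof: push $c_{\widetilde\lambda_h}$ rightward through the descending chain to obtain $c_{\widetilde\lambda_h-j}$ (the paper records this step implicitly by writing $z_k$ already with $c_{\widetilde\lambda_h-j}$ after the $T$-chain), apply the SDP condition on $A$ at $(h,k)$ to convert this to $T_{d_A}c_{\widetilde a_{h,k}-q_j}$, then use Lemma~\ref{prop_pjshift}(2) and a second push to reach the $j$-independent $c_{\widetilde a_{h,k}}$ adjacent to $T_{d_{\AM}}$, after which the four-case analysis mirrors Proposition~\ref{phiupper1}. The paper states only the SDP step explicitly and then writes ``the rest of the proof is symmetric to Proposition~\ref{phiupper1} and we omit the details''; your account makes the double push (your steps~1 and~4) explicit, which is a helpful clarification but not a different method.
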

\begin{proof}
Similar to  the even situation,
the proof is symmetric to Proposition \ref{phiupper1}. As the SDP condition looks different from the one in Proposition \ref{phiupper1}, we shall give an explanation on computing $z_k$. The equation \eqref{eq_gB} with $B=(\lambda-E_{h, h}| E_{h+1, h})$ leads to
\begin{align*}
z = 	 \sum_{{1\leq k\leq n}\atop {a_{h,k}>0}} z_k,
\qquad
\mbox{ where }
z_k
&=
		\sum_{j=\BK(h,k)}^{\BK(h,k-1) - 1}
		x_{{{\lambda}_{h}^{-}}}
		 T_{\widetilde{\lambda}_{h}-1} \cdots T_{\widetilde{\lambda}_{h}-j } c_{\widetilde{\lambda}_{h}-j} {T_{d_A}} c_{A}
		\sum _{{\sigma} \in \D_{{\nu}_{_A}} \cap \fS_{\mu}} T_{\sigma}.
\end{align*}
For pair $k$ and $j$, denote {$q_j = j - \BK(h,k)$},
the assumption of the SDP condition on $(h, k)$ implies
\begin{align*}
	c_{\widetilde{\lambda}_{h}-j} {T_{d_A}} = {T_{d_A}} c_{\widetilde{a}_{h,k} - {q}_j}
\end{align*}
and then
\begin{align*}
z_k
&=
	\sum_{j=\BK(h,k)}^{\BK(h,k-1) - 1}
	x_{{{\lambda}_{h}^{-}}}
	 T_{\widetilde{\lambda}_{h}-1} \cdots T_{\widetilde{\lambda}_{h}-\BK(h,k) - {q}_j } {T_{d_A}}  c_{\widetilde{a}_{h,k} - {q}_j} c_{A}
	\sum _{{\sigma} \in \D_{{\nu}_{_A}} \cap \fS_{\mu}} T_{\sigma} .
\end{align*}
The rest of the proof is symmetric to Proposition \ref{phiupper1}
and we omit the details.
\end{proof}

\medskip

\noindent
{\bf Some special cases.}
In the rest of this section, we shall establish several special multiplication formulas.
Recall the notation in Remark \ref{rem_phi_short_0}.
\begin{prop}\label{phiupper2}
	For any given {$\mu \in \CMN(n,r-1)$}, {$h \in [1,n-1]$}, the following formulas hold
\begin{align*}
{\rm {(1)}} \quad
\phi_{(\mu | E_{h, h+1})}
\phi_{({\mu}+  E_{h+1, h}|O)}
&= 			\phi_{({\mu}-E_{h+1, h+1}   + E_{h+1, h}| E_{h, h+1})}
	+	 {q}^{{\mu}_{h+1}}  \phi_{( {\mu}   | E_{h,h})} \\
	& \qquad
	- 	({q}-1) \STEP{{\mu}_{h}+1}	\phi_{( {\mu}-E_{h+1, h+1}   + E_{h, h}  | E_{h+1,h+1})}, \\
{\rm {(2)}} \,\qquad
\phi_{({\mu}| E_{h, h+1})}
\phi_{({\mu} | E_{h+1, h})}
&=
-	\STEP{{\mu}_{h} + 1} {q}^{ {\mu}_{h+1} }
		\phi_{( {\mu}  +  E_{h,h}|\mathrm{O})}  - 	\phi_{({\mu}-E_{h+1, h+1}   | E_{h, h+1} + E_{h+1, h} )}\\
	& \qquad
	+ 	({q}-1) 	\phi_{( {\mu}-E_{h+1, h+1}    | E_{h,h} + E_{h+1,h+1})} .
\end{align*}
\end{prop}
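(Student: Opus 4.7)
The plan is to apply \eqref{eq_gB} directly, just as in the proofs of Propositions \ref{phiupper0}--\ref{philower1}, but now specialised to the simple shapes of the matrices appearing here. Throughout, write $u=\widetilde{\mu}_h$, $s=\mu_h$, $t=\mu_{h+1}$ and $\alpha=\mu+\bs{\ep}_h$.

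First I will read off $T_A$, $T_B$ and $h'$ in \eqref{eq_gB}. In both parts $\ro(A)=\mu+\bs{\ep}_{h+1}=\co(B)$, so the relevant product is nonzero on $x_{\co(A)}\HCR$. Since the only off-diagonal entry of $\widehat A$ sits at position $(h+1,h)$, Remark \ref{rem:dA=1} yields $d_A=1$. The sum $\sum_{\sigma\in \D_{\nu_A}\cap \fS_{\co(A)}}T_\sigma$ collapses to the right-coset sum of $\fS_{\mu_h}\times \fS_1$ inside $\fS_{\mu_h+1}$ at the $h$-th block of $\co(A)$, which is exactly $\TDIJ(u,u-s+1)$. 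In (1) one has $c_A=1$, whereas in (2) the odd entry at $(h+1,h)$ gives $c_A=c_{u+1}$. A parallel computation for $B$ yields $d_B=1$, $c_B=c_{u+1}$, and the coset sum $\TAIJ(u+1,u+t)$, hence $T_B=x_{\alpha}c_{u+1}\TAIJ(u+1,u+t)$.

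Substituting into \eqref{eq_gB} gives, in part (1),
\begin{equation*}
z(B,A)=x_\alpha c_{u+1}\,\TAIJ(u+1,u+t)\,\TDIJ(u,u-s+1),
\end{equation*}
with an extra central $c_{u+1}$ inserted in part (2). I will first apply Lemma \ref{xcT}(2) to $x_\alpha c_{u+1}\TAIJ(u+1,u+t)$ (the hypothesis $s_{u+2},\ldots,s_{u+t}\in \fS_\alpha$ is immediate from the block structure of $\alpha$), expressing it as $q^t x_\alpha$ times the sum $\sum_{k=0}^{t-1}T_{u+1}^{-1}\cdots T_{u+k}^{-1}c_{u+k+1}$. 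For the $k=0$ term, Lemma \ref{xcT}(3) (using $s_u,s_{u-1},\ldots,s_{u-s+1}\in \fS_\alpha$) gives $x_\alpha c_{u+1}\TDIJ(u,u-s+1)=x_\alpha c_{q,u-s+1,u+1}$. For $k\ge 1$, the Clifford factor $c_{u+k+1}$ commutes past $\TDIJ(u,u-s+1)$, so one is left with a purely Hecke expression $x_\alpha T_{u+1}^{-1}\cdots T_{u+k}^{-1}\TDIJ(u,u-s+1)$, which is simplified by combining \eqref{Tick-inv}(1) (applied to each $T_{u+i}^{-1}$, since $s_{u+1}\notin\fS_\alpha$) with Lemma \ref{xTinverse}(2). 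In part (2), the extra central $c_{u+1}$ is then pushed through using the Clifford relations \eqref{cliff} (in particular $c_{u+1}^2=-1$) to produce the appropriate signs.

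Finally I will match the resulting expressions against the standard basis elements $T_M$ defined in \eqref{eq cA}. In part (1): (i) the $k=0$ contribution $q^t x_\alpha c_{q,u-s+1,u+1}$ equals $q^{\mu_{h+1}}T_{(\mu|E_{h,h})}$, since $T_{(\mu|E_{h,h})}=x_\alpha c_{q,u-s+1,u+1}$; (ii) the $k\ge1$ terms, once \eqref{Tick-inv}(1) is invoked to convert the inverse Hecke factors into positive ones, reassemble into the standard basis element $T_{(\mu-E_{h+1,h+1}+E_{h+1,h}|E_{h,h+1})}$ (whose defining data involves $T_{d_M}=T_{u+1}$, matching exactly the $k=1$ leading term); (iii) the $(q-1)$-corrections produced at each application of \eqref{Tick-inv}(1) collapse, via the telescoping identity $q^k+\sum_{i=k}^{j}(q-1)q^i=q^{j+1}$ already used in the proof of Lemma \ref{xcT}(2), into the single coefficient $-(q-1)\STEP{\mu_h+1}$ in front of $\phi_{(\mu-E_{h+1,h+1}+E_{h,h}|E_{h+1,h+1})}$. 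Part (2) is handled by the same procedure, with $c_{u+1}^2=-1$ supplying the overall sign of the leading $-\STEP{\mu_h+1}q^{\mu_{h+1}}\phi_{(\mu+E_{h,h}|\mathrm{O})}$ term, and the remaining $k\ge 1$ contributions together with the $(q-1)$-corrections yielding the other two summands after an entirely symmetric analysis.

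The main obstacle will be the careful bookkeeping of $q$-powers, signs and $(q-1)$-corrections: one must simultaneously track the factors of $q$ produced by $x_\alpha T_j=qx_\alpha$ for $s_j\in \fS_\alpha$, the inverse Hecke factors introduced by Lemma \ref{xcT}(2), and the sign changes caused by every swap of distinct Clifford generators. Degenerate cases ($s=0$, $t=0$ or $\mu_{h+1}=0$) are absorbed by Convention \ref{CONV}.
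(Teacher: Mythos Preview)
Your setup is correct: the data $d_A=d_B=1$, $c_B=c_{u+1}$, the coset sums $\TDIJ(u,u-s+1)$ and $\TAIJ(u+1,u+t)$, and the resulting expression for $z(B,A)$ all match the paper. The $k=0$ contribution is also handled correctly and in fact more efficiently than in the paper, where the coefficient $q^{\mu_{h+1}}$ only appears after recombining $z_h$ with a piece of $z_{h+1}$.

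The gap is in your treatment of the $k\ge 1$ terms. Your plan is to invoke Lemma~\ref{xTinverse}(2), which replaces $x_\alpha T_{u+1}^{-1}\cdots T_{u+k}^{-1}$ by $q^{-k}x_\alpha T_{u+1}\cdots T_{u+k}$ minus $(q-1)$ times a $\TAIJ$ correction, and then to identify the two halves with $T_C$ (where $C=(\mu-E_{h+1,h+1}+E_{h+1,h}\,|\,E_{h,h+1})$) and with $-(q-1)\STEP{\mu_h+1}T_M$ respectively. This split is not clean: already for $t=2$ one finds that the ``positive'' half exceeds $T_C$ by $(q-1)x_\alpha T_{u+1}c_{u+3}$, and exactly this cross term reappears (with opposite sign) inside the $(q-1)$-correction half. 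The different Clifford factors $c_{u+k+1}$ for different $k$ prevent any direct telescoping of the kind you cite. The paper avoids this entirely by never introducing the inverse Hecke elements: it writes $\TAIJ(u+1,u+t)=1+T_{u+1}\TAIJ(u+2,u+t)$ and applies the single relation $c_{u+1}T_{u+1}=T_{u+1}c_{u+2}+(q-1)(c_{u+1}-c_{u+2})$ from \eqref{Hecke-Cliff}, which immediately produces three summands that are literally the elements $T_C$, $(q-1)\STEP{\mu_{h+1}}T_{(\mu|E_{h,h})}$ and $-(q-1)\STEP{\mu_h+1}T_M$. If you wish to salvage your route, the correct move is to convert \emph{only} $T_{u+1}^{-1}$ via \eqref{Tick-inv}(1) and leave the remaining $T_{u+j}^{-1}$ ($j\ge2$) alone, using $x_\alpha T_{u+j}^{-1}=q^{-1}x_\alpha$ (and the analogous identity for $y=x_\alpha T_{u+1}\TDIJ(u,u-s+1)$); then the two resulting sums collapse term by term to $T_C$ and $-(q-1)\STEP{s+1}T_M$ without cross terms.
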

\begin{proof}
Let $A =  ({\SEE{a}_{i,j}} | {\SOE{a}_{i,j}})  =(\mu+E_{h+1,h}|O)$
or $A =  ({\SEE{a}_{i,j}} | {\SOE{a}_{i,j}})  =(\mu|E_{h+1,h})$.
 Then we have {$d_A = 1$},
 {$c_{A} = 1$} or
  {$c_{A} = c_{\widetilde{\mu}_{h} + 1}$}, and
\begin{align*}
\sum _{{\sigma} \in \D_{{\nu}_{_A}} \cap \fS_{\mu}} T_{\sigma}
= \TDIJ({\widetilde{\mu}_{h}},  {\widetilde{\mu}_{h} -  ({\mu}_{h} - 1)}).
\end{align*}
Let  {$X=({\mu} | E_{h, h+1})$},
 {$\xi = \ro(X)$}.
It is clear  {$c_X = c_{\widetilde{\mu}_{h}+1}$}, and  with $B=X$ the equation \eqref{eq_gB} becomes
\begin{align*}
	z=\sum_{k=1}^n z_k,
	\quad
	\mbox{with }
	 z_k&=
		\sum_{j=\AK(h+1,k)}^{\AK(h+1,k+1) - 1}
		x_{\xi}  c_{\widetilde{\mu}_{h}+1}
		T_{\widetilde{\mu}_{h}+1} T_{\widetilde{\mu}_{h}+2} \cdots T_{\widetilde{\mu}_{h}+j}
		{T_{d_A}} c_{A}
		\sum _{{\sigma} \in \D_{{\nu}_{_A}} \cap \fS_{\mu}} T_{\sigma} .
\end{align*}
Set {$C = {\mu}-E_{h+1, h+1} + E_{h, h+1} + E_{h+1, h}$}.
Direct calculation shows {$d_C = s_{\widetilde{\mu}_{h} + 1}$},
and
\begin{align*}
\sum _{{\sigma} \in \D_{{\nu}_{_C}} \cap \fS_{\mu}} T_{\sigma}
&=\TAIJ({\widetilde{\mu}_{h}+2} ,  {\widetilde{\mu}_{h}+ {\mu}_{h+1}}  	)
	 \TDIJ( {\widetilde{\mu}_{h}},  {\widetilde{\mu}_{h} -  ({\mu}_{h} - 1)} )
=	\TAIJ( {\widetilde{\mu}_{h}+2} ,  {\widetilde{\mu}_{h}+ {\mu}_{h+1}}  )
\sum _{{\sigma} \in \D_{{\nu}_{_A}} \cap \fS_{\mu}} T_{\sigma} .
\end{align*}
When  $k\neq h, h+1$, we have {$a_{h+1,k} = 0$} and $z_k=0$, hence
\begin{align*}
z=z_h+z_{h+1}.
\end{align*}
 Meanwhile
{$a_{h+1,h} = 1$} and hence
\begin{displaymath}
z_{h}
=		x_{\xi}  c_{\widetilde{\mu}_{h}+1}
		c_{A}
		\TDIJ( {\widetilde{\mu}_{h}} , {\widetilde{\mu}_{h} -  ({\mu}_{h} - 1)})
=\left\{
\begin{aligned}
&T_{(\mu|E_{h,h})}, \text{ if } A=(\mu+E_{h+1,h}|\mathrm{O}),\\
& -	\STEP{{\mu}_{h} + 1} T_{( {\mu}   +  E_{h,h}| \mathrm{O})},\text{ if }A=(\mu|E_{h+1,h}).
\end{aligned}
\right.
\end{displaymath}
When {$k=h+1$}, {$\AK(h+1,k) = 1, \AK(h+1,k+1) = \AK(h+1,k) + a_{h+1, h+1} = {\mu}_{h+1}+1$}.
Then
\begin{align*}
z_{h+1}
&=
		x_{\xi}  (T_{\widetilde{\mu}_{h}+1}  c_{\widetilde{\mu}_{h}+2} + ({q}-1)(c_{\widetilde{\mu}_{h}+1} -  c_{\widetilde{\mu}_{h}+2}))
		\TAIJ( {\widetilde{\mu}_{h}+2} , {\widetilde{\mu}_{h}+ {\mu}_{h+1}}  )
		 c_{A}
		\sum _{{\sigma} \in \D_{{\nu}_{_A}} \cap \fS_{\mu}} T_{\sigma} \\
&=
		x_{\xi}  T_{\widetilde{\mu}_{h}+1}  c_{\widetilde{\mu}_{h}+2}
		\TAIJ({\widetilde{\mu}_{h}+2} ,  {\widetilde{\mu}_{h}+ {\mu}_{h+1}}  )
		 c_{A}
		\sum _{{\sigma} \in \D_{{\nu}_{_A}} \cap \fS_{\mu}} T_{\sigma} \\
&\qquad
	+
		x_{\xi}  ({q}-1) c_{\widetilde{\mu}_{h}+1}
		\TAIJ({\widetilde{\mu}_{h}+2} ,  {\widetilde{\mu}_{h}+ {\mu}_{h+1}}  )
		 c_{A}
		\sum _{{\sigma} \in \D_{{\nu}_{_A}} \cap \fS_{\mu}} T_{\sigma} \\
&\qquad
		-
		x_{\xi} ({q}-1) c_{\widetilde{\mu}_{h}+2}
		\TAIJ({\widetilde{\mu}_{h}+2} , {\widetilde{\mu}_{h}+ {\mu}_{h+1}}  	)
		 c_{A}
		\sum _{{\sigma} \in \D_{{\nu}_{_A}} \cap \fS_{\mu}} T_{\sigma} .
\end{align*}
Observe that when $A=(\mu+E_{h+1,h}|\mathrm{O})$, we have {$c_{A} = 1$} and hence
\begin{align*}
z_{h+1}
&=
		T_{({\mu}-E_{h+1, h+1}   + E_{h+1, h}| E_{h, h+1})}
	+
		  ({q}-1) \STEP{ {\mu}_{h+1} } T_{ ({\mu}  | E_{h,h})}
		 \\
&\qquad
		-
		({q}-1) \STEP{{\mu}_{h}+1}
		T_{( {\mu}-E_{h+1, h+1}   + E_{h, h}   | E_{h+1,h+1})}		.
\end{align*}
When $A=(\mu|E_{h+1,h})$, we have {$c_{A} = c_{\widetilde{\mu}_{h}+1}$} and hence
\begin{align*}
z_{h+1}
&=
		-T_{( {\mu}-E_{h+1, h+1}   | E_{h, h+1} + E_{h+1, h} )}
		+
		({q}-1) 	T_{( {\mu}-E_{h+1, h+1}  | E_{h,h} + E_{h+1,h+1})}
		 \\
&\qquad
	 -  ({q}-1)  \STEP{ {\mu}_{h+1} }  \STEP{ {\mu}_{h} + 1 }
	 	T_{( {\mu}   + E_{h, h}  | O)}	 .
\end{align*}
Putting {$z_h$} and   {$z_{h+1}$} together, the proposition is proved.
\end{proof}

\begin{prop}\label{philower2}
	For any given {$h \in [1,n-1]$}, {$\mu \in \CMN(n,r-1)$},
	the following formulas hold
\begin{align*}
{\rm {(1)}} \quad
&\phi_{({\mu}| E_{h+1, h})}
\phi_{({\mu}+  E_{h, h+1}|O)}
=  \phi_{({\mu} - E_{h,h} + E_{h, h+1}| E_{h+1, h})}
+\phi_{( {\mu} | E_{h+1, h+1})}, \\
{\rm (2)} \quad
& \phi_{({\mu}| E_{h+1, h})}
\phi_{({\mu} | E_{h, h+1})}
= \phi_{({\mu} - E_{h,h} |   E_{h, h+1} +  E_{h+1, h})}
	- \STEP{ {\mu}_{h+1} + 1 } \phi_{( {\mu} + E_{h+1, h+1} | O)}.
\end{align*}
\end{prop}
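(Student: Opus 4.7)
The plan is to adapt the ``lower-triangular'' analogue of the argument used to prove Proposition~\ref{phiupper2}. In both (1) and (2) I set $X = (\mu|E_{h+1,h})$, so that $\ro(X) = \mu + \bs{\ep}_{h+1}$, $d_X = 1$ and $c_X = c_{\widetilde{\mu}_h + 1}$; moreover the two given $A$'s each satisfy $d_A = 1$, with $c_A = 1$ in case~(1) and $c_A = c_{\widetilde{\mu}_h+1}$ in case~(2), and in both cases one checks $\sum_{\sigma \in \D_{\nu_A}\cap \fS_{\co(A)}} T_\sigma = \TAIJ({\widetilde{\mu}_h+1}, {\widetilde{\mu}_h + \mu_{h+1}})$. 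Substituting $B = X$ into \eqref{eq_gB} and splitting $z := \phi_X\phi_A(x_{\co(A)}) = \sum_k z_k$ by columns shows that only $k=h$ and $k=h+1$ contribute (for other $k$, the partial-row-sum bookkeeping of Notation~\ref{rem_phi_short_0} forces the summand to vanish), so $z = z_h + z_{h+1}$.

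I would then evaluate $z_h$ and $z_{h+1}$ separately. The term $z_h$ is the easier one: the Clifford generator $c_{\widetilde{\mu}_h+1}$ is already in place in column~$h$, so only the relations \eqref{Hecke-Cliff} and Lemma~\ref{xTinverse}(1) are needed. In case~(1) it yields $T_{(\mu - E_{h,h} + E_{h,h+1}|E_{h+1,h})}$ directly, while in case~(2) the product $c_{\widetilde{\mu}_h+1}\cdot c_{\widetilde{\mu}_h+1} = -1$ from \eqref{cliff}, combined with a telescoping of $q$-powers coming from Lemma~\ref{xTinverse}(1), produces $-\STEP{\mu_{h+1}+1}\,T_{(\mu + E_{h+1,h+1}|\mathrm{O})}$. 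The term $z_{h+1}$ requires pushing $c_{\widetilde{\mu}_h+1}$ through $\TAIJ({\widetilde{\mu}_h+1}, {\widetilde{\mu}_h+\mu_{h+1}})$ using iterated applications of $T_i c_i = c_{i+1} T_i$ and $T_i c_{i+1} = c_i T_i - (q-1)(c_i - c_{i+1})$ from \eqref{Hecke-Cliff}; once the interior $(q-1)$-corrections telescope, the surviving contribution is $T_{(\mu|E_{h+1,h+1})}$ in case~(1), and $T_{(\mu - E_{h,h}|E_{h,h+1}+E_{h+1,h})}$ in case~(2), with the anti-commutation of the two Clifford factors governed by \eqref{cliff}.

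The main obstacle will be the bookkeeping of these $(q-1)$-corrections from \eqref{Hecke-Cliff}: one must verify that they all cancel against the Clifford-square contributions coming from Lemma~\ref{xcT}(4), leaving only the two listed summands (and in particular no $\STEPPD{\,\cdot\,}$-terms appear here, unlike in Proposition~\ref{phiupper2}, because $X$ and $A$ each contribute at most one Clifford generator at position $\widetilde{\mu}_h + 1$ rather than a full product over a column block). This is a finite, fully explicit computation that runs in parallel with Cases~1--4 in the proof of Proposition~\ref{phiupper2}, so once the analogue of Notation~\ref{rem_phi_short_0} is set up, the identification of the right-hand sides will be routine.
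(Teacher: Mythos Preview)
Your approach is essentially the paper's: both split $z$ into the two pieces coming from the decomposition $\TDIJ(\widetilde{\mu}_h,\widetilde{\mu}_{h-1}+1)=1+T_{\widetilde{\mu}_h}\,\TDIJ(\widetilde{\mu}_h-1,\widetilde{\mu}_{h-1}+1)$, which in your column-indexing are $z_{h+1}$ (the $j=0$ term) and $z_h$ (the $j=1,\dots,\mu_h$ terms), respectively.

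There is, however, a labelling slip in case~(2). The piece where $c_X c_A=c_{\widetilde{\mu}_h+1}^2=-1$ occurs is $z_{h+1}$, not $z_h$: combined with Lemma~\ref{xTinverse}(1) it gives $-\STEP{\mu_{h+1}+1}\,T_{(\mu+E_{h+1,h+1}|\mathrm{O})}$. It is $z_h$ that, after the single clean move $c_{\widetilde{\mu}_h+1}T_{\widetilde{\mu}_h}=T_{\widetilde{\mu}_h}c_{\widetilde{\mu}_h}$, yields $T_{(\mu-E_{h,h}|E_{h,h+1}+E_{h+1,h})}$. (Your case~(1) assignments are correct.)

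You also over-anticipate the difficulty. The ``Cases~1--4'' bookkeeping and the $(q-1)$-telescoping you plan for do not arise here: moving $c_{\widetilde{\mu}_h+1}$ leftward through $\TDIJ$ uses only $c_{i+1}T_i=T_i c_i$, which has no correction term, and for $z_{h+1}$ in case~(1) the paper simply invokes the special case of Lemma~\ref{xcT}(2) to get $x_\xi c_{\widetilde{\mu}_h+1}\TAIJ(\widetilde{\mu}_h+1,\widetilde{\mu}_h+\mu_{h+1})=x_\xi\,c_{q,\widetilde{\mu}_h+1,\widetilde{\mu}_{h+1}+1}=T_{(\mu|E_{h+1,h+1})}$ in one step. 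The actual computation is therefore much shorter than your outline suggests.
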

\begin{proof}
Let $A=({\mu}+  E_{h, h+1}|\mathrm{O})$ or $A=({\mu}| E_{h, h+1})$
and {$X=({\mu} | E_{h+1, h})$}, {$\xi = \ro(X)$}.
Then 	 {$d_A = 1$} and {$c_X = c_{\widetilde{\mu}_{h} + 1}$}, and

\begin{align*}
z
&=
		x_{\xi}  c_{X}
		 \TDIJ( {\widetilde{\mu}_{h}} , {\widetilde{\mu}_{h-1}+1})
		{T_{d_A}} c_{A}
		\sum _{{\sigma} \in \D_{{\nu}_{_A}} \cap \fS_{\mu}} T_{\sigma} \\
&=
		x_{\xi}  c_{\widetilde{\mu}_{h}+1}
		(  \TDIJ({\widetilde{\mu}_{h}}, {\widetilde{\mu}_{h-1}+1}) - 1)
		c_A  \TAIJ( {\widetilde{\mu}_h + 1}, {\widetilde{\mu}_h +  \mu_{h+1}})
	+ x_{\xi}  c_{\widetilde{\mu}_h + 1} c_{A}
		\TAIJ( {\widetilde{\mu}_h + 1}, {\widetilde{\mu}_h +  \mu_{h+1}}) \\
&=
		x_{\xi}  c_{\widetilde{\mu}_{h}+1} 
		T_{\widetilde{\mu}_{h}}
		\TDIJ( {\widetilde{\mu}_h - 1} , {\widetilde{\mu}_{h-1}+1})
		c_A \TAIJ( {\widetilde{\mu}_h + 1}, {\widetilde{\mu}_h +  \mu_{h+1}})
	+ x_{\xi}  c_{\widetilde{\mu}_h + 1} c_{A}
		\TAIJ( {\widetilde{\mu}_h + 1}, {\widetilde{\mu}_h +  \mu_{h+1}}) \\
&=
	x_{\xi}
		T_{\widetilde{\mu}_{h}}  c_{\widetilde{\mu}_{h}} c_A
		\TDIJ( {\widetilde{\mu}_h - 1},  {\widetilde{\mu}_{h-1}+1})
		\TAIJ({\widetilde{\mu}_h + 1}, {\widetilde{\mu}_h +  \mu_{h+1}})
	+ x_{\xi}  c_{\widetilde{\mu}_h + 1} c_{A}
		\TAIJ( {\widetilde{\mu}_h + 1}, {\widetilde{\mu}_h +  \mu_{h+1}}) .
\end{align*}

When {$A = ({\mu}+  E_{h, h+1}|\mathrm{O}) $}, we have {$c_A = 1$}.
By Lemma \ref{xcT}{\rm (3)}, we have {$x_{\xi}  c_{\widetilde{\mu}_h + 1}
	\TAIJ( {\widetilde{\mu}_h + 1}, {\widetilde{\mu}_h +  \mu_{h+1}})
= c_{{q},  \widetilde{\mu}_h + 1, \widetilde{\mu}_{h+1} + 1}
$}
and
\begin{align*}
z
&=
		x_{\xi}
		T_{\widetilde{\mu}_{h}}  c_{\widetilde{\mu}_{h}}
		\TDIJ( {\widetilde{\mu}_h - 1} , {\widetilde{\mu}_{h-1}+1})
		\TAIJ({\widetilde{\mu}_h + 1}, {\widetilde{\mu}_h +  \mu_{h+1}})
	+  	x_{\xi}  c_{\widetilde{\mu}_h + 1}
	\TAIJ( {\widetilde{\mu}_h + 1}, {\widetilde{\mu}_h +  \mu_{h+1}}) \\
&=
		x_{\xi}
		T_{\widetilde{\mu}_{h}}  c_{\widetilde{\mu}_{h}}
		\TDIJ( {\widetilde{\mu}_h - 1} , {\widetilde{\mu}_{h-1}+1})
		\TAIJ({\widetilde{\mu}_h + 1}, {\widetilde{\mu}_h +  \mu_{h+1}})
	+ 	x_{\xi}  c_{{q},  \widetilde{\mu}_h + 1, \widetilde{\mu}_{h+1}+1 }  \\
&= T_{({\mu} - E_{h,h} + E_{h, h+1}| E_{h+1, h})}
+ 	T_{({\mu} | E_{h+1, h+1})}.
\end{align*}

If {$ A= ({\mu} | E_{h, h+1}) $}, we have {$c_A = c_{\widetilde{\mu}_h + 1}$}.
Let {$C = ({\mu} - E_{h,h} |   E_{h, h+1} +  E_{h+1, h})$},
direct calculation shows {$d_C = s_{\widetilde{\mu}_{h}}$},
and
\begin{align*}
\sum _{{\sigma} \in \D_{{\nu}_{_C}} \cap \fS_{\mu}} T_{\sigma}
&= \TDIJ( {\widetilde{\mu}_h - 1}  , {\widetilde{\mu}_{h-1} + 1} )
\sum _{{\sigma} \in \D_{{\nu}_{_A}} \cap \fS_{\mu}} T_{\sigma} .
\end{align*}
This means
\begin{align*}
z
&=
		x_{\xi}
		T_{\widetilde{\mu}_{h}}  c_{\widetilde{\mu}_{h}}  c_{\widetilde{\mu}_h + 1}
		\TDIJ( {\widetilde{\mu}_h - 1}, {\widetilde{\mu}_{h-1}+1})
		\TAIJ({\widetilde{\mu}_h + 1} , {\widetilde{\mu}_h +  \mu_{h+1}})
+ 	x_{\xi}  c_{\widetilde{\mu}_h + 1} c_{\widetilde{\mu}_h + 1}
		\TAIJ( {\widetilde{\mu}_h + 1}, {\widetilde{\mu}_h +  \mu_{h+1}}) \\
&=
		x_{\xi}
		T_{\widetilde{\mu}_{h}}  c_{\widetilde{\mu}_{h}}  c_{\widetilde{\mu}_h + 1}
		\TDIJ( {\widetilde{\mu}_h - 1}, {\widetilde{\mu}_{h-1}+1})
		\TAIJ({\widetilde{\mu}_h + 1} , {\widetilde{\mu}_h +  \mu_{h+1}})
 			- x_{\xi}
		\TAIJ( {\widetilde{\mu}_h + 1}, {\widetilde{\mu}_h +  \mu_{h+1}}) \\
&=
		x_{\xi}
		T_{d_C}  c_{\widetilde{\mu}_{h}}  c_{\widetilde{\mu}_h + 1}
		\sum _{{\sigma} \in \D_{{\nu}_{_C}} \cap \fS_{\mu}} T_{\sigma}
		- \STEP{ {\mu}_{h+1} + 1 } x_{\xi} \\
&= T_{({\mu} - E_{h,h} |   E_{h, h+1} +  E_{h+1, h})}
 - \STEP{ {\mu}_{h+1} + 1 } T_{( {\mu} + E_{h+1, h+1} | O)}.
\end{align*}
Then the proposition is proved.
\end{proof}
\begin{rems}\label{raw}
(1) Using Proposition \ref{prop_PhiAPhiB}, one may write down the multiplication formulas for $\Phi_X\Phi_A$, where $X$ is one of the six matrices given in \eqref{even-odd}, 
(together with the required SDP conditions)
 in the twisted queer $q$-Schur superalgebra $\SQvnrR$.
 
 (2) The multiplication formulas in Propositions \ref{phiupper0}, \ref{phidiag1}, \ref{phiupper1} and \ref{philower1} 
are the ``raw'' formulas which are the counterpart of \cite[Lemma 3.2]{BLM} or \cite[Lemma 3.1]{DG}. 
 By normalising the $\phi$-basis, 
a normalised version of these formulas using symmetric Gaussian polynomials in \cite[Lemma 3.4 and \S4.6]{BLM}
 or \cite[Proposition 4.4 and 4.5]{DG} are used in the construction of the BLM type realizations. 
 However, an inspection on the coefficients of these multiplication formulas in Sections 4 and 5 shows that,
for the candidate basis $\{[A]\mid A \in \MNZ(n,r)\}$, where 
{$[A]:={v}^{-l(w^+_{\widehat A}) + l(w_{0,\ro(\widehat A)})}\Phi_A$}, 
 normalized versions of these formulas are unlikely to be derived,
 \footnote{Here $w^+_{\widehat A}$ is the longest elements of the double coset 
	associated with $\widehat A$ (see \cite[Lemma 13.10]{DDPW} for a geometric interpretation of $-w^+_{\widehat A}+w_{0,ro(\widehat A)}$.}
  since the coefficients involve not just the entries of $\widehat A$.
Fortunately,  
we are able to present a new BLM type realization for $\Uvqn$ 
by simply using these raw multiplication formulas. 
This requires certain adjustments on generators; 
see footnotes \ref{raw1} and \ref{raw2} below.
\end{rems}

\spaceintv
\section{Preliminaries for building the defining relations in $\bsSQvnr$}\label{sec_spanningsets}
In this section, we shall introduce a family of elements in $\SQvnrR$
 and derive multiplication formulas among them using the basic formulas established in Sections 4 and 5. 
These formulas serve as an important step to build the defining relations (QQ1)--(QQ6) for $\Uvqn$ in $\bsSQvnr$. 
This eventually allows us to find a new construction for the quantum queer supergroup $\Uvqn$.

Recall that   {${v}$}, {${q}$} are indeterminates.
For {$R = \Qv$},   and  {${q} = {v}^2 $}, let
\begin{align*}
\bsSQvnr = {{\widetilde{\mathcal{Q}}_{\lcase{q}}(\lcase{n},\lcase{r}; {\Qv})}}.
\end{align*}
Recall the notations defined in \eqref{def_ahk_notations}.
Set
{$ \MNZNS(n)=\{ A=(\SEE{a}_{i,j}|\SOE{a}_{i,j}) \in \MNZN(n)   \where  \SEE{a}_{i,i} = 0 \mbox{, for all } i\}$}.
For any   {$\bs{j} \in {\ZZ}^{n}$} and {$\lambda \in \CMN(n,r) $}, set {$\snorm{\bs{j}} = \sum_{k} j_{k}$} and
 {$ {\lambda} \cdot {\bs{j}} = {\bs{j}}\cdot{\lambda} = \sum_{k=1}^n {{\lambda}_k} {{j}_k}$}.

Similar to \cite{BLM},
for  given {$r, n \in \NN^+$}
and arbitrary  {$A=(\SUP{A}) \in \MNZNS(n)$}, {$\bs{j} \in {\ZZ}^{n} $},
we define the following elements\footnote{\label{raw1}Unlike the case for $\mathfrak{gl}_n$ or $\mathfrak{gl}_{m|n}$, the basis used in the definition below is not normalized. This eventually requires to modify the definition for the generators in \eqref{raw generator}.} in {$\SQvnrR$}:
\begin{equation}\label{def_ajr}
\begin{aligned}
	\AJRS(A, \bs{j}, r) =
\left\{
\begin{aligned}
	& \sum_{\substack{\lambda \in \CMN(n, r-\snorm{A})} }
	 {v}^{\lambda \cdot \bs{j}} {\Phi}_{( \SE{A} + \lambda | \SO{A} )},
		\ &\mbox{if } \snorm{A} \le r; \\
	&0, &\mbox{otherwise}.
\end{aligned}
\right.
\end{aligned}
\end{equation}

By \eqref{def_ajr}, the following holds for {$B=( { \SUP{B}} ) \in \MNZNS(n)$},
{$\bs{j} \in {\ZZ}^{n} $},
{$h \in [1, n]$}:
\begin{align}\label{eq simple diag}
\sum_{\substack{ \lambda \in \CMN(n, r - \snorm{B} + 1) \\
		\lambda_h \ge 1
} }
		{v}^{\lambda \cdot {\bs{j}}}
			{\Phi}_{(\SE{B} + \lambda -E_{h,h}|\SO{B})}
	={v}^{j_h}\ABJRS( \SE{B}, \SO{B}, \bs{j},  r ).
\end{align}


\begin{lem}\label{formstepodd}
Assume {$B=( { \SUP{B}} ) \in \MNZNS(n)$},
{$\bs{j} \in {\ZZ}^{n} $},
{$h \in [1, n]$},
the following holds in {${\SQvnrR}$}:
\begin{align*}
{\rm(1)}\qquad  &\sum_{\substack{  \lambda \in \CMN(n,r-\snorm{B}-2) } }
	{v}^{\lambda \cdot {\bs{j}}}
	(\STEPP{{\lambda}_h + 2 } - {\STEP{{\lambda}_h + 2 }} )
	{\Phi}_{(\SE{B} + \lambda  + 2E_{h,h} |  \SO{B}  )} \\
&=
	\frac{1}{({q}^2 - 1){v}^{2 j_{h}}} \{
	\ABJRS( \SE{B},  \SO{B} , \bs{j} + 4 \bs{\ep}_{h},  r )
	 -
	{({q} + 1)}
	\ABJRS( \SE{B},  \SO{B} , \bs{j} + 2 \bs{\ep}_{h},  r )
	+
	{q}
	\ABJRS( \SE{B},  \SO{B} , \bs{j},  r ) \} .\\
{\rm(2)}\qquad 	&	 \sum_{\substack{  \lambda \in \CMN(n,r-\snorm{B}-1) } }
	{v}^{\lambda \cdot {\bs{j}}}
	\STEP{ {\lambda}_{h}  + 1}
	{\Phi}_{(\SE{B} + \lambda    + E_{h,h} | \SO{B} )}\\
&= \frac{1}{({q} - 1) {v}^{j_{h}}}
	( \ABJRS(\SE{B}    ,  \SO{B} , {\bs{j}} + 2 \bs{\ep}_{h}, r )
	-
	\ABJRS( \SE{B}    ,  \SO{B} , {\bs{j}} , r ) ). \\
{\rm(3)}\qquad 	&	\sum_{\substack{ \lambda \in \CMN(n,r-\snorm{B} - 1) } }
		 {v}^{\lambda \cdot {\bs{j}}}
		{\STEPP{ {\lambda}_h + 1} }
		{\Phi}_{(\SE{B} + \lambda+  E_{h,h}| \SO{B} )}\\
 &=
		\frac{1}{({q}^2 - 1) {v}^{j_h}}
		\{ \ABJRS( \SE{B}, \SO{B} , \bs{j} +4 \bs{\ep}_{h},  r )
		- \ABJRS( \SE{B}, \SO{B} , \bs{j},  r ) \}  .
\end{align*}
\end{lem}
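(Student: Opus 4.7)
The plan is to treat all three identities with a single three-step strategy, since they differ only in the polynomial appearing as a coefficient.

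\emph{Shift the summation index.} First, I would set $\mu = \lambda + 2\bs{\ep}_h$ in part~(1), so that $\mu$ runs over $\CMN(n, r-\snorm{B})$ subject to $\mu_h \ge 2$; in parts~(2) and~(3) I would set $\mu = \lambda + \bs{\ep}_h$, letting $\mu$ run over $\CMN(n, r-\snorm{B})$ subject to $\mu_h \ge 1$. Under each shift one has $\Phi_{(\SE{B} + \lambda + kE_{h,h}\,|\,\SO{B})} = \Phi_{(\SE{B} + \mu\,|\,\SO{B})}$ and $v^{\lambda\cdot\bs{j}} = v^{-k j_h}\,v^{\mu\cdot\bs{j}}$, which already accounts for the prefactor $v^{-k j_h}$ appearing on the right-hand sides.

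\emph{Remove the boundary restriction.} After clearing the scalar denominators $q-1$ or $q^2-1$, the coefficient sitting in front of each $\Phi_{(\SE{B}+\mu|\SO{B})}$ is a polynomial in $q^{\mu_h}$. In part~(1) this polynomial is $q^{2\mu_h} - (q+1)q^{\mu_h} + q = (q^{\mu_h}-1)(q^{\mu_h}-q)$, which vanishes at both excluded values $\mu_h=0$ and $\mu_h=1$; in part~(2) it is $q^{\mu_h}-1$, vanishing at $\mu_h=0$; in part~(3) it is $q^{2\mu_h}-1$, again vanishing at $\mu_h=0$. Hence the inequality restriction on $\mu_h$ disappears for free in each case, and the sum extends to all $\mu \in \CMN(n, r-\snorm{B})$.

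\emph{Repackage as $\ABJRS$.} Writing $q^{m\mu_h} = v^{2m\mu_h}$, every monomial $q^{m\mu_h}$ in the polynomial coefficient contributes a factor $v^{\mu\cdot(\bs{j}+2m\bs{\ep}_h)}$ inside the summand; by \eqref{def_ajr} the resulting inner sum is exactly $\ABJRS(\SE{B}, \SO{B}, \bs{j}+2m\bs{\ep}_h, r)$. Collecting the three (respectively one, one) monomials produced in Step~2 yields the right-hand side of~(1),~(2),~(3) in turn. The only substantive ingredient in the whole argument is the coefficient vanishing observed in Step~2, which is a one-line factorisation; the remainder is purely formal bookkeeping, so I do not expect any real obstacle here.
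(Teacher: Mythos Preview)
Your proposal is correct and follows essentially the same approach as the paper: shift the summation variable by $k\bs{\ep}_h$, observe that the numerator polynomial in $q^{\mu_h}$ vanishes at the excluded boundary values (the paper writes the numerator in part~(1) as $q^{2\mu_h}-q^{\mu_h+1}+q-q^{\mu_h}$ rather than factoring it, but the content is identical), and then split the result via \eqref{def_ajr}. The paper only spells out part~(1) and leaves~(2) and~(3) to the reader, so your uniform treatment is in fact slightly more explicit.
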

\begin{proof}
Equation {\rm(1)} can be proved by direct calculation:
\begin{align*}
&\sum_{\substack{  \lambda \in \CMN(n,r-\snorm{B}-2) } }
	{v}^{\lambda \cdot {\bs{j}}}
	(\STEPP{{\lambda}_h + 2 } - {\STEP{{\lambda}_h + 2 }} )
	{\Phi}_{(\SE{B} + \lambda  + 2E_{h,h} |  \SO{B}  )} \\
&=
	\sum_{\substack{  \mu \in \CMN(n,r-\snorm{B})  \\ {\mu}_{h} \ge 2}}
	{v}^{(\mu - 2\bs{\ep}_{h}) \cdot {\bs{j}}}
	( \frac{{q}^{2 {\mu}_{h} } - 1}{{q}^2 - 1} -   \frac{{q}^{ {\mu}_{h}} - 1}{{q} - 1})
	{\Phi}_{(\SE{B} + \mu |  \SO{B}   )} \\
&=
	\sum_{\substack{  \mu \in \CMN(n,r-\snorm{B})  } }
	{v}^{\mu  \cdot {\bs{j}}}
	 \cdot \frac{{q}^{2 {\mu}_{h} }  - {q}^{ {\mu}_{h} +1} + {q} - {q}^{ {\mu}_{h}} }{({q}^2 - 1){v}^{2 j_{h}}}
	\cdot {\Phi}_{(\SE{B} + \mu |  \SO{B}   )} \\
&=
	\frac{1}{({q}^2 - 1){v}^{2 j_{h}}} \{
	\ABJRS( \SE{B},  \SO{B} , \bs{j} + 4 \bs{\ep}_{h},  r )
	  -
	{({q} + 1)}
	\ABJRS( \SE{B},  \SO{B} , \bs{j} + 2 \bs{\ep}_{h},  r )
	  +
	{q}
	\ABJRS( \SE{B},  \SO{B} , \bs{j},  r ) \},
\end{align*}
where for the first equality
we set $ \mu = \lambda  + 2 \bs{\ep}_{h}$
and the second equality is due to
the fact ${q}^{2 {\mu}_{h} }  - {q}^{ {\mu}_{h} +1} + {q} - {q}^{ {\mu}_{h}}  = 0$ either $\mu_h=0$ or $1$.
	Equations (2) and {\rm(3)} can be proved similarly and we leave details to the reader.
\end{proof}

We now derive the multiplication formulas for these ``long elements'' $A(\bs{j},r)$. We first look at the even case.

\begin{prop}\label{mulformzero}
Let 
{$h \in [1,n]$} and $\bs{j},\bs{k}\in {\ZZ}^n$. For {$O:=(\mathrm  O|\mathrm{O}),A \in  \MNZNS(n)$},
the following holds in {${\SQvnrR}$} for all  {$r\geq\snorm{A}$}:
\begin{itemize}
    \item[{\rm(1)}]$ \AJRS({O}, \bs{k}, r) \cdot \AJRS(A, \bs{j}, r)
		 = {v}^{\sum_{h=1}^{n}   \sum_{u=1}^n a_{h,u} k_h}   \AJRS(A, \bs{k} + \bs{j}, r);$
   \item[ {\rm(2)}] $\AJRS(A, \bs{j}, r)  \cdot  \AJRS({O}, \bs{k}, r)
	={v}^{\sum_{h=1}^{n} \sum_{u=1}^n a_{u,h} k_h  }   \AJRS(A, \bs{k} + \bs{j}, r).$
\end{itemize}
In particular, $\AJRS({O}, \bs{0}, r) \cdot  \AJRS(A, \bs{j}, r)
		=  \AJRS(A, \bs{j}, r) \cdot \AJRS({O}, \bs{0}, r)
		=  \AJRS(A, \bs{j}, r). $
\end{prop}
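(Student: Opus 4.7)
The strategy is a direct computation: expand both $\AJRS(O,\bs{k},r)$ and $\AJRS(A,\bs{j},r)$ via the definition \eqref{def_ajr}, then use the orthogonality of the idempotent-like basis elements $\Phi_{(\lambda|\mathrm O)}$ established in Proposition~\ref{phiupper0}(1), and finally keep track of the $\bs{v}$-powers that arise from reading off $\ro$ or $\co$.

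More precisely, for part (1), I expand
\[
\AJRS(O,\bs{k},r)\cdot \AJRS(A,\bs{j},r)
= \sum_{\lambda\in\CMN(n,r)} \sum_{\mu\in\CMN(n,r-\snorm{A})} {v}^{\lambda\cdot\bs{k}+\mu\cdot\bs{j}}\,
\Phi_{(\lambda|\mathrm O)}\Phi_{(\SE{A}+\mu|\SO{A})}.
\]
Since $(\lambda|\mathrm O)$ is even, the sign in \eqref{Phi_structure} is trivial, so Proposition~\ref{phiupper0}(1) gives $\Phi_{(\lambda|\mathrm O)}\Phi_{(\SE{A}+\mu|\SO{A})} = \delta_{\lambda,\ro(\SE{A}+\mu|\SO{A})}\Phi_{(\SE{A}+\mu|\SO{A})}$. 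The inner sum collapses to a single term with $\lambda = \ro(\SE{A}+\mu|\SO{A}) = \mu + \ro(\widehat A)$, using that $\SE{A}\in\MNZNS(n)$ has zero diagonal so $\mu$ contributes directly to the row sum. Substituting $\lambda\cdot\bs{k} = \mu\cdot\bs{k} + \ro(\widehat A)\cdot\bs{k} = \mu\cdot\bs{k} + \sum_{h}(\sum_u a_{h,u})k_h$ and factoring the $\mu$-independent scalar out yields the claimed identity. Part (2) proceeds symmetrically, using the second half of Proposition~\ref{phiupper0}(1) so that $\co(\SE{A}+\mu|\SO{A}) = \mu + \co(\widehat A)$ produces the factor ${v}^{\sum_h \sum_u a_{u,h} k_h}$ instead.

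The final assertion is the specialization $\bs{k}=\bs{0}$ (resp.\ $\bs{j}=\bs{0}$), for which both prefactors become ${v}^0=1$.

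The only place where care is needed is the parity bookkeeping in passing from $\phi$-products to $\Phi$-products via \eqref{Phi_structure}, and the observation that in $(\SE{A}+\mu|\SO{A})$ the composition $\mu$ fills the (zero) diagonal of $\SE{A}$, so that $\ro$ and $\co$ of this matrix decompose cleanly as $\mu + \ro(\widehat A)$ and $\mu + \co(\widehat A)$. There is no serious obstacle; this proposition is a warm-up identity that records how the ``diagonal'' long elements $\AJRS(O,\bs{k},r)$ act as weight-shifting scalars on the rest of the basis.
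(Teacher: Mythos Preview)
Your proof is correct and follows essentially the same approach as the paper: expand via \eqref{def_ajr}, collapse the double sum using Proposition~\ref{phiupper0}(1) (with the parity sign from \eqref{Phi_structure} trivial since $(\lambda|\mathrm O)$ is even), and read off the $\bs{v}$-power from $\ro(\SE{A}+\mu|\SO{A})=\mu+\ro(\widehat A)$ (resp.\ $\co$ for part~(2)). The paper's proof is the same computation, written slightly more tersely.
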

\begin{proof}
%
A direct calculation using \eqref{def_ajr} and Proposition \ref{phiupper0}(1) gives rise to
\begin{align*}
	\quad  \AJRS({O}, \bs{k}, r) \cdot \AJRS(A, \bs{j}, r)
	& =\sum_{\substack{ \lambda \in \CMN(n, r-\snorm{A}) } }
		{v}^{{(\ro(A + \lambda))} \cdot {\bs{k}} }
		{v}^{\lambda \cdot {\bs{j}} } {\Phi}_{( \SE{A} + \lambda | \SO{A})}
	= {v}^{\sum_{h=1}^{n} \sum_{u=1}^n a_{h,u} k_h  }   \AJRS(A, \bs{k} + \bs{j}, r).
\end{align*}
This proves {\rm(1)}. Similarly, Equation {\rm{(2)}} holds.
\end{proof}
As a consequence of  Proposition \ref{mulformzero}, we have the following corollary.
\begin{cor}\label{mulformzerocor}
	Let 
	{$h \in [1,n]$}.
	For any $A \in \MNZNS(n)$ and $\bs{j}\in {\ZZ}^n$, the following multiplication formulas hold in $\SQvnrR$ for all  {$r\geq\snorm{A} $}
\begin{itemize}
\item[{\rm(1)}] $ \AJRS({O}, \pm \bs{\ep}_h, r) \cdot \AJRS(A, \bs{j}, r)
	= {v}^{ \pm  \sum_{u=1}^n a_{h,u} }   \AJRS(A, \bs{j} \pm\bs{\ep}_h, r), $
\item[{\rm(2)}] $ \AJRS(A, \bs{j}, r) \cdot  \AJRS({O}, \pm \bs{\ep}_h, r)
	= {v}^{ \pm  \sum_{u=1}^n a_{u, h} }   \AJRS(A, \bs{j} \pm\bs{\ep}_h, r).$
\end{itemize}
\end{cor}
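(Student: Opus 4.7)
The plan is to observe that Corollary \ref{mulformzerocor} is an immediate specialization of Proposition \ref{mulformzero}: one simply substitutes the weight vector $\bs{k} = \pm\bs{\ep}_h$.

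Concretely, to prove (1), I would apply Proposition \ref{mulformzero}(1) with $\bs{k}=\pm\bs{\ep}_h$. Since $(\pm\bs{\ep}_h)_{h'}=\pm\delta_{h,h'}$, the exponent becomes
\begin{equation*}
\sum_{h'=1}^{n}\sum_{u=1}^{n} a_{h',u}(\pm\delta_{h,h'}) \;=\; \pm\sum_{u=1}^{n} a_{h,u},
\end{equation*}
and the sum $\bs{k}+\bs{j}$ becomes $\bs{j}\pm\bs{\ep}_h$, yielding precisely the claimed identity. Statement (2) follows in the same way from Proposition \ref{mulformzero}(2), with the only difference being that the partial row sum $\sum_{u=1}^n a_{h,u}$ is replaced by the partial column sum $\sum_{u=1}^n a_{u,h}$.

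There is no real obstacle here: the entire content of the corollary is a direct specialization, and no new manipulation of the basis elements $\Phi_A$ or the Hecke-Clifford generators is required. The proof is essentially a one-line substitution, so I would simply write it out as such, noting that it is included for convenient reference because these single-weight specializations are what will be needed when verifying the Drinfeld--Jimbo type relations (QQ1)--(QQ2) on the generators $\AJRS(O,\pm\bs{\ep}_h,r)$ (which will play the role of $\genK_h^{\pm 1}$) in the later sections.
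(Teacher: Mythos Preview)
Your proposal is correct and matches the paper's approach exactly: the paper simply states the corollary as an immediate consequence of Proposition \ref{mulformzero} without further proof, and your one-line specialization $\bs{k}=\pm\bs{\ep}_h$ is precisely what is intended.
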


\begin{conv}\label{rem_ajr_short_0}
Like Cnnvention \ref{CONV}, we use the convention {$\AJRS(  {A},  \bs{ j }, r ) = 0$} in {$\SQvnrR$} below for any {$\bs{j} \in \ZZ^n$}
if {$A \notin \MNZNS(n)$} (or {$\snorm{A}> r$}).
\end{conv}

Recall the notation {${\STEPX{m}{y, z}} ={\STEPX{m}{y}} - {\STEPX{m}{z}}$} in \eqref{stepd}.
\begin{prop}\label{mulformeven}
	Let 
	{$h \in [1,n-1]$}.
	For any {$A \in \MNZNS(n)$},
	the following multiplication formulas hold in {${\SQvnrR}$} for all  {$r\geq\snorm{A}$}:
\begin{align*}
{\rm(1)} \quad & \ABJRS( E_{h, h+1}, \mathrm{O}, \bs{ 0 },  r ) \cdot \ABJRS( \SE{A}, \SO{A}, \bs{j},  r ) \\
&=
	\sum_{k<h}
	{v}^{ 2 {\BK(h,k)}  +2  \SOE{a}_{h+1,k}}   \VSTEP{ \SEE{a}_{h,k} + 1}
	\ABJRS( \SE{A}- E_{h+1, k} + E_{h,k}, \SO{A}, \bs{j} + 2 \bs{\ep}_{h},  r ) \\
	&\qquad +
	 \frac{{v}^{ 2 {\BK(h,h)}  + 2 \SOE{a}_{h+1,h} - j_h }}{{v}^2 - 1} \Big\{
	\ABJRS( \SE{A}  - E_{h+1, h}, \SO{A}, \bs{j} + 2 \bs{\ep}_{h},  r )
	- \ABJRS( \SE{A}  - E_{h+1, h}, \SO{A}, \bs{j},  r )
	\Big\}\\
	& \qquad +
	{v}^{ 2 {\BK(h,h+1)} + 2 \SOE{a}_{h+1,h+1} + {j}_{h+1}}  \VSTEP{ \SEE{a}_{h,h+1} + 1}
	\ABJRS( \SE{A} + E_{h,h+1}, \SO{A}, \bs{j},  r )  \\
	& \qquad +
	\sum_{k>h+1}
	{v}^{ 2 {\BK(h,k) }  + 2 \SOE{a}_{h+1,k}}  \VSTEP{ \SEE{a}_{h,k} + 1}
	\ABJRS( \SE{A} - E_{h+1, k} + E_{h,k}, \SO{A}, \bs{j},  r )  \\
	& \qquad + \sum_{k<h}
	{v}^{2 {\BK(h,k)} }
	\ABJRS( \SE{A}, \SO{A} - E_{h+1, k}  + E_{h,k}, \bs{j} + 2 \bs{\ep}_{h},  r )  \\
	& \qquad +
	\sum_{k \ge h}
	{v}^{ 2 {\BK(h,k)} }
	\ABJRS( \SE{A}, \SO{A} - E_{h+1, k}  + E_{h,k}, \bs{j},  r ) \\
	&\qquad + \sum_{k<h}
	{v}^{ 2 {\BK(h,k)} - 2} \VSTEPPD{{a}_{h,k}+1}
	\ABJRS( \SE{A} + 2E_{h,k}, \SO{A}  -E_{h,k} - E_{h+1,k}, \bs{j} + 2 \bs{\ep}_{h},  r ) \\
	& \qquad +
	\frac{{v}^{ 2 {\BK(h,h)} - 2 j_h - 2}}{{v}^4 - 1}
	 \Big\{
	\ABJRS( \SE{A}, \SO{A}  -E_{h,h} - E_{h+1,h}, \bs{j} + 4 \bs{\ep}_{h},  r )  -  ({v}^2 + 1)\\
	&\qquad \qquad\cdot
	\ABJRS( \SE{A}, \SO{A}  -E_{h,h} - E_{h+1,h}, \bs{j} + 2 \bs{\ep}_{h},  r ) + {v}^2 \ABJRS( \SE{A}, \SO{A}  -E_{h,h} - E_{h+1,h}, \bs{j},  r )
	\Big\} \\
	& \qquad +
	\sum_{k>h}
	{v}^{ 2 {\BK(h,k)} - 2} \VSTEPPD{{a}_{h,k}+1}
	\ABJRS( \SE{A} + 2E_{h,k}, \SO{A}  -E_{h,k} - E_{h+1,k}, \bs{j},  r ) ; \\
{\rm(2)} \quad & \ABJRS( E_{h+1, h}, \mathrm{O}, \bs{ 0 },  r ) \cdot \ABJRS( \SE{A}, \SO{A}, \bs{j},  r ) \\
&=
	\sum_{k < h}
	{v}^{2 \AK(h+1,k)}
	\VSTEP{ \SEE{a}_{h+1, k} +1}
	\ABJRS( \SE{A} - E_{h,k} + E_{h+1, k}, \SO{A}, \bs{j},  r ) \\
	&\qquad +
	{v}^{2 \AK(h+1,h)+ j_h}
	\VSTEP{ \SEE{a}_{h+1, h} +1}
	\ABJRS( \SE{A} + E_{h+1, h}, \SO{A}, \bs{j},  r ) \\
		&\qquad +
	\frac{{v}^{2 \AK(h+1,h+1) - j_{h+1}}}{ {v}^2 - 1 }\Big\{
	\ABJRS( \SE{A} - E_{h,h+1}, \SO{A}, \bs{j} + 2 \bs{\ep}_{h+1},  r )
	 -
	\ABJRS( \SE{A} - E_{h,h+1}, \SO{A}, \bs{j},  r ) \Big \} \\
	&\qquad +
	\sum_{k > h+1}
	{v}^{ 2 \AK(h+1,k)}
	\VSTEP{ \SEE{a}_{h+1, k} +1}
	\ABJRS( \SE{A} - E_{h,k} + E_{h+1, k}, \SO{A},  \bs{j} + 2 \bs{\ep}_{h+1}, r ) \\
	& \qquad + \sum_{k<h}
	{v}^{2 \AK(h+1,k) + 2 {a}_{h, k} - 2}
	\ABJRS(  \SE{A}, \SO{A} - E_{h,k} + E_{h+1,k} , \bs{j},  r ) \\
	& \qquad +
	{v}^{2 \AK(h+1,h)  }
	\ABJRS(\SE{A}, \SO{A} - E_{h,h} + E_{h+1,h},  {\bs{j}} + 2 \bs{\ep}_{h}, r) \\
	& \qquad +
	{v}^{ 2 \AK(h+1,h+1) + 2 {a}_{h, h+1} - 2}
	\ABJRS(\SE{A}, \SO{A} - E_{h,h+1} + E_{h+1,h+1}, \bs{j}, r) \\
	& \qquad +
	\sum_{k>h+1}
	{v}^{ 2 \AK(h+1,k) + 2 {a}_{h, k} - 2}
	\ABJRS(\SE{A}, \SO{A} - E_{h,k} + E_{h+1,k}, {\bs{j}} + 2 \bs{\ep}_{h+1},  r ) \\
	&\qquad - \sum_{k < h }
	{v}^{2 \AK(h+1,k) + 2 {a}_{h, k} - 4}
	\VSTEPPDR{  {a}_{h+1, k} +1}
	\ABJRS( \SE{A} + 2E_{h+1,k}, \SO{A}  - E_{h,k} - E_{h+1,k}, \bs{j}, r ) \\
	& \qquad -
	{v}^{2 \AK(h+1,h) - 2}
	\VSTEPPDR{  {a}_{h+1, h} +1}
	\ABJRS( \SE{A}  + 2E_{h+1,h}, \SO{A}  - E_{h,h} - E_{h+1,h}, \bs{j} + 2 \bs{\ep}_{h},  r  ) \\
	& \qquad -
	\frac{{v}^{ 2 \AK(h+1,h+1) + 2 {a}_{h, h+1} - 2 j_{h+1}- 4}}{ {v}^4 - 1 }
	\Big \{
	  \ABJRS( \SE{A}, \SO{A}  - E_{h,h+1} - E_{h+1,h+1}, \bs{j} +  4 \bs{\ep}_{h+1},  r ) \\
	& \qquad  \qquad \qquad -
	({v}^2 + 1) \ABJRS( \SE{A}, \SO{A}  - E_{h,h+1} - E_{h+1,h+1}, \bs{j}+ 2 \bs{\ep}_{h+1},  r ) \\
	& \qquad \qquad \qquad +
	{v}^2 \ABJRS( \SE{A}, \SO{A}  - E_{h,h+1} - E_{h+1,h+1}, \bs{j},  r )
	 \Big\} \\
	& \qquad -
	\sum_{k>h+1}
	{v}^{ 2 \AK(h+1,k) + 2 {a}_{h, k} - 4}
	\VSTEPPDR{  {a}_{h+1, k} +1}
	\ABJRS( \SE{A} + 2E_{h+1,k}, \SO{A}  - E_{h,k} - E_{h+1,k}, \bs{j} + 2 \bs{\ep}_{h+1}, r ).
\end{align*}
\end{prop}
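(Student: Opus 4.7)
The plan is to expand the product $\ABJRS(E_{h,h+1}, \mathrm{O}, \bs{0}, r)\cdot \ABJRS(\SE{A}, \SO{A}, \bs{j}, r)$ using the definition \eqref{def_ajr} as a double sum of products $\Phi_B \Phi_M$, where
\[
B = \bigl(\diag(\mu) + E_{h,h+1} - E_{h+1,h+1}\,\bigl|\,\mathrm{O}\bigr), \quad \mu = \lambda + \bs{\ep}_{h+1},\ \lambda\in\CMN(n,r-1),
\]
and $M = (\SE{A} + \alpha | \SO{A})$ with $\alpha \in \CMN(n, r - \snorm{A})$. By the matching condition \eqref{co=ro}, only pairs with $\co(B) = \ro(M)$ survive; this pins down $\mu$ (hence $\lambda$) in terms of $\alpha$ and the row sums of $A$. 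Since $B$ is even, Proposition \ref{prop_PhiAPhiB} gives $\Phi_B\Phi_M = \sum_N \gamma^N_{B,M}\Phi_N$ with no sign, and the structure constants $\gamma^N_{B,M}$ are exactly those computed in Proposition \ref{phiupper0}(2). The proof of (2) is symmetric, using Proposition \ref{phiupper0}(3) in place of (2).

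Next I would substitute the nine-term sum from Proposition \ref{phiupper0}(2) inside the double sum, and reorganise by the summation index $k\in[1,n]$. For each $k$, the coefficient ${q}^{\BK(h,k)+\SOE{a}_{h+1,k}}\STEP{\SEE{a}_{h,k}+1}$ (and its analogues) depends on the entries of $M$ at column $k$; after the substitution $q = v^2$, these become the factors $v^{2\BK(h,k)+2\SOE{a}_{h+1,k}}\VSTEP{\SEE{a}_{h,k}+1}$ appearing in the target formula. The key separation is into three cases: $k < h$, $k\in\{h,h+1\}$ (the \emph{diagonal} positions, where the entry being modified lies on the diagonal of $\widehat{M}$ and interacts with the composition $\alpha$), and $k > h+1$. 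The off-diagonal cases ($k\neq h, h+1$) fold immediately into terms of the form $\ABJRS(\SE{A}\pm E, \SO{A}\pm E', \bs{j}+\bs{*},r)$ by reading off the shift in $\alpha$-weights.

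The main obstacle, and the one requiring the most care, is the \emph{diagonal} contributions $k = h$ and $k = h+1$. Here the entry $a_{h,k}$ (respectively $a_{h+1,k}$) being shifted is part of the free diagonal index $\alpha_h$ (respectively $\alpha_{h+1}$), so the quantum-integer factors $\STEP{a_{h,k}+1}$ and $\STEPPD{a_{h,k}+1}$ become functions of $\alpha_h$ summed over $\alpha$. This is precisely the situation addressed by Lemma \ref{formstepodd}: items (2) and (3) collapse sums of $\STEP{\lambda_h+1}\Phi$ and $\STEPP{\lambda_h+1}\Phi$ into combinations of $\ABJRS(\cdot,\cdot,\bs{j}+2\bs{\ep}_h,r)$, and item (1) handles the sum with $\VSTEPPD{a_{h,k}+1}$ from the $\Phi_{(\SE{A}+2E_{h,k}|\SO{A}-E_{h,k}-E_{h+1,k})}$ term at $k=h$. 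I also need the reindexing identity \eqref{eq simple diag} to absorb the shift $\mu = \lambda + \bs{\ep}_{h+1}$ (producing the $v^{j_{h+1}}$ factor one sees on the diagonal $k=h+1$ line), and Corollary \ref{mulformzerocor} (or direct inspection) to track the $v^{\pm j_h}$ prefactors produced when pulling $\alpha$-weights out of the sums.

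Finally, after the case-splitting, the nine-term output of Proposition \ref{phiupper0}(2) yields exactly nine groups in the target formula: three off-diagonal ``$\SEE{a}_{h,k}+1$'' terms (for $k<h$, $k=h+1$, $k>h+1$) plus one diagonal telescope at $k=h$; three ``$\SO{A}-E_{h+1,k}+E_{h,k}$'' terms analogously split; and three ``$\SOE{}-E_{h,k}-E_{h+1,k}$'' terms with the diagonal case at $k=h$ producing the three-term Lemma \ref{formstepodd}(1) expression. The $\bs{j}\mapsto \bs{j}+2\bs{\ep}_h$ shifts appear precisely on the terms where the shifted diagonal entry lies to the left of or at position $h$; this is exactly the bookkeeping required by Lemma \ref{formstepodd}. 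Part (2) is proved in the same manner, this time using Proposition \ref{phiupper0}(3), with the roles of $h$ and $h+1$ interchanged (and the corresponding shifts $\bs{j}+2\bs{\ep}_{h+1}$), yielding the second displayed formula.
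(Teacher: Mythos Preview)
Your proposal is correct and follows essentially the same approach as the paper: expand via \eqref{def_ajr}, collapse the double sum using \eqref{co=ro}, apply Proposition \ref{phiupper0}(2) (with no sign since $B$ is even), split the $k$-sum into the cases $k<h$, $k=h$, $k=h+1$, $k>h+1$, and handle the diagonal cases $k=h$, $k=h+1$ with \eqref{eq simple diag} and Lemma \ref{formstepodd}(1),(2); part (2) is then done symmetrically via Proposition \ref{phiupper0}(3). One small clarification: the $\bs{j}\to\bs{j}+2\bs{\ep}_h$ shift on the off-diagonal terms arises specifically for $k<h$, because then $\sum_{u>k}(A+\lambda)_{h,u}=\BK(h,k)+\lambda_h$ contributes an extra $q^{\lambda_h}=v^{2\lambda_h}$, whereas for $k\ge h$ the diagonal entry $\lambda_h$ does not appear in that partial row sum.
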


\begin{proof}
Observe that for any {$\mu \in \CMN(n, r-1)$},
{${(-1)}^{\parity{\mu+ E_{h,h+1}|O} \cdot \parity{A}}  = {(-1)}^{\parity{\mu+ E_{h+1,h}|O} \cdot \parity{A}}  = 1$}.
We only prove (1);  the proof of
{\rm(2)} is similar.

By  Proposition \ref{prop_PhiAPhiB},  \eqref{def_ajr} and Proposition \ref{phiupper0}(2), we have
\begin{align*}
& \ABJRS( E_{h, h+1}, \mathrm{O}, \bs{ 0 },  r ) \cdot \ABJRS( \SE{A}, \SO{A}, \bs{j},  r ) \\
& =
	\sum_{\substack{
		\lambda \in \CMN(n,r-\snorm{A}) \\
	} }
	{v}^{\lambda \cdot {\bs{j}}}
	{\Phi}_{(  \ro(A+\lambda)  - E_{h+1, h+1} + E_{h, h+1} |O )} {\Phi}_{( \SE{A} + \lambda | \SO{A} )}= {\fcY}_1 + {\fcY}_2 + {\fcY}_3,
\end{align*}
where
\begin{align*}
{\fcY}_1 &=
	\sum_{\substack{
		\lambda \in \CMN(n,r-\snorm{A}) \\
	} }
	{v}^{\lambda \cdot {\bs{j}}}
	\sum_{k=1}^n
	{q}^{ {\sum^{n}_{u=k+1}{(A+\lambda)}_{h, u}}  + \SOE{a}_{h+1,k}}  \STEP{ \SEE{(A+\lambda)}_{h,k} + 1}
		{\Phi}_{(\SE{A} + \lambda  - E_{h+1, k} + E_{h,k} | \SO{A} )}  , \\
{\fcY}_2 &=
	\sum_{\substack{
		\lambda \in \CMN(n,r-\snorm{A}) \\
	} }
	{v}^{\lambda \cdot {\bs{j}}}
	\sum_{k=1}^n
	{q}^{ {\sum^{n}_{u=k+1}{(A+\lambda)}_{h, u}} } {\Phi}_{(\SE{A} + \lambda   | \SO{A} - E_{h+1, k}  + E_{h,k} )} , \\
{\fcY}_3 &=
	\sum_{\substack{
		\lambda \in \CMN(n,r-\snorm{A}) \\
	} }
	{v}^{\lambda \cdot {\bs{j}}}
	\sum_{k=1}^n
	{q}^{ {\sum^{n}_{u=k+1}{(A+\lambda)}_{h, u}} -1} \\
	& \qquad \qquad \cdot \STEPPD{{(A+\lambda)}_{h,k}+1}
	{\Phi}_{(\SE{A} + \lambda  + 2E_{h,k} | \SO{A}  -E_{h,k} - E_{h+1,k} )}.
\end{align*}
Notice that
\begin{align*}
{\sum^{n}_{u=k+1}{(A+\lambda)}_{h, u}}
=
\left\{
\begin{aligned}
& {\BK(h,k)}  + \lambda_h  , & \mbox{ if } k< h, \\
& {\BK(h,k)}  , & \mbox{ if } k \ge h.
\end{aligned}
\right.
\end{align*}
Then we have
\begin{align*}
{\fcY}_1
& =
	\sum_{k<h}
	{q}^{ {\BK(h,k)} + \SOE{a}_{h+1,k}} \STEP{ \SEE{a}_{h,k} + 1}
	\sum_{\substack{  \lambda \in \CMN(n,r-\snorm{A}) } }
	{v}^{\lambda \cdot {\bs{j}}}
	{q}^{ {\lambda}_h }
	{\Phi}_{(\SE{A} + \lambda  - E_{h+1, k} + E_{h,k} | \SO{A} )}  \\
	& \qquad +
	{q}^{ {\BK(h,h)}   + \SOE{a}_{h+1,h}}
	\sum_{\substack{  \lambda \in \CMN(n,r-\snorm{A}) } }
	{v}^{\lambda \cdot {\bs{j}}}
	\STEP{ {\lambda}_{h} + 1}
	{\Phi}_{(\SE{A} + \lambda  - E_{h+1, h} + E_{h,h} | \SO{A} )}  \\
	& \qquad +
	{q}^{ {\BK(h,h+1)}  + \SOE{a}_{h+1,h+1}}  \STEP{ \SEE{a}_{h,h+1} + 1}
	\sum_{\substack{  \lambda \in \CMN(n,r-\snorm{A}) } }
	{v}^{\lambda \cdot {\bs{j}}}
	{\Phi}_{(\SE{A} + \lambda  - E_{h+1, h+1} + E_{h,h+1} | \SO{A} )}  \\
	& \qquad +
	\sum_{k>h+1}
	{q}^{ {\BK(h,k)}  + \SOE{a}_{h+1,k}}  \STEP{ \SEE{a}_{h,k} + 1}
	\sum_{\substack{  \lambda \in \CMN(n,r-\snorm{A}) } }
	{v}^{\lambda \cdot {\bs{j}}}
	{\Phi}_{(\SE{A} + \lambda  - E_{h+1, k} + E_{h,k} | \SO{A} )} \\
& =
	\sum_{k<h}
	{q}^{ {\BK(h,k)}  + \SOE{a}_{h+1,k}}  \STEP{ \SEE{a}_{h,k} + 1}
	\ABJRS( \SE{A}- E_{h+1, k} + E_{h,k}, \SO{A}, \bs{j} + 2 \bs{\ep}_{h},  r ) \\
	& \qquad +
	\frac{{q}^{ {\BK(h,h)} + \SOE{a}_{h+1,h}} }{({q} - 1) {v}^{j_h}}
 		( \ABJRS(\SE{A}  - E_{h+1, h} ,  \SO{A} , {\bs{j}} + 2 \bs{\ep}_{h}, r )
 		-  \ABJRS( \SE{A}  - E_{h+1, h} ,  \SO{A} , {\bs{j}} , r ) )  \\
	& \qquad +
	{q}^{ {\BK(h,h+1)}   + \SOE{a}_{h+1,h+1}}   {v}^{{j}_{h+1}}  \STEP{ \SEE{a}_{h,h+1} + 1}
	\ABJRS( \SE{A}  + E_{h,h+1}, \SO{A}, \bs{j},  r )  \\
	& \qquad +
	\sum_{k>h+1}
	{q}^{ {\BK(h,k)}  + \SOE{a}_{h+1,k}}  \STEP{ \SEE{a}_{h,k} + 1}
	\ABJRS( \SE{A} - E_{h+1, k} + E_{h,k}, \SO{A}, \bs{j},  r ) ,
\end{align*}
where the last equality is due to \eqref{def_ajr}, equation \eqref{eq simple diag} and Lemma \ref{formstepodd}{\rm(2)}.
Similarly, by \eqref{def_ajr}, we have
\begin{align*}
{\fcY}_2
& =
	\sum_{\substack{k<h\\  \lambda \in \CMN(n,r-\snorm{A}) } }
	{v}^{\lambda \cdot {\bs{j}}}
	{q}^{{\lambda}_h + {\BK(h,k)} } {\Phi}_{(\SE{A} + \lambda   | \SO{A} - E_{h+1, k}  + E_{h,k} )}  +
	\sum_{\substack{k \ge h\\  \lambda \in \CMN(n,r-\snorm{A}) } }
	{v}^{\lambda \cdot {\bs{j}}}
	{q}^{ {\BK(h,k)} } {\Phi}_{(\SE{A} + \lambda   | \SO{A} - E_{h+1, k}  + E_{h,k} )}   \\
& =
	\sum_{k<h}
	{q}^{{\BK(h,k)} }
	\ABJRS( \SE{A}, \SO{A} - E_{h+1, k}  + E_{h,k}, \bs{j} + 2 \bs{\ep}_{h},  r )  +
	\sum_{k \ge h}
	{q}^{ {\BK(h,k)} }
	\ABJRS( \SE{A}, \SO{A} - E_{h+1, k}  + E_{h,k}, \bs{j},  r ).
\end{align*}
Finally, by \eqref{def_ajr} and Lemma \ref{formstepodd}(1), one can obtain
\begin{align*}
{\fcY}_3
& =
	\sum_{k<h}
	{q}^{{\BK(h,k)} -1} \STEPPD{{a}_{h,k}+1}
	\ABJRS( \SE{A} + 2E_{h,k}, \SO{A}  -E_{h,k} - E_{h+1,k}, \bs{j} + 2 \bs{\ep}_{h},  r ) \\
	& \qquad +
	\frac{{q}^{ {\BK(h,h)} -1}}{({q}^2 - 1) {v}^{2 j_h}} \cdot  \{ \ 
	\ABJRS( \SE{A}, \SO{A}  -E_{h,h} - E_{h+1,h}, \bs{j} + 4 \bs{\ep}_{h},  r ) \\
	&\qquad \qquad -
	{({q} + 1)}
	\ABJRS( \SE{A}, \SO{A}  -E_{h,h} - E_{h+1,h}, \bs{j} + 2 \bs{\ep}_{h},  r ) \\
	&\qquad \qquad  +
	{q}
	\ABJRS( \SE{A}, \SO{A}  -E_{h,h} - E_{h+1,h}, \bs{j},  r ) \  \}  \\
	& \qquad +
	\sum_{k>h}
	{q}^{ {\BK(h,k)} -1}  \STEPPD{{a}_{h,k}+1}
	\ABJRS( \SE{A} + 2E_{h,k}, \SO{A}  -E_{h,k} - E_{h+1,k}, \bs{j},  r ).
\end{align*}
Putting the computations for {${\fcY}_1$},  {${\fcY}_2$},  {${\fcY}_3$} together and replacing {${q}$} with {${v}^2$},
we proved part {\rm(1)} of the proposition.
%
\end{proof}


For any  {$A \in \MNZN(n)$} and   {$\lambda \in \CMN(n, r-\snorm{A})$},
set  {$(A+\lambda)  = (\SE{A} + \lambda | \SO{A}) \in \MNZN(n)$}.

\begin{prop}\label{mulformdiag}
Let 
{$h \in [1,n]$} and {$A \in \MNZNS(n)$}.
Assume for any  {$\lambda \in \CMN(n, r-\snorm{A})$},
	 {$(A+\lambda)$}  satisfies the SDP condition on the $h$-th row.
Then the following multiplication formula holds in {${\SQvnrR}$} for all  {$r\geq\snorm{A} $}:

\begin{align*}
&\ABJRS( \mathrm{O}, E_{h,h}, \bs{ 0 },  r ) \cdot \ABJRS( \SE{A}, \SO{A}, \bs{j},  r )  \\
&	 =
	 \sum_{ k < h
	}
		{(-1)}^{ {{\SOE{\widetilde{a}}}_{h-1,k}} + \parity{A} }
		{v}^{ 2 \BK(h,k)}
		\ABJRS( \SE{A} -E_{h,k}, \SO{A}+ E_{h,k}, \bs{j} + 2 \bs{\ep}_{h},  r ) \\
		& \qquad +
		{(-1)}^{ {{\SOE{\widetilde{a}}}_{h-1,h}} + \parity{A}}
		{v}^{2 \BK(h,h)+ j_h}
		\ABJRS( \SE{A}, \SO{A}+ E_{h,h}, \bs{j},  r ) \\
		& \qquad +
		\sum_{ k > h  
		}
		{(-1)}^{ {{\SOE{\widetilde{a}}}_{h-1,k}} + \parity{A}}
		{v}^{2 \BK(h,k)}
		\ABJRS( \SE{A}  -E_{h,k}, \SO{A}+ E_{h,k}, \bs{j},  r ) \\
		& \qquad + \sum_{ k<h
		}
		{(-1)}^{ {{\SOE{\widetilde{a}}}_{h-1,k}} + \parity{A} +1 }
		{v}^{ 2 \BK(h,k)}
		{\VSTEPP{{a}_{h,k}}}
		\ABJRS( \SE{A} +  E_{h,k}, \SO{A} - E_{h,k}, \bs{j} + 2 \bs{\ep}_{h},  r )  \\
		&\qquad+
		{(-1)}^{ {{\SOE{\widetilde{a}}}_{h-1,h}} + \parity{A} +1 }
		\frac{{v}^{2 \BK(h,h)- j_h} }{ {v}^4 - 1 }
		\Big\{  \ABJRS( \SE{A}, \SO{A} - E_{h,h}, \bs{j} +4 \bs{\ep}_{h},  r )
		 - \ABJRS( \SE{A}, \SO{A} - E_{h,h}, \bs{j},  r )
		 \Big \}\\
		& \qquad +
		\sum_{ k>h 
		}
		{(-1)}^{ {{\SOE{\widetilde{a}}}_{h-1,k}} + \parity{A}+1 }
		{v}^{2 \BK(h,k)}
		{\VSTEPP{{a}_{h,k}}}
		\ABJRS( \SE{A} +  E_{h,k}, \SO{A} - E_{h,k}, \bs{j},  r ) .
\end{align*}
\end{prop}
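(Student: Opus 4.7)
The plan is to expand both factors via \eqref{def_ajr}, reduce each product to a basic $\phi$-multiplication handled by Proposition \ref{phidiag1}, and then repackage the result in long-element notation. First, I would write
\begin{align*}
\ABJRS(\mathrm{O},E_{h,h},\bs{0},r)\cdot \ABJRS(\SE{A},\SO{A},\bs{j},r)
= \sum_{\mu\in\CMN(n,r-1)}\sum_{\lambda\in\CMN(n,r-\snorm{A})} v^{\lambda\cdot \bs{j}}\, \Phi_{(\mu|E_{h,h})}\Phi_{(\SE{A}+\lambda|\SO{A})}.
\end{align*}
By \eqref{co=ro}, only the summands with $\mu+\bs{\ep}_h=\ro(A)+\lambda$ survive, which pins $\mu$ down uniquely from $\lambda$. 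Proposition \ref{prop_PhiAPhiB} then supplies a global sign $(-1)^{\parity{A}}$ when converting each surviving $\Phi$-product to the corresponding $\phi$-product; the SDP hypothesis is exactly what is needed to invoke Proposition \ref{phidiag1}, yielding a sum over $k\in[1,n]$ of two summands with coefficients $(-1)^{\SOE{\widetilde{a}}_{h-1,k}}v^{2\BK(h,k)(A+\lambda)}$ and $-(-1)^{\SOE{\widetilde{a}}_{h-1,k}}v^{2\BK(h,k)(A+\lambda)}\VSTEPP{a_{h,k}(A+\lambda)}$, respectively.

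Next I would exploit the specific $\lambda$-dependence of these coefficients. Since $\SOE{\widetilde{a}}_{h-1,k}$ involves only $\SO{A}$, it is independent of $\lambda$; and because $\SEE{a}_{h,h}=0$ (as $A\in\MNZNS(n)$), we have $\BK(h,k)(A+\lambda)=\BK(h,k)(A)+\lambda_h$ when $k<h$ and $=\BK(h,k)(A)$ when $k\geq h$, while $a_{h,k}(A+\lambda)=a_{h,k}(A)+\lambda_h\delta_{k,h}$. I would then split the $k$-sum into $k<h$, $k=h$, and $k>h$. For $k<h$, the factor $v^{2\lambda_h}$ absorbs into the weight to shift $\bs{j}\mapsto\bs{j}+2\bs{\ep}_h$, and the sum over $\lambda$ repackages directly via \eqref{def_ajr} into the long elements appearing in the first and fourth lines of the target formula. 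For $k>h$ no such shift occurs, and the repackaging yields the third and sixth lines of the target.

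The $k=h$ case is the main obstacle and requires an extra step. For the first $\phi$-summand, $\phi_{(\SE{A}+\lambda-E_{h,h}|\SO{A}+E_{h,h})}$, one applies \eqref{eq simple diag} with $B=(\SE{A}|\SO{A}+E_{h,h})$ to extract the prefactor $v^{j_h}$ and the long element $\ABJRS(\SE{A},\SO{A}+E_{h,h},\bs{j},r)$, matching the second line of the target formula. For the second $\phi$-summand, the coefficient $\VSTEPP{\lambda_h+\SOE{a}_{h,h}}$ is genuinely $\lambda$-dependent and (by Convention \ref{CONV}) contributes only when $\SOE{a}_{h,h}=1$, in which case it equals $\VSTEPP{\lambda_h+1}$; Lemma \ref{formstepodd}(3) then collapses the resulting sum into
$$\frac{v^{-j_h}}{v^4-1}\bigl\{\ABJRS(\SE{A},\SO{A}-E_{h,h},\bs{j}+4\bs{\ep}_h,r)-\ABJRS(\SE{A},\SO{A}-E_{h,h},\bs{j},r)\bigr\},$$
producing the fifth line of the target. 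Reassembling all contributions and tracking the signs $(-1)^{\parity{A}+\SOE{\widetilde{a}}_{h-1,k}}$ (or one more $+1$ in the exponent for the second summands, coming from the explicit minus sign in Proposition \ref{phidiag1}) together with the $v$-exponents $2\BK(h,k)(A)$ yields precisely the stated identity.
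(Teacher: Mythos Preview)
Your proposal is correct and follows essentially the same approach as the paper's own proof: expand via \eqref{def_ajr}, apply Proposition \ref{prop_PhiAPhiB} for the global sign $(-1)^{\parity{A}}$, invoke Proposition \ref{phidiag1} under the SDP hypothesis, split the resulting $k$-sum into $k<h$, $k=h$, $k>h$, and handle the $k=h$ terms using \eqref{eq simple diag} and Lemma \ref{formstepodd}(3) exactly as you describe. The paper organizes the computation by separating the two summands of Proposition \ref{phidiag1} into $\fcY_1$ and $\fcY_2$ before splitting on $k$, but the content is identical.
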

\begin{proof}
Observe that for any {$\mu \in \CMN(n, r-1)$},
{${(-1)}^{\parity{\mu|E_{h,h}} \cdot \parity{A}} = {(-1)}^{\parity{A}}$}.
By  \eqref{def_ajr},  Proposition \ref{prop_PhiAPhiB} and Proposition \ref{phidiag1}, one can obtain
\begin{align*}
	&\ABJRS( \mathrm{O}, E_{h,h}, \bs{ 0 },  r ) \cdot \ABJRS( \SE{A}, \SO{A}, \bs{j},  r ) \\
	& = \sum_{\substack{
			\mu \in \CMN(n, r-1) \\
			\lambda \in \CMN(n, r-\snorm{A}) \\
			\co(\mu + E_{h,h}) = \ro(A+\lambda)
			}
		}
		{v}^{{\mu} \cdot {\bs{0}}}
		{v}^{\lambda \cdot {\bs{j}}}
		\cdot
		{\Phi}_{({\mu}|E_{h,h})} {\Phi}_{( \SE{A} + \lambda | \SO{A} )} \\
	& = \sum_{\substack{ \lambda \in \CMN(n,r-\snorm{A}) } }
		{v}^{ \lambda \cdot {\bs{j}} }
		{(-1)}^{\parity{A}}
		\sum_{ k=1}^n \Big\{
			{(-1)}^{ {{\SOE{\widetilde{a}}}_{h-1,k}} } {q}^{{{\ro(A+\lambda)}_h} - \sum_{u=1}^{k} {(A+\lambda)}_{h,u} }
			{\Phi}_{(\SE{A} + \lambda -E_{h,k}|\SO{A}+ E_{h,k})}  \\
		&\qquad \qquad +
		{(-1)}^{ {{\SOE{\widetilde{a}}}_{h-1,k}} +1} {q}^{{{\ro(A+\lambda)}_h} - \sum_{u=1}^{k} {(A+\lambda)}_{h,u}}  {\STEPP{{(A+\lambda)}_{h,k}}}
		{\Phi}_{(\SE{A} + \lambda+  E_{h,k}| \SO{A} - E_{h,k})}
		\Big\} \quad  \\
&= {(-1)}^{\parity{A}}( {\fcY}_1 + {\fcY}_2) ,
\end{align*}
where
\begin{align}
{\fcY}_1
& =
		\sum_{\substack{ \lambda \in \CMN(n,r-\snorm{A}) } }
		\sum_{ k=1}^n
		 {v}^{ \lambda \cdot {\bs{j}} }
			{(-1)}^{ {{\SOE{\widetilde{a}}}_{h-1,k}} } {q}^{{{\ro(A+\lambda)}_h} - \sum_{u=1}^{k} {(A+\lambda)}_{h,u} }
			{\Phi}_{(\SE{A} + \lambda -E_{h,k}|\SO{A}+ E_{h,k})} \notag \\
	&=
	\sum_{ k < h}
		{(-1)}^{ {{\SOE{\widetilde{a}}}_{h-1,k}} }
		{q}^{\BK(h,k)}
		\sum_{\substack{ \lambda \in \CMN(n,r-\snorm{A}) } }
		 {v}^{ \lambda \cdot {(\bs{j} +2 \bs{\ep}_{h})}}
			{\Phi}_{(\SE{A} + \lambda -E_{h,k}|\SO{A}+ E_{h,k})}\label{eq:compute Y1}  \\
		& \qquad +
		{(-1)}^{ {{\SOE{\widetilde{a}}}_{h-1,h}} }
		{q}^{\BK(h,h)}
		\sum_{\substack{ \lambda \in \CMN(n,r-\snorm{A}) } }
		{v}^{ \lambda \cdot {\bs{j}} }
			{\Phi}_{(\SE{A} + \lambda -E_{h,h}|\SO{A}+ E_{h,h})}\notag  \\
		& \qquad +
		\sum_{ k > h}
		{(-1)}^{ {{\SOE{\widetilde{a}}}_{h-1,k}} }
		{q}^{\BK(h,k)}
		\sum_{\substack{ \lambda \in \CMN(n,r-\snorm{A}) } }
		 {v}^{ \lambda \cdot {\bs{j}} }
			{\Phi}_{(\SE{A} + \lambda -E_{h,k}|\SO{A}+ E_{h,k})} \notag \\
	&=
	\sum_{ k < h}
		{(-1)}^{ {{\SOE{\widetilde{a}}}_{h-1,k}} }
		{q}^{\BK(h,k)}
		\ABJRS( \SE{A} -E_{h,k}, \SO{A}+ E_{h,k}, \bs{j} + 2 \bs{\ep}_{h},  r )\notag \\
		& \qquad +
		{(-1)}^{ {{\SOE{\widetilde{a}}}_{h-1,h}} }
		{q}^{\sum_{u=h+1}^{n} {a}_{h,u}} {v}^{j_h}
		\ABJRS( \SE{A}, \SO{A}+ E_{h,h}, \bs{j},  r )
		\quad \mbox{( by \eqref{eq simple diag} ) }
		\notag \\
		& \qquad +
		\sum_{ k > h}
		{(-1)}^{ {{\SOE{\widetilde{a}}}_{h-1,k}} }
		{q}^{\BK(h,k)}
		\ABJRS( \SE{A}  -E_{h,k}, \SO{A}+ E_{h,k}, \bs{j},  r ) ,\notag
\end{align}
and
\begin{align*}
{\fcY}_2 & =
		\sum_{\substack{ \lambda \in \CMN(n,r-\snorm{A}) } }
		\sum_{ k=1}^n
		{v}^{ \lambda \cdot {\bs{j}} }
		{(-1)}^{ {{\SOE{\widetilde{a}}}_{h-1,k}} +1} {q}^{{{\ro(A+\lambda)}_h} - \sum_{u=1}^{k} {(A+\lambda)}_{h,u}}  {\STEPP{{(A+\lambda)}_{h,k}}}
		{\Phi}_{(\SE{A} + \lambda+  E_{h,k}| \SO{A} - E_{h,k})} \\
	&=
		\sum_{ k<h }
		{(-1)}^{ {{\SOE{\widetilde{a}}}_{h-1,k}} +1}
		{q}^{\BK(h,k)}
		{\STEPP{{a}_{h,k}}}
		\ABJRS( \SE{A} +  E_{h,k}, \SO{A} - E_{h,k}, \bs{j} + 2 \bs{\ep}_{h},  r )  \\
		&\qquad \qquad +
		{(-1)}^{ {{\SOE{\widetilde{a}}}_{h-1,h}} +1}
		{q}^{\BK(h,h)}
		\sum_{\substack{ \lambda \in \CMN(n,r-\snorm{A}) } }
		 {v}^{ \lambda \cdot {\bs{j}} }
		{\STEPP{\SOE{a}_{h,h} + {\lambda}_h}}
		{\Phi}_{(\SE{A} + \lambda+  E_{h,h}| \SO{A} - E_{h,h})} \\
		&\qquad \qquad +
		\sum_{ k>h}
		{(-1)}^{ {{\SOE{\widetilde{a}}}_{h-1,k}} +1}
		{q}^{\BK(h,k)}
		{\STEPP{{a}_{h,k}}}
		\ABJRS( \SE{A} +  E_{h,k}, \SO{A} - E_{h,k}, \bs{j},  r ) \\
	&=
		\sum_{ k<h }
		{(-1)}^{ {{\SOE{\widetilde{a}}}_{h-1,k}} +1}
		{q}^{\BK(h,k)}
		{\STEPP{{a}_{h,k}}}
		\ABJRS( \SE{A} +  E_{h,k}, \SO{A} - E_{h,k}, \bs{j} + 2 \bs{\ep}_{h},  r )  \\
		&\qquad \qquad 		+
		{(-1)}^{ {{\SOE{\widetilde{a}}}_{h-1,h}} +1}
		\frac{{q}^{\BK(h,h)}}{({q}^2 - 1) {v}^{ j_h }} \{
		\ABJRS( \SE{A}, \SO{A} - E_{h,h}, \bs{ j} +4 \bs{\ep}_{h},  r )
		-
		\ABJRS( \SE{A}, \SO{A} - E_{h,h}, \bs{j},  r ) \} \\
		&\qquad \qquad +
		\sum_{ k>h}
		{(-1)}^{ {{\SOE{\widetilde{a}}}_{h-1,k}} +1}
		{q}^{\BK(h,k)}
		{\STEPP{{a}_{h,k}}}
		\ABJRS( \SE{A} +  E_{h,k}, \SO{A} - E_{h,k}, \bs{j},  r ),
\end{align*}
where the second equality for computing ${\fcY}_2$ is due to the fact that
 {${\Phi}_{(\SE{A} + \lambda+  E_{h,h}| \SO{A} - E_{h,h})} $} appears only in the case $\SO{a}_{h,h} = 1$ and Lemma \ref{formstepodd} {\rm(3)}.
Putting {${\fcY}_1$} and {${\fcY}_2$} together, and replacing {$q$} by {${v}^2$}, the proposition is proved.
\end{proof}
By Lemma \ref{shiftonN} and Remark \ref{phidiagN}, we may drop the SDP condition for $h=n$.
\begin{cor}\label{mulformdiagN}
	For any {$A \in \MNZNS(n)$},
	the  multiplication formula in Proposition \ref{mulformdiag} holds for {$h=n$} in {${\SQvnrR}$} for all {$r\geq\snorm{A}$}.
\end{cor}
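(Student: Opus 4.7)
The plan is to observe that the only role of the SDP hypothesis in Proposition \ref{mulformdiag} was to enable the application of Proposition \ref{phidiag1} (and hence the commutation $c_{\widetilde\lambda_{h-1}+\AK(h,k)+p}T_{d_{A+\lambda}}=T_{d_{A+\lambda}}c_{\widetilde a_{h-1,k}+p}$) inside the computation of $\phi_{(\mu|E_{h,h})}\phi_{(\SE A+\lambda|\SO A)}$ for each $\lambda\in\CMN(n,r-\snorm A)$ and each $\mu\in\CMN(n,r-1)$ with $\co(\mu+E_{h,h})=\ro(A+\lambda)$. Therefore, to prove the corollary, it suffices to verify that when $h=n$, the required SDP condition on the $n$-th row is automatic for every $(A+\lambda)\in\MNZ(n,r)$, so that no extra hypothesis is needed on $A$.

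The key input is Lemma \ref{shiftonN}, which asserts precisely that every matrix in $\MNZ(n,r)$ satisfies the SDP condition on its $n$-th row. Thus for any $A\in\MNZNS(n)$ with $\snorm A\le r$ and any $\lambda\in\CMN(n,r-\snorm A)$, the matrix $(A+\lambda)=(\SE A+\lambda|\SO A)\in\MNZ(n,r)$ automatically satisfies the SDP condition on the $n$-th row. Consequently the hypothesis of Proposition \ref{mulformdiag} is fulfilled for $h=n$ without any further assumption on $A$, and the full expansion of $\ABJRS(\mathrm O,E_{n,n},\bs 0,r)\cdot\ABJRS(\SE A,\SO A,\bs j,r)$ displayed there is valid verbatim with $h=n$.

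Alternatively, one may phrase this as a direct transfer via Remark \ref{phidiagN}: that remark already records that the SDP hypothesis in Proposition \ref{phidiag1} is unnecessary when $h=n$, so each local computation of $\phi_{(\mu|E_{n,n})}\phi_{(\SE A+\lambda|\SO A)}$ used inside the proof of Proposition \ref{mulformdiag} holds unconditionally. Summing these local identities over $\lambda\in\CMN(n,r-\snorm A)$ against the weights ${v}^{\lambda\cdot\bs j}$ exactly as in the proof of Proposition \ref{mulformdiag}, and then applying Lemma \ref{formstepodd}(3) and the identity \eqref{eq simple diag} to reassemble the sums $\mathcal Y_1$ and $\mathcal Y_2$ with $h$ replaced by $n$, produces the stated formula.

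There is no real obstacle here: the corollary is essentially a corollary of Lemma \ref{shiftonN} applied uniformly in $\lambda$, combined with the proof of Proposition \ref{mulformdiag}. The only mild point to be careful about is that the summation index $\lambda$ ranges over compositions of $r-\snorm A$ and that $(A+\lambda)$ must lie in $\MNZ(n,r)$ for Lemma \ref{shiftonN} to apply, which is automatic from the definitions. Hence no new computation is required beyond invoking Lemma \ref{shiftonN} (equivalently Remark \ref{phidiagN}) and repeating, with $h=n$, the argument of Proposition \ref{mulformdiag}.
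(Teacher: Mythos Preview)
Your proposal is correct and follows exactly the paper's own reasoning: the paper simply remarks ``By Lemma \ref{shiftonN} and Remark \ref{phidiagN}, we may drop the SDP condition for $h=n$'' before stating the corollary, and your argument is a faithful elaboration of precisely that observation.
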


Considering  the SDP condition
and \eqref{def_ajr},
and applying  Proposition \ref{phiupper1},
we have the following Proposition \ref{mulformodd1} and Proposition \ref{mulformodd2}. Their detailed proof are given in Appendix A.
\begin{prop}\label{mulformodd1}
	Let 
	{$h \in [1,n-1]$} and  {$A \in \MNZNS(n)$}.
Assume for each $1\leq k\leq n$ and $\lambda \in \CMN(n, r-\snorm{A})$ with {${(A+\lambda)}_{h+1,k} \ge 1$},
{$({A+\lambda})^+_{h,k}$} satisfies SDP condition at $(h,k)$,
then the following multiplication formula  holds in {${\SQvnrR}$} for all  {$r\geq \snorm{A}$}:
\begin{align*}
& \ABJRS(\mathrm{O}, E_{h, h+1}, \bs{ 0 },  r  ) \cdot \ABJRS( \SE{A}, \SO{A}, \bs{j},  r ) \\
&=
	\sum_{k<h}
	{(-1)}^{{ {\SOE{\widetilde{a}}}_{h-1,k}} + \parity{A}}
	{v}^{ 2{\BK(h,k)}  + 2 \SOE{a}_{h+1,k} }
	\ABJRS( \SE{A} - E_{h+1, k}, \SO{A}  + E_{h,k}, \bs{j} + 2 \bs{\ep}_{h},  r ) \\
	& \qquad +
	{(-1)}^{{ {\SOE{\widetilde{a}}}_{h-1,h}} + \parity{A}}
	{v}^{ 2 {\BK(h,h)}  + 2 \SOE{a}_{h+1,h} }
	\ABJRS( \SE{A} - E_{h+1, h}, \SO{A}  + E_{h,h}, \bs{j}, r )\\
	& \qquad +
	{(-1)}^{{ {\SOE{\widetilde{a}}}_{h-1,h+1}} + \parity{A}}
	{v}^{ 2 {\BK(h,h+1)}  + 2 \SOE{a}_{h+1,h+1} +  j_{h+1} }
	\ABJRS( \SE{A}, \SO{A}  + E_{h,h+1}, \bs{j},  r ) \\
	& \qquad +
	\sum_{ \substack{ k > h+1}  }
	{(-1)}^{{ {\SOE{\widetilde{a}}}_{h-1,k}} + \parity{A}}
	{v}^{ 2 {\BK(h,k)}  + 2 \SOE{a}_{h+1,k} }
	\ABJRS( \SE{A} - E_{h+1, k}, \SO{A}  + E_{h,k}, \bs{j}, r )\\
	&\qquad +
	\sum_{k < h}
	{(-1)}^{{ {\SOE{\widetilde{a}}}_{h-1,k}} + 1 - \SOE{a}_{h,k} + \parity{A}}
	{v}^{ 2 {\BK(h,k)} }
	\VSTEP{ \SEE{a}_{h,k} +1}
	\ABJRS( \SE{A}+ E_{h,k}, \SO{A} - E_{h+1, k}, \bs{j} + 2 \bs{\ep}_{h},  r )  \\
	& \qquad +
	{(-1)}^{{ {\SOE{\widetilde{a}}}_{h-1,h}} + 1 - \SOE{a}_{h,h} + \parity{A} }
	\frac{{v}^{ 2 {\BK(h,h)} - j_h} } { {v}^2 - 1 }
		( \ABJRS( \SE{A}, \SO{A}- E_{h+1, h} , {\bs{j}} + 2 \bs{\ep}_{h}, r )
		-  \ABJRS(  \SE{A}, \SO{A}- E_{h+1, h} , {\bs{j}} , r ) ) \\
	& \qquad +
	\sum_{k > h}
	{(-1)}^{{ {\SOE{\widetilde{a}}}_{h-1,k}} + 1 - \SOE{a}_{h,k} + \parity{A}}
	{v}^{ 2 {\BK(h,k)}  }
	\VSTEP{ \SEE{a}_{h,k} +1}
	\ABJRS( \SE{A}  + E_{h,k}, \SO{A} - E_{h+1, k}, \bs{j},  r )  \\
	& \qquad +
	\sum_{k < h}
	{(-1)}^{{ {\SOE{\widetilde{a}}}_{h-1,k}} + \parity{A}}
	{v}^{ 2 {\BK(h,k)}  + 2 \SOE{a}_{h+1,k} - 2 }
	{\VSTEPPDR{ {a}_{h,k} + 1}} \\
	& \qquad \qquad \cdot
	\ABJRS( \SE{A} -E_{h+1, k} + 2 E_{h,k}, \SO{A} - E_{h,k}, {\bs{j}} + 2 \bs{\ep}_{h} ,  r )  \\
	& \qquad +
	{(-1)}^{{ {\SOE{\widetilde{a}}}_{h-1,h}} + \parity{A} }
	\frac{{v}^{ 2 {\BK(h,h)} + 2  \SOE{a}_{h+1,h} - 2 j_h - 2}}{ {v}^4 - 1 }
	\Big\{
	\ABJRS( \SE{A}  -E_{h+1, h}, \SO{A}  -E_{h,h}, \bs{j} + 4 \bs{\ep}_{h},  r ) \\
	&\qquad \qquad -
	({v}^2 +1) \ABJRS( \SE{A} -E_{h+1, h}, \SO{A}  -E_{h,h}, \bs{j} + 2 \bs{\ep}_{h},  r ) \\
	&\qquad \qquad +
	{v}^2 \ABJRS( \SE{A} -E_{h+1, h}, \SO{A}  -E_{h,h}, \bs{j},  r )  \Big \} \\
	& \qquad +
	{(-1)}^{{ {\SOE{\widetilde{a}}}_{h-1,h+1}} + \parity{A}}
	{v}^{ 2  {\BK(h,h+1)}  + 2 \SOE{a}_{h+1,h+1} + j_{h+1} - 2}
	\VSTEPPDR{ {a}_{h,h+1} + 1} \\
	& \qquad \qquad \cdot
	\ABJRS( \SE{A}  + 2 E_{h,h+1}, \SO{A} - E_{h,h+1}, \bs{j},  r  )  \\
	& \qquad +
	\sum_{k>h+1}
	{(-1)}^{{ {\SOE{\widetilde{a}}}_{h-1,k}} + \parity{A}}
	{v}^{ 2 {\BK(h,k)} + 2\SOE{a}_{h+1,k} - 2}
	\VSTEPPDR{ {a}_{h,k} + 1} \\
	& \qquad \qquad \cdot
	\ABJRS( \SE{A}  -E_{h+1, k} + 2 E_{h,k}, \SO{A} - E_{h,k}, \bs{j},  r ),
\end{align*}
\end{prop}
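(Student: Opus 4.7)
The plan is to follow exactly the template used in the proof of Proposition~\ref{mulformdiag}, with Proposition~\ref{phiupper1} playing the role that Proposition~\ref{phidiag1} did there. First I would expand the left-hand side by \eqref{def_ajr}:
\begin{align*}
\ABJRS(\mathrm{O}, E_{h,h+1}, \bs{0}, r) \cdot \ABJRS(\SE{A},\SO{A},\bs{j},r)
&= \sum_{\mu,\lambda} {v}^{\lambda\cdot\bs{j}}\, \Phi_{(\mu-E_{h+1,h+1}|E_{h,h+1})} \Phi_{(\SE{A}+\lambda|\SO{A})},
\end{align*}
where the sum runs over $\lambda\in\CMN(n,r-\snorm{A})$ and $\mu\in\CMN(n,r-1)$ with $\co(\mu-E_{h+1,h+1}+E_{h,h+1}|O) = \ro(A+\lambda)$. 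Since the parity of $X=(\mu-E_{h+1,h+1}|E_{h,h+1})$ is $1$, Proposition~\ref{prop_PhiAPhiB} contributes a global sign $(-1)^{\parity{A}}$ when the products $\phi_X\phi_{(\SE{A}+\lambda|\SO{A})}$ from Proposition~\ref{phiupper1} are rewritten in terms of $\Phi$.

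Next, under the hypothesis that each $(A+\lambda)^+_{h,k}$ satisfies the SDP condition at $(h,k)$ (exactly the hypothesis of Proposition~\ref{phiupper1}), I would substitute the three families of terms of that proposition into the sum. This splits the product into three pieces ${\fcY}_1 + {\fcY}_2 + {\fcY}_3$ corresponding to the three lines in Proposition~\ref{phiupper1}. In each ${\fcY}_i$ I would separate the inner sum over $k$ into four ranges $k<h$, $k=h$, $k=h+1$, $k>h+1$, since the exponent of $q$ from $\BK(h,k)$ picks up a contribution $\lambda_h$ when $k<h$ and the diagonal indices $k\in\{h,h+1\}$ are exactly those where the entry $a_{h,k}$ or $a_{h+1,k}$ that appears in the structure constants is shifted by $\lambda_h$ or $\lambda_{h+1}$.

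For $k\notin\{h,h+1\}$ the sums over $\lambda$ collapse directly into the $\AJRS$-notation via the definition \eqref{def_ajr}, possibly with a shift by $2\bs{\ep}_h$ when $k<h$ (accounting for the factor ${q}^{\lambda_h}$). For the delicate cases $k=h$ in ${\fcY}_2$ and $k=h+1$ in ${\fcY}_1$, the factor $\STEP{\SEE{a}_{h,h}+\lambda_h+1}$ or a plain term attached to a $\lambda_{h+1}$-shifted index is converted using \eqref{eq simple diag} and Lemma~\ref{formstepodd}(2), producing the difference of two $\AJRS$-terms divided by $({v}^2-1)$. For the term in ${\fcY}_3$ at $k=h$, the factor $\STEPPDR{a_{h,h}+\lambda_h+1}$ is converted using Lemma~\ref{formstepodd}(1), giving the three-term combination with coefficient $1/({v}^4-1)$. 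The signs $(-1)^{{\SOE{\widetilde{a}}}_{h-1,k}+\parity{A}}$ and $(-1)^{{\SOE{\widetilde{a}}}_{h-1,k}+1-\SOE{a}_{h,k}+\parity{A}}$ are read off directly from the three lines of Proposition~\ref{phiupper1} after multiplying by the global $(-1)^{\parity{A}}$; note that in ${\fcY}_2$ the sign picks up the extra factor $(-1)^{1-\SOE{a}_{h,k}}$ from $\parity{A}+1$ relative to the second line of that proposition.

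The main obstacle is purely bookkeeping rather than conceptual: one must carefully track (a) the exponent of ${v}$ in the coefficients, which combines contributions from $\BK(h,k)$, $\SOE{a}_{h+1,k}$, the $\VSTEP{\cdot}$ shifts, and the $\lambda$-dependent shifts that get absorbed into $\bs{j}\mapsto\bs{j}+2\bs{\ep}_h$ or $\bs{j}+2\bs{\ep}_{h+1}$; and (b) the parity signs, which depend on the partial sum $\SOE{\widetilde{a}}_{h-1,k}$ of the odd row before row $h$ and on $\SOE{a}_{h,k}$ in the second family. After this routine verification in each of the four $k$-ranges for each of the three families, collecting terms gives precisely the eleven lines of the stated formula, completing the proof. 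The argument for the companion proposition with $E_{h+1,h}$ in place of $E_{h,h+1}$ will be analogous, using Proposition~\ref{philower1}.
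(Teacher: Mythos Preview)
Your proposal is correct and follows essentially the same approach as the paper's own proof: expand via \eqref{def_ajr}, pull out the global sign $(-1)^{\parity{A}}$ from Proposition~\ref{prop_PhiAPhiB}, apply Proposition~\ref{phiupper1} to split into ${\fcY}_1+{\fcY}_2+{\fcY}_3$, and in each piece separate the four ranges $k<h$, $k=h$, $k=h+1$, $k>h+1$, handling the diagonal cases with \eqref{eq simple diag} and Lemma~\ref{formstepodd}(1),(2). The only minor imprecision is your remark that the extra $(-1)^{1-\SOE{a}_{h,k}}$ in ${\fcY}_2$ comes ``from $\parity{A}+1$''; in fact that factor is already present in the second line of Proposition~\ref{phiupper1} and simply survives multiplication by $(-1)^{\parity{A}}$.
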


\begin{prop}\label{mulformodd2}
	Let 
	{$h \in [1,n-1]$} and {$A \in \MNZNS(n)$}.
Assume for any   $\lambda \in \CMN(n, r-\snorm{A})$,
{$({A+\lambda})$}  satisfies the SDP condition on the $h$-th row,
then the following multiplication formulas hold in {${\SQvnrR}$} for all  {$r\geq \snorm{A} $}.:
\begin{align*}
& \ABJRS(\mathrm{O}, E_{h+1, h}, \bs{ 0 },  r  ) \cdot \ABJRS( \SE{A}, \SO{A}, \bs{j},  r ) \\
&=
 	\sum_{k<h}
	{(-1)}^{{\SOE{\widetilde{a}}}_{h-1,k} + \SOE{a}_{h,k} + \parity{A}}
	{v}^{ 2 \AK(h+1,k)}
	\ABJRS( \SE{A} - E_{h,k}, \SO{A}+ E_{h+1, k}, \bs{j},  r ) \\
	&\qquad +
	{(-1)}^{{\SOE{\widetilde{a}}}_{h-1,h} + \SOE{a}_{h,h} + \parity{A}}
	{v}^{ 2 \AK(h+1,h) +  j_h }
	\ABJRS( \SE{A} , \SO{A}+ E_{h+1, h}, \bs{j},  r ) \\
	&\qquad +
	{(-1)}^{{\SOE{\widetilde{a}}}_{h-1,h+1} + \SOE{a}_{h,h+1} + \parity{A}}
	{v}^{ 2 \AK(h+1,h+1)}
	\ABJRS( \SE{A} - E_{h,h+1}, \SO{A}+ E_{h+1, h+1}, \bs{j},  r ) \\
	&\qquad +
 	\sum_{k>h+1}
	{(-1)}^{{\SOE{\widetilde{a}}}_{h-1,k} + \SOE{a}_{h,k} + \parity{A}}
	{v}^{  2 \AK(h+1,k)}
	\ABJRS( \SE{A} - E_{h,k}, \SO{A}+ E_{h+1, k}, \bs{j} + 2 \bs{\ep}_{h+1},  r ) \\
	&\qquad +
 	\sum_{k<h}
	{(-1)}^{{\SOE{\widetilde{a}}}_{h-1,k} + \SOE{a}_{h,k} + \parity{A} + 1}
	{v}^{ 2 \AK(h+1,k) - 2}
	\VSTEPPDR{ {a}_{h+1,k} +1} \\
	& \qquad \qquad  \qquad  \cdot
	\ABJRS( \SE{A}  - E_{h,k} + 2E_{h+1, k}, \SO{A}  -E_{h+1, k}, \bs{j},  r ) \\
	& \qquad +
	{(-1)}^{{\SOE{\widetilde{a}}}_{h-1,h} + \SOE{a}_{h,h}+ \parity{A} + 1}
	{v}^{ 2 \AK(h+1,h) + j_h - 2}
	\VSTEPPDR{ {a}_{h+1,h} +1} \\
	& \qquad \qquad  \qquad  \cdot
	\ABJRS( \SE{A} + 2E_{h+1, h}, \SO{A}  -E_{h+1, h}, \bs{j},  r ) \\
	& \qquad +
	{(-1)}^{{\SOE{\widetilde{a}}}_{h-1,h+1} + \SOE{a}_{h,h+1} + \parity{A} + 1}
	\frac{{v}^{ 2 \AK(h+1,h+1) - 2 j_{h+1} - 2 }}{ {v}^4 - 1 } \Big \{
	\ABJRS( \SE{A}  - E_{h,h+1}, \SO{A}  -E_{h+1, h+1}, \bs{j} + 4 \bs{\ep}_{h+1},  r ) \\
	&\qquad \qquad -
	 ({v}^2 + 1) \ABJRS( \SE{A}  - E_{h,h+1}, \SO{A}  -E_{h+1, h+1}, \bs{j} + 2 \bs{\ep}_{h+1},  r ) \\
	&\qquad \qquad +
	 {v}^2 \ABJRS( \SE{A}  - E_{h,h+1}, \SO{A}  -E_{h+1, h+1}, \bs{j},  r )
	 \Big \} \\
	& \qquad +
 	\sum_{k>h+1}
	{(-1)}^{{\SOE{\widetilde{a}}}_{h-1,k} + \SOE{a}_{h,k} + \parity{A} + 1}
	{v}^{ 2 \AK(h+1,k) - 2}
	\VSTEPPDR{ {a}_{h+1,k} +1} \\
	& \qquad \qquad  \qquad  \cdot
	\ABJRS( \SE{A}  - E_{h,k} + 2E_{h+1, k}, \SO{A}  -E_{h+1, k}, \bs{j}+ 2 \bs{\ep}_{h+1},  r ) \\
	& \qquad + \sum_{k < h}
 	{(-1)}^{{\SOE{\widetilde{a}}}_{h-1,k} + \parity{A} + 1}
	{v}^{ 2 \AK(h+1,k) + 2 {a}_{h,k} - 2}
	\VSTEP{ \SEE{a}_{h+1,k}+1}
	\ABJRS( \SE{A} +  E_{h+1,k}, \SO{A}  - E_{h,k}, \bs{j},  r ) \\
	& \qquad +
 	{(-1)}^{{\SOE{\widetilde{a}}}_{h-1,h} + \parity{A} + 1}
	{v}^{ 2 \AK(h+1,h) }
	\VSTEP{ \SEE{a}_{h+1,h}+1}
	\ABJRS( \SE{A} +  E_{h+1,h}, \SO{A}  - E_{h,h}, \bs{j} + 2 \bs{\ep}_{h},  r ) \\
	& \qquad +
 	{(-1)}^{{\SOE{\widetilde{a}}}_{h-1,h+1} + \parity{A} + 1}
		\frac{{v}^{ 2 \AK(h+1,h+1) + 2 {a}_{h,h+1} - j_{h+1} - 2} }{  {v}^2 - 1  }\Big\{
	\ABJRS( \SE{A}, \SO{A}  - E_{h,h+1}, \bs{j} + 2 \bs{\ep}_{h+1},  r ) \\
	& \qquad \qquad  -
	\ABJRS( \SE{A}, \SO{A}  - E_{h,h+1}, \bs{j},  r )
	 \Big \} \\
	& \qquad +
	\sum_{k>h+1}
 	{(-1)}^{{\SOE{\widetilde{a}}}_{h-1,k} + \parity{A} + 1}
	{v}^{ 2 \AK(h+1,k) + 2{a}_{h,k} - 2}
	\VSTEP{ \SEE{a}_{h+1,k}+1}
	\ABJRS( \SE{A}  + E_{h+1,k}, \SO{A}  - E_{h,k}, \bs{j} + 2 \bs{\ep}_{h+1},  r ).
\end{align*}
\end{prop}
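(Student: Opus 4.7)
The plan is to prove Proposition \ref{mulformodd2} in complete parallel to the proof of Proposition \ref{mulformodd1}, now invoking Proposition \ref{philower1} in place of Proposition \ref{phiupper1}. First I would expand $\ABJRS(\mathrm{O}, E_{h+1, h}, \bs{0}, r) \cdot \ABJRS(\SE{A}, \SO{A}, \bs{j}, r)$ via \eqref{def_ajr}. The orthogonality constraint $\co(\mu|E_{h+1,h}) = \ro(\SE{A}+\lambda|\SO{A})$ forces $\mu = \ro(A)+\lambda-\bs{\ep}_{h}$, so the outer double sum collapses to a single sum over $\lambda \in \CMN(n, r-\snorm{A})$. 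Next, Proposition \ref{prop_PhiAPhiB} rewrites each $\Phi$-product as a $\phi$-product times the twist sign $(-1)^{\parity{A}}$; since the SDP hypothesis on $(A+\lambda)$ (on the $h$-th row) holds for every admissible $\lambda$, Proposition \ref{philower1} applies termwise and decomposes the expression into three pieces $\fcY_1+\fcY_2+\fcY_3$ corresponding to the three lines of its right-hand side.

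Each piece $\fcY_i$ is a sum over $k \in [1,n]$ that I would split into the four sub-cases $k<h$, $k=h$, $k=h+1$, $k>h+1$. The structure constants $\AK(h+1,k)(A+\lambda)$ equal $\AK(h+1,k)(A)$ for $k \leq h+1$ and $\AK(h+1,k)(A)+\lambda_{h+1}$ for $k \geq h+2$, so factoring out the $A$-part and absorbing $q^{\lambda_{h+1}}=v^{2\lambda_{h+1}}$ into $v^{\lambda\cdot\bs{j}}$ produces precisely the $\bs{j}+2\bs{\ep}_{h+1}$ shift appearing in the $k>h+1$ terms. In the diagonal sub-cases $k=h$ and $k=h+1$, the matrix modifications $\pm E_{h,k}$ and $\pm E_{h+1,k}$ overlap the diagonal entries indexed by $\lambda_h$ or $\lambda_{h+1}$, so \eqref{eq simple diag} is applied when a single shift of $\lambda_h$ or $\lambda_{h+1}$ is involved (yielding the $v^{j_h}$ or $v^{j_{h+1}}$ factors in those subcases) and Lemma \ref{formstepodd}(1)--(3) is applied when a $q$-step function of $\lambda_h$ or $\lambda_{h+1}$ appears in the summand, producing the $\frac{1}{{v}^{2}-1}$ and $\frac{1}{{v}^{4}-1}$ denominators of the statement.

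Matching conventions then closes the proof: the sign from Proposition \ref{philower1} combined with the twist sign $(-1)^{\parity{A}}$ yields the three sign prefixes $(-1)^{\SOE{\widetilde{a}}_{h-1,k}+\SOE{a}_{h,k}+\parity{A}}$, $(-1)^{\SOE{\widetilde{a}}_{h-1,k}+\SOE{a}_{h,k}+\parity{A}+1}$ and $(-1)^{\SOE{\widetilde{a}}_{h-1,k}+\parity{A}+1}$ appearing in the three blocks on the right-hand side, while the substitution $q=v^{2}$ converts the structure constants to the stated powers of $v$. The main obstacle will be the systematic bookkeeping across the twelve subcases (three $\fcY_i$'s each split into four $k$-ranges): simultaneously tracking the several signs, the extra $\lambda_{h+1}$ absorbed by $\AK(h+1,k)$ in the $k>h+1$ branch, and whether the coefficients $\STEP{\cdot}$ and $\STEPPDR{\cdot}$ depend on the summation variable $\lambda$. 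Once those are reconciled case by case, the remaining simplifications are the same telescoping computations already used in the proofs of Proposition \ref{mulformeven} and Proposition \ref{mulformodd1}.
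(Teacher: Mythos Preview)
Your proposal is correct and follows essentially the same route as the paper's proof in Appendix~A: expand via \eqref{def_ajr}, apply Proposition~\ref{prop_PhiAPhiB} and Proposition~\ref{philower1} under the SDP hypothesis to obtain $(-1)^{\parity{A}}(\fcY_1+\fcY_2+\fcY_3)$, then split each $\fcY_i$ into the four $k$-ranges and simplify the diagonal cases with \eqref{eq simple diag} and Lemma~\ref{formstepodd}. Your bookkeeping observations (the $\lambda_{h+1}$ absorbed into $\AK(h+1,k)$ for $k>h+1$, the $\lambda_h$-dependence at $k=h$ in $\fcY_3$, and the sign decomposition) all match the paper's computation.
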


\spaceintv
\section{Superalgebra homomorphisms from $\Uvqn$ to $\bsSQvnr$}\label{defining relations}
Recall that   {${v}$}, {${q}$} are indeterminate and  {${v}^2 = {q} $}.
By Proposition \ref{mulformzero}, {$\AJRS({O}, \bs{0}, r)$} is the identity element in {${\SQvnrR}$}.
Define the superalgebra
$$
{\SQvnR}  = \prod_{r \ge 0 } {\SQvnrR}.
$$
The elements $(f_r)_{r\geq0}$ in $\SQvnR$ 
are written as formal sum $\sum_{r\geq0}f_r$
 which is homogenous of degree $\bar i$ if every $f_r$ is homogenous of degree $\bar i$.
In particular, for any  $A \in \MNZNS(n), \bs{j} \in {\ZZ}^{n}$, 
let $\AJS(A, \bs{j}) := \sum_{r \ge 0 } \AJRS(A, \bs{j}, r) \in {\SQvnR}$ and form the vector space
\begin{equation}\label{Anv}
    \USnv = \tspan_{\Qv} \{\AJS(A, \bs{j})\  \where  A \in \MNZNS(n), \bs{j} \in {\ZZ}^{n} \} \subset {\SQvnR}.
\end{equation}

\begin{rem}\label{rem:prod alg}
By definition of the product in the algebra ${\SQvnR} $, since the coefficients of the multiplication formulas in {${\SQvnrR}$} given in Proposition \ref{mulformzero}, \ref{mulformeven}, \ref{mulformdiag}, \ref{mulformodd1},  and  \ref{mulformodd2}
are independent of $r$ for all $r\geq |A|$ (and $\AJRS(A, \bs{j}, r)=0$ if $r<|A|$),
 it follows that removing the $r$ in every involved {$\AJRS(A, \bs{j}, r)$} yields
multiplication formulas in {${\SQvnR}  $}.
\end{rem}

We introduce the following elements\footnote{\label{raw2}These elements play the role as generators for $\Uvqn$; see Theorems \ref{qqschur_reltion} and \ref{map_iso} below. \eqref{mult-GA} shows that they differ by a factor from those elements used in \cite[Lemma 5.4(b)]{BLM} or \cite[Theorem 8.1]{DG}. See Remark \ref{raw}(2).} in $\USnv$:
\begin{equation}\label{raw generator}
\aligned
&G_{i}^{\pm 1}
	= \ABJS(\mathrm{O}, \mathrm{O}, \pm \bs{\ep}_{i}), \qquad
X_{j} = \ABJS(E_{j, j+1}, \mathrm{O}, -\bs{\ep}_{j}) , \qquad
Y_{j} = \ABJS(E_{j+1, j}, \mathrm{O}, -\bs{\ep}_{j+1}) , \\
& G_{\ol{i}} 	=  \ABJS(\mathrm{O}, E_{i,i}, - \bs{\ep}_{i}), \qquad
 X_{\ol{j}} =    \ABJS( \mathrm{O}, E_{j,j+1}, - \bs{\ep}_{j} ),  \qquad
 Y_{\ol{j}} 	=   \ABJS(\mathrm{O}, E_{j+1,j},   - \bs{\ep}_{j+1} ).
 \endaligned
\end{equation}
with {$1 \le i \le n, 1\le j \le n-1$}.
Observe that by Corollary \ref{mulformzerocor} and Remark \ref{rem:prod alg},
for $1\leq i\leq n$ and {$\bs{j}\in \ZZ^n$},
 we have
\begin{align}\label{mult-GA}
G_i^{\pm} \AJS(A, \bs{j})
	=v^{\pm\sum_{u=1}^na_{i,u}} \AJS(A, \bs{j} \pm \bs{\ep}_i),
	\quad
\AJS(A, \bs{j}) G_i^{\pm} =v^{\pm\sum_{u=1}^na_{u,i}} \AJS(A, \bs{j} \pm \bs{\ep}_i).
\end{align}
Moreover for $1\leq j\leq n-1$, we have
\begin{equation}\label{XYbar-comp}
\begin{aligned}
X_j&=vG_j^{-1}\ABJS(E_{j, j+1},O ,\bs{0} )=\ABJS(E_{j, j+1},\mathrm{O}, \bs{0} )G_j^{-1}, \\
 X_{\ol{j}}&=vG_{j}^{-1}\ABJS(\mathrm{O},E_{j, j+1} ,\bs{0} )=\ABJS(\mathrm{O},E_{j, j+1}, \bs{0} )G_j^{-1}, \\
Y_j&=vG_{j+1}^{-1}\ABJS(E_{j+1, j},\mathrm{O}, \bs{0} )=\ABJS(E_{j+1, j},\mathrm{O}, \bs{0} )G_{j+1}^{-1}, \\
 Y_{\ol{j}}&=vG_{j+1}^{-1}\ABJS(\mathrm{O},E_{j+1, j}, \bs{0} )=\ABJS(\mathrm{O},E_{j+1, j}, \bs{0} )G_{j+1}^{-1}.
\end{aligned}
\end{equation}

Because some of the matrices appearing in the computation of the multiplication
{$ X_{\ol{h}} Y_{\ol{h}} $}, {$ X_{\ol{h}} Y_{h}$}
{$ Y_{\ol{h}} X_{\ol{h}} $}, {$ Y_{\ol{h}} X_{h}$}
do not satisfy the SDP condition, the formulas in Section \ref{sec_spanningsets} are not applicable. We shall compute them separately in Lemma \ref{mulform_ef}.
Simliarly, since some of the matrices involved in computing the multiplications
{$ X_{\ol{h}} X_{\ol{h+1}} $}, {$ X_{\ol{h+1}} X_{\ol{h}}$},
{$ Y_{\ol{h}} Y_{\ol{h+1}} $}, {$ Y_{\ol{h+1}} Y_{\ol{h}}$}
do not satisfy the SDP condition,  
we shall compute them  in Lemma \ref{relation_eei}.
\begin{lem}\label{mulform_ef}
For any integer {$h$} with {$ 1 \le h \le n-1$}, the following equations hold in {${\SQvnR} $}:
\begin{align*}
{\rm (1)} \qquad
X_{\ol{h}} Y_{\ol{h}} + Y_{\ol{h}} X_{\ol{h}}
&= \frac{G_{h} G_{h+1} - G_{h}^{-1} G_{h+1}^{-1}}{{v} - {v}^{-1}}
	 + ({v} - {v}^{-1}) G_{\ol{h}} G_{\ol{h+1}} ,\\
{\rm (2)} \qquad
X_{h} Y_{\ol{h}}    - Y_{\ol{h}} X_{h}
&=   G_{\ol{h}}  G_{h+1}^{-1}   -  G_{h}^{-1} G_{\ol{h+1}}, \\
{\rm (3)} \qquad
X_{\ol{h}} Y_{h} - Y_{h} X_{\ol{h}}
&=  G_{h+1} G_{\ol{h}}  - G_{\ol{h+1}} G_{h} .
\end{align*}
\end{lem}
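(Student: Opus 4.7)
The plan is to reduce each product to a finite sum of the computations carried out in Propositions~\ref{phiupper2} and \ref{philower2}, since those special multiplication formulas already cover precisely the matrices that violate the SDP condition in this setting.

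First I would use \eqref{XYbar-comp} to strip the $G$-factors off the generators and rewrite, for each $r$,
\[
  X_h = \ABJRS(E_{h,h+1},\mathrm O,\bs 0,r)\cdot G_h^{-1},\qquad
  Y_{\ol h}=\ABJRS(\mathrm O,E_{h+1,h},\bs 0,r)\cdot G_{h+1}^{-1},
\]
and similarly for the other generators. Using \eqref{mult-GA} to commute the $G_\star^{\pm 1}$ all the way to the right of the product then reduces each of the three products to an $r$-independent scalar times a product of two elements of the form $\ABJRS(\star|\star,\bs 0,r)$ where $\star|\star\in\{(E_{h,h+1}|\mathrm O),(\mathrm O|E_{h,h+1}),(E_{h+1,h}|\mathrm O),(\mathrm O|E_{h+1,h})\}$. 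Expanding each factor via \eqref{def_ajr} gives a double sum
\[
  \ABJRS(X,\bs 0,r)\cdot\ABJRS(Y,\bs 0,r)=\sum_{\substack{\mu,\lambda\\ \co(\mu+X)=\ro(\lambda+Y)}}\Phi_{(\mu+X)}\Phi_{(\lambda+Y)},
\]
and the constraint $\co(\mu+X)=\ro(\lambda+Y)$ forces $\lambda$ to be determined by $\mu$ (with a small correction), so each double sum collapses to a single sum over $\mu\in\CMN(n,r-1)$.

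Second, for each such $\mu$ I would evaluate $\Phi_{(\mu+X)}\Phi_{(\lambda+Y)}$ using Proposition~\ref{prop_PhiAPhiB} together with the explicit formulas of Propositions~\ref{phiupper2} and \ref{philower2}; the parity sign $(-1)^{p(X)p(Y)}$ contributes a $+1$ for (1) and a $-1$ that combines with the commutator bracket for (2) and (3). Each of the four subcases then produces three or four terms of one of the forms
\[
  \Phi_{(\mu-E_{?,?}\pm E_{?,?}|\,E_{?,?})},\quad
  \Phi_{(\mu|E_{h,h})},\quad\Phi_{(\mu|E_{h+1,h+1})},\quad\Phi_{(\mu+E_{h,h}|\mathrm O)},\quad\text{etc.},
\]
weighted by $q^{\mu_h},q^{\mu_{h+1}}$ or $\STEP{\mu_h+1}$.

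Third, I would sum over $\mu$: the ``off-diagonal'' contributions reassemble directly into long elements $\ABJS(\mathrm O,E_{h,h+1}+E_{h+1,h},\bs j)$-type terms which, when one computes the anticommutator in (1) or the commutators in (2) and (3), cancel in pairs. The ``diagonal'' contributions reassemble, after applying Lemma~\ref{formstepodd}(2) to convert $\sum_\mu v^{\mu\cdot\bs j}\STEP{\mu_h+1}\Phi_{(\mu+E_{h,h}|\mathrm O)}$-type sums into differences of $\ABJS$ at shifted weights, into combinations of $G_h^{\pm 1}G_{h+1}^{\pm 1}$ and $G_{\ol h}G_{h+1}^{\pm 1}$, $G_h^{\pm 1}G_{\ol{h+1}}$, $G_{\ol h}G_{\ol{h+1}}$. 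The denominators $v^2-1$ produced by Lemma~\ref{formstepodd}(2) combine with the scalar prefactors coming from the $G$-commutations to give exactly the $\frac{G_hG_{h+1}-G_h^{-1}G_{h+1}^{-1}}{v-v^{-1}}$ term in (1); the coefficient $(q-1)=(v-v^{-1})(v+v^{-1})v^0$ in the last term of Proposition~\ref{phiupper2}(2) is the origin of the $(v-v^{-1})G_{\ol h}G_{\ol{h+1}}$ piece in (1).

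The main obstacle is purely bookkeeping: each product on the left-hand side of (1)--(3) unfolds into eight to twelve $\mu$-indexed contributions (four terms from each of Propositions~\ref{phiupper2},\,\ref{philower2}, further split into diagonal/off-diagonal cases), and one must verify that after applying Lemma~\ref{formstepodd} and the $G$-commutation \eqref{mult-GA}, the $v$-powers (which involve $\mu_h$, $\mu_{h+1}$ and the row/column sums) and the parity signs conspire so that all but the announced right-hand sides cancel. I expect the anticommutator in (1) to be the most delicate: there the $-\STEP{\mu_h+1}q^{\mu_{h+1}}\phi_{(\mu+E_{h,h}|\mathrm O)}$ term from Proposition~\ref{phiupper2}(2) combines with its $-\STEP{\mu_{h+1}+1}$ counterpart from Proposition~\ref{philower2}(2), and only after writing $q^{\mu_{h+1}}\STEP{\mu_h+1}+\STEP{\mu_{h+1}+1}=\STEP{\mu_h+\mu_{h+1}+1}+(\text{boundary})$ does one see the expected quantum-integer identity producing the $\frac{G_hG_{h+1}-G_h^{-1}G_{h+1}^{-1}}{v-v^{-1}}$ term.
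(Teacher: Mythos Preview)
Your overall strategy is exactly the paper's: strip the $G$-factors via \eqref{XYbar-comp}, expand each long element by \eqref{def_ajr}, evaluate the resulting $\Phi_X\Phi_Y$ products using the special short formulas, and resum using Lemma~\ref{formstepodd}. Two corrections, though.

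First, your parity-sign remark is backwards. For (1) both factors are odd, so $(-1)^{p(X)p(Y)}=-1$; for (2) and (3) one factor is even, so the sign is $+1$. This matters immediately when you unpack $\Phi_B\Phi_A$ via Proposition~\ref{prop_PhiAPhiB}.

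Second, Propositions~\ref{phiupper2} and \ref{philower2} only cover products with an \emph{odd} generator on the left: $\phi_{(\mu|E_{h,h+1})}\phi_A$ and $\phi_{(\mu|E_{h+1,h})}\phi_A$. For (2) you also need $X_hY_{\ol h}$, i.e.\ $(E_{h,h+1}|\mathrm O)$ on the left acting on $(\mathrm O|E_{h+1,h})$, and for (3) you need $Y_hX_{\ol h}$. These are even-generator actions and are handled not by \ref{phiupper2}/\ref{philower2} but by the general even formula, Proposition~\ref{mulformeven} (equivalently Proposition~\ref{phiupper0}(2),(3) at the $\phi$-level). The paper does exactly this split: for (2), $Y_{\ol h}X_h$ uses Proposition~\ref{philower2}(1) while $X_hY_{\ol h}$ uses Proposition~\ref{mulformeven}; symmetrically for (3).

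Finally, the quantum-integer combining you anticipate in (1) is not needed. The two $\STEP{\cdot}$-weighted sums (one with $\STEP{\mu_h+1}$, the other with $\STEP{\mu_{h+1}+1}$) are converted \emph{separately} by Lemma~\ref{formstepodd}(2) into differences of $A(O,\bs j,r)$'s; after multiplying by $G_h^{-1}G_{h+1}^{-1}$ the four resulting terms give $\frac{1}{v^2-1}\big(\AJS(O,2\bs\ep_h+2\bs\ep_{h+1})-\AJS(O,\bs 0)\big)$ directly, with no boundary term to chase. The off-diagonal $\ABJS(\mathrm O,E_{h,h+1}+E_{h+1,h},\star)$ pieces cancel between the two summands of the anticommutator, and the surviving $(v^2-1)\ABJS(\mathrm O,E_{h,h}+E_{h+1,h+1},\star)$ from Proposition~\ref{phiupper2}(2) is precisely the $(v-v^{-1})G_{\ol h}G_{\ol{h+1}}$ term.
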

\begin{proof}

{\rm(1)}  \
By Proposition \ref{prop_PhiAPhiB}, \ref{philower2} {\rm (2)}, \eqref{def_ajr} and Lemma \ref{formstepodd} {\rm(1)},
we have
\begin{align*}
&\ABJRS(\mathrm{O}, E_{h+1, h}, \bs{0}, r) \cdot \ABJRS(\mathrm{O}, E_{h, h+1}, \bs{0}, r) \\
& =
		\sum_{\substack{
			\mu \in \CMN(n,r)  \\
			\lambda \in  \CMN(n,r) \\
			\co(\mu | E_{h+1, h}  ) = \ro(\lambda | E_{h, h+1})
		} }
	{v}^{\mu \cdot {\bs{0}}} {\Phi}_{(\mu | E_{h+1, h} )}
		\cdot
		{v}^{\lambda \cdot {\bs{0}}} {\Phi}_{(\lambda | E_{h, h+1})} \\
& =
		\sum_{		\mu \in  \CMN(n,r-1)  } (
	-{\Phi}_{({\mu} - E_{h,h} |   E_{h, h+1} +  E_{h+1, h})}
	+ \VSTEP{ {\mu}_{h+1} + 1 } {\Phi}_{( {\mu} + E_{h+1, h+1} | O)} ) \\
&= -\sum_{	 \substack{ \mu \in  \CMN(n,r-1) \\ \mu_h > 0} }
	{\Phi}_{({\mu} - E_{h,h} |   E_{h, h+1} +  E_{h+1, h})}
+\sum_{ \mu \in  \CMN(n,r-1)  }
		\VSTEP{ {\mu}_{h+1} + 1 } {\Phi}_{( {\mu} + E_{h+1, h+1} | O)}\\
&=	-\ABJRS(\mathrm{O}, E_{h, h+1} + E_{h+1, h}, \bs{0}, r )+\frac{ 1 }{{v^2} - 1}   \AJRS(\bs{O},2 \bs{\ep}_{h+1}, r)
	- \frac{ 1 }{{v^2} - 1}    \AJRS(\bs{O}, \bs{0}, r)
\end{align*}
Similarly,  by Proposition \ref{phiupper2} {\rm(2)},
  \eqref{def_ajr} and Lemma \ref{formstepodd} {\rm(1)},
we have
\begin{align*}
&\ABJRS(\mathrm{O}, E_{h, h+1}, \bs{0}, r) \cdot \ABJRS(\mathrm{O}, E_{h+1, h}, \bs{0}, r) \\
& =
		\sum_{\substack{
			\mu \in \CMN(n,r-1)  \\
			\lambda \in  \CMN(n,r-1) \\
			\co(\mu | E_{h, h+1}  ) = \ro(\lambda | E_{h+1, h})
		} }
	{v}^{\mu \cdot {\bs{0}}} {\Phi}_{(\mu | E_{h, h+1} )}
		\cdot
		{v}^{\lambda \cdot {\bs{0}}} {\Phi}_{(\lambda | E_{h+1, h})} \\
&= \sum_{		\mu \in  \CMN(n,r-1)  }
	 	\VSTEP{{\mu}_{h} + 1} {q}^{ {\mu}_{h+1}  }
		{\Phi}_{( {\mu} +  E_{h,h}| \mathrm{O})}+\sum_{		\mu \in  \CMN(n,r)  }
	{\Phi}_{({\lambda} - 2 E_{h+1, h+1}   | E_{h, h+1} + E_{h+1, h} )} \\
&-\sum_{		\mu \in  \CMN(n,r-1)  }
	({q}-1) 	{\Phi}_{( {\lambda}- 2 E_{h+1, h+1}    | E_{h,h} + E_{h+1,h+1})}\\
&= \frac{ 1}{ {v^2} - 1 }
		\AJRS({O}, 2 \bs{\ep}_{h} +  2 \bs{\ep}_{h+1}, r)
		- \frac{ 1}{ {v^2} - 1 }
		\AJRS({O},  2 \bs{\ep}_{h+1}, r)+\ABJRS( \mathrm{O} ,  E_{h, h+1} + E_{h+1, h}, \bs{0}, r)\\
&-({v^2}-1) 	\ABJRS( \mathrm{O} , E_{h,h} + E_{h+1,h+1} , \bs{0}, r)
\end{align*}
Putting together, one can obtain by \eqref{XYbar-comp}
\begin{align*}
X_{\ol{h}}  Y_{\ol{h}} + Y_{\ol{h}} X_{\ol{h}}
&=v G_{h}^{-1}\ABJS(\mathrm{O},E_{h,h+1},\bs{0})\ABJS(\mathrm{O},E_{h+1,h},\bs{0})G_{h+1}^{-1}+vG_{h+1}^{-1}\ABJS(\mathrm{O},E_{h+1,h},\bs{0})\ABJS(\mathrm{O},E_{h,h+1},\bs{0}))G_{h}^{-1}\\
&=
	\frac{ 1}{ {v} - {v}^{-1} } (
	 \AJS({O}, \bs{\ep}_{h} +   \bs{\ep}_{h+1})
	-  \AJS({O},  -\bs{\ep}_{h}  -\bs{\ep}_{h+1}) )\\
&	- {v}^{-1} ({q}-1) 	\ABJS( \mathrm{O}, E_{h, h} + E_{h+1, h+1} ,  -\bs{\ep}_{h}  -\bs{\ep}_{h+1})
	\\
&=
	\frac{  G_{h} G_{h+1} - G_{h}^{-1} G_{h+1}^{-1} }{ {v} - {v}^{-1} }
	+  ({v} - {v}^{-1} ) G_{\ol{h}} G_{\ol{h+1} } .
\end{align*}

{\rm(2)}
By Proposition \ref{philower2} {\rm (1)},
we have
\begin{align*}
&\ABJRS(\mathrm{O}, E_{h+1, h}, \bs{0}, r) \cdot \ABJRS(E_{h, h+1}, \mathrm{O}, \bs{0}, r) \\
& =
		\sum_{\substack{
			\mu \in \CMN(n,r-1)  \\
			\lambda \in  \CMN(n,r-1) \\
			\co(\mu | E_{h+1, h}) = \ro(\lambda + E_{h, h+1}| O)
		} }
	{v}^{\mu \cdot {\bs{0}}} {\Phi}_{(\mu | E_{h+1, h} )}
		\cdot
		{v}^{\lambda \cdot {\bs{0}}} {\Phi}_{(\lambda + E_{h, h+1}| O)} \\
&= 	\sum_{		\mu \in  \CMN(n,r-1)  \atop \mu_{h} > 0}
	 {\Phi}_{({\mu} -  E_{h,h} + E_{h, h+1}| E_{h+1, h})}
+ 	\sum_{		\mu \in  \CMN(n,r-1)  }
		{\Phi}_{( {\mu} | E_{h+1, h+1})}  \\
& = 	\ABJRS(E_{h, h+1}, E_{h+1, h}, \bs{0}, r)
 + 	\ABJRS(\mathrm{O}, E_{h+1, h+1}, \bs{0}, r).
\end{align*}
Meanwhile by Proposition \ref{mulformeven}, we have
\begin{align*}
	\ABJRS(E_{h, h+1}, \mathrm{O}, \bs{0}, r) \ABJRS(\mathrm{O}, E_{h+1, h}, \bs{0},r  )
&=
\ABJRS(  E_{h, h+1}, E_{h+1, h}, \bs{ 0 }, r )
	 +
	\ABJRS( \mathrm{O},  E_{h, h}, \bs{ 0 }, r ).
\end{align*}
As a consequence, by \eqref{XYbar-comp}  one can obtain
\begin{align*}
X_{h} Y_{\ol{h}}
&=
	 {v}^{-1} \ABJS(  E_{h, h+1}, E_{h+1, h}, -\bs{\ep}_{h} -\bs{\ep}_{h+1}  )
	+    \ABJS( \mathrm{O},  E_{h, h},  -\bs{\ep}_{h} -\bs{\ep}_{h+1}  ),\\
Y_{\ol{h}} X_{h}
&=
	  {v}^{-1} \ABJS( E_{h, h+1} ,   E_{h+1, h}, - \bs{\ep}_{h} - \bs{\ep}_{h+1} )
	 +  \ABJS( \mathrm{O}, E_{h+1, h+1}, - \bs{\ep}_{h} - \bs{\ep}_{h+1} ) , \\
X_{h} Y_{\ol{h}}    - Y_{\ol{h}} X_{h}
&=
	    \ABJS( \mathrm{O},  E_{h, h},  -\bs{\ep}_{h} -\bs{\ep}_{h+1}  )
	-  \ABJS( \mathrm{O}, E_{h+1, h+1}, - \bs{\ep}_{h} - \bs{\ep}_{h+1} )
 \\
&= G_{\ol{h}}  G_{h+1}^{-1}   -  G_{h}^{-1} G_{\ol{h+1}}.
\end{align*}
Similarly, by Proposition \ref{phiupper2} {\rm {(1)}}, \eqref{def_ajr},
Proposition \ref{mulformeven},  Lemma \ref{formstepodd} {\rm(2)} as well
as  \eqref{XYbar-comp}, one can show the third part {\rm (3)} and we leave the details to the reader.

\end{proof}

\begin{lem}\label{relation_eei}
For any integer {$h$} with {$ 1 \le h \le n-1$}, the following equations hold in {${\SQvnR} $}:
\begin{align*}
{\rm (1)} \qquad
& X_{h} X_{h+1} - {v} X_{h+1} X_{h}
	= X_{\ol{h}} X_{\ol{h+1}} + {v} X_{\ol{h+1}} X_{\ol{h}}, \\
{\rm (2)} \qquad
& Y_{h} Y_{h+1} - {v} Y_{h+1} Y_{h}
	= - (Y_{\ol{h}} Y_{\ol{h+1}} + {v} Y_{\ol{h+1}} Y_{\ol{h}}).
\end{align*}
\end{lem}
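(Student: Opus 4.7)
The plan is to expand each of the four products in (1) componentwise in $r$ as a linear combination of basis elements $\Phi_M$ of $\bsSQvnr$, and verify the identity by matching coefficients; (2) then follows by a transpose-symmetric argument.

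Using the factorizations \eqref{XYbar-comp} together with Corollary \ref{mulformzerocor} to commute the $G^{\pm 1}$-normalizations to the far right, each product reduces (up to a tracked power of $v$) to a ``bare'' two-factor product of $A$-elements. The two even--even bare products fall within the scope of Proposition \ref{mulformeven}(1), which has no SDP hypotheses to verify. For the odd--odd bare product $A(O, E_{h,h+1}, \bs 0) \cdot A(O, E_{h+1,h+2}, \bs j)$ corresponding to $X_{\bar h}X_{\bar{h+1}}$, however, Proposition \ref{mulformodd1} does not apply directly: at $k = h+2$, the matrix $(A + \lambda)^+_{h, h+2} = \lambda + E_{h, h+2}$ has $d = 1$ but fails the SDP condition, since the indices $\widetilde\lambda_{h-1} + \AK(h, h+2) + 1 = \widetilde\lambda_h + 1$ and $\widetilde a_{h-1, h+2} + 1 = \widetilde\lambda_{h+1} + 1$ in Definition \ref{defn:SDP} differ by $\lambda_{h+1}$.

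To handle this obstruction I would compute the product $\phi_{(\mu|E_{h,h+1})}\phi_{(\mu|E_{h+1,h+2})}$ directly in the Hecke--Clifford algebra $\HCR$, in the spirit of Propositions \ref{phiupper2} and \ref{philower2}. Both matrices have $d = 1$ with $c_B = c_{\widetilde\mu_h+1}$ and $c_A = c_{\widetilde\mu_{h+1}+1}$, with their $\sum_\sigma T_\sigma$-sums given respectively by $\TAIJ(\widetilde\mu_h+1,\widetilde\mu_{h+1})$ and $\TAIJ(\widetilde\mu_{h+1}+1,\widetilde\mu_{h+2})$. Writing $T_B\cdot h'$ with $T_A = x_{\ro A}\, h'$, the critical step is to move $c_A$ leftwards past the first $\TAIJ$-block in $T_B$; most summands commute trivially, but the longest term produces a short cascade via \eqref{Hecke-Cliff} and \eqref{Tick-inv}, which one unwinds using Lemmas \ref{xTinverse} and \ref{xcT} to re-express each resulting term as a scalar multiple of a standard-form basis element $T_M$.

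Collecting all four bare products and reintroducing the scalar prefactors, each of $X_hX_{h+1}$, $X_{h+1}X_h$, $X_{\bar h}X_{\bar{h+1}}$, and $X_{\bar{h+1}}X_{\bar h}$ is expressed as a linear combination of $\Phi_M$'s for $M$ in the four families $(\nu + E_{h,h+1} + E_{h+1,h+2}|O)$, $(\nu + E_{h,h+2}|O)$, $(\nu|E_{h,h+1} + E_{h+1,h+2})$, and $(\nu|E_{h,h+2})$, with $\nu\in\CMN(n,r-2)$. Identity (1) then reduces to matching coefficients, a finite symbolic check in $\ZZ[v, v^{-1}]$. The symmetric argument establishes (2), with $E_{h,h+1}$ replaced by $E_{h+1,h}$ throughout; the overall minus sign on the right of (2) traces back to the orientation of the Clifford--Hecke commutations in the corresponding direct computation. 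The main obstacle is precisely the direct Hecke--Clifford computation of the odd--odd product where the SDP shortcut fails: the cascade of sign changes and $(q-1)$-factors arising from moving a Clifford generator through a chain of up to $\mu_{h+1}$ Hecke generators must be tracked exactly to reproduce the right-hand side.
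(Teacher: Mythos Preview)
Your overall strategy is sound and would eventually work, but the paper takes a cleaner route that sidesteps the basis decomposition altogether. Rather than expressing each of the four products as a $\Phi_M$-linear combination and matching coefficients, the paper evaluates both sides of (1) on an arbitrary $x_\lambda g$ and proves the resulting equality directly as an identity in $\HCR$. Concretely, each of $X_{\bar h}X_{\bar{h+1}}$, $X_{\bar{h+1}}X_{\bar h}$, $X_hX_{h+1}$, $X_{h+1}X_h$ applied to $x_\lambda g$ is a single element of the form $v^{\bullet}\, x_\xi\cdot(\text{Hecke--Clifford word})\cdot g$ with common $\xi=\lambda-\bs\ep_{h+2}+\bs\ep_h$, and the claim reduces to $\mathcal Y_1+\mathcal Y_2=\mathcal Y_3-\mathcal Y_4$ for four explicit expressions built from $c$'s and $\TAIJ$-blocks. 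The key step is to apply Lemma~\ref{xcT}(2) to expand the two odd--odd contributions $\mathcal Y_1,\mathcal Y_2$ as sums of $x_\xi T_{u+1}^{-1}\cdots T_{u+k}^{-1}c_{u+k+1}$-type terms; all but one term cancel telescopically in the sum $\mathcal Y_1+\mathcal Y_2$, and the surviving $T^{-1}$-chain is then converted via Lemma~\ref{xTinverse}(2) into exactly $\mathcal Y_3-\mathcal Y_4$. No $T_M$-decomposition is ever performed.

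Your route requires decomposing each product fully into the standard basis, which is genuinely more work (and your list of four target families is not quite consistent: $(\nu\,|\,E_{h,h+2})$ has odd parity, yet all four products are even, so either that family cannot appear or additional even families with diagonal odd entries would show up in its place). The paper's approach buys you exactly the shortcut you were worried about at the end: the ``cascade of sign changes and $(q-1)$-factors'' never needs to be fully unwound, because it collapses before you ever reach the basis. Part~(2) is then handled by the symmetric computation with $\TDIJ$-blocks, as you anticipated.
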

\begin{proof}

 {\rm (1)}
 Referring to \eqref{XYbar-comp},
it suffices to prove the equation
\begin{align*}
	X_{h} X_{h+1}(x_{\lambda}g)- {v} X_{h+1} X_{h}(x_{\lambda}g)
	= X_{\ol{h}} X_{\ol{h+1}}(x_{\lambda}g) + {v} X_{\ol{h+1}} X_{\ol{h}}(x_{\lambda}g)
\end{align*}
holds for any {$\lambda \in \CMN(n, r)$},  and homogeneous {$ g \in \HCR$}.
As
\begin{align*}
 X_{\ol{h}} X_{\ol{h+1}} ( x_{\lambda} g)
&= \ABJRS( \mathrm{O}, E_{h, h+1}, - \bs{\ep}_{h},  r  ) \cdot \ABJRS( \mathrm{O}, E_{h+1, h+2},- \bs{\ep}_{h+1},  r  )  (x_{\lambda} g ) \\
& =
		  \sum_{\substack{
			\alpha \in \CMN(n, r-1),\ 
			\beta \in \CMN(n,r - 1)  \\
			\co( \alpha | E_{h, h+1}  ) = \ro( \beta |  E_{h+1, h+2}  )
		} }
		{v}^{ -{\alpha}_{h} - {\beta}_{h+1} }
		{\Phi}_{( \alpha | E_{h, h+1} )}
		\cdot
		{\Phi}_{( \beta |  E_{h+1, h+2} )}	(x_{\lambda} g )\\
& = {(-1)}^{\parity{g}}
		 \sum_{\substack{
			\alpha \in \CMN(n, r-1),  \ 
			\beta \in \CMN(n,r - 1)  \\
			\alpha + \bs{\ep}_{h+1}  = \beta + \bs{\ep}_{h+1}  \\
			\beta + \bs{\ep}_{h+2}  = \lambda
		} }
		{v}^{ -{\alpha}_{h} -{\beta}_{h+1} }
		{\Phi}_{( \alpha | E_{h, h+1} )}
		( T_{( \beta |  E_{h+1, h+2}) } \cdot g ) \\
&=(-1)^{\parity{g}}  v^{-\lambda_h-\lambda_{h+1}}
		 \Phi_{(\lambda-\epsilon_{h+1}|E_{h,h+1})}
		(T_{(\lambda-\epsilon_{h+1}|E_{h+1,h+2})}\cdot g) \\
&={(-1)}^{\parity{g}}
		{v}^{ -{\alpha}_{h} -{\lambda}_{h+1} }
		{\Phi}_{( \lambda-\epsilon_{h+2} | E_{h, h+1} )}
		( x_{\ro(A)}
		T_{d_A} c_A
		\sum _{{\sigma} \in \D_{{\nu}_{A}} \cap \fS_{\co(A)}} T_{\sigma}
		\cdot g )\\
&=	- {v}^{ -{\lambda}_{h} -{\lambda}_{h+1} }
		x_{\ro(B)}
		T_{d_B} c_B
		\sum _{{\sigma} \in \D_{{\nu}_{B}} \cap \fS_{\co(B)}} T_{\sigma}
		T_{d_A} c_A
		\sum _{{\sigma} \in \D_{{\nu}_{A}} \cap \fS_{\co(A)}} T_{\sigma}
		\cdot g,
\end{align*}
where {$A = ( \lambda -  \bs{\ep}_{h+2}  |  E_{h+1, h+2})$},  {$B =  ( \lambda - \bs{\ep}_{h+2} | E_{h, h+1}) $} and the last equality is due to
 {${(-1)}^{\parity{g}} \cdot {(-1)}^{\parity{B} \cdot (\parity{g} + \parity{A})} = -1$}.
It is easy to verify
{$d_B = d_A = 1$},
{$c_B = c_{\widetilde{\lambda}_{h} + 1}$} ,
{$c_A = c_{\widetilde{\lambda}_{h+1} + 1}$},
{$\ro(B) = \ro( \lambda - \bs{\ep}_{h+2} | E_{h, h+1}) =  \lambda - \bs{\ep}_{h+2} + \bs{\ep}_{h}$},
and
\begin{align*}
\sum _{{\sigma} \in \D_{{\nu}_{A}} \cap \fS_{\co(A)}} T_{\sigma}
= \TAIJ( {\widetilde{\lambda}_{h+1} +1} , {\widetilde{\lambda}_{h+1} + \lambda_{h+2} - 1} ), \quad
\sum _{{\sigma} \in \D_{{\nu}_{B}} \cap \fS_{\co(B)}} T_{\sigma}
=
\TAIJ({\widetilde{\lambda}_{h}+1} , {\widetilde{\lambda}_{h} + \lambda_{h+1} }).
\end{align*}
Hence
\begin{align*}
X_{\ol{h}} X_{\ol{h+1}} ( x_{\lambda} g)
& =
		-{v}^{ -{\lambda}_{h} -{\lambda}_{h+1} }
		x_{(\lambda - \bs{\ep}_{h+2} + \bs{\ep}_{h})}
		c_{\widetilde{\lambda}_{h} + 1}
		\TAIJ({\widetilde{\lambda}_{h}+1} , {\widetilde{\lambda}_{h} + \lambda_{h+1} })
		c_{\widetilde{\lambda}_{h+1} + 1}
		 \TAIJ( {\widetilde{\lambda}_{h+1} +1} , {\widetilde{\lambda}_{h+1} + \lambda_{h+2} - 1} )
		\cdot g .
\end{align*}
Similarly, set
{$J = ( \lambda + \bs{\ep}_{h} - \bs{\ep}_{h+1} - \bs{\ep}_{h+2} | E_{h+1, h+2} )$},
{$ K = ( \lambda -\bs{\ep}_{h+1} | E_{h, h+1} ) $},
then we have
\begin{align*}
\sum _{{\sigma} \in \D_{{\nu}_{J}} \cap \fS_{\co(J)}} T_{\sigma}
 =\TAIJ( {\widetilde{\lambda}_{h+1} +1}, {\widetilde{\lambda}_{h+1} + \lambda_{h+2} - 1} ) \ , \qquad
\sum _{{\sigma} \in \D_{{\nu}_{K}} \cap \fS_{\co(K)}} T_{\sigma}
= \TAIJ( {\widetilde{\lambda}_{h}+1} , {\widetilde{\lambda}_{h} + \lambda_{h+1} - 1}) \ ,
\end{align*}
and direct calculation shows
\begin{align*}
X_{\ol{h+1}} X_{\ol{h}}  ( x_{\lambda} g)
&=  \ABJRS( \mathrm{O}, E_{h+1, h+2},- \bs{\ep}_{h+1},  r  )
\cdot
 \ABJRS( \mathrm{O}, E_{h, h+1}, - \bs{\ep}_{h},  r  )  (x_{\lambda} g ) \\
& =
		-{v}^{ -{\lambda}_{h} -{\lambda}_{h+1} + 1}
		x_{(\lambda - \bs{\ep}_{h+2} + \bs{\ep}_{h})}
		c_{\widetilde{\lambda}_{h+1} + 1}
		\sum _{{\sigma} \in \D_{{\nu}_{J}} \cap \fS_{\co(J)}} T_{\sigma}
		c_{\widetilde{\lambda}_{h} + 1}
		\sum _{{\sigma} \in \D_{{\nu}_{K}} \cap \fS_{\co(K)}} T_{\sigma}
		\cdot g   \\
& =
		-{v}^{ -{\lambda}_{h} -{\lambda}_{h+1} + 1}
		x_{(\lambda - \bs{\ep}_{h+2} + \bs{\ep}_{h})}
		c_{\widetilde{\lambda}_{h+1} + 1}
		\TAIJ( {\widetilde{\lambda}_{h+1} +1}, {\widetilde{\lambda}_{h+1} + \lambda_{h+2} - 1} )
		c_{\widetilde{\lambda}_{h} + 1}
		\TAIJ( {\widetilde{\lambda}_{h}+1} , {\widetilde{\lambda}_{h} + \lambda_{h+1} - 1})
		\cdot g .
\end{align*}
Using the proof of Proposition \ref{phiupper0}(2), a straightforward calculation implies
\begin{align*}
X_{h} X_{h+1}  ( x_{\lambda} g)
&= \ABJRS( E_{h, h+1}, \mathrm{O}, - \bs{\ep}_{h},  r  ) \cdot \ABJRS( E_{h+1, h+2}, \mathrm{O},  - \bs{\ep}_{h+1},  r  )  (x_{\lambda} g ) \\
& =
		{v}^{ -{\lambda}_{h} -{\lambda}_{h+1} }
		x_{(\lambda - \bs{\ep}_{h+2} + \bs{\ep}_{h})}
		\TAIJ({\widetilde{\lambda}_{h}+1} , {\widetilde{\lambda}_{h} + \lambda_{h+1} })
		 \TAIJ( {\widetilde{\lambda}_{h+1} +1} , {\widetilde{\lambda}_{h+1} + \lambda_{h+2} - 1} )
		\cdot g , \\
X_{h+1} X_{h}  ( x_{\lambda} g)
&= \ABJRS( E_{h+1, h+2}, \mathrm{O},  - \bs{\ep}_{h+1},  r  ) \ABJRS( E_{h, h+1}, \mathrm{O},  - \bs{\ep}_{h},  r  )  (x_{\lambda} g ) \\
& =
		{v}^{ -{\lambda}_{h} -{\lambda}_{h+1} + 1}
		x_{(\lambda - \bs{\ep}_{h+2} + \bs{\ep}_{h})}
		\sum _{{\sigma} \in \D_{{\nu}_{J}} \cap \fS_{\co(J)}} T_{\sigma}
		\sum _{{\sigma} \in \D_{{\nu}_{K}} \cap \fS_{\co(K)}} T_{\sigma}
		\cdot g \\
& =
		{v}^{ -{\lambda}_{h} -{\lambda}_{h+1} + 1}
		x_{(\lambda - \bs{\ep}_{h+2} + \bs{\ep}_{h})}
		\TAIJ( {\widetilde{\lambda}_{h+1} +1}, {\widetilde{\lambda}_{h+1} + \lambda_{h+2} - 1} )
		\TAIJ( {\widetilde{\lambda}_{h}+1} , {\widetilde{\lambda}_{h} + \lambda_{h+1} - 1})
		\cdot g .
\end{align*}
Putting together, in order to prove
{$  X_{\ol{h}} X_{\ol{h+1}} + {v} X_{\ol{h+1}} X_{\ol{h}} = X_{h} X_{h+1} - {v} X_{h+1} X_{h} $},
it is enough to prove
{$
  {\fcY}_1 + {\fcY}_2 = {\fcY}_3 - {\fcY}_4
  $},
where
\begin{align*}
{\fcY}_1
&=
		-x_{(\lambda - \bs{\ep}_{h+2} + \bs{\ep}_{h})}
		c_{\widetilde{\lambda}_{h} + 1}
		 \TAIJ( {\widetilde{\lambda}_{h}+1} , {\widetilde{\lambda}_{h} + \lambda_{h+1} })
		c_{\widetilde{\lambda}_{h+1} + 1}
		 \TAIJ( {\widetilde{\lambda}_{h+1} +1}, {\widetilde{\lambda}_{h+1} + \lambda_{h+2} - 1}) , \\
{\fcY}_2
&=
		-{v}^{2}
		x_{(\lambda - \bs{\ep}_{h+2} + \bs{\ep}_{h})}
		c_{\widetilde{\lambda}_{h+1} + 1}
		 \TAIJ( {\widetilde{\lambda}_{h+1} +1}, {\widetilde{\lambda}_{h+1} + \lambda_{h+2} - 1})
		c_{\widetilde{\lambda}_{h} + 1}
		 \TAIJ( {\widetilde{\lambda}_{h}+1}, {\widetilde{\lambda}_{h} + \lambda_{h+1} - 1}	), \\
{\fcY}_3
&=
		x_{(\lambda - \bs{\ep}_{h+2} + \bs{\ep}_{h})}
		 \TAIJ(  {\widetilde{\lambda}_{h}+1} , {\widetilde{\lambda}_{h} + \lambda_{h+1} }	)
		 \TAIJ( {\widetilde{\lambda}_{h+1} +1} , {\widetilde{\lambda}_{h+1} + \lambda_{h+2} - 1}) , \\
{\fcY}_4
&=
		{v}^{2}
		x_{(\lambda - \bs{\ep}_{h+2} + \bs{\ep}_{h})}
		 \TAIJ( {\widetilde{\lambda}_{h+1} +1} , {\widetilde{\lambda}_{h+1} + \lambda_{h+2} - 1} )
		 \TAIJ( {\widetilde{\lambda}_{h}+1} , {\widetilde{\lambda}_{h} + \lambda_{h+1} - 1}	).
\end{align*}

Let {$\xi = (\lambda - \bs{\ep}_{h+2} + \bs{\ep}_{h})$},
denote {$ u = \widetilde{\lambda}_{h}$}, {$ w = \widetilde{\lambda}_{h+1}$},  {$ p = \widetilde{\lambda}_{h+2}$} for short,
then by Lemma \ref{xcT}(2) one can obtain
\begin{align*}
{\fcY}_1
&=
		- x_{\xi}  c_{u + 1} T_{ u+1, w}  c_{w + 1} T_{(w +1, p - 1)} \\
&=
	-{q}^{ w - u }x_{\xi}
	( c_{u+1} + T_{u+1}^{-1} c_{u+2} +  T_{u+1}^{-1} T_{u+2}^{-1}   c_{u+3}
		+ \cdots
		+ T_{u+1}^{-1}
		\cdots T_{w}^{-1}  c_{w +1}
	)  c_{w + 1} T_{(w +1, p - 1)}, \\
{\fcY}_2
&=
	-{v}^{2}
	x_{\xi}
	c_{w + 1}T_{(w +1, p - 1)}
		c_{u + 1}T_{(u+1, w-1)} \\
&=
	{q}^{ w - u }	x_{\xi}
	( c_{u+1} + T_{u+1}^{-1} c_{u+2} +  T_{u+1}^{-1} T_{u+2}^{-1}   c_{u+3}
	+ \cdots
	+ T_{u+1}^{-1}
	\cdots T_{w-1}^{-1}  c_{w} )
	c_{w + 1}T_{(w +1, p - 1)}.
\end{align*}
As a consequence,
\begin{align*}
{\fcY}_1 + {\fcY}_2
&= -	{q}^{ w - u }x_{\xi} ( c_{u+1} + T_{u+1}^{-1} c_{u+2} +  T_{u+1}^{-1} T_{u+2}^{-1}   c_{u+3}
	+ \cdots
	+ T_{u+1}^{-1}
	\cdots T_{w}^{-1}  c_{w +1}  )  c_{w + 1} T_{(w +1, p - 1)} \\
	& \qquad
	+ {q}^{ w - u }	x_{\xi}
	( c_{u+1} + T_{u+1}^{-1} c_{u+2} +  T_{u+1}^{-1} T_{u+2}^{-1}   c_{u+3}
	+ \cdots
	+ T_{u+1}^{-1}
	\cdots T_{w-1}^{-1}  c_{w} )
	c_{w + 1}T_{(w +1, p - 1)} \\
&=	 {q}^{ w - u }x_{\xi} T_{u+1}^{-1} T_{u+2}^{-1} T_{u+3}^{-1} \cdots T_{w}^{-1}   T_{(w +1, p - 1)} \\
&=	 {q}^{ w - u }
		(
			{q}^{  u-w  } x_{\xi}  T_{ u+1 }   T_{ u+2} \cdots T_{  w }
		- {q}^{ u- w  }  ({q} - 1)  x_{\xi}  T_{( u+1 ,   w-1 )}  )
		T_{(w +1, p - 1)}
		 \ \mbox{( by Lemma \ref{xTinverse}  )}
		\\
&=
	 {q}^{ w - u }   {q}^{ u - w} x_{\xi}T_{ u+1 }   T_{ u+2} \cdots T_{ w }  T_{(w +1, p - 1)}
	- {q}^{ w - u }   {q}^{ u -w }  ({q} - 1)   x_{\xi}  T_{( u+1 ,   w-1 )}   T_{(w +1, p - 1)}
	\\
&=
	x_{\xi}    T_{ u+1 }   T_{ u+2} \cdots T_{ w }  T_{(w +1, p - 1)}
	-  ({q} - 1)  x_{\xi}   T_{(u+1 ,   w - 1)}  T_{(w +1, p - 1)}.
\end{align*}
On the other hand, we have
\begin{align*}
{\fcY}_3
&=	x_{\xi} 	T_{(u+1, w)}	T_{(w +1, p - 1)}, \\
{\fcY}_4
&=		{v}^{2}	x_{\xi}  T_{(w +1, p - 1)} T_{(u+1, w-1)}
=		{q}	x_{\xi}  T_{(u+1, w-1)} T_{(w +1, p - 1)} ,
\end{align*}
and
\begin{align*}
{\fcY}_3 - {\fcY}_4
&=
		 x_{\xi}  T_{u+1} T_{u+2} \cdots T_{w}  T_{(w +1, p - 1)}
		- ( {q} - 1)x_{\xi}  T_{(u+1, w-1)} T_{(w +1, p - 1)}.
\end{align*}
Hence
{$  {\fcY}_1 + {\fcY}_2 = {\fcY}_3 - {\fcY}_4$}
and equation {\rm(1)} is proved.

{\rm(2)}
Similarly,
for any {$\lambda \in \CMN(n, r)$}  and homogeneous {$ g \in \HCR$},
one can prove that
\begin{align*}
  Y_{\ol{h+1}} Y_{\ol{h}} (x_{\lambda} g)
  &=-{v}^{ -{\lambda}_{h+1} -{\lambda}_{h+2}}
		x_{(\lambda - \bs{\ep}_{h} + \bs{\ep}_{h+2})}
		c_{\widetilde{\lambda}_{h+1} }
		 \TDIJ( {\widetilde{\lambda}_{h+1}  - 1}, {\widetilde{\lambda}_{h+1} - \lambda_{h+1}  } )
		c_{\widetilde{\lambda}_{h} }
		 \TDIJ( {\widetilde{\lambda}_{h}-1}, {\widetilde{\lambda}_{h} -  \lambda_{h} +1 })
		\cdot g  , \\
 Y_{h} Y_{h+1} (x_{\lambda} g)
 & =
		{v}^{ -{\lambda}_{h+1} -{\lambda}_{h+2} +1}
		x_{(\lambda - \bs{\ep}_{h} + \bs{\ep}_{h+2})}
		 \TDIJ( {\widetilde{\lambda}_{h}-1} , {\widetilde{\lambda}_{h} -  \lambda_{h} +1 })
		 \TDIJ( {\widetilde{\lambda}_{h+1}  - 1}, {\widetilde{\lambda}_{h+1} - \lambda_{h+1} + 1  } )
		\cdot g , \\
Y_{h+1} Y_{h}  (x_{\lambda} g)
& =
		{v}^{ -{\lambda}_{h+1} -{\lambda}_{h+2} }
		x_{(\lambda - \bs{\ep}_{h} + \bs{\ep}_{h+2})}
		 \TDIJ( {\widetilde{\lambda}_{h+1}  - 1}, {\widetilde{\lambda}_{h+1} - \lambda_{h+1}  } )
		 \TDIJ( {\widetilde{\lambda}_{h}-1}, {\widetilde{\lambda}_{h} -  \lambda_{h} +1 })
		\cdot g
\end{align*}
for any $\lambda\in\Lambda(n,r)$ and homogeneous element $g\in \HCR$.
Direct calculation shows
\begin{align*}
-( Y_{\ol{h}} Y_{\ol{h+1}} + {v} Y_{\ol{h+1}} Y_{\ol{h}})(x_{\lambda}  g)
&=- x_{(\lambda - \bs{\ep}_{h} + \bs{\ep}_{h+2})}
		T_{\widetilde{\lambda}_{h+1}-1} \cdots T_{\widetilde{\lambda}_{h+1} - \lambda_{h+1} }
		 \TDIJ( {\widetilde{\lambda}_{h}-1} , {\widetilde{\lambda}_{h} -  \lambda_{h} +1 }	)\cdot g\\
(Y_{h} Y_{h+1} - {v} Y_{h+1} Y_{h} )(x_{\lambda} g)
&=-
		x_{(\lambda - \bs{\ep}_{h} + \bs{\ep}_{h+2})}
		T_{\widetilde{\lambda}_{h+1}-1} \cdots T_{\widetilde{\lambda}_{h+1} - \lambda_{h+1}  }
		 \TDIJ( {\widetilde{\lambda}_{h}-1} , {\widetilde{\lambda}_{h} -  \lambda_{h} +1 }	)\cdot g
\end{align*}
and then {\rm(2)} is proved.
\end{proof}

\begin{thm}\label{qqschur_reltion}
There is  a superalgebra  homomorphism $\bs{\xi}_n: \Uvqn \to {\SQvnR} $ defined by
$${\genE}_{j} \mapsto X_{j}, \;
{\genE}_{\ol{j}} \mapsto X_{\ol{j}}, \;
{\genF}_{j} \mapsto Y_{j}, \;
{\genF}_{\ol{j}} \mapsto Y_{\ol{j}},  \; {\genK}_{i}^{\pm 1} \mapsto G_{i}^{\pm 1}, \;
{\genK}_{\ol{i}} \mapsto G_{\ol{i}},$$
for all $1 \le i \le n, 1 \le j \le n-1$.
\end{thm}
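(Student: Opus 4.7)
The plan is to verify that the images $X_j, X_{\ol{j}}, Y_j, Y_{\ol{j}}, G_i^{\pm 1}, G_{\ol{i}}$ satisfy all defining relations (QQ1)--(QQ6) of $\Uvqn$. To make the calculation manageable, I would first invoke Remark \ref{induction_N} to reduce the problem to a smaller generating set $\{\genE_j, \genF_j, \genK_i^{\pm 1}, \genK_{\ol{n}} \mid 1 \le j \le n-1, 1 \le i \le n\}$; the relations among the remaining (derived) odd generators then follow automatically provided the chosen subset of (QQ1)--(QQ6) holds. This is crucial because the raw multiplication formulas from Section \ref{sec_spanningsets} for the odd generators $\genK_{\ol{i}}, \genE_{\ol{j}}, \genF_{\ol{j}}$ with $i<n$ require SDP conditions that are not automatic on every matrix; by contrast, the index $h=n$ is always safe by Lemma \ref{shiftonN} / Corollary \ref{mulformdiagN}, so $G_{\ol{n}}$ causes no trouble.

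For each relation, the verification splits into standard pieces. The relations (QQ1) involving only the $G_i^{\pm 1}$ and $G_{\ol{i}}$ follow at once from Proposition \ref{mulformzero} and Corollary \ref{mulformzerocor}, applied to the diagonal matrix case (and from Corollary \ref{mulformdiagN} for the $G_{\ol{n}}$ anti-commutator). The relations (QQ2), asserting how $G_i$ conjugates $X_j, Y_j$ (and their odd versions), reduce via \eqref{XYbar-comp} and \eqref{mult-GA} to a bookkeeping check on the weight shift $\pm 2\bs{\ep}_h$ that appears in the formulas of Propositions \ref{mulformeven}, \ref{mulformdiag}, \ref{mulformodd1}, \ref{mulformodd2}; here the point is simply that the exponent $\sum_u a_{h,u}$ or $\sum_u a_{u,h}$ matches the weight pairing $\bs{\ep}_i\centerdot\alpha_j$. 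The relations (QQ4) of mixed even/odd type are exactly what Lemma \ref{mulform_ef} establishes after rewriting via \eqref{XYbar-comp}; the pure even relation $\genE_i\genF_j-\genF_j\genE_i=\delta_{i,j}(\genK_i\genK_{i+1}^{-1}-\genK_i^{-1}\genK_{i+1})/({v}-{v}^{-1})$ is routine from Proposition \ref{mulformeven}.

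Relation (QQ3) is the one that allows $\genE_{\ol{j}},\genF_{\ol{j}},\genK_{\ol{j}}$ (for $j<n$) to be expressed in terms of the reduced set (see Remark \ref{induction_N}). My plan is to check the single case that anchors the induction, namely the one defining $G_{\ol{j}}$ from $G_{\ol{j+1}}$, which boils down to comparing $X_j Y_{\ol{j}} - Y_{\ol{j}} X_j$ with $G_{\ol{j}} G_{j+1}^{-1} - G_j^{-1}G_{\ol{j+1}}$ (Lemma \ref{mulform_ef}(2)(3)), and then use the algebra identities in (QQ3) themselves as \emph{definitions} of the derived odd generators, so that once $\genK_{\ol{n}}\mapsto G_{\ol{n}}$ and the even part are fixed, the images of $\genK_{\ol{j}}, \genE_{\ol{j}}, \genF_{\ol{j}}$ for $j<n$ are automatically $G_{\ol{j}}, X_{\ol{j}}, Y_{\ol{j}}$ respectively. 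This elegantly sidesteps having to verify the odd multiplication formulas of Propositions \ref{mulformdiag}, \ref{mulformodd1}, \ref{mulformodd2} on matrices where SDP fails.

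For the Serre-type relations (QQ5) and (QQ6), the strictly quantum Serre identities among the even generators $X_j, Y_j$ are an immediate consequence of the raw multiplication formulas in Proposition \ref{mulformeven}, while the four mixed $|i-j|=1$ relations in the third and fourth lines of (QQ5) are precisely Lemma \ref{relation_eei}. The remaining relations $\genE_{\ol{i}}^2 = -\frac{{v}-{v}^{-1}}{{v}+{v}^{-1}}\genE_i^2$, $\genF_{\ol{i}}^2 = \frac{{v}-{v}^{-1}}{{v}+{v}^{-1}}\genF_i^2$, and the $|i-j|>1$ commutations can be reduced, via (QQ3) and the anti-commutativity rules from (QQ5), to the Clifford relation $c_i^2=-1$ together with even-case computations. \textbf{The main obstacle} I anticipate is the careful management of signs $(-1)^{\parity{A}\parity{B}}$ introduced by Proposition \ref{prop_PhiAPhiB} when multiplying odd elements, combined with the shift conventions in Convention \ref{rem_ajr_short_0}: in the special-case Lemmas \ref{mulform_ef} and \ref{relation_eei} the authors have already bypassed the SDP failures for the critical four-term odd products, and those lemmas are precisely the ones one needs for (QQ4) and (QQ5); the rest is organized substitution and telescoping of the Gaussian-polynomial coefficients $\VSTEP{\cdot}$, $\VSTEPP{\cdot}$, $\VSTEPPD{\cdot}$ appearing on the right-hand sides, which, thanks to Lemma \ref{formstepodd}, always collapse into the desired combinations of $\AJS(A,\bs{j})$'s. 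Once every relation is checked term by term in this uniform manner, the universal property of $\Uvqn$ yields the superalgebra homomorphism $\bs{\xi}_n$.
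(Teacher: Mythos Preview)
Your strategy of reducing to the generating subset $\{\genE_j,\genF_j,\genK_i^{\pm1},\genK_{\ol{n}}\}$ is different from what the paper does, and it introduces a gap that the direct approach avoids. The paper verifies \emph{all} of (QQ1)--(QQ6) directly on the named elements $X_j,Y_j,G_i,G_{\ol i},X_{\ol j},Y_{\ol j}$: for (QQ3) and the squares in (QQ5) it computes products like $G_{\ol i}X_i$, $X_i G_{\ol i}$, $X_{\ol i}^2$ using Propositions~\ref{mulformdiag}, \ref{mulformeven}, \ref{mulformodd1}, \ref{mulformodd2}, and the SDP hypotheses there \emph{are} satisfied because the matrices that actually appear (e.g.\ $(E_{i,i+1}|\mathrm{O})+\lambda$, $(2E_{i,i+1}|\mathrm{O})+\lambda$, $(2E_{i+1,i}|\mathrm{O})+\lambda$) all have $d_A=1$, so Example~\ref{exam_shift} applies. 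Your stated motivation---that the direct route would force you onto matrices where SDP fails---is therefore mistaken: the SDP failures only arise for more complicated matrices, which never occur when checking the defining relations on the bare generators.

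The reduction idea also has a circularity problem. Even granting a reduced presentation (Remark~\ref{induction_N} cites \cite{GJKKK,DLZ} but does not spell it out, and neither do you), once you have a homomorphism on the reduced set you must still prove that the \emph{derived} images, e.g.\ $-vG_{j+1}G_{\ol{j+1}}X_j+v^{-1}X_jG_{j+1}G_{\ol{j+1}}$, coincide with the \emph{named} elements $X_{\ol j}$; this is not ``automatic'' but is exactly the (QQ3) identity in the target algebra, and checking it requires computing $G_{\ol{j+1}}X_j$ and $X_jG_{\ol{j+1}}$---the very computations the paper performs directly. Similarly, your claim that $X_{\ol i}^2=-\tfrac{v-v^{-1}}{v+v^{-1}}X_i^2$ can be ``reduced via (QQ3) and the Clifford relation $c_i^2=-1$'' is left unsubstantiated; the paper instead computes both sides explicitly using Propositions~\ref{mulformodd1} and~\ref{mulformeven}(1), with SDP supplied by Example~\ref{exam_shift}. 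In short, the paper's direct verification is both complete and no harder than your proposed reduction; carrying out the reduction rigorously would require making the reduced presentation explicit and then verifying that derived images match named ones, which amounts to the same (QQ3)/(QQ4) computations you were hoping to bypass.
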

\begin{proof}
We verify all the relations (QQ1)--(QQ6) in Definition \ref{defqn}.

{\bf Relation (QQ1).} \
By Example \ref{exam_shift}, any diagonal matrix statisfies SDP condition at $(h,k)$ for $1\leq h,k\leq n$, and hence by Proposition \ref{mulformdiag} we have
for $1\leq i<j\leq n$ and {$1 \le k \le n$}
\begin{align*}
&\ABJS( \mathrm{O}, E_{i,i}, \bs{ 0 } )  \ABJS( \mathrm{O}, E_{j,j}, \bs{ 0 } )
	 = -\ABJS( \mathrm{O}, E_{j,j}+ E_{i,i}, \bs{ 0 } ), \\
&\ABJS( \mathrm{O}, E_{j,j}, \bs{ 0 } )  \ABJS( \mathrm{O}, E_{i,i}, \bs{ 0 } )
 =  \ABJS( \mathrm{O}, E_{j,j}+ E_{i,i}, \bs{ 0 } ),\\
&
\ABJS( \mathrm{O}, E_{k,k}, \bs{ 0 } )  \ABJS( \mathrm{O}, E_{k,k}, \bs{ 0 } )
=\frac{1}{({v}^4 - 1) }
		\{ \ABJS( \mathrm{O}, \mathrm{O}, 4 \bs{\ep}_{k} )  - \ABJS( \mathrm{O}, \mathrm{O}, \bs{ 0 } )
		  \}.
\end{align*}
Therefore, we obtain
\begin{align*}
G_{\ol{i}} G_{\ol{j}} + G_{\ol{j}} G_{\ol{i}}=0,\qquad
G_{\ol{k}}^2= \frac{  G_k^{2} - G_k^{-2}  }{{v}^2 - {v}^{-2}  }.
\end{align*}
The other equations in (QQ1) can be proved by direct calculation.

{\bf Relation (QQ2).} 
The equations can be proved directly by \eqref{mult-GA}.

{\bf Relation (QQ3).}
By Example \ref{exam_shift},
for any {$\lambda \in \CMN(n, r)$} for any {$r \ge 1$},
the matrices
{$(E_{i, i} + \lambda)$},
{$(E_{i, i+1} + \lambda)$},
{$(E_{i+1, i} + \lambda)$}
satisfy the SDP condition at {$(i, i)$}, {$(i, i+1)$}. This means the formulas in
Proposition \ref{mulformdiag}-\ref{mulformodd2} works here.
For the first relation,  we have
\begin{align*}
G_{\ol{i}} X_{i}
&=     {v} G_{i}^{-1}  \{\  \ABJS( \mathrm{O}, E_{i,i}, \bs{ 0 }) \ABJS(E_{i, i+1}, \mathrm{O},  \bs{0}) \ \} G_{i}^{-1} \\
&=     {v} G_{i}^{-1} \  \{ \  {v}^2 \ABJS( E_{i, i+1}, E_{i,i}, \bs{ 0 } )
		+ \ABJS( \mathrm{O},  E_{i,i+1}, \bs{ 0 } ) \  \} \   G_{i}^{-1}
	\quad	\mbox{ (by Proposition \ref{mulformdiag})}
		\\
&=
     \ABJS( E_{i, i+1}, E_{i,i}, -2 \bs{\ep}_{i} )
		+    \ABJS( \mathrm{O},  E_{i,i+1}, -2 \bs{\ep}_{i} ),  \\
X_{i} G_{\ol{i}}
&=    {v}^2  G_{i}^{-1} \ABJS(E_{i, i+1}, \mathrm{O}, \bs{0})
		\cdot \ABJS( \mathrm{O}, E_{i,i}, \bs{ 0 })  G_{i}^{-1} \\
&=   {v}^2  G_{i}^{-1} \ABJS( E_{i,i+1},E_{i,i}, \bs{ 0 } ) 	G_{i}^{-1}
\quad	\mbox{ (by Proposition \ref{mulformeven} {\rm(1)})}
\\
&=    {v}^{-1} \ABJS( E_{i,i+1},E_{i,i}, -2 \bs{\ep}_{i} ) ,\\
X_{\ol{i}} G_{i}^{-1}
	&=    \ABJS( \mathrm{O}, E_{i,i+1},  - 2\bs{\ep}_{i}  )
	\quad	\mbox{ (by Corollary \ref{mulformzerocor})} \\
&=G_{\ol{i}} X_{i}  - {v} X_{i} G_{\ol{i}}  .
\end{align*}
For the third one,
\begin{align*}
G_{\ol{i}} Y_{i}
&=    {v}   G_{i}^{-1}  \ABJS( \mathrm{O}, E_{i,i}, \bs{ 0 }) \ABJS(E_{i+1, i}, \mathrm{O}, \bs{0}) G_{i+1}^{-1}   \\
&=    {v} G_{i}^{-1}  \{ \ 
		\ABJS( \mathrm{O}, E_{i,i}, \bs{ 0 }) \ABJS(E_{i+1, i}, \mathrm{O}, \bs{0})
	\ \}G_{i+1}^{-1}
	\quad	\mbox{ (by Proposition \ref{mulformdiag})}
	\\
&=   \ABJS( E_{i+1, i},  E_{i,i},- \bs{\ep}_{i} - \bs{\ep}_{i+1}  ), \\
Y_{i} G_{\ol{i}}
&=      {v}^2  G_{i+1}^{-1}   \ABJS(E_{i+1, i}, \mathrm{O}, \bs{0})
		\cdot \ABJS( \mathrm{O}, E_{i,i}, \bs{ 0 }) G_{i}^{-1}   \\
&=     {v}^2 G_{i+1}^{-1}\   \{
	\ABJS( E_{i+1, i}, E_{i,i}, \bs{ 0 } )
	 + \ABJS(\mathrm{O}, E_{i+1,i},  2 \bs{\ep}_{i} )
  \} \  G_{i}^{-1}
\  	\mbox{ (by Proposition \ref{mulformeven} {\rm(2)})}
\\
&=
	   {v}^{-1} \ABJS( E_{i+1, i}, E_{i, i}, -\bs{\ep}_{i+1} -\bs{\ep}_{i} )
	+    \ABJS(\mathrm{O}, E_{i+1,i},   \bs{\ep}_{i} - \bs{\ep}_{i+1} ) , \\
Y_{\ol{i}} G_{i}
	& =    {v} \ABJS(\mathrm{O}, E_{i+1,i},    \bs{\ep}_{i} - \bs{\ep}_{i+1} )
		\quad	\mbox{ (by Corollary \ref{mulformzerocor})} \\
&=-G_{\ol{i}} Y_{i}  + {v} Y_{i} G_{\ol{i}} .
\end{align*}

For the second relation,
\begin{align*}
G_{\ol{i}}  X_{i-1}
&=   {v} G_{i}^{-1} \ABJS(\mathrm{O}, E_{i,i}, \bs{0} )  \ABJS(E_{i-1,i}, \mathrm{O},   \bs{0} )  G_{i-1}^{-1} \\
&=   {v} G_{i}^{-1} \ABJS( E_{i-1,i},  E_{i,i}, \bs{ 0 } ) G_{i-1}^{-1}
\quad	\mbox{ (by Proposition \ref{mulformdiag})}
\\
&=   \ABJS( E_{i-1,i},  E_{i,i},  -\bs{\ep}_{i-1} - \bs{\ep}_{i} ) , \\
X_{i-1}  G_{\ol{i}}
&=   {v}^2 G_{i-1}^{-1} \{ \ABJS(E_{i-1,i}, \mathrm{O},  \bs{0} ) \ABJS(\mathrm{O}, E_{i,i},  \bs{0} ) \} G_{i}^{-1} \\
&=   {v}^2 G_{i-1}^{-1} \{
{v}^2	\ABJS( E_{i-1,i}, E_{i,i}, \bs{ 0 } )
	 + 	\ABJS( \mathrm{O}, E_{i-1,i}, \bs{ 0 } )
 \} G_{i}^{-1}
 \ 	\mbox{ (by Proposition \ref{mulformeven} {\rm(1)})}
 \\
&=   {v} 	\ABJS( E_{i-1,i}, E_{i,i}, -\bs{\ep}_{i-1} - \bs{\ep}_{i} )
		+  	\ABJS( \mathrm{O}, E_{i-1,i}, -\bs{\ep}_{i-1} - \bs{\ep}_{i} )  ,\\
-G_{i}^{-1} X_{\ol{i-1}}
&=
 - \ABJS( \mathrm{O}, E_{i-1,i}, - \bs{\ep}_{i-1} - \bs{\ep}_{i} )
	\quad	\mbox{ (by Corollary \ref{mulformzerocor})} \\
&= {v} G_{\ol{i}}  X_{i-1} - X_{i-1}  G_{\ol{i}}.
\end{align*}
The  fourth relation is analogous to the second one,
\begin{align*}
G_{\ol{i}}  Y_{i-1}
&=   {v} G_{i}^{-1} \ABJS(\mathrm{O}, E_{i,i}, \bs{0} )  \ABJS(E_{i,i-1}, \mathrm{O},   \bs{0} )  G_{i}^{-1} \\
&=   {v} G_{i}^{-1} \{
		\ABJS( E_{i,i-1}, E_{i,i}, \bs{ 0 } ) +
		\ABJS( \mathrm{O}, E_{i,i-1}, 2 \bs{\ep}_{i+1} )
\} G_{i}^{-1}
\quad	\mbox{ (by Proposition \ref{mulformdiag})}
\\
&=
		  {v}^{-2} \ABJS( E_{i,i-1}, + E_{i,i}, -2 \bs{\ep}_{i+1} )
		+   \ABJS( \mathrm{O}, E_{i,i-1},  \bs{ 0 } ),  \\
 Y_{i-1}  G_{\ol{i}}
&=   {v}^2 G_{i}^{-1} \{ \ABJS(E_{i,i-1}, \mathrm{O},  \bs{0} ) \ABJS(\mathrm{O}, E_{i,i},  \bs{0} ) \} G_{i}^{-1} \\
&=   {v}^2 G_{i}^{-1} \ABJS(E_{i,i-1},   E_{i,i}, \bs{ 0 } )  G_{i}^{-1}
\quad	\mbox{ (by Proposition \ref{mulformeven} {\rm(2)})}
\\
&=   {v}^{-1} \ABJS( E_{i,i-1},  E_{i,i},  - 2 \bs{\ep}_{i} ) , \\
G_{i} Y_{\ol{i-1}}
&=  {v}  \ABJS(\mathrm{O}, E_{i,i-1}, \bs{0} ),
	\quad	\mbox{ (by \eqref{mult-GA} )}
	\\
&={v} G_{\ol{i}}  Y_{i-1} - Y_{i-1}  G_{\ol{i}}  .
\end{align*}

The other relations in (QQ3) are analogous to that of the first four equations,
we omit the proof.

{\bf Relation (QQ4).}
The first  relation  can be verified directly using Proposition \ref{mulformeven}.
We omit the details.
The other three relations follow from Lemma \ref{mulform_ef}.

{\bf Relation (QQ5).}
By Example \ref{exam_shift},
for any {$\lambda \in \CMN(n, r)$},
{${(2E_{i, i+1} + \lambda)}^{+}_{i,i+1}$}
satisfies the SDP condition at {$(i, i+1)$},
and {$(2E_{i+1, i} + \lambda)$} satisfies the SDP condition on the $i$-th row.
So the formulas in Proposition \ref{mulformodd1}, \ref{mulformodd2} are applicable,
then one can obtain
\begin{align*}
X_{\ol{i}} ^2
&=  {v}G_{i}^{-1}  \ABJS( \mathrm{O}, E_{i,i+1}, \bs{0} ) \cdot \ABJS( \mathrm{O}, E_{i,i+1}, \bs{0} )  G_{i}^{-1}  \\
&= -{v}^{-1} ({v}^2 - 1) \ABJS( 2 E_{i,i+1}, \mathrm{O}, - 2 \bs{\ep}_{i}  );\\
X_{i} ^2
&= {v}G_{i}^{-1}  \{ \ABJS(E_{i,i+1},  \mathrm{O}, \bs{0} ) \cdot \ABJS(E_{i,i+1},  \mathrm{O}, \bs{0} ) \}  G_{i}^{-1} \\
&= {v}^{-1}( 1 + {v}^2 ) 	\ABJS( 2 E_{i,i+1}, \mathrm{O}, - 2 \bs{\ep}_{i} ) ; \\
- \frac{{v} - {v}^{-1}}{{v} + {v}^{-1}} X_{i}^2
&=- ({v}^2 - 1) \cdot
	{v}^{-1} 	\ABJS( 2 E_{i,i+1}, \mathrm{O}, - 2 \bs{\ep}_{i} )  = X_{\ol{i}} ^2.
\end{align*}
Similarly, we can show
\begin{align*}
  \frac{{v} - {v}^{-1}}{{v} + {v}^{-1}} Y_{i}^2
&=  ({v}^2 - 1)
	{v}^{-1}  \ABJS( 2E_{i+1, i}, \mathrm{O},  - 2 \bs{\ep}_{i+1} ) =Y_{\ol{i}}^2 .
\end{align*}

For the third relation, if {$j = i$},
\begin{align*}
X_{i} X_{\ol{i}}
&=   {v}  G_{i}^{-1}\ABJS( E_{i,i+1}, E_{i, i+1}, \bs{ 0 } )  G_{i}^{-1}, \\
X_{\ol{i}} X_{i}
&=   {v}  G_{i}^{-1} \ABJS( E_{i, i+1}, E_{i,i+1}, \bs{ 0} )  G_{i}^{-1},
\end{align*}
and hence $X_{i} X_{\ol{i}} - X_{\ol{i}} X_{i}= 0.$

When {$j > i+1 $} or {$j < i-1 $},
\begin{align*}
X_{i} X_{\ol{j}}
&=    {v}  G_{i}^{-1} \ABJS(E_{i, i+1}, \mathrm{O}, \bs{0})  \ABJS(\mathrm{O}, E_{j,j+1}, \bs{0} ) G_{j}^{-1} \\
&=   \ABJS( E_{i,i+1}, E_{j,j+1},  - \bs{\ep}_{i} - \bs{\ep}_{j} ) , \\
X_{\ol{j}} X_{i}
&=   {v}  G_{j}^{-1} \ABJS(\mathrm{O}, E_{j,j+1}, \bs{0} )   \ABJS(E_{i, i+1}, \mathrm{O}, \bs{0})   G_{i}^{-1} \\
&=  \ABJS( E_{i,i+1}, E_{j,j+1},  - \bs{\ep}_{i} - \bs{\ep}_{j} ) ,
\end{align*}
and hence $X_{i} X_{\ol{j}} - X_{\ol{j}} X_{i}= 0.$
The proofs for the fifth and seventh relations are given in Lemma \ref{relation_eei},
and the proof of  the rest  relations  are omitted.


{\bf Relation (QQ6).}
The first two relations are similar to the nonsuper case and can be directly proved using Proposition \ref{mulformeven}. We omit the details.
For the third relation, we only give the proof in the case {$j=i-1$} ,
while the case $j=i+1$ can be proved similarly. That is, we shall prove
\begin{align}\label{serre-X}
& X_{i}^2 X_{\ol{i-1}} - ( {v} + {v}^{-1} ) X_{i} X_{\ol{i-1}} X_{i} + X_{\ol{i-1}}  X_{i}^2 = 0,
\end{align}
By replacing {$i$} with {$i+1$}, it suffices to prove
\begin{align*}
& X_{i+1}^2 X_{\ol{i}} - ( {v} + {v}^{-1} ) X_{i+1} X_{\ol{i}} X_{i+1} + X_{\ol{i}}  X_{i+1}^2 = 0,
\end{align*}
Direct calculation shows
\begin{align*}
& X_{i+1}^2 X_{\ol{i}} - ( {v} + {v}^{-1} ) X_{i+1} X_{\ol{i}} X_{i+1} + X_{\ol{i}}  X_{i+1}^2 \\
&=	- {v}^{-1}X_{i+1} (X_{\ol{i}} X_{i+1} -  {v}X_{i+1} X_{\ol{i}} )
	+ (X_{\ol{i}}  X_{i+1}  - {v} X_{i+1} X_{\ol{i}} )X_{i+1}.
\end{align*}
As we have proved (QQ5), the following holds
\begin{align*}
  X_{i} X_{\ol{i+1}} - {v} X_{\ol{i+1}} X_{i}
	= X_{\ol{i}} X_{i+1} - {v} X_{i+1} X_{\ol{i}}.
\end{align*}
Putting together we have
\begin{align*}
& X_{i+1}^2 X_{\ol{i}} - ( {v} + {v}^{-1} ) X_{i+1} X_{\ol{i}} X_{i+1} + X_{\ol{i}}  X_{i+1}^2 \\
&=	 - {v}^{-1}X_{i+1}  X_{i} X_{\ol{i+1}} +  X_{i+1}  X_{\ol{i+1}} X_{i}
	+ X_{i} X_{\ol{i+1}}X_{i+1} - {v} X_{\ol{i+1}} X_{i} X_{i+1} \\
&=	 -  {v}^2 G_{i+1}^{-2}  G_{i}^{-1}  \ABJS(E_{i+1, i+2}, \mathrm{O}, \bs{0})  \cdot
	\ABJS(E_{i, i+1}, \mathrm{O}, \bs{0}) \cdot
		 \ABJS(\mathrm{O}, E_{i+1, i+2}, \bs{0}) \\
	& \qquad  +  {v}^4 G_{i+1}^{-2}  G_{i}^{-1}  \ABJS(E_{i+1, i+2}, \mathrm{O}, \bs{0})  \cdot
				\ABJS(\mathrm{O}, E_{i+1, i+2}, \bs{0}) \cdot
				\ABJS(E_{i, i+1}, \mathrm{O}, \bs{0})   \\
	& \qquad + {v}^2 G_{i}^{-1} G_{i+1}^{-2} \ABJS(E_{i, i+1}, \mathrm{O}, \bs{0}) \cdot
		 \ABJS(\mathrm{O}, E_{i+1, i+2},  \bs{0})  \cdot
		\ABJS(E_{i+1, i+2}, \mathrm{O}, \bs{0})  \\
	& \qquad - {v}^4   G_{i+1}^{-2} G_{i}^{-1} \ABJS(\mathrm{O}, E_{i+1, i+2}, \bs{0})  \cdot
		\ABJS(E_{i, i+1}, \mathrm{O}, \bs{0})  \cdot
		\ABJS(E_{i+1, i+2}, \mathrm{O}, \bs{0})  .
\end{align*}
Applying Proposition \ref{mulformeven}{\rm(1)} and  Proposition \ref{mulformodd1}, we have
\begin{align*}
&\ABJS(E_{i+1, i+2}, \mathrm{O}, \bs{0})  \cdot
	\ABJS(E_{i, i+1}, \mathrm{O}, \bs{0}) \cdot
		 \ABJS(\mathrm{O}, E_{i+1, i+2}, \bs{0}) \\
&= \ABJS( E_{i,i+1}+ E_{i+1,i+2}, E_{i+1, i+2}, \bs{0} ) + \ABJS( E_{i+1,i+2}, E_{i,i+2}, \bs{0} ) , \\
&\ABJS(E_{i+1, i+2}, \mathrm{O}, \bs{0})  \cdot
				\ABJS(\mathrm{O}, E_{i+1, i+2}, \bs{0}) \cdot
				\ABJS(E_{i, i+1}, \mathrm{O}, \bs{0})  \\
&=\ABJS( E_{i, i+1} + E_{i+1,i+2}, E_{i+1,i+2}, \bs{0} ) , \\
&  \ABJS(E_{i, i+1}, \mathrm{O}, \bs{0}) \cdot
		 \ABJS(\mathrm{O}, E_{i+1, i+2}, \bs{0}) \cdot
		\ABJS(E_{i+1, i+2}, \mathrm{O}, \bs{0})  \\
&=	\ABJS( E_{i+1, i+2} + E_{i,i+1}, E_{i+1, i+2}, \bs{0} )
	+ {v}^{ 2 } \ABJS( E_{i,i+2}, E_{i+1, i+2}, \bs{0} )
	+ \ABJS( E_{i+1, i+2}, E_{i, i+2}, \bs{0} ) , \\
&\ABJS(\mathrm{O}, E_{i+1, i+2}, \bs{0})  \cdot
		\ABJS(E_{i, i+1}, \mathrm{O}, \bs{0})  \cdot
		\ABJS(E_{i+1, i+2}, \mathrm{O}, \bs{0}) \\
&=\ABJS(E_{i+1, i+2} + E_{i,i+1}, E_{i+1,i+2}, \bs{0} )
	+ \ABJS(  E_{i,i+2}, E_{i+1,i+2}, \bs{0} ).
\end{align*}
This leads to
\begin{align*}
0 =
& -  \ABJS(E_{i+1, i+2}, \mathrm{O}, \bs{0})  \cdot
	\ABJS(E_{i, i+1}, \mathrm{O}, \bs{0}) \cdot
		 \ABJS(\mathrm{O}, E_{i+1, i+2}, \bs{0}) \\
	& \qquad  +  {v}^2 \ABJS(E_{i+1, i+2}, \mathrm{O}, \bs{0})  \cdot
				\ABJS(\mathrm{O}, E_{i+1, i+2}, \bs{0}) \cdot
				\ABJS(E_{i, i+1}, \mathrm{O}, \bs{0})   \\
	& \qquad +  \ABJS(E_{i, i+1}, \mathrm{O}, \bs{0}) \cdot
		 \ABJS(\mathrm{O}, E_{i+1, i+2}, \mathrm{O}, \bs{0}) \cdot
		\ABJS(E_{i+1, i+2}, \mathrm{O}, \bs{0})  \\
	& \qquad - {v}^2 \ABJS(\mathrm{O}, E_{i+1, i+2}, \bs{0})  \cdot
		\ABJS(E_{i, i+1}, \mathrm{O}, \bs{0})  \cdot
		\ABJS(E_{i+1, i+2}, \mathrm{O}, \bs{0}) .
\end{align*}
Hence the equation \eqref{serre-X} is proved.

For the last relation,
because for any {$\lambda \in \CMN(n, r)$} with {$r>0$},
we have {$d_A = 1$} when {$A=\lambda + E_{i+1, i}$} or {$\lambda + 2E_{i+1, i}$}.
By Lemma \ref{lem_dA1},
Proposition \ref{mulformodd2}  can be used for the conditions {$j=i \pm 1$}.
Then the proof is analogous to the second relation.
\end{proof}

In the last two sections, we determine the image of the superalgebra homomorphism in Theorem \ref{qqschur_reltion} and prove that $\bs{\xi}_n$ is injective. 

We end this section with an application of the relations established above. 
Let 
\begin{equation}\label{setG}
 \fsG = \{ G_{i}, G_{i}^{-1}, G_{\ol{i}},
	X_{j}, X_{\ol{j}},
	Y_{j}, Y_{\ol{j}}
	\where  1 \le i \le n,\  1 \le j \le n-1
	\}
\end{equation}.

\begin{cor}\label{common_form}
For any {$A \in \MNZNS(n)$},
and any $Z  \in\fsG $, 
there exist $g_{B,\bs{j}}(Z,A) \in \Qv $, 
for some $B \in \MNZNS(n)$ and $\bs{j}\in {\ZZ}^n$, 
such that in $\SQvnR$ 
\begin{align*}
Z  \cdot \AJS(A, \bs{0})
&=
	\sum_{B, \bs{j}} g_{B, \bs{j}}(Z,A )  \AJS(B, \bs{j} ).
\end{align*}
\end{cor}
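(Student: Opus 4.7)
The approach is to partition the generating set $\fsG$ into a \emph{good} subset
\[
\fsG_0 := \{ G_i^{\pm 1},\, X_j,\, Y_j,\, G_{\bar n} \mid 1 \le i \le n,\ 1 \le j \le n-1 \},
\]
for which the formulas in Sections \ref{even case}--\ref{sec_spanningsets} yield unconditional expansions of $Z \cdot \AJS(A', \bs{j}')$ in the long basis, and a complementary \emph{bad} subset $\{G_{\bar i}\ (i<n),\, X_{\bar j},\, Y_{\bar j}\}$, to which the SDP-dependent multiplication formulas do not apply uniformly in $A$. The bad generators will be rewritten as $\Qv$-polynomials in $\fsG_0$ using Remark \ref{induction_N}, after which $Z \cdot \AJS(A, \bs{0})$ is computed by iterating the good-generator formulas.

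For $Z \in \fsG_0$ the conclusion is direct. The case $Z = G_i^{\pm 1}$ is Corollary \ref{mulformzerocor}. By \eqref{XYbar-comp} we can write $X_j = \ABJS(E_{j,j+1}, \mathrm{O}, \bs{0})\, G_j^{-1}$ and $Y_j = \ABJS(E_{j+1,j}, \mathrm{O}, \bs{0})\, G_{j+1}^{-1}$, so these reduce to Proposition \ref{mulformeven} after an initial weight shift. Finally $G_{\bar n} = v\, \ABJS(\mathrm{O}, E_{n,n}, \bs{0})\, G_n^{-1}$, so Corollary \ref{mulformdiagN} handles it; this works for \emph{any} matrix because Lemma \ref{shiftonN} makes the SDP condition on the $n$-th row automatic. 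In each case the output is an explicit finite $\Qv$-linear combination of elements $\AJS(B, \bs{k})$.

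For a bad generator, I will invoke the identities of Remark \ref{induction_N},
\begin{align*}
E_{\bar j} &= -v K_{j+1} K_{\bar{j+1}} E_j + v^{-1} E_j K_{j+1} K_{\bar{j+1}}, \\
F_{\bar j} &= v K_{j+1}^{-1} K_{\bar{j+1}} F_j - v^{-1} F_j K_{j+1}^{-1} K_{\bar{j+1}}, \\
K_{\bar j} &= E_j K_{\bar{j+1}} F_j - v^{-1} E_j F_j K_{\bar{j+1}} - v K_{\bar{j+1}} F_j E_j + F_j K_{\bar{j+1}} E_j + K_j^{-1} K_{\bar{j+1}} K_{j+1},
\end{align*}
transported from $\Uvqn$ to $\USnv \subset \SQvnR$ via the homomorphism $\bs{\xi}_n$ of Theorem \ref{qqschur_reltion}. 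Downward induction on $j$ from $n-1$ to $1$ then rewrites every $X_{\bar j},\, Y_{\bar j},\, G_{\bar j}$ with $j < n$ as a $\Qv$-linear combination of monomials in $\fsG_0$.

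To finish, write any bad $Z$ as $Z = \sum_\ell \alpha_\ell\, Z_{\ell,1} Z_{\ell,2} \cdots Z_{\ell, m_\ell}$ with $Z_{\ell,s} \in \fsG_0$, and evaluate $Z \cdot \AJS(A, \bs{0})$ right-to-left. At every stage the current expression is a finite $\Qv$-linear combination of long basis elements, and left-multiplying by any $Z_{\ell,s} \in \fsG_0$ preserves this form by the second paragraph applied termwise. The iteration terminates in finitely many steps and produces the claimed expansion. The only conceptual obstacle is the existence of the polynomial expressions of the third paragraph; this is supplied by Remark \ref{induction_N} combined with Theorem \ref{qqschur_reltion}, after which everything else is routine iteration of already-established formulas.
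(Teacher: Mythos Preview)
Your proof is correct and follows essentially the same approach as the paper: handle the even generators and $G_{\bar n}$ directly via Propositions \ref{mulformzero}, \ref{mulformeven}, and Corollary \ref{mulformdiagN}, then use the recursive identities of Remark \ref{induction_N} (which the paper rewrites as \eqref{induction_XYG} in terms of $X,Y,G$) together with Theorem \ref{qqschur_reltion} to reduce the remaining odd generators by downward induction on the index. The only cosmetic difference is that the paper's identity \eqref{induction_XYG}(c) for $G_{\bar i}$ goes through $Y_{\bar i}$ (already handled in the same inductive step) rather than your longer expression from Remark \ref{induction_N} that involves only $G_{\overline{i+1}}$; both versions work.
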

\begin{proof} If $Z$ is even (i.e., $Z\in\{G_{i}, G_{i}^{-1},X_{j},Y_{j}\}$), the assertion follows from Propositions \ref{mulformzero} and \ref{mulformeven}. 
If $Z$ is odd, prove the existence of the formula by induction. 
First, by Proposition \ref{mulformzero} and Corollary \ref{mulformdiagN},
the assertion is true for $Z=G_{\ol{n}}$. 
Similar to Remark \ref{induction_N},
from the relations in (QQ3) and (QQ4),
we  have
\begin{equation}\label{induction_XYG}
\begin{aligned}
{\rm(a)}&\;\;X_{\ol{i}}
= - {v} G_{i+1} G_{\ol{i+1}} X_{i} + {v}^{-1} X_{i} G_{i+1}  G_{\ol{i+1}} , \\
{\rm(b)}&\;\;Y_{\ol{i}}
= {v}  G_{i+1}^{-1} G_{\ol{i+1}} Y_{i} - {v}^{-1} Y_{i} G_{i+1}^{-1}  G_{\ol{i+1}} ,\\
{\rm(c)}&\;\;G_{\ol{i}}=
	X_{i} Y_{\ol{i}}  G_{i+1}  - Y_{\ol{i}} X_{i} G_{i+1}  +  G_{i}^{-1} G_{\ol{i+1}} G_{i+1}.
\end{aligned}
\end{equation}
Thus, the assertion is true for $Z=X_{\ol{n-1}},Y_{\ol{n-1}}$ by \eqref{induction_XYG}(a),(b), 
and thus for $Z=G_{\ol{n-1}}$ by \eqref{induction_XYG}(c).
 Now, a downward induction from $n-1$ to 1 prove the assertion for all odd $Z$.
\end{proof}

\begin{rem}\label{rem:induct-mult}By Propositions \ref{mulformzero} and \ref{mulformeven} and Corollary \ref{mulformdiagN},
there are explicit formulas for the coefficients $g_{B, \bs{j}}(Z,A )$ for all $A$ and $Z\in\{G_{i}, G_{i}^{-1},X_{j},Y_{j},G_{\ol n}\}$. For the remaining generators, the coefficients are only inductively defined. However, with the new realization given in Theorem \ref{map_iso}, the explicit multiplication formulas for $Z\in\{G_{i}, G_{i}^{-1},X_{j},Y_{j},G_{\ol n}\}$ are sufficient to define a new presentation for $\Uvqn$; see Remark \ref{induction_N}.
\end{rem}

\spaceintv
\section{Two triangularly related bases for $\USnv$}\label{sec_generators}
We are now ready to present a new realization for $\Uvqn$ 
via the twisted  quantum queer Schur superalgebra ${\SQvnrR}$ for $r\geq 0$ in the next two sections. 
By Theorem \ref{qqschur_reltion} and Corollary \ref{common_form}, 
it suffices to prove that $\bs{\xi}_n$ is an isomorphism.
 In this section, we determine the image of $\bs{\xi}_n$.

The following result can be proved by a method 
similar  to the  proof of \cite[Proposition 4.1{\rm(2)}]{DF2}. 
Recall the $\Qv$-space $\USnv$ defined in \eqref{Anv}.
\begin{prop}\label{unv_basis}
The set {$\LS =\{  \AJS(A, \bs{j})  \where   A \in \MNZNS(n), \bs{j} \in {\ZZ}^{n} \} $} is a {$\Qv$}-basis of {$\USnv$}.
\end{prop}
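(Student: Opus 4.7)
The plan is to reduce the linear independence claim to a well-known ``$v$-Vandermonde'' argument, using the fact that $\{\Phi_M \mid M \in \MNZ(n,r)\}$ is a $\Qv$-basis of $\SQvnrR$ (Proposition \ref{prop_PhiAPhiB}). First I would note that the spanning assertion is immediate from the definition of $\USnv$ in \eqref{Anv}, so it remains to prove linear independence.

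Suppose a finite relation $\sum_{A,\bs{j}} c_{A,\bs{j}}\,\AJS(A,\bs{j}) = 0$ holds in $\SQvnR$, with $c_{A,\bs{j}} \in \Qv$. Since $\SQvnR = \prod_{r\ge 0} \SQvnrR$, this is equivalent to $\sum_{A,\bs{j}} c_{A,\bs{j}}\,\AJRS(A,\bs{j},r) = 0$ for every $r \ge 0$. Substituting the defining formula \eqref{def_ajr}, this becomes
\begin{align*}
\sum_{A \in \MNZNS(n),\,|A|\le r}\ \sum_{\bs{j}\in\ZZ^n}\ \sum_{\lambda \in \CMN(n,r-|A|)} c_{A,\bs{j}}\,v^{\lambda\cdot\bs{j}}\,\Phi_{(\SE{A}+\lambda\,|\,\SO{A})} = 0.
\end{align*}
The key observation is that the map $(A,\lambda) \mapsto (\SE{A}+\lambda\,|\,\SO{A})$ is a bijection from
$$\{(A,\lambda) : A \in \MNZNS(n),\,\lambda \in \CMN(n,r-|A|)\}$$
onto $\MNZ(n,r)$: given $M = (\SE{M}\,|\,\SO{M}) \in \MNZ(n,r)$, one recovers $\lambda = \mathrm{diag}(\SE{M})$ and $A = (\SE{M}-\lambda\,|\,\SO{M})$ uniquely, with $A \in \MNZNS(n)$ because the diagonal of $\SE{A}$ is zero. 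Comparing coefficients of $\Phi_M$ in the basis of $\SQvnrR$ thus yields, for each $A \in \MNZNS(n)$ and each $\lambda \in \NN^n$ (letting $r = |A|+|\lambda|$ vary freely),
\begin{align*}
\sum_{\bs{j}\in\ZZ^n} c_{A,\bs{j}}\,v^{\lambda\cdot\bs{j}} = 0.
\end{align*}

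Fixing $A$, it remains to deduce $c_{A,\bs{j}} = 0$ for all $\bs{j}$. Since only finitely many $c_{A,\bs{j}}$ are nonzero, I would proceed by induction on $n$: for $n=1$ the relation reads $\sum_{j\in\ZZ} c_{A,j}\,v^{\lambda j} = 0$ for all $\lambda \ge 0$; after multiplying by $v^{N\lambda}$ for $N$ large, this is a nonzero polynomial in $v^\lambda$ of bounded degree with infinitely many roots in $\Qv$, forcing each $c_{A,j}=0$. For general $n$, fix $\lambda_2,\dots,\lambda_n$ and group the sum by $j_1$; the $n=1$ case yields $\sum_{j_2,\dots,j_n} c_{A,\bs{j}}\,v^{\lambda_2 j_2+\cdots+\lambda_n j_n} = 0$ for every fixed $j_1$ and all $\lambda_2,\dots,\lambda_n \ge 0$, so induction on $n$ completes the argument.

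The main obstacle, insofar as there is one, is the bookkeeping in the bijection $(A,\lambda) \leftrightarrow M$; the Vandermonde step is routine once the problem is reduced to an identity of exponential sums in one $\lambda$ at a time, so the argument is essentially parallel to \cite[Proposition 4.1(2)]{DF2}.
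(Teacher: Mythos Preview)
Your proof is correct and is precisely the argument the paper has in mind: the paper does not write out a proof but simply refers to \cite[Proposition~4.1(2)]{DF2}, and you have faithfully reproduced that Vandermonde-type argument adapted to the super setting, including the key bijection $(A,\lambda)\leftrightarrow M$ which uses that $\MNZNS(n)$ constrains only the diagonal of $\SE{A}$.
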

For any {$A \in \MNZNS(n)$},
let
\begin{align*}
& \co^{-}(A)_{\min} =
\left\{
\begin{aligned}
& n , \qquad \mbox{ if } a_{i,j} = 0 \mbox{ for all } i >j; \\
& \min \{ j \in \NN \where \sum_{i=j+1}^{n}a_{i, j} > 0\} \in [1, n-1] ,
	\qquad \mbox{ otherwise}.
\end{aligned}
\right.
\end{align*}
For {$\alpha, \beta \in \NN^t$},
denote {$\alpha < \beta $} if there exists {$k \le t$} such that
{$\alpha_k <  \beta_k $}
and
{$ \alpha_i = \beta_i $} for all {$i < k$}.
For {$A=(\SEE{a}_{i,j}| \SOE{a}_{i,j}), B=(\SEE{b}_{i,j}| \SOE{b}_{i,j}) \in \MNZN(n)$}, and {$1 \le k \le n-1$},  we say {$ B <_{k} A $} if one of the following conditions hold:
\begin{enumerate}
\item {$\sum_{i=k+1}^{n} a_{i,k}  > \sum_{i=k+1}^{n} b_{i,k}$},   or
\item {$\sum_{i=k+1}^{n} a_{i,k}  = \sum_{i=k+1}^{n} b_{i,k} $}
	 and  {$( \SOE{a}_{k, k}, \SOE{a}_{k+1, k}, \cdots \SOE{a}_{n, k} )  > ( \SOE{b}_{k, k}, \SOE{b}_{k+1, k}, \cdots \SOE{b}_{n, k} ) $}, or
\item {$\sum_{i=k+1}^{n} a_{i,k}  = \sum_{i=k+1}^{n} b_{i,k} $},
	{$ (\SOE{a}_{k, k}, \SOE{a}_{k+1, k}, \cdots \SOE{a}_{n, k} )  = (\SOE{b}_{k, k}, \SOE{b}_{k+1, k}, \cdots \SOE{b}_{n, k} )$},  \\
	 and {$(a_{k, k}, a_{k+1, k}, \cdots a_{n, k} )  < (b_{k, k}, b_{k+1, k}, \cdots b_{n, k} )$}.
\end{enumerate}

We say {$ A =_{k} B $} if
{$\SEE{a}_{i, k} = \SEE{b}_{i, k}$},
{$\SOE{a}_{i, k} = \SOE{b}_{i, k}$}
for all {$i \in [k, n]$}.
We say {$A =^+ B$} if {${a}_{i, j} = {b}_{i, j}$}
for all {$i  < j$},
and denote  {$A {\ne}^+ B$} if there exists a pair {$(i, j)$} such that {$i < j$} and   {${a}_{i, j} \ne {b}_{i, j}$}.

\begin{defn}\label{defn:partialorder}
For {$A, B \in \MNZN(n)$}, we say {$ B \prec A $} if one of the following conditions holds:
\begin{enumerate}
\item {$\sum_{i \le s, j\ge t } b_{i,j} \le \sum_{i \le s, j\ge t } a_{i,j}$}
	 for all {$s<t $}, and {$A {\ne}^+ B$};  or
\item {$A =^+ B$}, {$\co^{-}(A)_{\min} < \co^{-}(B)_{\min}$};  or
\item {$A =^+ B$}, {$\co^{-}(A)_{\min} = \co^{-}(B)_{\min}=k <n$}
	 and  {$B <_{k} A$}; or
\item {$A =^+ B$}, {$\co^{-}(A)_{\min} = \co^{-}(B)_{\min}=k <n$},
	 there exists  {$ t \in [k+1,n-1] $}  such that  {$B <_{t} A $}
	 and for all {$ j < t$} , {$A =_{j} B$}.
\end{enumerate}
\end{defn}

Notice for any {$\bs{j} \in {\ZZ}^{n}$},
\begin{align}
 \AJS(A, \bs{j}) = g \cdot (\prod_{h=1}^{n}G_{h}^{{j}_h}  ) \cdot \AJS(A, \bs{0}) ,
 	\qquad \mbox{ where } g \in \Qv, \label{Aj_0}
\end{align}
it means we can get {$\AJS(A, \bs{j})$}  from  {$\AJS(A, \bs{0})$}, or conversely.

\begin{lem}\label{triang_aaa}
For any {$A \in \MN(n)$} and {$k \in [1,n]$},
if {$a_{i,j}=0$} for any {$i>j$},
or {$i<j$} but {$j>k$},
then $A$ satisfies the SDP condition at  {$(k-1,k)$} and  {$(k,k)$}.
\end{lem}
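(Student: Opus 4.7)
The plan is to prove the SDP condition at $(h,k)$ for $h \in \{k-1, k\}$ by reducing it to a trivial commutation. The SDP condition asserts $c_a T_{d_A} = T_{d_A} c_b$ in $\HCR$, with $a = \widetilde{\lambda}_{h-1} + \AK(h,k) + p$ and $b = \widetilde{a}_{h-1,k} + p$. The strategy is twofold: first, show the numerical identity $a = b$ under the hypothesis, so that the SDP reduces to the commutation $c_b T_{d_A} = T_{d_A} c_b$; then, show that $b$ lies strictly above every simple transposition index appearing in the reduced expression \eqref{d_A} of $d_A$, making the commutation trivial.

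For the position identity, I would exploit the structural consequences of the hypothesis: $A$ is upper triangular with above-diagonal entries only in columns $\le k$, hence $\lambda_i = \sum_{j=i}^{k} a_{i,j}$ for $i \le k$. In particular $\lambda_k = a_{k,k}$ and $\lambda_{k-1} = a_{k-1,k-1} + a_{k-1,k}$, whence $\widetilde{\lambda}_k = \widetilde{\mu}_k$. Moreover $\AK(k,k) = 0$ and $\AK(k-1,k) = a_{k-1,k-1}$. A short calculation using these identities confirms $a = b$ for both values of $h$.

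To analyze $d_A$, I would use the factorization $d_A = \prod_{j=1}^{n-1} \prod_{i=2}^{n} w_{i,j}$ in \eqref{d_A}. Factors with $i > j$ are trivial since $a_{i,j} = 0$. Factors with $j \ge k$ are also trivial: the test $\sum_{u \le i-1,\, p \ge j+1} a_{u,p}$ in \eqref{wij} vanishes, because any nonzero entry in a column $p \ge j+1 \ge k+1$ must lie on the diagonal, yet $u \le i-1 < p$ precludes this. So $d_A$ is a product of $w_{i,j}$ with $i \le j \le k-1$, and the largest simple-transposition index appearing in any such nontrivial $w_{i,j}$ is $\sigma_{i-1,j} + a_{i,j} - 1$. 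The key inequality to establish is
\[ \sigma_{i-1,j} + a_{i,j} \;\le\; \widetilde{a}_{k-2,k} \;\le\; \widetilde{a}_{h-1,k}, \qquad h \in \{k-1,k\}. \]
Expanding both sides using \eqref{sigmaij} and \eqref{mtildehk}, the difference $\widetilde{a}_{k-2,k} - (\sigma_{i-1,j}+a_{i,j})$ rewrites as a sum of nonnegative entries $a_{u,p}$ over the index set $\{(u,p) : j \le p \le k-1,\, i \le u \le p,\, (u,p) \ne (i,j)\} \cup \{(u,k) : i \le u \le k-2\}$, giving the bound.

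Combining these, each $s_m$ appearing in $d_A$ satisfies $m \le \widetilde{a}_{h-1,k} - 1$, so $s_m$ acts only on positions $\le \widetilde{a}_{h-1,k}$. Since $b = \widetilde{a}_{h-1,k} + p \ge \widetilde{a}_{h-1,k} + 1$, each $T_{s_m}$ commutes with $c_b$, hence $c_b T_{d_A} = T_{d_A} c_b$. Combined with the position identity $a = b$, this yields the SDP condition at $(h,k)$. The main technical obstacle is the combinatorial verification of the inequality $\sigma_{i-1,j}+a_{i,j} \le \widetilde{a}_{k-2,k}$; the argument leans crucially on the hypothesis that above-diagonal entries of $A$ are confined to columns $\le k$, which is precisely what ensures the compensating terms in the expansion are nonnegative.
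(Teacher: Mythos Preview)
Your proposal is correct and follows essentially the same route as the paper's proof: both reduce the SDP condition to showing (i) the position identity $\widetilde{\lambda}_{h-1}+\AK(h,k)+p=\widetilde{a}_{h-1,k}+p$ and (ii) that every simple reflection $s_m$ occurring in the reduced expression \eqref{d_A} of $d_A$ has index $m$ strictly below $\widetilde{a}_{h-1,k}$, so that $c_b$ commutes with each $T_{w_{i,j}}$. The only organizational difference is that the paper splits off the factor $w_{k-1,k-1}$ and bounds its largest index by an explicit subtraction, whereas you subsume that case into the single inequality $\sigma_{i-1,j}+a_{i,j}\le\widetilde{a}_{k-2,k}$ (which becomes an equality precisely when $i=j=k-1$); your unified rewriting of the difference as a sum of nonnegative entries is a modest tidying of the same computation.
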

\begin{proof}
Denote {$\lambda = \ro(A)$}.
To prove the SDP condition at {$(k-1,k)$},
we need to prove for any {$p$} satisfying {$0 \le p \le a_{k-1,k}-1$},
\begin{displaymath}
	c_{\widetilde{\lambda}_{k-2} + a_{k-1, k-1} + p + 1} {T_{d_A}} =  {T_{d_A}} c_{\widetilde{a}_{k-2,k} + p + 1}.
\end{displaymath}

By assumption, $A$ can be written as
\begin{equation}\label{eq:special matrix}
A =
	\begin{pmatrix}
		a_{1,1} &a_{1,2}	&\cdots &a_{1,k-1} 	&a_{1,k}	&0 	&\cdots &0  \\
		0 	&a_{2,2}	&\cdots &a_{2,k-1}	&a_{2,k}	&0 	&\cdots &0  \\
		\vdots 	&\vdots		&\cdots &\vdots 	&\vdots		&\vdots  	&\cdots&\vdots     \\
		0	&0	 	&\cdots &a_{k-2,k-1} 	&a_{k-2,k}	&0  	&\cdots&0  \\
		0	&0	 	&\cdots &a_{k-1,k-1} 	&a_{k-1,k}	&0  	&\cdots&0  \\
		0	&0	 	&\cdots &0		&a_{k,k} 	&0 	&\cdots&0  \\
		0	&0	 	&\cdots &0		&0		&a_{k+1,k+1} &\cdots &0  \\
		\vdots 	&\vdots   	&\cdots &\cdots 	&\vdots		&\vdots	&\cdots&\vdots     \\
		0	 &0	 	&\cdots &0		&0	 	&0 	&\cdots &a_{n,n}  \\
	\end{pmatrix} .
\end{equation}
By \eqref{d_A} and \eqref{wij},
direct calculation shows
{$\sigma_{i-1,k} = \widetilde{a}_{i-1,k}$} when  {$2 \le i \le k$},
which means {$w_{i,k} = 1$}  when {$2 \le i \le k$}.
As a consequence, we have
\begin{align*}
d_A = w_{2,2} \cdot (w_{2,3} w_{3,3} )
	\cdots (w_{2,k-1} w_{3,k-1} \cdots w_{k-1,k-1} ).
\end{align*}
Recall
\begin{align*}
	w_{i,j} =
		&(s_{  \widetilde{\lambda}_{i-1} + \sum_{u > i-1, p < j}  a_{u,p} } s_{ \widetilde{\lambda}_{i-1} + \sum_{u > i-1, p < j}  a_{u,p}  - 1} \cdots s_{\widetilde{a}_{i-1,j} +1}) \\
		&(s_{  \widetilde{\lambda}_{i-1} + \sum_{u > i-1, p < j}  a_{u,p}  + 1} s_{ \widetilde{\lambda}_{i-1} + \sum_{u > i-1, p < j}  a_{u,p}  } \cdots s_{\widetilde{a}_{i-1,j} +2}) \\
		&\cdots \\
		&(s_{ \widetilde{\lambda}_{i-1} + \sum_{u > i-1, p < j}  a_{u,p}  +a_{i,j}-1} s_{ \widetilde{\lambda}_{i-1} + \sum_{u > h-1, p < j}  a_{u,p}  + a_{i,j}-2 } \cdots s_{\widetilde{a}_{i,j}}).
\end{align*}
When {$1 \le i \le k-2$} and {$i\leq j\leq k-1$},
 we have
\begin{align*}
{\widetilde{\lambda}_{k-2} + a_{k-1, k-1} + p + 1}  >  \widetilde{\lambda}_{i-1} + \sum_{u > i-1, p <j}  a_{u,p}  +a_{i,j},
\end{align*}
which means in this case,
\begin{displaymath}
	c_{\widetilde{\lambda}_{k-2} + a_{k-1, k-1} + p + 1}  {T_{w_{i,j}}}
	=  {T_{w_{i,j}}} c_{\widetilde{\lambda}_{k-2} + a_{k-1, k-1} + p + 1}.
\end{displaymath}
For  the case $i=j=k-1$,
 observe that {$ \sum_{u > k-2, p < k-1}  a_{u,p} = 0$} and moreover
we have
\begin{align*}
w_{k-1,k-1}=
		&(s_{  \widetilde{\lambda}_{k-2}  } s_{ \widetilde{\lambda}_{k-2}  - 1} \cdots s_{\widetilde{a}_{k-2,k-1} +1}) \\
		&(s_{  \widetilde{\lambda}_{k-2}  + 1} s_{ \widetilde{\lambda}_{k-2}   } \cdots s_{\widetilde{a}_{k-2,k-1} +2}) \\
		&\cdots \\
		&(s_{ \widetilde{\lambda}_{k-2}  +a_{k-1,k-1}-1} s_{ \widetilde{\lambda}_{k-2} + a_{k-1,k-1}-2 } \cdots s_{\widetilde{a}_{k-1,k-1}}).
\end{align*}
As $
	({\widetilde{\lambda}_{k-2} + a_{k-1, k-1} + p + 1} ) - ( \widetilde{\lambda}_{k-2}  +a_{k-1,k-1}-1)
	=  p + 2
	\ge 2,
$
we obtain
\begin{displaymath}
	c_{\widetilde{\lambda}_{k-2} + a_{k-1, k-1} + p + 1}  {T_{w_{k-1, k-1}}}
	=  {T_{w_{k-1, k-1}}} c_{\widetilde{\lambda}_{k-2} + a_{k-1, k-1} + p + 1}  ,
\end{displaymath}
and hence
\begin{displaymath}
	c_{\widetilde{\lambda}_{k-2} + a_{k-1, k-1} + p + 1}  {T_{d_A}}
	=  {T_{d_A}} c_{\widetilde{\lambda}_{k-2} + a_{k-1, k-1} + p + 1}
	=  {T_{d_A}} c_{\widetilde{a}_{k-2, k} + p + 1} .
\end{displaymath}

To prove the SDP condition at {$(k,k)$},
we need to prove for any {$p$} satisfying {$0 \le p \le a_{k,k}-1$},
\begin{displaymath}
	c_{\widetilde{\lambda}_{k-1} + p + 1} {T_{d_A}} =  {T_{d_A}} c_{\widetilde{a}_{k-1,k} + p + 1}.
\end{displaymath}
Similar to the case for {$(k-1, k)$},
 we have
{$
{\widetilde{\lambda}_{k-2} + a_{k-1, k-1} + p + 1}  >  \widetilde{\lambda}_{i-1} + \sum_{u > i-1, p <j}  a_{u,p}  +a_{i,j}
$} for all {$i,j$} satisfying {$1 \le i \le j  \le k-1$},
then
\begin{align*}
	c_{\widetilde{\lambda}_{k-1} + p + 1}  {T_{d_A}}
	=  {T_{d_A}} c_{\widetilde{\lambda}_{k-1} + p + 1}
	=  {T_{d_A}} c_{\widetilde{a}_{k-1, k} + p + 1} .
\end{align*}
\end{proof}

Recall that $\ABSUM{A}= \SE{A} + \SO{A}$.
\begin{cor}\label{triang_aaa_cor}
Let  {$A \in \MNZNS(n)$}
and {$\lambda \in \CMN(n, r)$} for any {$r > \snorm{A}$}.
Assume there is a {$k \in [2, n]$} such that
{$a_{i,j}=0$} when {$i > j$}, or  {$j > k$}.
Then
\begin{enumerate}
\item
{${(A + \lambda)}_{k-1,k}^+$} satisfies the SDP condition at {$(k-1,k)$},
\item
{$ {(A + \lambda)}$} satisfies the SDP condition at  {$(k,k)$}.
\end{enumerate}
\end{cor}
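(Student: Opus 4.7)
The plan is to reduce both claims to a direct application of Lemma \ref{triang_aaa}, after observing that the underlying plain matrices have the required triangular shape \eqref{eq:special matrix}.

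First, recall from \eqref{def_ahk_notations} and \eqref{parity} that, for a super matrix $B \in \MNZN(n)$, the SDP condition at a position $(h,k)$ is determined entirely by the plain matrix $\widehat{B}$ together with $\ro(B)=\ro(\widehat{B})$, since $d_B=d_{\widehat{B}}$. Setting $B:=A+\lambda$, we have $\widehat{B}=\widehat{A}+\lambda$. The hypothesis $a_{i,j}=a^{\bar 0}_{i,j}+a^{\bar 1}_{i,j}=0$ whenever $i>j$ or $j>k$ forces both parities to vanish in those positions, so $\widehat{A}$ has zeros below the diagonal and zeros in every column $j>k$ above the diagonal. Since $\lambda$ is diagonal, adding it only alters diagonal entries; hence $\widehat{B}$ has exactly the triangular shape \eqref{eq:special matrix} treated in Lemma \ref{triang_aaa}.

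For assertion (2), Lemma \ref{triang_aaa} applied to $\widehat{B}$ with the given $k$ directly yields the SDP condition at $(k,k)$, which by the reduction above is the desired SDP for the super matrix $A+\lambda$ at $(k,k)$.

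For assertion (1), by \eqref{def_ahk_notations} and \eqref{Ahk} we have $(A+\lambda)^+_{k-1,k}=\widehat{B}+E_{k-1,k}-E_{k,k}$ as an element of $\MN(n)$; if $\widehat{b}_{k,k}=a^{\bar 1}_{k,k}+\lambda_k=0$ then this matrix has a negative entry and the assertion is vacuous by Convention \ref{CONV}, so we may assume $\widehat{b}_{k,k}\ge 1$. Writing $M:=(A+\lambda)^+_{k-1,k}$, the modification only touches column $k$: the $(k-1,k)$ entry becomes $\widehat{b}_{k-1,k}+1$ and the $(k,k)$ entry becomes $\widehat{b}_{k,k}-1$. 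Therefore $M$ retains all the vanishing required by Lemma \ref{triang_aaa} (zeros below the diagonal and zeros in columns $j>k$), and the lemma applied to $M$ yields the SDP condition at $(k-1,k)$.

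No essential difficulty is expected; the argument is pure definition-chasing, and the key observation is simply that both operations --- adding a diagonal composition $\lambda$ and passing from $\widehat{B}$ to $\widehat{B}+E_{k-1,k}-E_{k,k}$ --- preserve the triangular pattern \eqref{eq:special matrix} that powers Lemma \ref{triang_aaa}.
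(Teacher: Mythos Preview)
Your proposal is correct and follows exactly the same approach as the paper: both the paper and you simply observe that $\widehat{A+\lambda}$ and $(A+\lambda)^+_{k-1,k}$ are of the form \eqref{eq:special matrix}, so Lemma~\ref{triang_aaa} applies directly. The paper's proof is a one-line version of what you wrote; your expanded account of why the triangular shape is preserved under adding $\lambda$ and under the $E_{k-1,k}-E_{k,k}$ modification is accurate. One small remark: your appeal to Convention~\ref{CONV} for the degenerate case $\widehat{b}_{k,k}=0$ is slightly off, since that convention governs the vanishing of $\phi_M$ and $T_M$ rather than the SDP condition itself; however, in every use of this corollary the relevant entry is automatically $\ge 1$, so the paper simply ignores this edge case and you may safely do the same.
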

\begin{proof}
Clearly {${\ABSUM{A + \lambda}}_{k-1,k}^+ $},  {${\ABSUM{A + \lambda}} \in \MN(n)$}
are of the form \eqref{eq:special matrix} in Lemma \ref{triang_aaa}. Hence the corollary holds.
\end{proof}

Following \cite{GLL},
for  {$u \le k$},
we apply the convention
for the orders of products:
\begin{equation}\label{eq_product}
\begin{aligned}
 		\prod_{u \le h \le k}{\ft}_h:={\ft}_{u} {\ft}_{u+1} \cdots {\ft}_{k} , \quad
		\prod_{k \ge  h \ge u} {\ft}_h := {\ft}_{k} {\ft}_{k-1} \cdots {\ft}_{u}.
\end{aligned}
\end{equation}
Then we set
\begin{equation}\label{def_udl}
\begin{aligned}
&
U^{A}_{i,j} 
={( X_{i}     X_{i+1}  \cdots   X_{j-2} X_{\ol{j-1}})}^{\SOE{a}_{i,j}}
	X_{i}^{\SEE{a}_{i,j}}    X_{i+1}^{\SEE{a}_{i,j}} \cdots   X_{j-2}^{\SEE{a}_{i,j}} X_{{j-1}}^{\SEE{a}_{i,j}} ,
	 \qquad &&\mbox{for all } 1 \le i < j \le n, \\
&D^{A}_{i,i} =  G_{\ol{i}}^{\SOE{a}_{i,i}} ,
	\qquad && \mbox{for all }   1 \le i   \le n, \\
&L^{A}_{i,j} =
	 {(X_{i} X_{i+1}  \cdots   X_{n-1}  G_{\ol{n}} Y_{n-1}  \cdots Y_{i+1}   Y_{i})}^{\SOE{a}_{i,j} }
	 \cdot  Y_{i-1}^{\sum_{k=i}^n  {a}_{k,j}}  ,
	 \qquad  && \mbox{for all } 1 \le j < i \le n.
\end{aligned}
\end{equation}
We also define
\begin{equation}\label{MA}
 {\frm}^A  :={\prod_{{1} \le j \le {n-1}}} \Big({\prod_{{n} \ge i \ge {j+1}}} L^{A}_{i,j}\Big)
	\cdot
	{\prod_{{n} \ge j \ge {1}}}  \Big[  D^{A}_{j,j}\big ( {\prod_{{j-1} \ge i \ge {1}}} U^{A}_{i,j}\big) \Big ].
\end{equation}
Notice that the expression
{$  X_{i}^{k}    X_{i+1}^{k} \cdots   X_{j-2}^{k} $}
is set to be $1$ when {$j=i+1$},
and  the expression  {$  (X_{i} X_{i+1}  \cdots   X_{n-1}  G_{\ol{n}} Y_{n-1}  \cdots  Y_{i+1}  Y_{i} )$}
is set to be {$  G_{\ol{n}}  $}   when {$i = n$}.

\begin{rem}
Because of the SDP condition,
the multiplication formulas involving {$Y_{\ol{j}}$} do  not work for arbitrary {$\AJS(A, \bs{j})$},
and this means we could not handle the lower triangular part by introducing elements parallel to {$U^{A}_{i,j}$}.
By \eqref{induction_XYG},
we have the following equations:
\begin{align*}
&Y_{\ol{n-1}} = {v} G_{n}^{-1} G_{\ol{n}} Y_{n-1} + \cdots , \\
&Y_{\ol{i-1}} = f_{i} \cdot {(X_{i}  \cdots   X_{n-1}  G_{\ol{n}} Y_{n-1}  \cdots   Y_{i})} \cdot Y_{i-1}+ \cdots , 
\end{align*}
where for each {$i \in [2, n]$},  {$f_{i}$} is a monomial in {${v}^{\pm 1}$}, {$G_{i}^{\pm 1}$},  {$\cdots$}, {$G_{n}^{\pm 1}$}. The largest term
with respect to the partial order `{$\prec $}' defined before on the right hand side appears {$(X_{i}  \cdots   X_{n-1}  G_{\ol{n}} Y_{n-1}  \cdots   Y_{i})  Y_{i - 1}$}. This motives us to introduce the elements {$L^{A}_{i,j} $} whose odd part plays the same role as {$Y_{\ol{i-1}}$}.
\end{rem}


Analogous to  \cite[Lemma 6.4]{GLL},
by applying Proposition \ref{mulformeven} and Remark \ref{rem:prod alg} directly,
we have the following:
\begin{lem}\label{triang_prec0_q} 
	Fix {$h  \in [1, n-1]$},  {$a \in \NN^+$}, {$k>h+1$}. 
	Let {$M=(\SEE{m}_{i,j} | \SOE{m}_{i,j}) , P=(\SEE{p}_{i,j} | \SOE{p}_{i,j})  \in \MNZNS(n)$}, {$  P  \prec {M}$},
and we assume {${m}_{i,j} = 0$} when {$i > j$} or {$k < j \le n$}.
Then   in {$\USnv$}
\begin{enumerate}
\item 
	if {$\SOE{m}_{h,k} = \SOE{m}_{h+1,k} = 0$},  {$\SEE{m}_{h+1,k} \ge a$}, 
	we have 
\begin{align*}
X_{h}^{a}  \cdot  \AJS(M, \bs{0})
   =  g_A \AJS( A , \bs{j}_A )  
	  + \sum_{\substack{
	 		B \in \MNZNS(n) \\
	 		B  \prec A ,\ 
	 		{\bs{b}} \in {\ZZ}^n
	} } g_{B, \bs{b} }\AJS(B, {\bs{b}}), 
\end{align*}
where {$	A= ( \SE{M} + a E_{h,k} - a E_{h+1, k}| \SO{M})$},
nonzero {$g_A  \in \Qv$},
finitely many  {$ g_{B, \bs{b} } \in \Qv$},  {$\bs{j}_A  \in \ZZ^n$};
\item 
	if {$a=1$}, {$\SOE{m}_{h,k} =0$}, {$   \SOE{m}_{h+1,k} = 1$},  {$\SEE{m}_{h+1,k}  = 0$}, 
	we have
\begin{align*}
X_{h}  \cdot  \AJS(M, \bs{0})
   =  g_A \AJS( A ,\bs{j}_A )  
	  + \sum_{\substack{
	 		B \in \MNZNS(n) \\
	 		B  \prec A ,\ 
	 		{\bs{b}} \in {\ZZ}^n
	} } g_{B, \bs{b} }\AJS(B, {\bs{b}}), 
\end{align*}
where  {$A= ( \SE{M} | \SO{M} +  E_{h,k} -  E_{h+1, k}) $},
nonzero {$g_A  \in \Qv$},
finitely many  {$ g_{B, \bs{b} } \in \Qv$},  {$\bs{j}_A  \in \ZZ^n$};
\item
if {$m_{h+1, k} = \SEE{m}_{h+1, k} +  \SOE{m}_{h+1, k} \ge a$}, 
we have 
\begin{align*}
 &  X_{h}^{a}   \cdot   \AJS(P, \bs{0})
	 =
	 \sum_{\substack{
	 		B \in \MNZNS(n) \\
	 		\ABSUM{B}  \prec A' ,\ 
	 		{\bs{b}} \in {\ZZ}^n
	} } g'_{B, \bs{b} }\AJS(B, {\bs{b}}), 
\end{align*}
where  {$A'= ( \SE{M} + \SO{M} +  {a} E_{h,k} - {a} E_{h+1, k}) \in \MN(n)$},
finitely many {$g_{B', \bs{b} } \in \Qv$}.
\end{enumerate}
\end{lem}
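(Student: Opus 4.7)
The plan is to derive parts~(1) and~(2) from Proposition~\ref{mulformeven}(1), and then lift part~(1) to general $a$ by induction. By \eqref{XYbar-comp}, $X_h = \ABJS(E_{h,h+1},\mathrm{O},\bs{0})\,G_h^{-1}$, and \eqref{mult-GA} shows that $G_h^{-1}$ acts on $\AJS(M,\bs{0})$ by a nonzero scalar together with the weight shift $\bs{0}\mapsto-\bs\ep_h$. It therefore suffices to analyse $\ABJRS(E_{h,h+1},\mathrm{O},\bs{0},r)\cdot\ABJRS(\SE{M},\SO{M},-\bs\ep_h,r)$ via Proposition~\ref{mulformeven}(1), whose nine families of terms are indexed by a column $k'\in[1,n]$ and by three modification types (an even shift $E_{h,k'}-E_{h+1,k'}$ of $\SE{M}$, an odd shift of the same shape of $\SO{M}$, or a double-even shift $+2E_{h,k'}$ combined with an odd cancellation). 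The structural hypothesis $m_{i,j}=0$ for $i>j$ or $j>k$ forces every contribution with $k'>k$ or $k'<h+1$ to vanish via Convention~\ref{rem_ajr_short_0}, since the row-$h{+}1$ entry of $M$ in those columns is zero; only $k'\in[h+1,k]$ survives.

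For part~(1), the base case $a=1$ under $\SOE{m}_{h,k}=\SOE{m}_{h+1,k}=0$ and $\SEE{m}_{h+1,k}\ge 1$ singles out the $k'=k$ first-family term, which produces $\ABSUM{B}=\ABSUM{M}+E_{h,k}-E_{h+1,k}$ with nonzero coefficient $v^{2\BK(h,k)}\VSTEP{\SEE{m}_{h,k}+1}$; this is the designated leading matrix $A=(\SE{M}+E_{h,k}-E_{h+1,k}|\SO{M})$. Every other surviving term yields a matrix $B$ with either $\ABSUM{B}\prec\ABSUM{A}$ under clause~(1) of Definition~\ref{defn:partialorder}, or $\ABSUM{B}=\ABSUM{A}$ but with a perturbed odd/even support pattern that places $B\prec A$ via clauses~(2)--(4). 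The induction on $a$ then proceeds cleanly: the intermediate leading matrix $A_{a-1}=(\SE{M}+(a{-}1)E_{h,k}-(a{-}1)E_{h+1,k}|\SO{M})$ inherits the same column-sparsity and the same odd-vanishing at $(h,k)$ and $(h+1,k)$, so the same analysis applies. For part~(2), the hypotheses $\SOE{m}_{h+1,k}=1$, $\SEE{m}_{h+1,k}=0$ kill the first-family $k'=k$ term by Convention~\ref{CONV} and elevate the second-family odd-shift term to the leading role, giving $A=(\SE{M}|\SO{M}+E_{h,k}-E_{h+1,k})$ with coefficient $v^{2\BK(h,k)}$.

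For part~(3), combine $P\prec M$ with the sparsity analysis above. Every surviving $B$ in $X_h^a\,\AJS(P,\bs{0})$ has $\ABSUM{B}$ obtained from $\ABSUM{P}$ by some combination of column shifts $E_{h,k'}-E_{h+1,k'}$ with $k'\in[h+1,k]$, together with possible $\pm E_{h,k'}$ adjustments from the double-even terms, the net upward mass moved being $a$. Since $\ABSUM{P}\prec\ABSUM{M}$ in the partial-sum sense of clause~(1), and the analogous total shift applied to $\ABSUM{M}$ yields precisely $A'=\ABSUM{M}+aE_{h,k}-aE_{h+1,k}$, the partial-sum inequalities $\sum_{i\le s,j\ge t}\ABSUM{B}_{i,j}\le\sum_{i\le s,j\ge t}A'_{i,j}$ for $s<t$ propagate; strictness $\ABSUM{B}\ne A'$ follows either from the strict gap in $P\prec M$ or from a column $k'<k$ where the actual shift deviates from the leading shift in column $k$.

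The main obstacle is the partial-order bookkeeping: one must verify, across all nine term families and every surviving column $k'\in[h+1,k]$, that each sub-leading contribution falls strictly below the designated leading matrix under one of the four clauses of Definition~\ref{defn:partialorder}. The most delicate cases arise when $\ABSUM{B}=\ABSUM{A}$ but the tuples $(\SOE{b}_{k,k},\dots,\SOE{b}_{n,k})$ or $(b_{k,k},\dots,b_{n,k})$ are perturbed relative to those of $A$; these must be ordered via $\co^{-}(\cdot)_{\min}$, $=_k$, and $<_k$, and arise from the second- and third-family terms in column $k$ where odd--even entries swap.
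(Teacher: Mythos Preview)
Your approach is the same as the paper's, which proves the lemma in a single line by citing Proposition~\ref{mulformeven} and \cite[Lemma 6.4]{GLL}; your sketch simply unpacks what that citation means.  One clarification: your concern about clauses~(2)--(4) of Definition~\ref{defn:partialorder} is unnecessary for parts~(1) and~(2), since the hypothesis $m_{i,j}=0$ for $i>j$ forces $M$, $A$, and every sub-leading $B$ to be upper triangular, so $\co^{-}(\cdot)_{\min}=n$ throughout and only clause~(1) is ever in play---the ``delicate'' cases you flag do not actually occur.  For part~(3) your outline is correct, but the step ``all shifts occur in columns $k'\le k$'' for a general $P$ deserves one more sentence: since $M$ is upper triangular, $P\prec M$ must hold via clause~(1), and taking the northeast sum at $(s,t)$ with $t>k$ gives $\sum_{i\le s,\,j\ge t}p_{i,j}\le\sum_{i\le s,\,j\ge t}m_{i,j}=0$, so the strictly upper part of $P$ vanishes in columns $>k$; this vanishing is preserved under each application of $X_h$, which is what confines every $k'$ to $[1,k]$ throughout the iteration.
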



Analogous to  \cite[Corollary 6.6]{GLL},
by applying Lemma \ref{triang_prec0_q},
we have the following:
\begin{cor}\label{triang_prec_all_q}
	Let {$M=(\SEE{m}_{i,j} | \SOE{m}_{i,j})   \in \MNZNS(n)$}, 
	 {$k \in [2, n]$}, {$u \in [1, k-1]$}.
	Assume  {${m}_{i,j} = 0$} when {$i > j$} or {$k < j \le n$},
and  {$ {m}_{i,k} = 0$} for {$i  \ge u$}.
Then for 
{$\SEE{a} \in \NN $},  {$\SOE{a} \in \ZG$}, 
 we have in {$\USnv$}

\begin{align*}
& 
{(  X_{u}     X_{u+1}  \cdots   X_{k-2} X_{\ol{k-1}} )}^{\SOE{a}} \cdot 
\prod_{u \le i \le k-1} X_{i}^{\SEE{a}} 
 \cdot  \AJS(M, \bs{0})
   =   g_A \AJS( A ,\bs{j}_A)  
	  + \sum_{\substack{
	 		B \in \MNZNS(n) \\
	 		B  \prec A ,\ 
	 		{\bs{b}} \in {\NN}^n
	} } g_{B, \bs{b} }\AJS(B, {\bs{b}}),
\end{align*}
where {$A= ( \SE{M} + \SEE{a} E_{u,k} | \SO{M} + \SOE{a} E_{u,k})$}, 
{$\bs{j}_A \in \ZZ^n$}, 
nonzero {$ g_A \in \Qv$},
and finitely many  {$  g_{B, \bs{b} } \in \Qv$}.
\end{cor}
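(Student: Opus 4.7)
The plan is to prove the corollary by iterating the multiplication formulas in Lemma~\ref{triang_prec0_q}, moving the target entry in column $k$ upward one row at a time from row $k-1$ down to row $u$. The argument splits naturally into two stages corresponding to the even multiplier $\prod_{u \le i \le k-1} X_i^{\SEE{a}}$ and the odd multiplier $(X_u \cdots X_{k-2} X_{\ol{k-1}})^{\SOE{a}}$.

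For the even stage (assuming $\SEE{a} \ge 1$; otherwise skip), I would use downward induction on $h$ from $k-1$ to $u$ to establish
\[
X_h^{\SEE{a}} X_{h+1}^{\SEE{a}} \cdots X_{k-1}^{\SEE{a}} \cdot \AJS(M, \bs{0}) = g^{(h)} \AJS(\SE{M} + \SEE{a} E_{h,k}, \SO{M}, \bs{j}^{(h)}) + (\textrm{$\prec$-lower terms}),
\]
with nonzero $g^{(h)} \in \Qv$. The delicate base case $h = k-1$ does not fit Lemma~\ref{triang_prec0_q}(1) directly because $\SEE{m}_{k,k} = 0$. However, under the hypotheses, row $k$ of $M$ is entirely zero: $\SEE{m}_{k,k} = 0$ since $M \in \MNZNS(n)$, $m_{k,j} = 0$ for $j < k$ by upper triangularity, and $m_{k,j} = 0$ for $j > k$ by hypothesis. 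Consequently, most terms in Proposition~\ref{mulformeven}(1) that require a positive entry in row $k$ vanish; only the ``diagonal-extraction'' term adding $E_{k-1,k}$ to $\SE{M}$ (drawing from the $\lambda_k$ contribution inside $\AJS$) survives. Iterating $\SEE{a}$ times yields the claimed leading term. For the inductive step $h < k-1$, Lemma~\ref{triang_prec0_q}(1) applies directly, since the $\SEE{a}$ entries are now at $(h+1, k)$ while positions $(i, k)$ with $i \le h$ remain zero.

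For the odd stage with $\SOE{a} = 1$ (the case $\SOE{a} = 0$ being trivial), the operator $X_{\ol{k-1}}$ is applied first: by an analogous row-$k$-zero argument using Proposition~\ref{mulformodd1} in place of Proposition~\ref{mulformeven}, its leading term is $\AJS(\SE{M} + \SEE{a} E_{u,k}, \SO{M} + E_{k-1,k}, \bs{j})$. Successive applications of $X_h$ for $h = k-2, k-3, \ldots, u$ then move the odd entry up one row at a time via Lemma~\ref{triang_prec0_q}(2), producing the leading matrix $A = (\SE{M} + \SEE{a} E_{u,k} \mid \SO{M} + \SOE{a} E_{u,k})$ as required. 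The SDP conditions needed to invoke Proposition~\ref{mulformodd1} are guaranteed at each step by Corollary~\ref{triang_aaa_cor}, as the intermediate matrices retain the upper-triangular-up-to-column-$k$ shape described there.

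The main obstacle will be the combinatorial bookkeeping of the partial order $\prec$ through the many iterations: one must verify that the entry newly added at $(h, k)$ at each step is maximal under $\prec$ among all terms produced by the multiplication formula, and that the accumulated lower-order terms remain $\prec$-lower throughout. The latter is controlled by Lemma~\ref{triang_prec0_q}(3), which ensures that multiplying a $\prec$-lower term by $X_h^a$ produces only terms bounded by the correspondingly shifted bound. The former follows from condition~(1) of Definition~\ref{defn:partialorder}, which makes upper-row entries in column $k$ maximal via the partial sums $\sum_{i \le s, j \ge t} b_{i,j}$.
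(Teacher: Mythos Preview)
Your proposal is correct and follows essentially the same two-stage inductive approach as the paper: first the even product $\prod_{u\le i\le k-1}X_i^{\SEE{a}}$ via iterated use of Lemma~\ref{triang_prec0_q}, then the odd factor via Proposition~\ref{mulformodd1} (with the SDP condition supplied by Corollary~\ref{triang_aaa_cor}) followed by repeated application of Lemma~\ref{triang_prec0_q}(2) to push the odd entry from row $k-1$ up to row $u$. You are in fact more careful than the paper about the base case $h=k-1$, where the hypothesis $k>h+1$ of Lemma~\ref{triang_prec0_q} fails and one must invoke Proposition~\ref{mulformeven}(1) (respectively Proposition~\ref{mulformodd1}) directly, exploiting that row $k$ of $M$ is entirely zero.
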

\begin{proof}
The matrices appearing in the computation of
the left side
 are of the form \eqref{eq:special matrix}, 
hence by Lemma \ref{triang_aaa} and Corollary \ref{triang_aaa_cor}, 
the formula  in Proposition \ref{mulformodd1} is applicable. 
Then the proof is analogous to \cite[Corollary 6.6]{GLL} 
by applying Lemma \ref{triang_prec0_q} 
and we omit the details.
For convenience, for any {$i \in [u, k-1]$}, 
we set  {$ M_{i}^0  = ( \SE{M} + \SEE{a} E_{i,k} | \SO{M})$}.
Applying Lemma \ref{triang_prec0_q} and noticing there is only one leading term,
we have 
\begin{align*}
&  X_{k-1}^{\SEE{a}} 
 \cdot  \AJS(M, \bs{0})
   =  g_{ M_{k-1}^0 }  \AJS(  M_{k-1}^0 , \bs{j}_{M_{k-1}^0} ) 
	  + \sum_{\substack{
	 		C \in \MNZNS(n) \\
	 		C  \prec  M_{k-1}^0  ,\ 
	 		{\bs{c}} \in {\NN}^n
	} } g^{ M_{k-1}^0 }_{C, \bs{c} }\AJS(C, {\bs{c}}),
\end{align*}
where {$g^{ k-1,0 }_{C, \bs{c} } \in \Qv$},  {$ \bs{j}_{M_{k-1}^0}  \in \ZZ^n$}.
By induction, we have
\begin{equation}\label{eq_part_triang_o1} 
\begin{aligned}
& 
\prod_{u \le i \le k-1} X_{i}^{\SEE{a}} 
 \cdot  \AJS(M, \bs{0})
   =  g_{ M_{u}^0 }   \AJS( { M_{u}^0 }  ,\bs{j}_{M_{u}^0 })  
	  + \sum_{\substack{
	 		D \in \MNZNS(n) \\
	 		D  \prec { M_{u}^0 }  ,\ 
	 		{\bs{d}} \in {\NN}^n
	} } g^{ u, 0 }_{D, \bs{d} }\AJS(D, {\bs{d}}).
\end{aligned}
\end{equation}
If {$\SO{a} = 1$},
will will multiply {$X_{\ol{k-1}}$}  on the summands of the right hand side of \eqref{eq_part_triang_o1}. 
By Lemma \ref{triang_aaa} and Corollary \ref{triang_aaa_cor}, 
the formula in Proposition \ref{mulformodd1} is applicable. 
Then for {$ \AJS( { M_{u}^0 }  ,\bs{j}_{M_{u}^0}) $} and each {$\AJS(D, {\bs{d}})$} in \eqref{eq_part_triang_o1},
  we have 
\begin{align*}
& 
X_{\ol{k-1}}  \cdot  \AJS( { M_{u}^0 }  ,\bs{j}_{M_{u}^0}) 
   =    g_{ M_{k-1}^1 }  \AJS( { M_{k-1}^1 }  ,\bs{j}_{M_{k-1}^1})  
	  + \sum_{\substack{
	 		P \in \MNZNS(n) \\
	 		P  \prec { M_{k-1}^1  }  ,\ 
	 		{\bs{p}} \in {\NN}^n
	} } g^{ k-1, 1 }_{P, \bs{p} }\AJS(P, {\bs{p}}),\\
& 
X_{\ol{k-1}}  \cdot \AJS(D, {\bs{d}})
   =  
    \sum_{\substack{
	 		R \in \MNZNS(n) \\
	 		R  \prec { M_{k-1}^1  }  ,\ 
	 		{\bs{r}} \in {\NN}^n
	} } g^{ k-1, 1 }_{R, \bs{r} }\AJS(R, {\bs{r}}),\\
\end{align*}
and 
\begin{align*}
& 
X_{\ol{k-1}} \prod_{u \le i \le k-1} X_{i}^{\SEE{a}} 
 \cdot  \AJS(M, \bs{0})
   =   g_{ M_{k-1}^1 }  \AJS( { M_{k-1}^1 }  ,\bs{j}_{M_{k-1}^1})  
	  + \sum_{\substack{
	 		B \in \MNZNS(n) \\
	 		B  \prec { M_{k-1}^1  }  ,\ 
	 		{\bs{b}} \in {\NN}^n
	} } g^{ k-1, 1 }_{B, \bs{b} }\AJS(B, {\bs{b}}),
\end{align*}
where {$M_{k-1}^1 = ( \SE{M} + \SEE{a} E_{u,k} | \SO{M} + \SOE{a} E_{k-1,k})$},
and {$ g^{ k-1, 1 }_{B, \bs{b} } \in \Qv$}.
Then  Lemma \ref{triang_prec0_q} shows
\begin{align*}
& 
X_{k-2} X_{\ol{k-1}} \prod_{u \le i \le k-1} X_{i}^{\SEE{a}} 
 \cdot  \AJS(M, \bs{0})
   =  g_{ M_{k-2}^1 }   \AJS( { M_{k-2}^1 }  ,\bs{j}_{M_{k-2}^1})  
	  + \sum_{\substack{
	 		B \in \MNZNS(n) \\
	 		B  \prec { M_{k-2}^1  }  ,\ 
	 		{\bs{b}} \in {\NN}^n
	} } g^{ k-2, 1 }_{B, \bs{b} }\AJS(B, {\bs{b}}),
\end{align*}
where {$M_{k-2}^1 = ( \SE{M} + \SEE{a} E_{u,k} | \SO{M} + \SOE{a} E_{k-2,k})$}.
Consequently,  applying Lemma  \ref{triang_prec0_q}  inductively, 
the result is then proved.
\end{proof}

\begin{lem}\label{triangular_lower}
Fix {$1\le u \le n-1$},
assume   {$\bs{j} \in \ZZ^n$},
and {$M=( {\SEE{m}_{i,j}} | {\SOE{m}_{i,j}} ) \in \MNZNS(n)$}
satisfies {$ \co^{-}(M)_{\min} \ge u+1$}.
Let {$( \SEE{a}_{u+1, u}, \cdots, \SEE{a}_{n, u}) \in \NN^{n-u}$},
 {$( \SOE{a}_{u+1, u}, \cdots, \SOE{a}_{n, u}) \in \ZG^{n-u}$},
and {$M' = ( \SE{M} + \sum_{k=u+1}^n \SEE{a}_{k,u} E_{k, u} |  \SO{M} + \sum_{k=u+1}^n \SOE{a}_{k,u} E_{k, u}) $}.
Then we have
\begin{align*}
	\prod_{{n} \ge i \ge {u+1}} L_{M'}^{i,u}
	\cdot
	\AJS(M, \bs{j})
	= g_{M',\bs{j}'} \AJS(M', \bs{j}')
		+ \sum_{\substack{
			B \in \MNZNS(n) ,
			\bs{j}'' \in \ZZ^{n} \\
			B \prec M' }
			} g_{B, \bs{j}''}  \AJS(B, \bs{j}''),
\end{align*}
where {$\bs{j}', \bs{j}'' \in \ZZ^n$},  nonzero {$ g_{M',\bs{j}'} \in \Qv$},
and finitely many  {$g_{B, \bs{j}''} \in \Qv$}.
\end{lem}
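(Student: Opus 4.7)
The plan is to induct on $i$ from $u+1$ up to $n$, showing that the partial product $L^{M'}_{u+1,u} L^{M'}_{u+2,u} \cdots L^{M'}_{i,u}$ (ordered per convention \eqref{eq_product}) applied to $\AJS(M,\bs{j})$ equals a nonzero scalar multiple of $\AJS(M_{(i)},\bs{j}_{(i)})$ plus a $\Qv$-linear combination of basis vectors $\AJS(B,\bs{j}'')$ with $B \prec M_{(i)}$. Here $M_{(i)}$ denotes the matrix obtained from $M$ by inserting $\SEE{a}_{k,u}$ even and $\SOE{a}_{k,u}$ odd units into position $(k,u)$ for $k=u+1,\ldots,i$, drawing the required mass from position $(u,u)$ (an operation available in $\SQvnR$ because one can pass to arbitrarily large $r$). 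Taking $i=n$ then yields the desired leading term $\AJS(M',\bs{j}')$.

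For the base case $i=u+1$, I would first apply $Y_u^{N}$ with $N=\sum_{k=u+1}^n(\SEE{a}_{k,u}+\SOE{a}_{k,u})$ to $\AJS(M,\bs{j})$. Since $\co^{-}(M)_{\min}\ge u+1$ forces column $u$ of $M$ to vanish strictly below the diagonal, the matrices arising in the computation have the triangular shape of \eqref{eq:special matrix}, so the SDP conditions demanded by Propositions \ref{mulformeven}, \ref{mulformodd1}, \ref{mulformodd2} and Corollary \ref{mulformdiagN} are automatic via Lemma \ref{triang_aaa} and Corollary \ref{triang_aaa_cor}. An iterated leading-term analysis modelled on \cite[Lemma 6.4]{GLL} then shows $Y_u^{N}\AJS(M,\bs{j})$ has leading term obtained by transferring $N$ units from $(u,u)$ to $(u+1,u)$, with all other contributions strictly $\prec$-below. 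If $\SOE{a}_{u+1,u}=1$, one then applies $W_{u+1} := X_{u+1} \cdots X_{n-1} G_{\ol{n}} Y_{n-1} \cdots Y_{u+1}$: its rightmost tail $Y_{n-1}\cdots Y_{u+1}$ slides a single unit of this mass diagonally from $(u+1,u)$ down to $(n,n)$, the factor $G_{\ol{n}}$ toggles that unit's parity via Corollary \ref{mulformdiagN} (SDP on row $n$ is automatic by Lemma \ref{shiftonN}), and the leftmost head $X_{u+1}\cdots X_{n-1}$ returns the odd unit to $(u+1,u)$. Lemma \ref{triang_prec0_q}, Corollary \ref{triang_prec_all_q}, and the analogous $Y$-lowering counterparts derived from Proposition \ref{mulformeven}(2) show all extraneous contributions from this round trip are $\prec$-lower, settling the case $i=u+1$.

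The inductive step $i-1\Rightarrow i$ is entirely parallel. Multiplying the inductive leading term $\AJS(M_{(i-1)},\bs{j}_{(i-1)})$ by $L^{M'}_{i,u}=W_i^{\SOE{a}_{i,u}}Y_{i-1}^{N_i}$, with $N_i=\sum_{k=i}^n(\SEE{a}_{k,u}+\SOE{a}_{k,u})$, shifts exactly $N_i$ of the units sitting at $(i-1,u)$ down to $(i,u)$ as the leading term, and then optionally toggles one parity at $(i,u)$ via $W_i$ exactly as before. The triangular shape of $M_{(i-1)}$ persists throughout, so the SDP prerequisites of the single-step formulas continue to hold. Lower-order contributions produced from the leading term are controlled by Lemma \ref{triang_prec0_q} and its symmetric form, while the $\prec$-lower terms already present in the inductive hypothesis remain $\prec M_{(i)}$ after multiplication by $L^{M'}_{i,u}$ (a routine comparison of column-$u$ masses and off-column entries in the partial order $\prec$).

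The main obstacle will be verifying rigorously that the $W_i$-factor indeed contributes only a single parity toggle at $(i,u)$ with nonzero $\Qv$-coefficient and $\prec$-lower error. Since $W_i$ is a composition of $2(n-i)+1$ generators, each acting on a fresh intermediate matrix, one must re-check at every intermediate step that the matrix remains in the triangular class of Corollary \ref{triang_aaa_cor} (so the SDP-dependent formulas apply), and that none of the numerous coefficient factors produced by Propositions \ref{mulformeven}, \ref{mulformodd1}, \ref{mulformodd2} and Corollary \ref{mulformdiagN} vanishes. This is essentially an $(n-i)$-fold iterated application of the leading-term formulas, with careful bookkeeping of the $\prec$-order along the way; combined with the inductive hypothesis, it completes the proof of Lemma \ref{triangular_lower}.
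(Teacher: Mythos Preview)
Your overall plan---induct on $i$ from $u+1$ to $n$, applying $L^{M'}_{u+1,u}$ first and tracking a single $\prec$-leading term---matches the paper's, but two details would make the argument fail as written. First, your intermediate leading term $M_{(i)}$ is mis-identified: you say position $(k,u)$ already carries its target value $(\SEE{a}_{k,u}\,|\,\SOE{a}_{k,u})$ for each $k\le i$, yet in the inductive step you shift $N_i=\sum_{k\ge i}a_{k,u}$ units from $(i-1,u)$ via $Y_{i-1}^{N_i}$, while by your own definition only $a_{i-1,u}$ units sit there. The correct intermediate (the paper's $M^i_u$) has positions $(u+1,u),\ldots,(i-1,u)$ at their targets but position $(i,u)$ carrying the \emph{entire remaining mass} $\sum_{k\ge i}a_{k,u}$, with its odd part $\SOE{a}_{i,u}$ already installed. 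Second, your $W_i$ mechanism is wrong: the tail $Y_{n-1}\cdots Y_{i}$ does not send a unit ``diagonally to $(n,n)$''; the leading term of each $Y_j$ (Proposition~\ref{mulformeven}(2)) keeps the unit in column $u$, landing it at $(n,u)$, where $G_{\ol{n}}$ toggles its parity, and then $X_{i}\cdots X_{n-1}$ lifts it back to $(i,u)$ still in column $u$. A detour through column $n$ would not be $\prec$-dominant, since the order of Definition~\ref{defn:partialorder} compares the lower-triangular part column by column starting at $\co^{-}_{\min}$.

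There is also a misleading SDP justification: Lemma~\ref{triang_aaa} and Corollary~\ref{triang_aaa_cor} concern upper-triangular matrices of shape \eqref{eq:special matrix}, which the matrices here (with mass below the diagonal in column $u$, and possibly in columns $>u$ as well) are not. This turns out to be harmless rather than fatal, because SDP is simply not needed in this lemma: $L^{M'}_{i,u}$ involves only the even generators $X_j,Y_j$ (Proposition~\ref{mulformeven}, no SDP hypothesis) and $G_{\ol{n}}$ (Corollary~\ref{mulformdiagN}, SDP on row $n$ automatic by Lemma~\ref{shiftonN}). Propositions~\ref{mulformodd1} and~\ref{mulformodd2} play no role here.
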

\begin{proof}
We shall compute $L_{M'}^{i,u} \cdots  L_{M'}^{u+1, u}    \AJS(M, \bs{j}) $ for $u+1\leq i\leq n$ by induction on {$i$}.
Set {$a_u = \sum_{k=u+1}^{n} a_{k, u} $}.
Applying Proposition \ref{mulformeven} {\rm(2)}, we have
\begin{align*}
 Y_{u} \AJS(M, \bs{j})
& =     g_1'  \ABJS( \SE{M} + E_{u+1, u} , \SO{M} , \bs{j}')
		+ {\delta}_{1, \SOE{m}_{u, u}} \AJS(  (\SE{M} | \SO{M} -E_{u,u} +E_{u+1, u} ), \bs{j}')  \\
	& \qquad
		+ \sum_{\substack{
			M'' \in \MNZNS(n), \\
			\bs{j}'' \in \ZZ^{n}
			}
		} g_{M'', \bs{j}''}   \AJS(M'', \bs{j}'') ,
\end{align*}
where $ g_1' , g_{M'', \bs{j}''}\in \Qv$ and  each {$M''$} satisfies
{$  \co^{-}(M'')_{\min} \ge u+1$}.
By Definition \ref{defn:partialorder}
and comparing the {$u$}-th colomn  of the lower triangular parts, we have
\begin{align*}
&M'' \prec ( \SE{M} + E_{u+1, u} | \SO{M}), \\
& (\SE{M} | \SO{M} -E_{u,u} +E_{u+1, u} )  \prec ( \SE{M} + E_{u+1, u} | \SO{M}).
\end{align*}
Then applying Proposition \ref{mulformeven} {\rm(2)} repeatedly, we have
\begin{equation}\label{tmp_part_1}
\begin{aligned}
 Y_{u}^{a_u} \AJS(M, \bs{j})
& =     g'  \AJS(  (\SE{M} + a_u  E_{u+1, u} | \SO{M}  ), \bs{j}')
	& \qquad
		+ \sum_{\substack{
			B \in \MNZNS(n) ,
			\bs{j}'' \in \ZZ^{n} \\
			B \prec  (\SE{M}  + a_u  E_{u+1, u} | \SO{M} )}
			} g^M_{B, \bs{j}''}   \AJS(B, \bs{j}'') ,
\end{aligned}
\end{equation}
for some {$g' ,g^M_{B, \bs{j}''}\in \Qv$}.

If {$\SOE{a}_{u+1,u} = 1$},
we need to consider  the
multiplication of  {$X_{u+1}  \cdots   X_{n-1}  G_{\ol{n}} Y_{n-1}  \cdots   Y_{u+1} $}
with {$Y_{u}^{a_u} \AJS(M, \bs{j})$}
and this means we need to compute
the multiplication of  {$X_{u+1}  \cdots   X_{n-1}  G_{\ol{n}} Y_{n-1}  \cdots   Y_{u+1} $}
with each summands on the right hand side of  \eqref{tmp_part_1}.
Firstly, for each {$ \AJS(B, \bs{j}'')$} with {$B=(\SEE{b}_{i,j}|\SOE{b}_{i,j})$} appearing in the summands \eqref{tmp_part_1},
let
\begin{align}
X_{u+1}  \cdots   X_{n-1}  G_{\ol{n}} Y_{n-1}  \cdots   Y_{u+1}  \AJS(B, \bs{j}'')
= \sum_{\substack{
			C \in \MNZNS(n) ,
			\bs{j}''' \in \ZZ^{n} }
			} g^{B,\bs{j}''}_{C, \bs{j}'''}   \AJS(C, \bs{j}''') . \label{triangular_tmp_C}
\end{align}
Denote {$b_u = \sum_{i=u+1}^{n} b_{i, u} $},
{$c_u = \sum_{i=u+1}^{n} c_{i, u} $}.
By Proposition \ref{mulformeven}, \ref{mulformdiag},
we have the following three cases:
\begin{enumerate}
\item
{$b_{i,j}=0$} when {$i>j$} and {$j<u+1$}.
It is clear {$ \co^{-}(B)_{\min} \ge u+1$}, {$ \co^{-}(C)_{\min} \ge u+1$};
\item
{$b_{i,j}=0$} when {$i>j$} and {$j<u$},
{$b_{i,u}=0$} when {$i>u+1$},
and {$b_{u+1,u} < a_u$}.
In this case,
 {$c_u =  b_u < a_{u} $},
{$(\SOE{m}_{u,u}, 1, 0, ...)  \ge ( \SOE{c}_{u,u}, \SOE{c}_{u+1,u}, \cdots, \SOE{c}_{n,u} )$};
\item
{$b_{i,j}=0$} when {$i>j$} and {$j<u$},
{$b_{i,u}=0$} when {$i>u+1$},
and {$b_{u+1,u} = a_u$}.
This happens only when  {$\SOE{m}_{u, u} = 1$}
and  {$B = \AJS(  (\SE{M} + (a_u - 1) E_{u+1, u} | \SO{M} -E_{u,u} +E_{u+1, u} ), \bs{j}') $}.
In this case,
 {$c_u = b_u = a_{u} $},
{$\SOE{c}_{u,u} = 0$}.
Therefore,
{$(\SOE{m}_{u,u}, 1, 0, ...)  > ( \SOE{c}_{u,u}, \SOE{c}_{u+1,u}, \cdots, \SOE{c}_{n,u} )$}.
\end{enumerate}
This means that in each case, every {$ \AJS(C, \bs{j}''')$}  appearing in \eqref{triangular_tmp_C}
satisfies
\begin{align*}
C \prec  (\SE{M}_{u+1} + (a_u - 1 ) E_{u+1, u}  | \SO{M}+ E_{u+1, u} ).
\end{align*}
and hence the following holds
\begin{align}
X_{u+1}  \cdots   X_{n-1}  G_{\ol{n}} Y_{n-1}  \cdots   Y_{u+1}  \AJS(B, \bs{j}'')
= \sum_{\substack{
			C \in \MNZNS(n) ,
			\bs{j}''' \in \ZZ^{n},\\
 C \prec  (\SE{M}_{u+1} + (a_u - 1 ) E_{u+1, u}  | \SO{M}+ E_{u+1, u} )}
			} g^{B,\bs{j}''}_{C, \bs{j}'''}   \AJS(C, \bs{j}''') , \label{triangular_tmp_C-final}
\end{align}
for each $B(\bs{j}'')$ in \eqref{tmp_part_1}.

Secondly, for the leading term {$ \AJS(  (\SE{M} + a_u  E_{u+1, u} | \SO{M} ), \bs{j}')$} in \eqref{tmp_part_1},
by Proposition \ref{mulformeven} (2) we can write
\begin{equation}\label{eq:tmp leading-1}
\begin{aligned}
Y_{n-1}  \cdots   Y_{u+1}
 \AJS(  (\SE{M} + a_u  E_{u+1, u} | \SO{M} ), \bs{j}')
& =
g^M_{\bs{j}''}    \AJS(  (\SE{M}_{u+1} + (a_u - 1 ) E_{u+1, u} + E_{n, u} | \SO{M} ), \bs{j}'') \\
& \qquad \qquad +
\sum_{\substack{
			R \in \MNZNS(n) ,
			\bs{j}''' \in \ZZ^{n} }
			} g^M_{R, \bs{j}'''}   \AJS(R, \bs{j}''') ,
\end{aligned}
\end{equation}
each  {$ \AJS(R, \bs{j}''')$}  appears in the equation  with {$R=(\SEE{r}_{i,j}|\SOE{r}_{i,j})$} satisfies
{$ \co^{-}(R)_{\min} = u$},
{$\SOE{r}_{i,u}=0$} for any {$i \ge u+1$},
and
\begin{enumerate}
\item
{$r_{u+1, u} = a_u$},  {$r_{k, u} = 0$} for all {$k \in [u+2, n]$}, or
\item
  {$r_{u+1, u} = a_u - 1$} and {$r_{t, u} = 1$} for some  {$t \in [u+2 , n-1]$}.
\end{enumerate}
So for each $\AJS( R, \bs{j}''')$ appearing in \eqref{eq:tmp leading-1} we have
\begin{align*}
X_{u+1}  \cdots   X_{n-1}  G_{\ol{n}} \AJS( R, \bs{j}')
= \sum_{\substack{
			D \in \MNZNS(n) ,
			\bs{j}'''' \in \ZZ^{n} }
			} g^R_{D, \bs{j}''''}   \AJS(D, \bs{j}'''') ,
\end{align*}
where each  {$ \AJS(D, \bs{j}'''')$}  with  {$D=(\SEE{d}_{i,j}|\SOE{d}_{i,j})$} satisfies
{$ \co^{-}(D)_{\min} = u$} and
 {$\sum_{i=u+1}^{n} \SOE{d}_{i, u} = 0$},
 and it is clear
\begin{align*}
D \prec  (\SE{M}_{u+1} + (a_u - 1 ) E_{u+1, u}  | \SO{M}+ E_{u+1, u} ).
\end{align*}
Meanwhile,
Proposition \ref{mulformdiagN}, \ref{mulformeven} {\rm(1)} imply
\begin{align*}
& X_{u+1}  \cdots   X_{n-1}  G_{\ol{n}}
\cdot
\AJS(  (\SE{M} + (a_u - 1 ) E_{u+1, u} + E_{n, u} | \SO{M}), \bs{j}'') \\
&=
g^{M}_{\bs{j}'''}  \AJS(  (\SE{M}_{u+1} + (a_u - 1 ) E_{u+1, u}  | \SO{M}+ E_{u+1, u} ), \bs{j}''')  +
\sum_{\substack{
			P \in \MNZNS(n) ,
			\bs{j}'''' \in \ZZ^{n} }
			} g^M_{P, \bs{j}'''}   \AJS(P, \bs{j}'''') ,
\end{align*}
where each  {$ \AJS(P, \bs{j}'''')$}   with  {$P=(\SEE{p}_{i,j}|\SOE{p}_{i,j})$} satisfies
{$ \co^{-}(P)_{\min} = u$} and
{$\SOE{p}_{u,u} = \SOE{m}_{u,u}$},
{$ \SOE{p}_{u+1,u}  = 0$},
{$\sum_{k=u+2}^{n} \SOE{p}_{k,u} = 1$}.
Then we have
{$(\SOE{m}_{u,u}, 1, 0, \cdots, 0) >  (\SOE{p}_{u,u},  \SOE{p}_{u+1,u}, \cdots, \SOE{p}_{n,u} ) $}.
Therefore for the leading term {$ \AJS(  (\SE{M} + a_u  E_{u+1, u} | \SO{M} ), \bs{j}')$} in \eqref{tmp_part_1} we obtain
\begin{align}\label{eq:tmp leading-2}
&X_{u+1}  \cdots   X_{n-1}  G_{\ol{n}} Y_{n-1}  \cdots   Y_{u+1}\AJS(  (\SE{M} + a_u  E_{u+1, u} | \SO{M} ), \bs{j}')\\
&=
g^{M}_{\bs{j}'''}  
\AJS(  (\SE{M} + (a_u -1) E_{u+1, u} | \SO{M} + E_{u+1, u}), \bs{j}''')
		+ \sum_{\substack{
			B \in \MNZNS(n) ,
			\bs{j}'' \in \ZZ^{n} \\
			B \prec  (\SE{M} + ( a_{u} -1) E_{u+1, u} | \SO{M} + E_{u+1, u}) }
			} g'_{B, \bs{j}''}   \AJS(B, \bs{j}'')\notag
\end{align}
with $g'_{B, \bs{j}''}\in \Qv$.
Then by \eqref{tmp_part_1}, \eqref{triangular_tmp_C-final}, \eqref{eq:tmp leading-2} and Definition \ref{defn:partialorder}, we obtain
\begin{equation}\label{tmp_part_2}
\begin{aligned}
&X_{u+1}  \cdots   X_{n-1}  G_{\ol{n}} Y_{n-1}  \cdots   Y_{u+1}
 Y_{u}^{ a_{u} }   \AJS(M, \bs{j})\\
& =     g'  \AJS(  (\SE{M} + (a_u -1) E_{u+1, u} | \SO{M} + E_{u+1, u}), \bs{j}')
		+ \sum_{\substack{
			B \in \MNZNS(n) ,
			\bs{j}'' \in \ZZ^{n} \\
			B \prec  (\SE{M} + ( a_{u} -1) E_{u+1, u} | \SO{M} + E_{u+1, u}) }
			} g_{B, \bs{j}''}   \AJS(B, \bs{j}'') .
\end{aligned}
\end{equation}
Summarizing equations  \eqref{tmp_part_1} and  \eqref{tmp_part_2},
we have
\begin{align*}
L_{M'}^{u+1, u}    \AJS(M, \bs{j})
 =     g'_{u+1, u}  \AJS( M^{u+1}_{u} , \bs{j}')
		+ \sum_{\substack{
			B \in \MNZNS(n) , \\
			\bs{j}'' \in \ZZ^{n} , \
			B \prec M^{u+1}_{u}  }
			} g_{B, \bs{j}''}   \AJS(B, \bs{j}'') ,
\end{align*}
where {$M^{u+1}_{u} =  (\SE{M} + (  \sum_{k=u+1}^n a_{k,u} - \SOE{a}_{u+1, u}  ) E_{u+1, u} | \SO{M} + \SOE{a}_{u+1, u}  E_{u+1, u})$}.

By induction on {$i$},
with the same argument above,
we can prove for {$i \ge u+1$},
\begin{align*}
&L_{M'}^{i,u} \cdots  L_{M'}^{u+1, u}    \AJS(M, \bs{j})
  =     g'_u  \AJS(	M^{i}_{u}, \bs{j}')
		+ \sum_{\substack{
			B \in \MNZNS(n) ,
			\bs{j}'' \in \ZZ^{n} \\
			B \prec
			M^{i}_{u}
				}
			} g_{B, \bs{j}''}   \AJS(B, \bs{j}'') ,
\end{align*}
where {$M^{i}_{u} = (\SE{M} + \sum_{t=u+1}^{i-1} \SEE{a}_{t, u} E_{t, u}
			+ ( \sum_{t=i}^{n} {a}_{t, u} ) E_{i, u} - \SOE{a}_{i,u} E_{i,u}
			| \SO{M} +  \sum_{t=u+1}^{i-1} \SOE{a}_{t, u} E_{t, u}
				+ \SOE{a}_{i,u} E_{i,u}
)$}.
In particular when {$i=n$}, we have
\begin{align*}
L_{M'}^{n,u} \cdots  L_{M'}^{u+1, u}    \AJS(M, \bs{j})
& =     g  \AJS(  M', \bs{j}')
		+ \sum_{\substack{
			B \in \MNZNS(n) ,\\
			\bs{j}'' \in \ZZ^{n}, \
			B \prec	  M' }
			} g_{B, \bs{j}''}   \AJS(B, \bs{j}'') .
\end{align*}
\end{proof}
For any   {$B=( {\SEE{b}_{i,j}} | {\SOE{b}_{i,j}} ) \in \MNZNS(n)$},
for convenience, we introduce
{${B}_{k}^{+} = ({\SE{(B^+)}_{k}} | {\SO{(B^+)}_{k}} )$},
with {$ {\SE{(B^+)}_{k}}  = (\SEE{m}_{i,j})$}, {${\SO{(B^+)}_{k}}  = (\SOE{m}_{i,j})$},
satisfying {$\SEE{m}_{i,j} =  \SOE{m}_{i,j} = 0$} when {$i > j$} or {$j > k$},
and {$\SEE{m}_{i,j} = \SEE{b}_{i,j}$} , {$\SOE{m}_{i,j} = \SOE{b}_{i,j}$} otherwise.
Meanwhile, set  {${B}_{k} = (\SE{B}_{k} | \SO{B}_{k} )$},
with {$\SE{B}_{k} = (\SEE{m}_{i,j})$}, {$\SO{B}_{k} = (\SOE{m}_{i,j})$},
where {$\SEE{m}_{i,j} =  \SOE{m}_{i,j} = 0$} when {$i > j$} and {$j < k$},
and {$\SEE{m}_{i,j} = \SEE{b}_{i,j}$}, {$\SOE{m}_{i,j} = \SOE{b}_{i,j}$} otherwise.
Notice that {$B_n = B_n^+$}.

By applying
Corollary \ref{mulformzero},
Proposition \ref{mulformdiag},
Proposition \ref{mulformodd1},
Corollary \ref{triang_aaa_cor},  \ref{triang_prec_all_q},
Lemma \ref{triangular_lower},
we are ready to  show the important  triangular relation between the elements {${\frm}^A$} defined in \eqref{MA} and the elements $A(\bs{j})$.
\begin{thm}\label{triangular_relation_q}
For any {$A \in \MNZNS(n)$},  we have in $\SQvnR$
\begin{align*}
 {\frm}^A 
	= g_A  \AJS(A, \bs{j}_A) +
		\sum_{B \prec A , \  \bs{j}' \in \ZZ^{n} }
		g_{B, \bs{j}'} \AJS(B, \bs{j}'),
\end{align*}
where {$\bs{j}_A, \bs{j}' \in \ZZ^n$},
   nonzero  {$g_A \in \Qv$} 
and finitely many  {$g_{B, \bs{j}'} \in \Qv$}.
In particular, the $\Qv$-spave  {$\USnv$} is a subalgebra generated by the set $\fsG$ in \eqref{setG}.
\end{thm}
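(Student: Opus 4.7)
The plan is to compute $\frm^A$ by applying its factors right-to-left to the identity element $O(\bs{0}) \in \SQvnR$, tracking how each factor contributes entries to the target matrix $A$ while keeping the remainder terms $\prec$-lower. Since $\frm^A$ decomposes naturally as a lower-triangular block on the left multiplied by a diagonal-plus-upper-triangular block on the right (which acts first), I would treat these two blocks separately, invoking Corollary \ref{triang_prec_all_q} for the upper-triangular/diagonal portion and Lemma \ref{triangular_lower} for the lower-triangular portion.

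For the right block $\prod_{n \ge j \ge 1}[D^A_{j,j}(\prod_{j-1 \ge i \ge 1} U^A_{i,j})]$, I would proceed by induction on $j$ from $1$ to $n$. After step $j$, the accumulated element should equal a nonzero $\Qv$-scalar times $A^+_j(\bs{j}')$ (using the notation $B^+_k$ introduced just before the theorem) plus $\prec$-lower terms. To pass from step $j$ to $j+1$, the sub-block $D^A_{j+1,j+1}\cdot\prod_{j \ge i \ge 1} U^A_{i,j+1}$ acts with $U^A_{1,j+1}$ first, then $U^A_{2,j+1}$, and so on, each filling the position $(i,j+1)$ of the upper triangle. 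At every sub-step the current leading matrix has the shape of \eqref{eq:special matrix}, so Lemma \ref{triang_aaa} and Corollary \ref{triang_aaa_cor} supply the SDP hypotheses needed by Proposition \ref{mulformodd1}, and Corollary \ref{triang_prec_all_q} then produces the updated leading term plus a $\prec$-controlled remainder. The diagonal cap $D^A_{j+1,j+1}=G_{\ol{j+1}}^{\SOE{a}_{j+1,j+1}}$ is absorbed using Proposition \ref{mulformdiag} (or Corollary \ref{mulformdiagN} when $j+1=n$), since the current matrix satisfies the SDP condition at $(j+1,j+1)$ by the same lemma.

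After the right block has been fully processed, the accumulated element has leading term $g\cdot A^+_n(\bs{j}')$ plus $\prec$-lower terms, where $A^+_n$ agrees with $A$ on the upper triangle and diagonal and is zero strictly below the diagonal. I then apply the left block, whose rightmost inner product (for $j=n-1$) acts first. For each $u$ descending from $n-1$ to $1$, I invoke Lemma \ref{triangular_lower} with $M$ equal to the current leading matrix and the $\SEE{a}_{k,u},\SOE{a}_{k,u}$ parameters read off from $A$. The hypothesis $\co^{-}(M)_{\min}\ge u+1$ holds by construction, since columns $u+1,\dots,n-1$ of the lower triangle have already been filled to match $A$; the conclusion then fills column $u$, again modulo $\prec$-lower remainders. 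After the step $u=1$, the leading term is precisely $g_A A(\bs{j}_A)$ for some nonzero $g_A\in\Qv$, and the accumulated remainders all satisfy $B\prec A$.

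The main obstacle will be the bookkeeping of how the $\prec$-lower remainder terms produced in early steps interact with subsequent multiplications: when a further generator is applied to some $B(\bs{j}'')$ with $B \prec A^+_j$ (resp., $B \prec$ the current leading matrix), one must verify that every resulting basis element remains strictly $\prec$-smaller than the eventual $A$. This requires comparing Definition \ref{defn:partialorder} with the explicit matrix shifts appearing in Propositions \ref{mulformeven}, \ref{mulformdiag}, \ref{mulformodd1}, and \ref{mulformodd2}, and using Corollary \ref{common_form} to rewrite mixed products back in the basis $\LS$ (with the odd generators $X_{\ol j}, Y_{\ol j}, G_{\ol i}$ handled inductively via \eqref{induction_XYG}). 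Once the triangular relation is established, the final claim follows at once: the containment $\langle \fsG\rangle \subseteq \USnv$ comes from iterating Corollary \ref{common_form} starting at the identity $O(\bs{0})$ of $\SQvnR$, while the reverse containment is an induction on $\prec$ using the triangular relation together with \eqref{Aj_0} to account for the weight shifts.
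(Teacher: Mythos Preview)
Your proposal is correct and follows essentially the same approach as the paper's proof: process the upper-triangular/diagonal block first by induction on $j$ (using Corollary~\ref{triang_prec_all_q}, Lemma~\ref{triang_aaa}, Corollary~\ref{triang_aaa_cor}, and Proposition~\ref{mulformdiag}), then the lower-triangular block by descending induction on the column index via Lemma~\ref{triangular_lower}, and deduce the final generation claim from~\eqref{Aj_0}. The paper also applies Lemma~\ref{triangular_lower} directly to the remainder terms (rather than invoking Corollary~\ref{common_form}), but this is a minor bookkeeping difference; your identification of the remainder-term tracking as the main obstacle is apt, and the paper handles it in the same spirit.
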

\begin{proof}
We shall prove the result in several steps.
Example \ref{example_order_q}  explains the generating order of the entries.

For the upper triagular and the diagonal parts,
the proof is similar to \cite[Theorem 6.8]{GLL} and we will give a sketchy explanation.
We first compute, for $k>j$,
\begin{align*}
[ ( {\prod_{{k-1} \ge i \ge {1}}} U^{A}_{i,k})  ]{\prod_{{k-1} \ge j \ge {1}}}  [  D^{A}_{j,j} ( {\prod_{{j-1} \ge i \ge {1}}} U^{A}_{i,j})  ]
\end{align*}
inductively for $1\leq k\leq n$
and by Corollary \ref{triang_prec_all_q} the matrices involved
are of the form \eqref{eq:special matrix}.
Then by Corollary \ref{triang_aaa_cor}
we can use the formulas in Proposition \ref{mulformdiag} to compute
\begin{align*}
 [ D^{k,k}_A( {\prod_{{k-1} \ge i \ge {1}}} U^{A}_{i,k})  ]{\prod_{{k-1} \ge j \ge {1}}}  [  D^{A}_{j,j} ( {\prod_{{j-1} \ge i \ge {1}}} U^{A}_{i,j})  ]={\prod_{{k} \ge j \ge {1}}}  [  D^{A}_{j,j} ( {\prod_{{j-1} \ge i \ge {1}}} U^{A}_{i,j})  ]
\end{align*}
and  obtain
\begin{align*}
{\prod_{{k} \ge j \ge {1}}}  [  D^{A}_{j,j} ( {\prod_{{j-1} \ge i \ge {1}}} U^{A}_{i,j})  ]
&= g_{A^+_k}
\AJS(A^+_k, \bs{j}_{A^+_k}) 
+
		\sum_{C \prec A^+_k  , \  \bs{j}' \in \ZZ^{n} }
		g_{C, \bs{j}'} \AJS(C, \bs{j}').
\end{align*}
When  {$k=n$},
observe that  {$A_n = A_n^+$},
then we have
\begin{equation}\label{triagular_upper}
\begin{aligned}
{\prod_{{n} \ge j \ge {1}}}  [  D^{A}_{j,j} ( {\prod_{{j-1} \ge i \ge {1}}} U^{A}_{i,j})  ]
&=
g_{A_n } \AJS(A_n, \bs{j}_{A_n}) 
+
		\sum_{C \prec A_n ,   \  \bs{j}' \in \ZZ^{n} }
		g_{C, \bs{j}'} \AJS(C, \bs{j}') ,
\end{aligned}
\end{equation}
where each  {$\AJS(C, \bs{j}')$} with {$C=(\SEE{c}_{i,j}| \SOE{c}_{i,j})$}
satisfies {$c_{i,j} = 0$} when {$i>j$}.

More effort is needed to handle the lower triangular part. Clearly
\begin{align*}
	{\prod_{{1} \le j \le {n-1}}} {\prod_{{n} \ge i \ge {j+1}}} L^{A}_{i,j}
=  (L^{A}_{n,1} 	L^{A}_{n-1,1} \cdots L^{A}_{2,1}) \cdot
	(L^{A}_{n,2} 	L^{A}_{n-1,2} \cdots L^{A}_{3,2}) \cdots
	(L^{A}_{n,n-2} 	L^{A}_{n-1,n-2})  \cdot
	L^{A}_{n,n-1} .
\end{align*}
We will prove the following equation by induction on {$k$}:
\begin{equation}\label{triangular_lower_k}
\begin{aligned}
	{\prod_{{k} \le j \le {n-1}}} {\prod_{{n} \ge i \ge {j+1}}} L^{A}_{i,j}
	\cdot
	{\prod_{{n} \ge j \ge {1}}}  [  D^{A}_{j,j} ( {\prod_{{j-1} \ge i \ge {1}}} U^{A}_{i,j})  ]
	= g_{A_{k}}  \AJS(A_{k}, \bs{j}_{A_{k}})
		+ \sum_{\substack{
			B \in \MNZNS(n) ,
			\bs{j}' \in \ZZ^{n} \\
			B \prec A_{k}}
			} g_{B, \bs{j}'}  \AJS(B, \bs{j}'),
\end{aligned}
\end{equation}
where each  {$\AJS(B, \bs{j}')$}
satisfies {$ \co^{-}(B)_{\min} \ge k$}.

For   {$ \AJS(A_{n}, \bs{j}_{A_n}) $} and each {$  \AJS(C, \bs{j}')$} appearing in \eqref{triagular_upper},
applying Lemma \ref{triangular_lower},
we obtain
\begin{align}
 L^{A}_{n,n-1}
	\cdot
	\AJS(A_n, \bs{j}_{A_{n}})
&= g_1 \AJS(A_{n-1}, \bs{j}_{A_{n-1}})
		+ \sum_{\substack{
			H \in \MNZNS(n) ,
			\bs{j}'' \in \ZZ^{n} \\
			H \prec A_{n-1} }
			} g_{H, \bs{j}''}  \AJS(H, \bs{j}''),  \label{tmp_h} \\
 L^{A}_{n,n-1}
	\cdot
	\AJS(C, \bs{j}')
&= g_2 \AJS(C_{n-1}, \bs{j}''')
		+ \sum_{\substack{
			P \in \MNZNS(n) ,
			\bs{j}'' \in \ZZ^{n} \\
			P \prec C_{n-1} }
			} g_{P, \bs{j}''''}  \AJS(P, \bs{j}''''),  \label{tmp_j}
\end{align}
where
{$C_{n-1} = ( \SE{C} +  \SEE{a}_{n,n-1} E_{n, n-1} |  \SO{C} +  \SOE{a}_{n, n-1} E_{n, n-1}) $}.
This means
{$C_{n-1} \prec A_{n-1}$},
{$H \prec A_{n-1}$},
{$J \prec A_{n-1}$}.
Hence \eqref{triangular_lower_k} is proved  for the case {$k=n-1$}.

Now assume \eqref{triangular_lower_k} holds for  {$k$}. Then
applying Lemma \ref{triangular_lower}
on {$ \AJS(A_{k},  \bs{j}_{A_{k}}) $} and {$  \AJS(B, \bs{j}')$} in \eqref{triangular_lower_k} one can directly
proves that \eqref{triangular_lower_k} also holds for {$k-1$}, which means \eqref{triangular_lower_k} holds for all {$k \in [1, n-1]$}.

 So when {$k=1$},  noticing that {$A_{1} = A$}, we have
\begin{align*}
{\prod_{{1} \le j \le {n-1}}} {\prod_{{n} \ge i \ge {j+1}}} L^{A}_{i,j}
\cdot
{\prod_{{n} \ge j \ge {1}}}  [  D^{A}_{j,j} ( {\prod_{{j-1} \ge i \ge {1}}} U^{A}_{i,j})  ]
&	= g_A  \AJS(A, \bs{j}_A)
		+ \sum_{\substack{
			B \in \MNZNS(n) ,
			\bs{j}' \in \ZZ^{n} \\
			B \prec A
			} } g_{B, \bs{j}'}  \AJS(B, \bs{j}').
\end{align*}
and hence the equation is proved.

Then for any {$\bs{j} \in \ZZ^n$},
equation \eqref{Aj_0} implies {$  \AJS(A, \bs{j}) $}
can be generated by {$  \AJS(A, \bs{j}_A) $}
and  {$\{ G_{i}^{\pm 1} \where i \in [1, n]\}$},
hence  {$\AJS(A, \bs{j})$} can be generated by the set {$\fsG$}.
\end{proof}

\begin{exam}\label{example_order_q}
In order  to  illustrate the proof of Theorem \ref{triangular_relation_q},
we use an example to show   the leading terms for important steps.
Let {$A = (\SEE{a}_{i,j} | \SOE{a}_{i,j}) \in \MNZNS(3)$},
and recall the notations {$A_k^+$}
and {$A_{k}$} defined before Theorem \ref{triangular_relation_q}.
The ``lower terms''  in the expressions
means a {$\Qv$}-linear combination of finitely many lower elements
with respect to the partial order `{$\prec $}' defined before.
The detail for producing  {$A$} is quite different from the one in \cite[Example 6.9]{GLL}.
\begin{align*}
& D^{A}_{1,1}
	 = g_{A_1^+} \AJS(A_1^+,\bs{j}_{A_1^+}),  \\
& U^{A}_{1,2} D^{A}_{1,1}
	 =  g_{A_{1,2}^+} \AJS(A_{1,2}^+,\bs{j}_{A_{1,2}^+})
	 + \mbox{lower terms},  \\
& D^{A}_{2,2} U^{A}_{1,2} D^{A}_{1,1}
	 = g_{A_{2}^+}  \AJS(A_{2}^+,\bs{j}_{A_{2}^+}) 
	 + \mbox{lower terms},  \\
& U^{A}_{1,3}  D^{A}_{2,2} U^{A}_{1,2} D^{A}_{1,1}
	 = g_{A_{1,3}^+} \AJS(A_{1,3}^+,  \bs{j}_{A_{1,3}^+} )
	 + \mbox{lower terms},  \\
& U^{A}_{2,3}  U^{A}_{1,3}  D^{A}_{2,2} U^{A}_{1,2} D^{A}_{1,1}
	 = g_{A_{2,3}^+}  \AJS(A_{2,3}^+,\bs{j}_{A_{2,3}^+})
	 + \mbox{lower terms},  \\
& D^{A}_{3,3}  U^{A}_{2,3}  U^{A}_{1,3}  D^{A}_{2,2} U^{A}_{1,2} D^{A}_{1,1}
	 = g_{A_{3}^+}  \AJS(A_{3}^+,\bs{j}_{A_{3}^+})
	 + \mbox{lower terms},  \\
& L^{A}_{3,2}  D^{A}_{3,3}  U^{A}_{2,3}  U^{A}_{1,3}  D^{A}_{2,2} U^{A}_{1,2} D^{A}_{1,1}
	 = g_{A_{2}}  \AJS(A_{2},\bs{j}_{A_{2}})
	 + \mbox{lower terms},  \\
& L^{A}_{2,1} L^{A}_{3,2}  D^{A}_{3,3}  U^{A}_{2,3}  U^{A}_{1,3}  D^{A}_{2,2} U^{A}_{1,2} D^{A}_{1,1}
	 = g_{A_{2,1}^-}  \AJS(A_{2,1}^-,\bs{j}_{A_{2,1}^{-}})
	 + \mbox{lower terms},  \\
& L^{A}_{3,1}   L^{A}_{2,1} L^{A}_{3,2}  D^{A}_{3,3}  U^{A}_{2,3}  U^{A}_{1,3}  D^{A}_{2,2} U^{A}_{1,2} D^{A}_{1,1}
	 = g_{A}  \AJS(A,\bs{j}_{A})
	 + \mbox{lower terms}.
\end{align*}
Here the matrices of the leading terms  are listed as followed:
\begin{align*}
&
A_{1}^+ =
\left (
\begin{matrix}
         0       &   0   & 0\\
         0 &      0 & 0 \\
         0 & 0 & 0
\end{matrix}
\right.
\left |
\begin{matrix}
         \SOE{a}_{1 1}  & 0 & 0 \\
         0 & 0 & 0 \\
         0 & 0 & 0
\end{matrix}
\right) ,
\qquad
A_{1,2}^+ =
\left (
\begin{matrix}
         0       &   \SEE{a}_{1 2}   & 0\\
         0 &      0 & 0 \\
         0 & 0 & 0
\end{matrix}
\right.
\left |
\begin{matrix}
         \SOE{a}_{1 1}  &   \SOE{a}_{1 2} & 0 \\
         0 & 0 & 0 \\
         0 & 0 & 0
\end{matrix}
\right) ,\\
&
A_{2}^+ =
\left (
\begin{matrix}
         0       &   \SEE{a}_{1 2}   & 0\\
         0 &      0 & 0 \\
         0 & 0 & 0
\end{matrix}
\right.
\left |
\begin{matrix}
         \SOE{a}_{1 1}  &   \SOE{a}_{1 2} & 0 \\
         0 & \SOE{a}_{2 2} & 0 \\
         0 & 0 & 0
\end{matrix}
\right),
\qquad
A_{1,3}^+ =
\left (
\begin{matrix}
         0       &   \SEE{a}_{1 2}   & \SEE{a}_{1 3}\\
         0 &      0 & 0 \\
         0 & 0 & 0
\end{matrix}
\right.
\left |
\begin{matrix}
         \SOE{a}_{1 1}  &   \SOE{a}_{1 2} &  \SOE{a}_{1 3} \\
         0 & \SOE{a}_{2 2} & 0 \\
         0 & 0 & 0
\end{matrix}
\right) ,\\
&
A_{2,3}^+ =
\left (
\begin{matrix}
         0       &   \SEE{a}_{1 2}   & \SEE{a}_{1 3}\\
         0 &      0 & \SEE{a}_{2 3} \\
         0 & 0 & 0
\end{matrix}
\right.
\left |
\begin{matrix}
         \SOE{a}_{1 1}  &   \SOE{a}_{1 2} &  \SOE{a}_{1 3} \\
         0 & \SOE{a}_{2 2} & \SOE{a}_{2 3} \\
         0 & 0 & 0
\end{matrix}
\right) ,
\qquad
A_{3}^+ =
\left (
\begin{matrix}
         0       &   \SEE{a}_{1 2}   & \SEE{a}_{1 3}\\
         0 &      0 & \SEE{a}_{2 3} \\
         0 & 0 & 0
\end{matrix}
\right.
\left |
\begin{matrix}
         \SOE{a}_{1 1}  &   \SOE{a}_{1 2} &  \SOE{a}_{1 3} \\
         0 & \SOE{a}_{2 2} & \SOE{a}_{2 3} \\
         0 & 0 & \SOE{a}_{3 3}
\end{matrix}
\right) ,
\\
&
A_{2} =
\left (
\begin{matrix}
         0       &   \SEE{a}_{1 2}   & \SEE{a}_{1 3}\\
         0 &      0 & \SEE{a}_{2 3} \\
         0 & \SEE{a}_{3 2}  & 0
\end{matrix}
\right.
\left |
\begin{matrix}
         \SOE{a}_{1 1}  &   \SOE{a}_{1 2} &  \SOE{a}_{1 3} \\
         0 & \SOE{a}_{2 2} & \SOE{a}_{2 3} \\
         0 & \SOE{a}_{3 2}  & \SOE{a}_{3 3}
\end{matrix}
\right) ,
\qquad
A_{2,1}^- =
\left (
\begin{matrix}
         0       &   \SEE{a}_{1 2}   & \SEE{a}_{1 3}\\
         \SEE{a}_{2 1}+  {a}_{3 1} &      0 & \SEE{a}_{2 3} \\
         0 & \SEE{a}_{3 2}  & 0
\end{matrix}
\right.
\left |
\begin{matrix}
         \SOE{a}_{1 1}  &   \SOE{a}_{1 2} &  \SOE{a}_{1 3} \\
         \SOE{a}_{2,1} & \SOE{a}_{2 2} & \SOE{a}_{2 3} \\
         0 & \SOE{a}_{3 2}  & \SOE{a}_{3 3}
\end{matrix}
\right) ,
\\
&
A =
\left (
\begin{matrix}
         0       &   \SEE{a}_{1 2}   & \SEE{a}_{1 3}\\
         \SEE{a}_{2 1}&      0 & \SEE{a}_{2 3} \\
         \SEE{a}_{3 1} & \SEE{a}_{3 2}  & 0
\end{matrix}
\right.
\left |
\begin{matrix}
         \SOE{a}_{1 1}  &   \SOE{a}_{1 2} &  \SOE{a}_{1 3} \\
         \SOE{a}_{2,1} & \SOE{a}_{2 2} & \SOE{a}_{2 3} \\
         \SOE{a}_{3 1} & \SOE{a}_{3 2}  & \SOE{a}_{3 3}
\end{matrix}
\right) .
\end{align*}
\end{exam}

For any {$A \in \MNZNS(n)$} and  {$ \bs{j}=(j_1, \cdots j_n) \in {\ZZ}^{n} $},
denote
{$ {\frm}^{A, \bs{j}}= {\frm}^{A} \cdot \prod_{1 \le i \le n} {G_i}^{{j}_{i}} $}.
With Proposition \ref{unv_basis} and Theorem \ref{triangular_relation_q},
we construct the monomial basis for {$\USnv$} as  the following corollary immediately.
\begin{cor}\label{monomial_basis}
The set {$\fcM = \{  {\frm}^{A, \bs{j}}
	\where   A \in \MNZNS(n), \bs{j} \in {\ZZ}^{n} \} $} is a {$\Qv$}-basis of {$\USnv$}.
\end{cor}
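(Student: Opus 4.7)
The plan is to upgrade the triangular relation in Theorem \ref{triangular_relation_q} from the elements $\frm^A$ to the full family $\frm^{A,\bs{j}}$ by tracking the effect of right multiplication by the diagonal monomials $\prod_i G_i^{j_i}$, and then deduce that $\fcM$ is a basis from the fact that $\LS$ is a basis (Proposition \ref{unv_basis}).

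First, I would recall from \eqref{mult-GA} that for every basis element $A(\bs{l}) \in \LS$ and every $\bs{j} \in \ZZ^n$,
\[
A(\bs{l}) \cdot \prod_{1 \le i \le n} G_i^{j_i} = {v}^{\bs{j} \cdot \co(\widehat A)} \, A(\bs{l} + \bs{j}),
\]
so right multiplication by $\prod_i G_i^{j_i}$ sends each element $A(\bs{l})$ of $\LS$ to a nonzero scalar multiple of $A(\bs{l}+\bs{j})$, without changing the matrix part $A$. Combining this with Theorem \ref{triangular_relation_q} gives
\[
\frm^{A,\bs{j}} \;=\; \frm^A \cdot \prod_{1 \le i \le n} G_i^{j_i} \;=\; \widetilde g_A \, A(\bs{j}_A + \bs{j}) \;+\; \sum_{\substack{B \prec A \\ \bs{k} \in \ZZ^n}} \widetilde g_{B,\bs{k}} \, B(\bs{k}),
\]
with $\widetilde g_A \in \Qv^{\times}$ and only finitely many $\widetilde g_{B,\bs{k}} \in \Qv$ nonzero.

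Next, I would observe that for each fixed $A$ the map $\bs{j} \mapsto \bs{j}_A + \bs{j}$ is a bijection $\ZZ^n \to \ZZ^n$ (since $\bs{j}_A$ depends only on $A$). Consequently, the indexing set $\MNZNS(n) \times \ZZ^n$ for $\fcM$ matches that for $\LS$ via the bijection $(A,\bs{j}) \mapsto (A,\bs{j}_A+\bs{j})$, and the transition matrix from $\fcM$ to $\LS$ is triangular with respect to the partial order $\prec$ on $\MNZNS(n)$ (acting trivially on the $\bs{j}$-coordinate once one passes through that bijection), with diagonal entries $\widetilde g_A \ne 0$.

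Finally, to conclude, I would argue linear independence and spanning separately. For linear independence, any nontrivial $\Qv$-linear relation among the $\frm^{A,\bs{j}}$ would, after picking an index $(A_0,\bs{j}_0)$ with $A_0$ maximal among the supporting matrices (possible because for any finite set of indices the supporting $A$'s have bounded $|A|$, so only finitely many $A$'s are $\prec$-predecessors of a maximal one in the support), yield a nontrivial relation in $\LS$ with the nonzero coefficient $\widetilde g_{A_0}$ on $A_0(\bs{j}_{A_0}+\bs{j}_0)$, contradicting Proposition \ref{unv_basis}. For spanning, I would induct on the partial order $\prec$: given a basis element $A(\bs{l}) \in \LS$, the element $\frm^{A,\bs{l}-\bs{j}_A}$ has leading term $\widetilde g_A A(\bs{l})$, so $A(\bs{l})$ can be expressed as a $\Qv$-combination of $\frm^{A,\bs{l}-\bs{j}_A}$ together with elements $B(\bs{k})$ for $B \prec A$, which by induction lie in the $\Qv$-span of $\fcM$. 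The main (mild) obstacle is purely formal: ensuring that $\prec$ restricted to any actually relevant subset is well-founded so that the triangular back-substitution terminates; this is handled by the observation that only finitely many $B \in \MNZNS(n)$ with $|B| \le |A|$ satisfy $B \prec A$, and by Theorem \ref{triangular_relation_q} all lower terms arising from $\frm^A$ have $|B| \le |A|$.
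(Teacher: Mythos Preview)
Your approach---extending the triangular relation of Theorem \ref{triangular_relation_q} to $\frm^{A,\bs j}$ via right multiplication by $\prod_i G_i^{j_i}$, then deducing independence and spanning against the basis $\LS$ of Proposition \ref{unv_basis}---is exactly what the paper intends by ``immediately''. The linear independence argument is fine.

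The gap is in your termination argument for spanning. The claim that ``all lower terms arising from $\frm^A$ have $|B|\le|A|$'' is false. Take $n=2$ and $A=(\mathrm O\,|\,E_{2,1})$; then $\frm^A=G_{\ol 2}Y_1$, and a direct computation using Proposition \ref{mulformdiag} (with $h=n=2$, valid by Corollary \ref{mulformdiagN}) gives
\[
\frm^A=(\mathrm O\,|\,E_{2,1})(\bs 0)\;+\;v^{-2}\,(E_{2,1}\,|\,E_{2,2})(-2\bs\ep_2),
\]
whose lower term has $B=(E_{2,1}\,|\,E_{2,2})$ with $|B|=2>1=|A|$. (One checks $B\prec A$ via condition (3) of Definition \ref{defn:partialorder}, using clause (2) of $<_1$, so Theorem \ref{triangular_relation_q} is not violated---only your bound is.) The correct justification is that the relation $\prec$ is itself well-founded on $\MNZNS(n)$: condition (1) strictly decreases the nonnegative integer $\sum_{s<t}\sum_{i\le s,\,j\ge t}a_{i,j}$; once the upper-triangular part stabilizes, condition (2) strictly increases $\co^-(\cdot)_{\min}\in[1,n]$; and once that stabilizes to some $k$, conditions (3)--(4) amount to the lexicographic product over the finitely many columns $k,\ldots,n-1$ of the orders $<_t$, each of which is well-founded (the column sum in clause (1) of $<_t$ is bounded below, and clauses (2)--(3) compare tuples in a finite set once that sum is fixed). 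Well-foundedness together with the finiteness of the lower terms at each step then makes the Noetherian induction for spanning go through.
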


\spaceintv
\section{A new realization of {$\Uvqn$}}
In this section we will prove the homomorphism {$ \bs{\xi}_n$} is actually an isomorphism of superalgebras.
We will construct a  PBW-type basis following  \cite{DW1},
and then prove the image of this basis under the homomorphism {$ \bs{\xi}_n$} is still a basis of {$\USnv$},
which means {$ \bs{\xi}_n$} is  in fact  an isomorphism.

Following \cite[Theorem 6.2]{Ol} and \cite[Remark 5.6, Lemma 5.7]{DW1},
for {$1 \le i \le n-1$}, set
\footnote{Here  {${\RZ}_{i,j}$}  is in fact  the quantum root vector which is denoted by {${{X}}_{i,j}$} in \cite{DW1}.}
\begin{align*}
{\RZ}_{i, i+1} = {\genE}_{i}, \qquad
{\RZ}_{i+1, i} = {\genF}_{i}, \qquad
{\ol{\RZ}}_{i, i+1} = {\genE}_{\ol{i}}, \qquad
{\ol{\RZ}}_{i+1, i} = {\genF}_{\ol{i}};
\end{align*}
for {$|j-i| > 1$}, set
\begin{align*}
{\RZ}_{i, j} =
\left\{
\begin{aligned}
&{\RZ}_{i, k} {\RZ}_{k, j} - {v}  {\RZ}_{k, j} {\RZ}_{i, k},   &\mbox{ if } i<j, \\
&{\RZ}_{i, k} {\RZ}_{k, j} - {v}^{-1}  {\RZ}_{k, j} {\RZ}_{i, k},   &\mbox{ if } i>j,
\end{aligned}
\right.
\quad
{\ol{\RZ}}_{i, j} =
\left\{
\begin{aligned}
&{\RZ}_{i, k} {\ol{\RZ}}_{k, j} - {v}  {\ol{\RZ}}_{k, j} {\RZ}_{i, k},  &\mbox{ if } i<j, \\
&{\RZ}_{i, k} {\ol{\RZ}}_{k, j} - {v}^{-1}  {\ol{\RZ}}_{k, j} {\RZ}_{i, k},  &\mbox{ if } i>j,
\end{aligned}
\right.
\end{align*}
where {$k$} is any number strictly between {$i$} and {$j$},
and  {${\RZ}_{i, j}$} and {${\ol{\RZ}}_{i, j}$} does not depend on the choice of $k$.

For any {$A \in \MNZNS(n)$} and  {$ \bs{j} \in \ZZ^n$},
denote {$\bs{ {\genK}}^{\bs{j}} = \prod_{k=1}^n {\genK}_{k}^{j_k}$},
and
\begin{align*}
 {\frb}^{A, \bs{j}}
= \bs{ {\genK}}^{\bs{j}}\cdot  ({\prod_{{1} \le j \le {n-1}}} \ 
	 {\prod_{{n} \ge i \ge {j+1}}}  {\RZ}_{i,j}^{\SEE{a}_{i,j}} {\ol{\RZ}}_{i,j}^{\SOE{a}_{i,j}}	)
	\cdot {\prod_{{n} \ge j \ge {1}}}  [   {\genK}_{\ol{j}}^{\SOE{a}_{j,j}}
			( {\prod_{{j-1} \ge i \ge {1}}} {\RZ}_{i,j}^{\SEE{a}_{i,j}} {\ol{\RZ}}_{i,j}^{\SOE{a}_{i,j}})  ] .
\end{align*}

Similar to \cite[Proposition 5.8]{DW1}, we have the following
\begin{prop}\label{new_pbw_basis}
The set
{$ \fcB = \{ {\frb}^{A, \bs{j}}
	\where
	A \in \MNZNS(n), \ \bs{j} \in \ZZ^n
	\}
$}
is a {$\Qv$}-basis of {$\Uvqn$}.
\end{prop}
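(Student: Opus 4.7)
The strategy is to adapt the argument for \cite[Proposition 5.8]{DW1}, which gives an analogous PBW basis for $\Uvqn$, to the present ordering used in $\frb^{A,\bs{j}}$. Since the statement is explicitly flagged as being ``similar to'' that result, the main work lies in (i) verifying that the ordering conventions are compatible, and (ii) checking that the spanning/independence argument survives the reordering.

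First I would check that the quantum root vectors $\RZ_{i,j}$ and $\ol{\RZ}_{i,j}$ with $|i-j|>1$ are well defined, i.e.\ independent of the choice of intermediate index $k$; this is a consequence of the Serre-type relations (QQ5)--(QQ6) together with (QQ3)--(QQ4), and is carried out in \cite[Section 5]{DW1}. Next, for \emph{spanning}, I would show that any monomial in the Drinfeld--Jimbo-type generators $\genE_j, \genF_j, \genK_i^{\pm 1}, \genK_{\bar i}, \genE_{\bar j}, \genF_{\bar j}$ (reduced, via Remark~\ref{induction_N}, to $\genE_j, \genF_j, \genK_i^{\pm 1}, \genK_{\bar n}$) can be rewritten as a $\Qv$-linear combination of $\frb^{A,\bs{j}}$ by using (QQ1)--(QQ6) and the commutation relations between the quantum root vectors. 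This is the standard PBW-reordering argument for quantum (super)groups: each non-ordered product reduces modulo lower-order terms to an ordered one, with Clifford-type signs on the odd part tracked through (QQ5).

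For \emph{linear independence}, I would invoke the homomorphism $\bs{\xi}_n:\Uvqn\to\SQvnR$ from Theorem~\ref{qqschur_reltion} and compare $\frb^{A,\bs{j}}$ with the monomial element $\frm^{A,\bs{j}}$ of $\USnv$ defined in \eqref{MA}. By construction, $\bs{\xi}_n$ sends $\genE_j, \genF_j, \genK_i^{\pm 1}, \genK_{\bar i}$ and (inductively via \eqref{induction_XYG}) $\genE_{\bar j}, \genF_{\bar j}$ to elements that, together with the recursive quantum-bracket definitions of $\RZ_{i,j}$ and $\ol{\RZ}_{i,j}$, produce expressions whose leading term in the $\LS$-basis coincides with $\frm^{A,\bs{j}}$ up to a nonzero scalar. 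Combining this with the triangular relation established in Theorem~\ref{triangular_relation_q} shows that $\bs{\xi}_n(\frb^{A,\bs{j}})$ expands triangularly in the basis $\LS$ with nonzero leading coefficient at $\AJS(A,\bs{j}_A)$. Since $\LS$ is a $\Qv$-basis of $\USnv$ (Proposition~\ref{unv_basis}), the images $\{\bs{\xi}_n(\frb^{A,\bs{j}})\}$ are $\Qv$-linearly independent, hence so is $\fcB$ in $\Uvqn$.

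The main obstacle will be reconciling the inductive, quantum-bracket definition of $\RZ_{i,j}, \ol{\RZ}_{i,j}$ in $\Uvqn$ with the raw, unnormalized generators $X_j, Y_j, X_{\bar j}, Y_{\bar j}$ used in $\USnv$ (cf.\ Remark~\ref{raw} and footnotes \ref{raw1}, \ref{raw2}), and verifying that under this identification $\bs{\xi}_n(\RZ_{i,j}^{\SEE{a}_{i,j}}\ol{\RZ}_{i,j}^{\SOE{a}_{i,j}})$ still has $X_i X_{i+1}\cdots X_{j-2}X_{\overline{j-1}}$-type leading term matching the factors in $U^A_{i,j}, L^A_{i,j}$, modulo terms strictly lower with respect to the partial order $\prec$ of Definition~\ref{defn:partialorder}. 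Once this matching is verified entry by entry in the order prescribed by \eqref{MA} (as illustrated in Example~\ref{example_order_q}), the argument becomes formally parallel to that of \cite[Proposition 5.8]{DW1} and both spanning and independence follow.
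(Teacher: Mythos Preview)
The paper offers no proof here beyond the one-line remark ``Similar to \cite[Proposition 5.8]{DW1}''; the intended argument is therefore internal to $\Uvqn$, following \cite{DW1} (spanning via the usual reordering relations among the $\RZ_{i,j},\ol{\RZ}_{i,j},\genK_i^{\pm1},\genK_{\bar i}$, and independence by a specialization/filtration argument, not via the Schur-algebra map).

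Your spanning outline matches this. Your linear-independence argument, however, goes by a different route: you push $\frb^{A,\bs{j}}$ through $\bs{\xi}_n$ and read off a triangular expansion in the basis $\LS$ of $\USnv$. That computation is precisely the content of Proposition~\ref{map_basis}, which the paper places \emph{after} the present proposition. Your approach is logically valid and not circular---neither Theorem~\ref{qqschur_reltion}, nor Proposition~\ref{unv_basis}, nor the triangularity computation behind Proposition~\ref{map_basis} relies on Proposition~\ref{new_pbw_basis}---but it collapses two separate steps of the paper into one. In the paper's layout, Proposition~\ref{new_pbw_basis} (a basis of $\Uvqn$, borrowed from \cite{DW1}) and Proposition~\ref{map_basis} (a basis of $\USnv$, computed directly) are kept independent so that Theorem~\ref{map_iso} can conclude injectivity of $\bs{\xi}_n$ by the clean observation that a homomorphism sending a basis to a basis is injective. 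Your route is more economical but merges the two propositions, so if you adopt it you should be explicit that Proposition~\ref{map_basis} then becomes a corollary rather than a separate result.
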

Recall the algebra homomorphism {$ \bs{\xi}_n$},
denote {${\RP}_{i,j} = \bs{\xi}_n({\RZ}_{i,j})$}, {${\ol{\RP}}_{i,j} = \bs{\xi}_n({\ol{\RZ}}_{i,j})$}.
For any {$A \in \MNZNS(n)$}, {$ \bs{j} \in \ZZ^n$},
denote {$\bs{G}^{\bs{j}} = \xn(\bs{ {\genK}}^{\bs{j}}) = \prod_{k=1}^n G_{k}^{j_k}$}
and  {${\frp}^{A, \bs{j}} = \xn( {\frb}^{A, \bs{j}} )$}.
It is clear
\begin{align*}
 {\frp}^{A, \bs{j}}
= \xn( {\frb}^{A, \bs{j}} )
=\bs{G}^{\bs{j}} \cdot ({\prod_{{1} \le j \le {n-1}}}  \ 
	{\prod_{{n} \ge i \ge {j+1}}} {\RP}_{i,j}^{\SEE{a}_{i,j}} {\ol{\RP}}_{i,j}^{\SOE{a}_{i,j}}	)
	\cdot {\prod_{{n} \ge j \ge {1}}}  [  G_{\ol{j}}^{\SOE{a}_{j,j}}
			( {\prod_{{j-1} \ge i \ge {1}}} {\RP}_{i,j}^{\SEE{a}_{i,j}} {\ol{\RP}}_{i,j}^{\SOE{a}_{i,j}})  ] .
\end{align*}
\begin{prop}\label{map_basis}
The set
{$
\fcP = \{ {\frp}^{A, \bs{j}}
	\where
	A \in \MNZNS(n), \ \bs{j} \in \ZZ^n
	\}
$}
is a {$\Qv$}-basis of {$\USnv$}.
\end{prop}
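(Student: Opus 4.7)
The plan is to establish a triangular relation between the set $\fcP$ and the PBW-type basis $\LS=\{A(\bs{j})\}$ of $\USnv$ (Proposition \ref{unv_basis}), parallel to the one proved in Theorem \ref{triangular_relation_q} for the monomial basis $\fcM$. Concretely, I would show that for every $A\in\MNZNS(n)$ and $\bs{j}\in\ZZ^n$,
\begin{align*}
\frp^{A,\bs{j}} = h_{A,\bs{j}}\, A(\bs{j}^\ast_A) + \sum_{\substack{B\in\MNZNS(n),\ \bs{j}'\in\ZZ^n\\ B\prec A}} h_{B,\bs{j}'}\, B(\bs{j}'),
\end{align*}
with nonzero $h_{A,\bs{j}}\in\Qv$, some $\bs{j}^\ast_A\in\ZZ^n$, and finitely many $h_{B,\bs{j}'}\in\Qv$.

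First I would analyze the images $\RP_{i,j}=\bs{\xi}_n(\RZ_{i,j})$ and $\ol{\RP}_{i,j}=\bs{\xi}_n(\ol{\RZ}_{i,j})$ of the quantum root vectors. By the recursive definition $\RP_{i,j}=\RP_{i,k}\RP_{k,j}-v^{\pm1}\RP_{k,j}\RP_{i,k}$ (and analogously for $\ol{\RP}_{i,j}$), starting from $\RP_{i,i+1}=X_i$, $\RP_{i+1,i}=Y_i$, $\ol{\RP}_{i,i+1}=X_{\ol i}$, $\ol{\RP}_{i+1,i}=Y_{\ol i}$, an induction on $|i-j|$ expresses each root vector as a $\Qv$-linear combination of ordered monomials in the generators $\fsG$, whose distinguished ``leading monomial'' matches the factor occurring in $U^A_{i,j}$ (for $i<j$) or $L^A_{i,j}$ (for $i>j$); the first summand of the recursion supplies the leading piece, while the second contributes only to $\prec$-lower matrices upon application of the multiplication formulas of Sections \ref{even case}--\ref{odd case}.

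Substituting these expansions into $\frp^{A,\bs{j}}$ and multiplying out the product, the leading term becomes a nonzero scalar multiple of $\frm^{A,\bs{j}}$; every other term is (after the same reordering) a scalar multiple of some $\frm^{B,\bs{j}'}$ with $B\prec A$. Applying Theorem \ref{triangular_relation_q} to each resulting $\frm^{B,\bs{j}'}$ then produces the desired triangular expansion in $\LS$. Linear independence of $\fcP$ is then immediate: any nontrivial $\Qv$-linear relation among the $\frp^{A,\bs{j}}$ would, on selecting a $\prec$-maximal $A$ with nonzero coefficient, contradict the linear independence of $\LS$. For spanning, I would invoke the fact that $\bs{\xi}_n$ is a superalgebra homomorphism, so its image contains $\fsG$ and hence, by Theorem \ref{triangular_relation_q}, $\bs{\xi}_n(\Uvqn)\supseteq\USnv$; combined with $\bs{\xi}_n(\Uvqn)\subseteq\USnv$ (which is clear from $\bs{\xi}_n(\fsG)\subseteq\USnv$), one has $\bs{\xi}_n(\Uvqn)=\USnv$, and since $\fcB$ is a $\Qv$-basis of $\Uvqn$ by Proposition \ref{new_pbw_basis}, the image $\fcP=\bs{\xi}_n(\fcB)$ spans $\USnv$. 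As a bonus, this will prove that $\bs{\xi}_n$ is injective and hence a superalgebra isomorphism $\Uvqn\cong\USnv$, giving the promised realization (Theorem \ref{map_iso}).

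The main obstacle will be the careful bookkeeping of leading terms through the recursive $v$-commutator definition, especially for the odd root vectors $\ol{\RP}_{i,j}$: the signs produced by the parity function $p(\cdot)$ in Propositions \ref{mulformodd1} and \ref{mulformodd2}, together with the need to verify the SDP condition at every stage of the iterated reordering, must be tracked precisely. Here the key technical support is provided by Lemma \ref{triang_aaa} and Corollary \ref{triang_aaa_cor}, which guarantee the SDP condition for the ``staircase'' matrices that actually arise at each step of building up the columns of $A$ from right to left and the rows of $A$ from top to bottom, in the same order already used successfully in the proof of Theorem \ref{triangular_relation_q} (see Example \ref{example_order_q}).
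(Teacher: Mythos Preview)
Your overall strategy (a triangular relation between $\fcP$ and the basis $\LS$, mirroring Theorem \ref{triangular_relation_q}) and your spanning argument are both correct and agree with the paper. The gap is in the lower--triangular part of the triangular relation.

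The claim that the leading monomial of $\ol{\RP}_{i,j}$ for $i>j$ ``matches the factor occurring in $L^A_{i,j}$'' is not right. Unwinding the $v$-commutator gives
\[
\ol{\RP}_{i,j}=Y_{\ol{i-1}}\,Y_{i-2}\cdots Y_j+\sum_\mu f_{\ol\mu}\,Y_{\ol\mu},
\]
whose leading monomial contains the odd generator $Y_{\ol{i-1}}$, whereas the odd factor of $L^A_{i,j}$ is the completely different word $X_i\cdots X_{n-1}\,G_{\ol n}\,Y_{n-1}\cdots Y_i$ built around the single odd generator $G_{\ol n}$. Even the exponent structure differs: $L^A_{i,j}$ carries $Y_{i-1}^{\sum_{k\ge i}a_{k,j}}$, an \emph{accumulated} power that has no analogue in the product $\prod_{i}\RP_{i,j}^{\SEE a_{i,j}}\ol{\RP}_{i,j}^{\SOE a_{i,j}}$. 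So ``leading term of $\frp^{A,\bs j}$ is a scalar multiple of $\frm^{A,\bs j}$'' fails, and the subsequent step ``every other term is a scalar multiple of some $\frm^{B,\bs j'}$ with $B\prec A$'' fails with it: the monomials you obtain by expanding the root vectors are simply not of the special form $\frm^{B,\bs j'}$, so you cannot feed them into Theorem \ref{triangular_relation_q}.

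There is also an SDP obstruction hidden here. You cannot just ``apply the multiplication formulas of Sections \ref{even case}--\ref{odd case}'' to $Y_{\ol{i-1}}$: Proposition \ref{mulformodd2} (and Proposition \ref{mulformdiag} for $h<n$) require the SDP condition, which is \emph{not} guaranteed for the intermediate matrices arising here. This is exactly why $\frm^A$ was built with $G_{\ol n}$ (unconditional by Corollary \ref{mulformdiagN}) rather than with $Y_{\ol j}$ or $G_{\ol j}$ for $j<n$. The paper's proof resolves both problems at once: it first expands $Y_{\ol{i-1}}$ and $G_{\ol i}$ in terms of $X_j,Y_j,G_j^{\pm1},G_{\ol n}$ via the relations \eqref{induction_XYG} (see \eqref{p_induct_2}, \eqref{p_induct_3}), and then \emph{reruns} the leading-term computation of Theorem \ref{triangular_relation_q} directly against $\LS$ rather than invoking that theorem as a black box. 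Adding this expansion step --- and abandoning the reduction to $\frm^{A,\bs j}$ in favour of a direct comparison with $\LS$ --- is what is missing from your plan.
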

\begin{proof}
We shall apply the similar calculation used in the proof of Theorem \ref{triangular_relation_q} to compute ${\frp}^{A, \bs{j}}$.

For any {$i,j$}, define  {$U(i, j, x)$} to be
a set of {$x \cdot (|j-i|+1)$} numbers,
while each {$k$} is between {$i$} and {$j$}, and  appears $x$ times in it.
Let  {$S(i, j, x)$} be the set of all possible sequences of the members of  {$U(i, j, x)$}.
For each {$\mu \in S(i, j, x)$},
set {$X_{\mu} = X_{\mu_1} \cdots X_{\mu_{t}}$},
where {$t=x \cdot (|j-i| + 1)$} is the length of {$\mu$}.
For the odd element,
for any   {$\mu \in S(i, j, 1)$},
define {$X_{\ol{\mu}}$} to be the one obtained from {$X_{\mu}$} by replacing {$X_{k}$} with {$X_{\ol{k}}$}, where {$k= \mathrm{max} U(i, j, 1)$} is the maximum number in {$U(i, j, 1)$}.

For the pair {$i,j \in [1, n]$} with {$i < j$},
set {$\mu_{(0)}$} be the sequence such that
{$X_{\mu_{(0)}} =  X_{i}^{\SEE{a}_{i,j}}   X_{i+1}^{\SEE{a}_{i,j}}   \cdots  X_{j-1}^{\SEE{a}_{i,j}}  $},
and set {$\mu_{(1)}$} be the sequence such that
{$X_{\ol{\mu_{(1)}}} =  X_{i} X_{i+1} \cdots  X_{\ol{j-1}}$}.
Direct calculation shows
\begin{align*}
{\RP}_{i,j}^{\SEE{a}_{i,j}}
&=  X_{i}^{\SEE{a}_{i,j}}   X_{i+1}^{\SEE{a}_{i,j}}   \cdots  X_{j-1}^{\SEE{a}_{i,j}}
	+ \sum_{\mu \in S(i, j-1, \SEE{a}_{i,j}) \atop \mu \ne \mu_{(0)}}  f_{\mu} X_{\mu}  ,\\
{\ol{\RP}}_{i,j}
&=  X_{i} X_{i+1} \cdots X_{j-2} X_{\ol{j-1}}
	+ \sum_{\mu \in S(i, j-1, 1) \atop \mu \ne \mu_{(1)}}  f_{\ol{\mu}} X_{\ol{\mu} },
\end{align*}
where each {$f_{\mu}$} or {$f_{\ol{\mu}}$} is $0$  or {${v}^{k}$} for some {$k \in \NN$}.

For any {$M=(\SEE{m}_{h,k} | \SOE{m}_{h,k}) \in \MNZNS(n)$},
{$\bs{p} \in \ZZ^n$},
assume {$m_{h,k}=0$} for all {$h>k$}, or {$k > j$}, or  {$h > i$} and {$k = j$}.
Then for {$\mu \in S(i, j-1, \SEE{a}_{i,j}) $},
Proposition \ref{mulformeven}{\rm(1)} implies
\begin{align*}
X_{\mu}  \AJS(M, \bs{p})
&= f_0 \ABJS(\SE{M} + \SEE{a}_{i,j} E_{i,j}, \SO{M}, \bs{p}'_{M})
	+ \sum_{\substack{B \in \MNZNS(n) , \
			\bs{p}' \in \ZZ^n \\
			B \prec (\SE{M} + \SEE{a}_{i,j} E_{i,j}| \SO{M})} }
			 f_{B, \bs{p}'} \AJS(B, \bs{p}') ,
\end{align*}
where 
{$f_0,  f_{B, \bs{p}'} \in \Qv$}, 
and {$f_0 \ne 0$}  when {$\mu = \mu_{(0)}$}.

As the matrices involved in computing $X_{\ol{\mu}} \AJS(M, \bs{p})$ satisfies the SDP condition, the formulas in Proposition \ref{mulformodd1} are applicable and this means for {$\mu \in S(i, j-1, 1)$},
\begin{align*}
X_{\ol{\mu}} \AJS(M, \bs{p})
=  f_{{1}} \ABJS(\SE{M} ,  \SO{M} + E_{i,j}, \bs{p}''_{M})
	+ \sum_{\substack{B \in \MNZNS(n) , \
			\bs{p}' \in \ZZ^n \\
			B \prec  (\SE{M} | \SO{M} +  E_{i,j})}}
			 f_{B, \bs{p}'} \AJS(B, \bs{p}') ,
\end{align*}
{$f_1,  f_{B, \bs{p}'} \in \Qv$}, 
and {$f_1 \ne 0$}  when {$\mu = \mu_{(1)}$}.
As a consequence, we have
\begin{align*}
{\RP}_{i,j}^{\SEE{a}_{i,j}}   {\ol{\RP}}_{i,j}^{\SOE{a}_{i,j}}  \AJS(M, \bs{p})
&= f_0 \ABJS(\SE{M} + \SEE{a}_{i,j} E_{i,j}, \SO{M} + \SOE{a}_{i,j} E_{i,j}, \bs{p}'''_{M})
	+ \sum_{\substack{B \in \MNZNS(n),  \
			\bs{p}' \in \ZZ^n \\
			B \prec (\SE{M} + \SEE{a}_{i,j} E_{i,j}| \SO{M} + \SOE{a}_{i,j} E_{i,j})}}
			 f_{B, \bs{p}'} \AJS(B, \bs{p}'),
\end{align*}
with {$f_0, f_{B, \bs{p}'}  \in \Qv$} and {$ f_0 \ne 0$}.
Fix $j$,
by induction on $i$ from {$1$} to {$j-1$},
we have
\begin{align*}
({\prod_{{j-1} \ge i \ge {1}}} {\RP}_{i,j}^{\SEE{a}_{i,j}} {\ol{\RP}}_{i,j}^{\SOE{a}_{i,j}})
 \AJS(A^+_{j-1}, \bs{p}_{A^+_{j-1}})
&= f'_j \ABJS(\SE{(A^+_{j})} , \SO{(A^+_{j})} -  \SOE{a}_{j,j} E_{j,j}, \bs{p}'_{A^+_{j-1}})  \\
	&  \qquad
	+ \sum_{\substack{B \in \MNZNS(n),  \
			\bs{p}' \in \ZZ^n \\
			B \prec (  \SE{(A^+_{j})} | \SO{(A^+_{j})} -  \SOE{a}_{j,j} E_{j,j}  )}}
			 f_{B, \bs{p}'} \AJS(B, \bs{p}').
\end{align*}
Applying Proposition \ref{mulformdiag}, we have
\begin{align*}
 G_{\ol{j}}^{\SOE{a}_{j,j}}
({\prod_{{j-1} \ge i \ge {1}}} {\RP}_{i,j}^{\SEE{a}_{i,j}} {\ol{\RP}}_{i,j}^{\SOE{a}_{i,j}})   \AJS(A^+_{j-1},  \bs{p}_{A^+_{j-1}})
&= f^{+}_j  \AJS(A^+_{j}, \bs{p}_{A^+_{j}})
	+ \sum_{\substack{B \in \MNZNS(n)  \\
			\bs{p}'' \in \ZZ^n,  \
			B \prec  A^+_{j} }}
			 f_{B, \bs{p}'} \AJS(B, \bs{p}').
\end{align*}
Then by induction on $k$, we have
\begin{align*}
{\prod_{{1} \le j \le {k}}} G_{\ol{j}}^{\SOE{a}_{j,j}}
	( {\prod_{{j-1} \ge i \ge {1}}} {\RP}_{i,j}^{\SEE{a}_{i,j}} {\ol{\RP}}_{i,j}^{\SOE{a}_{i,j}})
&= f^{+}_k   \AJS(A^+_{k}, \bs{p}_{A^+_{k}})
	+ \sum_{\substack{B \in \MNZNS(n)  \\
			\bs{p}' \in \ZZ^n,  \
			 B \prec  A^+_{k}}}
			 f_{B, \bs{p}'} \AJS(B, \bs{p}')
\end{align*}
and
\begin{align*}
{\prod_{{1} \le j \le {n}}} G_{\ol{j}}^{\SOE{a}_{j,j}}
	( {\prod_{{j-1} \ge i \ge {1}}} {\RP}_{i,j}^{\SEE{a}_{i,j}} {\ol{\RP}}_{i,j}^{\SOE{a}_{i,j}})
&= f^{+}_n  \AJS(A_n^+, \bs{p}_{A_n^+})
	+ \sum_{\substack{B \in \MNZNS(n)  \\
			\bs{p}' \in \ZZ^n,  \
			B \prec  A_n^+} }
			 f_{B, \bs{p}'} \AJS(B, \bs{p}').
\end{align*}

For {$i,j \in [1, n]$} with {$i > j$},
set {$\mu_{(0)}$} be the sequence such that
{$Y_{\mu_{(0)}} =  Y_{i-1}^{\SEE{a}_{i,j}}   Y_{i-2}^{\SEE{a}_{i,j}}   \cdots  Y_{j}^{\SEE{a}_{i,j}}   $},
and set {$\mu_{(1)}$} be the sequence such that
{$Y_{\ol{\mu_{(1)}}} =  Y_{\ol{i-1}}  Y_{i-2}   \cdots  Y_{j} $}.
Direct calculation shows
\begin{equation}\label{p_induct_1}
\begin{aligned}
{\RP}_{i,j}^{\SEE{a}_{i,j}}
&=  Y_{i-1}^{\SEE{a}_{i,j}}   Y_{i-2}^{\SEE{a}_{i,j}}   \cdots  Y_{j}^{\SEE{a}_{i,j}}
	+ \sum_{\mu \in S(i-1, j, \SEE{a}_{i,j}),  \  \mu \ne \mu_{(0)}}  f_{\mu} Y_{\mu}  ,\\
{\ol{\RP}}_{i,j}
&= Y_{\ol{i-1}}  Y_{i-2}   \cdots  Y_{j}
	+ \sum_{\mu \in S(i-1, j, 1),  \  \mu \ne \mu_{(1)}}  f_{\ol{\mu}} Y_{\ol{\mu} },
\end{aligned}
\end{equation}
where each {$f_{\mu}$} or {$f_{\ol{\mu}}$} is $0$  or {${v}^{k}$} for some {$k \in \NN$}.
Referring to equations in \eqref{induction_XYG},
we have
\begin{align}
Y_{\ol{i}}
&= {v}  G_{i+1}^{-1} G_{\ol{i+1}} Y_{i} - {v}^{-1} Y_{i} G_{i+1}^{-1}  G_{\ol{i+1}} , \label{p_induct_2}\\
G_{\ol{i}}
&=X_{i} \cdots X_{n-1}  G_{\ol{n}} Y_{n-1}  \cdots  Y_{i}
	+ \sum_{R} f_R R, \label{p_induct_3}
\end{align}
where {$f_0, f_R \in \Qv$},
each $R$ is a product generated by {$X_{i} , \cdots X_{n-1} $},
{$Y_{i}, \cdots , Y_{n-1}$}, {$G_{i}^{\pm}, \cdots , G_{n}^{\pm}$},
{$G_{\ol{n}}$},
in which each factor appears only $0$ or $1$ times,
and each $R$ is different from the leading term.

As a consequence, fix any {$j \in [1, n-1]$},
the equations in (QQ6) and \eqref{p_induct_1}, \eqref{p_induct_2} imply
\begin{equation}\label{p_induct_4}
\begin{aligned}
{\prod_{{n} \ge i \ge {j+1}}} {\RP}_{i,j}^{\SEE{a}_{i,j}} {\ol{\RP}}_{i,j}^{\SOE{a}_{i,j}}
&=f_0 {\prod_{{n} \ge i \ge {j+1}}}
		G_{\ol{i}}^{\SOE{a}_{i,j} }
		Y_{i-1}^{\sum_{k=i}^n  {a}_{k,j}} 	+ \sum_{Q} f_Q Q,
\end{aligned}
\end{equation}
where {$f_0, f_Q \in \Qv$},
each $Q$ is a product generated by
all the factors in the leading term,
but in different orders from the leading term.

For the pair $i$ and $j$ with {$i>j$},
 {$M=(\SEE{m}_{h,k} | \SOE{m}_{h,k}) \in \MNZNS(n)$} and {$\bs{p} \in \ZZ^n$},
assume {$ \co^{-}(M)_{\min} = j$}, {$\sum_{h=i}^n \SOE{a}_{h, j} = 0$}.
Applying Proposition \ref{mulformeven}{\rm(2)},
when {$i=j+1$} or {$\SEE{m}_{i-1, j} > {\sum_{k=i}^n  {a}_{k,j}} $},
we have,
\begin{equation}\label{p_induct_5}
\begin{aligned}
Y_{i-1}^{\sum_{k=i}^n  {a}_{k,j}}  \AJS(M, \bs{p})
= f^{-}_{M'} \AJS(M' , \bs{p}'_{M'})
	+ \sum_{\substack{B \in \MNZNS(n)  \\
			\bs{p}'' \in \ZZ^n , \
			B \prec M'}}
			 f_{B, \bs{p}''} \AJS(B, \bs{p}''),
\end{aligned}
\end{equation}
where $M' = (\SE{M} - (1-\delta_{i, j+1}) {\sum_{k=i}^n  {a}_{k,j}}  E_{i-1,j} + {\sum_{k=i}^n  {a}_{k,j}}  E_{i,j}| \SO{M})$,
{$f^{-}_{M'},  f_{B, \bs{p}''}  \in \Qv$}.
According to the proof of Lemma \ref{triangular_lower}, we have
\begin{equation}\label{pbw_gi_1}
\begin{aligned}
X_{i} \cdots X_{n-1}  G_{\ol{n}} Y_{n-1}  \cdots  Y_{i} \AJS(M, \bs{p})
&= f^{-}_{M''} \AJS(M'', \bs{p}'_{M''})
	+ \sum_{\substack{B \in \MNZNS(n)  \\
			\bs{p}'' \in \ZZ^n , \
			B \prec M''}}
			 f_{B, \bs{p}''} \AJS(B, \bs{p}''),
\end{aligned}
\end{equation}
where {$M''= (\SE{M} - E_{i,j}| \SO{M} + E_{i,j})$}, {$\bs{p}' \in \ZZ^n$}, 
{$f^{-}_{M''},  f_{B, \bs{p}''}  \in \Qv$}  and {$ f_{M''}  \ne 0$}.
Similar to the discussions in  Lemma \ref{triangular_lower},
for each $R$ in \eqref{p_induct_3}, we also have
\begin{equation}\label{pbw_gi_2}
\begin{aligned}
R \cdot \AJS(M, \bs{p})
&=\sum_{\substack{B \in \MNZNS(n)  , \
			\bs{p}'' \in \ZZ^n \\
			B \prec (\SE{M} - E_{i,j}| \SO{M} + E_{i,j})}}
			 f_{B, \bs{p}''} \AJS(B, \bs{p}'').
\end{aligned}
\end{equation}
Then by \eqref{pbw_gi_1} and  \eqref{pbw_gi_2}, we have
\begin{equation}\label{p_induct_6}
\begin{aligned}
G_{\ol{i}}   \AJS(M, \bs{p})
&= f^{-}_{M'''} \ABJS(\SE{M} - E_{i,j}, \SO{M} + E_{i,j}, \bs{p}'_{M'''})
	+ \sum_{\substack{B \in \MNZNS(n),   \
			\bs{p}'' \in \ZZ^n \\
			B \prec (\SE{M} - E_{i,j}| \SO{M} + E_{i,j})}}
			 f_{B, \bs{p}''} \AJS(B, \bs{p}''),
\end{aligned}
\end{equation}
where {$\bs{p}' \in \ZZ^n$}, {$f^{-}_{M'''},  f_{B, \bs{p}''}  \in \Qv$}, and {$ f \ne 0$}.

By \eqref{p_induct_4}, \eqref{p_induct_5}, \eqref{p_induct_6}
together with the same discussion
for each $Q$ in \eqref{p_induct_4},
we obtain
\begin{align*}
{\prod_{{n} \ge i \ge {j+1}}} {\RP}_{i,j}^{\SEE{a}_{i,j}} {\ol{\RP}}_{i,j}^{\SOE{a}_{i,j}}	  \AJS(A_{j+1}, \bs{p})
&= f' \AJS(A_{j} , \bs{p}'_{A_{j}})
	+ \sum_{\substack{B \in \MNZNS(n)  \\
			\bs{p}'' \in \ZZ^n , \
			B \prec A_{j}}}
			 f_{B, \bs{p}''} \AJS(B, \bs{p}''),
\end{align*}
where {$\bs{p}' \in \ZZ^n$}, {$f,  f_{B, \bs{p}''}  \in \Qv$}.
As a consequence,
\begin{align*}
& ({\prod_{{1} \le j \le {n-1}}} \quad  {\prod_{{n} \ge i \ge {j+1}}} {\RP}_{i,j}^{\SEE{a}_{i,j}} {\ol{\RP}}_{i,j}^{\SOE{a}_{i,j}}	)
\cdot {\prod_{{n} \ge j \ge {1}}}  [  G_{\ol{j}}^{\SOE{a}_{j,j}}
	( {\prod_{{j-1} \ge i \ge {1}}} {\RP}_{i,j}^{\SEE{a}_{i,j}} {\ol{\RP}}_{i,j}^{\SOE{a}_{i,j}})  ] \\
&
=  f'_A \AJS(A , \bs{p}_A)
	+ \sum_{\substack{B \in \MNZNS(n)  \\
			\bs{p}' \in \ZZ^n,  \
			B \prec A}}
			 f_{B, \bs{p}'} \AJS(B, \bs{p}'),
\end{align*}
where {$\bs{p}_A \in \ZZ^n$}, {$f'_A,  f_{B, \bs{p}'}  \in \Qv$}, and {$ f'_A \ne 0$}.
Applying Proposition \ref{mulformzero}, we have
\begin{align*}
& \bs{G}^{\bs{j}} \cdot  ({\prod_{{1} \le j \le {n-1}}} \quad  {\prod_{{n} \ge i \ge {j+1}}} {\RP}_{i,j}^{\SEE{a}_{i,j}} {\ol{\RP}}_{i,j}^{\SOE{a}_{i,j}}	)
\cdot {\prod_{{n} \ge j \ge {1}}}  [  G_{\ol{j}}^{\SOE{a}_{j,j}}
	( {\prod_{{j-1} \ge i \ge {1}}} {\RP}_{i,j}^{\SEE{a}_{i,j}} {\ol{\RP}}_{i,j}^{\SOE{a}_{i,j}})  ] \\
&  =
		 f_{A, \bs{j}} \AJS(A ,  \bs{j} + \bs{p}_A)
	+ \sum_{\substack{B \in \MNZNS(n)  \\
			\bs{p}' \in \ZZ^n, \
			B \prec A}}
			 f_{B, \bs{p}'} \AJS(B, \bs{p}'),
\end{align*}
where 
{$ f_{A, \bs{j}} ,   f_{B, \bs{p}'}  \in \Qv$}, and {$ f_{A, \bs{j}} \ne 0$}.

By Proposition \ref{unv_basis},
the set {$\LS =\{  \AJS(A, \bs{p}_A + \bs{j})  \where   A \in \MNZNS(n), \bs{j} \in {\ZZ}^{n} \} $} is a basis of {$\USnv$}.
Hence the result is proved.
\end{proof}

\begin{thm}\label{map_iso}
The vector space {$\USnv$}  is a superalgebra generated by
	the elements
	{$G_{i}^{\pm 1}$}, 
	{$G_{\ol{i}}$},
	{$X_{j}$}, {$X_{\ol{j}}$},
	{$Y_{j}$}, {$Y_{\ol{j}}$},
	({$ 1 \le i \le n$}, {$ 1 \le j \le n-1$}).
Furthermore, 
the image of the {$\Qv$}-superalgebra homomorphism {$\bs{\xi}_n$} given in Theorem \ref{qqschur_reltion} is $\USnv$ 
and $\bs{\xi}_n$ induces a superalgebra isomorphism $\bs{\xi}_n:\Uvqn\overset\sim\to\USnv$.
\end{thm}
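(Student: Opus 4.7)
The plan is to assemble the results established in the previous sections, since most of the heavy lifting has been done. The proof will proceed in three logical stages corresponding to the three assertions.

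First, for the claim that $\USnv$ is a superalgebra generated by the elements $G_i^{\pm 1}, G_{\ol i}, X_j, X_{\ol j}, Y_j, Y_{\ol j}$, I would simply invoke Theorem \ref{triangular_relation_q}: it already gives the triangular relation
\[
{\frm}^A = g_A \AJS(A, \bs{j}_A) + \sum_{B \prec A, \bs{j}' \in \ZZ^n} g_{B,\bs{j}'} \AJS(B, \bs{j}'),
\]
with $g_A \neq 0$, combined with the multiplication \eqref{Aj_0}. Together with Proposition \ref{unv_basis}, this shows every $\AJS(A, \bs{j})$ lies in the subalgebra generated by $\fsG$. Conversely, Corollary \ref{common_form} shows that $\fsG \cdot \USnv \subseteq \USnv$, so the subalgebra generated by $\fsG$ is contained in $\USnv$. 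Hence equality holds.

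Second, for the image of $\bs{\xi}_n$: by the definition of $\bs{\xi}_n$ in Theorem \ref{qqschur_reltion}, every generator of $\Uvqn$ in the list $\{\genE_j, \genE_{\ol j}, \genF_j, \genF_{\ol j}, \genK_i^{\pm 1}, \genK_{\ol i}\}$ maps to a corresponding element of $\fsG$. By the first part, $\fsG$ generates $\USnv$, so $\bs{\xi}_n(\Uvqn) = \USnv$.

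Third, for injectivity, I would use Proposition \ref{map_basis}: the image $\fcP = \bs{\xi}_n(\fcB)$ of the PBW basis $\fcB$ from Proposition \ref{new_pbw_basis} is a $\Qv$-basis of $\USnv$. Since $\bs{\xi}_n$ sends the basis $\fcB$ of $\Uvqn$ bijectively onto the basis $\fcP$ of $\USnv$, it is a $\Qv$-linear isomorphism. Combined with the fact that $\bs{\xi}_n$ is a superalgebra homomorphism (Theorem \ref{qqschur_reltion}), this yields the desired superalgebra isomorphism $\Uvqn \to \USnv$.

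There is essentially no new obstacle here; the substantive work was completed in Propositions \ref{new_pbw_basis} and \ref{map_basis} (producing the PBW-type basis for $\Uvqn$ and showing its image under $\bs{\xi}_n$ remains a basis by a triangularity argument with respect to the partial order $\prec$) and in Theorem \ref{triangular_relation_q}. The final step is thus a short bookkeeping argument tying these results together.
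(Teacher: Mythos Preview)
Your proposal is correct and matches the paper's own proof essentially line for line: the paper likewise invokes Theorem \ref{triangular_relation_q} together with Proposition \ref{common_form} to see that $\USnv$ is the subalgebra generated by $\fsG$ (hence equals the image of $\bs{\xi}_n$), and then uses Propositions \ref{new_pbw_basis} and \ref{map_basis} to conclude injectivity from the fact that a basis maps to a basis. Your extra mention of \eqref{Aj_0} and Proposition \ref{unv_basis} just unpacks what is already recorded in the concluding sentence of Theorem \ref{triangular_relation_q}.
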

\begin{proof}
Theorem \ref{qqschur_reltion} shows  {$\bs{\xi}_n$}  is an algebra homomorphism
into {${\SQvnR} $}.
Theorem \ref{triangular_relation_q} and Proposition \ref{common_form}
show  {$\USnv$} is a superalgebra generated by the elements
{$G_{i}$}, {$G_{i}^{-1}$},{$G_{\ol{i}}$},
	{$X_{j}$}, {$X_{\ol{j}}$},
	{$Y_{j}$}, {$Y_{\ol{j}}$},
which means the image of {$\bs{\xi}_n$}  is {$\USnv$},
and {$\bs{\xi}_n$}  is an epimorphism onto {$\USnv$}.
Proposition \ref{new_pbw_basis} and \ref{map_basis}
show  {$\bs{\xi}_n$} is injective and hence is an isomorphism.
\end{proof}

As a consequence, there is a superalgebra epimorphism
\begin{align}
    \bs{\eta}_r: \qquad
& \USnv \to  {\SQvnrR},  \qquad
  \AJS(A, \bs{j}) \mapsto   \AJRS(A, \bs{j}, r).
  \label{superalgebra_epi}
\end{align}

Because of the isomorphism {$ \bs{\xi}_n$},
we can identify
{$ G_{i}^{\pm 1} $},  {$  G_{\ol{i}} $},
{$ X_{j} $}, {$  X_{\ol{j}} $},
{$ Y_{j} $}, {$  Y_{\ol{j}} $}
with
{$ {\genK}_{i}^{\pm 1} $},   {$  {\genK}_{\ol{i}} $},
{$ {\genE}_{j} $}, {$  {\genE}_{\ol{j}} $},
	{$ {\genF}_{j} $}, {$  {\genF}_{\ol{j}} $} resp.,
for all {$1 \le i \le n, 1 \le j \le n-1$}.
Then one can directly write down the multiplication formulas for the generators {$ G_{i}^{\pm 1} $},  {$  G_{\ol{i}} $},
{$ X_{j} $}, {$  X_{\ol{j}} $},
{$ Y_{j} $}, {$  Y_{\ol{j}} $} with the basis {$\{\AJS(A, \bs{j})  \where   A \in \MNZNS(n), \bs{j} \in {\ZZ}^{n} \}$}.

\vspace{0.5cm}
{\it Acknowledgement}: 
\iffalse
The work was  partially supported by the UNSW Science FRG 
and the Natural Science Foundation of China (\#12071129, \#11871404, \#12122101, \#12071026).
\else
The first author acknowledges the support from UNSW Science FRG.
The second author  acknowledges the support from NSFC-12071129. 
The third author would like to thank the support from Professor Yanan Lin in Xiamen University,
and  NSFC-11871404.
The fourth author is supported by NSFC-12122101 and NSFC-12071026. 
\fi
\appendix
\section{Proofs of Propositions \ref{mulformeven}(2), \ref{mulformodd1}, \ref{mulformodd2}}

\noindent
\begin{proof}[\bf Proof of Proposition \ref{mulformeven}(2)]
By  Proposition \ref{prop_PhiAPhiB},  \eqref{def_ajr} and Proposition \ref{phiupper0}(3), we can write
\begin{align*}
  \ABJRS( E_{h+1, h}, \mathrm{O}, \bs{ 0 },  r ) \cdot \ABJRS( \SE{A}, \SO{A}, \bs{j},  r )
& =	\sum_{\substack{ \\ \lambda \in  \CMN(n,r-\snorm{A}) }
	} {v}^{\lambda \cdot {\bs{j}}} {\Phi}_{(E_{h+1, h} + \ro(A)+\lambda-\bs{\ep}_h | \mathrm{O} )}{\Phi}_{( \SE{A} + \lambda | \SO{A} )} \\
&= {\fcY}_1 + {\fcY}_2 - {\fcY}_3,
\end{align*}
where
\begin{align*}
{\fcY}_1 &=
	\sum_{\substack{  \lambda \in \CMN(n,r-\snorm{A}) } }
	{v}^{\lambda \cdot {\bs{j}}}
	\sum_{k=1}^n
	{q}^{\sum^{k-1}_{j=1}{(A + \lambda )}_{h+1, j}}
	\STEP{ \SEE{(A + \lambda )}_{h+1, k} +1}
	{\Phi}_{(\SE{A} + \lambda  - E_{h,k} + E_{h+1, k} | \SO{A} )}, \\
{\fcY}_2
&=
	\sum_{\substack{  \lambda \in \CMN(n,r-\snorm{A}) } }
	{v}^{\lambda \cdot {\bs{j}}}
	\sum_{k=1}^n
	{q}^{\sum^{k-1}_{j=1}{(A + \lambda )}_{h+1, j} + {(A + \lambda )}_{h, k} - 1}
	{\Phi}_{(\SE{A} + \lambda |\SO{A} - E_{h,k} + E_{h+1,k})}, \\
{\fcY}_3
&=
	\sum_{\substack{  \lambda \in \CMN(n,r-\snorm{A}) } }
	{v}^{\lambda \cdot {\bs{j}}}
	\sum_{k=1}^n
	{q}^{\sum^{k-1}_{j=1}{(A + \lambda )}_{h+1, j} + {(A + \lambda )}_{h, k} -2}  \\
	&\qquad \qquad \cdot
	{\STEPPD{  {(A + \lambda )}_{h+1, k} +1}}
	{\Phi}_{(\SE{A} + \lambda  + 2E_{h+1,k} | \SO{A}  - E_{h,k} - E_{h+1,k})}.
\end{align*}

To compute $\fcY_1$, we consider the  four cases $k<h, k=h, k=h+1$ and $k>h+1$ similar to the approach using in \eqref{eq:compute Y1} and then by Lemma \ref{formstepodd}{\rm(2)} we have
\begin{align*}
{\fcY}_1
&=
	\sum_{k < h}
	{q}^{\AK(h+1,k)}
	\STEP{ \SEE{a}_{h+1, k} +1}
	\ABJRS( \SE{A} - E_{h,k} + E_{h+1, k}, \SO{A}, \bs{j},  r ) \\
	&\qquad +
	{q}^{\AK(h+1,h)} {v}^{j_h}
	\STEP{ \SEE{a}_{h+1, h} +1}
	\ABJRS( \SE{A} + E_{h+1, h}, \SO{A}, \bs{j},  r ) \\
	&\qquad +
	\frac{{q}^{\AK(h+1,h+1)} }{ ({q} - 1){v}^{j_{h+1}} }\{ \quad
	\ABJRS( \SE{A} - E_{h,h+1}, \SO{A}, \bs{j} + 2 \bs{\ep}_{h+1},  r )
	 -
	\ABJRS( \SE{A} - E_{h,h+1}, \SO{A}, \bs{j},  r ) \quad \}
	\\
	&\qquad +
	\sum_{k > h+1}
	{q}^{ \AK(h+1,k)}
	\STEP{ \SEE{a}_{h+1, k} +1}
	\ABJRS( \SE{A} - E_{h,k} + E_{h+1, k}, \SO{A}, \bs{j} + 2 \bs{\ep}_{h+1}, r ),
\end{align*}
Similarly, we have
\begin{align*}
{\fcY}_2
&=
	\sum_{k<h}
	{q}^{\AK(h+1,k) + {a}_{h, k} - 1}
	\ABJRS(  \SE{A}, \SO{A} - E_{h,k} + E_{h+1,k} , \bs{j},  r ) \\
	& \qquad +
	{q}^{\AK(h+1,h)  }
	\ABJRS(\SE{A}, \SO{A} - E_{h,h} + E_{h+1,h},  {\bs{j}} + 2 \bs{\ep}_{h}, r) \\
	& \qquad +
	{q}^{\AK(h+1,h+1) + {a}_{h, h+1} - 1}
	\ABJRS(\SE{A}, \SO{A} - E_{h,h+1} + E_{h+1,h+1}, \bs{j}, r) \\
	& \qquad +
	\sum_{k>h+1}
	{q}^{\AK(h+1,k) + {a}_{h, k} - 1}
	\ABJRS(\SE{A}, \SO{A} - E_{h,k} + E_{h+1,k}, {\bs{j}} + 2 \bs{\ep}_{h+1},  r ) ,
\end{align*}
where in the computation the fact  ${\Phi}_{(\SE{A} + \lambda |\SO{A} - E_{h,h} + E_{h+1,h})} \mbox{ appears when }  \SOE{a}_{h, h} = 1$ is used.
Finally,
\begin{align*}
{\fcY}_3
&=
	\sum_{k < h }
	{q}^{\AK(h+1,k) + {a}_{h, k} -2}
	\STEPPDR{  {a}_{h+1, k} +1}
	\ABJRS( \SE{A} + 2E_{h+1,k}, \SO{A}  - E_{h,k} - E_{h+1,k}, \bs{j}, r ) \\
	& \qquad +
	{q}^{\AK(h+1,h) -1}
	\STEPPDR{  {a}_{h+1, h} +1}
	\ABJRS( \SE{A}  + 2E_{h+1,h}, \SO{A}  - E_{h,h} - E_{h+1,h}, \bs{j} + 2 \bs{\ep}_{h},  r  ) \\
	& \qquad +
	{q}^{\AK(h+1,h+1) + {a}_{h, h+1} -2}
	\frac{1}{({q}^2 - 1){v}^{2 j_{h+1} }}
	\{ \quad
	\ABJRS( \SE{A}, \SO{A}  - E_{h,h+1} - E_{h+1,h+1}, \bs{j} +  4 \bs{\ep}_{h+1},  r ) \\
	&\qquad \qquad -
	({q} + 1) \ABJRS( \SE{A}, \SO{A}  - E_{h,h+1} - E_{h+1,h+1}, \bs{j} +  2 \bs{\ep}_{h+1},  r ) \\
	&\qquad \qquad +
	{q} \ABJRS( \SE{A}, \SO{A}  - E_{h,h+1} - E_{h+1,h+1}, \bs{j},  r )
	 \quad \}
	 \qquad 
	 \\
	& \qquad +
	\sum_{k>h+1}
	{q}^{ \AK(h+1,k) + {a}_{h, k} -2}
	\STEPPDR{  {a}_{h+1, k} +1}
	\ABJRS( \SE{A} + 2E_{h+1,k}, \SO{A}  - E_{h,k} - E_{h+1,k}, \bs{j} + 2 \bs{\ep}_{h+1}, r ) ,
\end{align*}
where we use the fact that ${\Phi}_{(A^0+\lambda+2E_{h+1,h}|A^1-E_{h,h}-E_{h+1,h})}$ appears only when $a_{h,h}^1=1=a_{h+1,h}^1$
and ${\Phi}_{(A^0+\lambda+2E_{h+1,h+1}|A^1-E_{h,h+1}-E_{h+1,h+1})}$ appears only when $a_{h+1,h+1}^1=1=a_{h,h+1}^1$. Hence {\rm(2)} is proved.
\end{proof}

\begin{proof}[\bf Proof of Proposition \ref{mulformodd1}]
Observe that for any {$\mu \in \CMN(n, r-1)$},
{${(-1)}^{\parity{\mu|E_{h,h+1}} \cdot \parity{A}}
= {(-1)}^{\parity{A}}$}.

By the assumption that for each $1\leq k\leq n$ and $\lambda \in \CMN(n, r-\snorm{A})$ with {${(A+\lambda)}_{h+1,k} \ge 1$},
{$({A+\lambda})^+_{h,k}$} satisfies SDP condition at $(h,k)$,
we have via  Proposition \ref{prop_PhiAPhiB},  \eqref{def_ajr} and Proposition \ref{phiupper1}
\begin{align*}
  \ABJRS( \mathrm{O}, E_{h, h+1}, \bs{ 0 },  r ) \cdot \ABJRS( \SE{A}, \SO{A}, \bs{j},  r )
& =
	\sum_{\substack{
		\lambda \in \CMN(n,r-\snorm{A}) \\
		\mu \in \CMN(n, r-1) \\
		\co(\mu + E_{h,h+1}) = \ro(A+\lambda)
	} }
	{v}^{\mu \cdot {\bs{0}}} {v}^{\lambda \cdot {\bs{j}}}
	{\Phi}_{(\mu| E_{h, h+1})} {\Phi}_{( \SE{A} + \lambda | \SO{A} )} \\
&=  {(-1)}^{\parity{A}}({\fcY}_1 + {\fcY}_2 + {\fcY}_3),
\end{align*}
where
\begin{align*}
{\fcY}_1&=
	\sum_{\substack{  \lambda \in \CMN(n,r-\snorm{A}) } }
	{v}^{\mu \cdot {\bs{0}}} {v}^{\lambda \cdot {\bs{j}}}
	\sum_{k=1}^n
	{(-1)}^{{ {\SOE{\widetilde{a}}}_{h-1,k}}} {q}^{ {\sum^{n}_{u=k+1}{(A+\lambda)}_{h, u}}  + \SOE{a}_{h+1,k} }
	{\Phi}_{(\SE{A} + \lambda  - E_{h+1, k}| \SO{A}  + E_{h,k} )} \\
{\fcY}_2&=\sum_{\substack{  \lambda \in \CMN(n,r-\snorm{A}) } }
	{v}^{\mu \cdot {\bs{0}}} {v}^{\lambda \cdot {\bs{j}}}
	\sum_{k=1}^n
	{(-1)}^{{ {\SOE{\widetilde{a}}}_{h-1,k}} + 1 - \SOE{a}_{h,k} } {q}^{ {\sum^{n}_{u=k+1}{(A+\lambda)}_{h, u}}  }  \STEP{\SEE{(A+\lambda)}_{h,k} +1}
		{\Phi}_{(\SE{A} + \lambda  + E_{h,k}| \SO{A} - E_{h+1, k} )}\\
{\fcY}_3&=
	\sum_{\substack{  \lambda \in \CMN(n,r-\snorm{A}) } }
	{v}^{\mu \cdot {\bs{0}}} {v}^{\lambda \cdot {\bs{j}}}
	\sum_{k=1}^n
	{(-1)}^{{ {\SOE{\widetilde{a}}}_{h-1,k}} }
	{q}^{ {\sum^{n}_{u=k+1}{(A+\lambda)}_{h, u}} -1 + \SOE{a}_{h+1,k}}
	\STEPPDR{ {(A+\lambda)}_{h,k} + 1} \\
	& \qquad \qquad \cdot  {\Phi}_{(\SE{A} + \lambda  -E_{h+1, k} + 2 E_{h,k} |\SO{A} - E_{h,k})}.
\end{align*}
Direct calculation shows
\begin{align*}
{\fcY}_1
&=
	\sum_{k<h}
	{(-1)}^{{ {\SOE{\widetilde{a}}}_{h-1,k}}}
	{q}^{ {\BK(h,k)}  + \SOE{a}_{h+1,k} }
	\ABJRS( \SE{A} - E_{h+1, k}, \SO{A}  + E_{h,k}, \bs{j} + 2 \bs{\ep}_{h},  r ) \\
	& \qquad +
	{(-1)}^{{ {\SOE{\widetilde{a}}}_{h-1,h}}}
	{q}^{ {\BK(h,h)}  + \SOE{a}_{h+1,h} }
	\ABJRS( \SE{A} - E_{h+1, h}, \SO{A}  + E_{h,h}, \bs{j}, r )\\
	& \qquad +
	{(-1)}^{{ {\SOE{\widetilde{a}}}_{h-1,h+1}}}
	{q}^{ {\BK(h,h+1)}  + \SOE{a}_{h+1,h+1} } {v}^{j_{h+1} }
	\ABJRS( \SE{A}, \SO{A}  + E_{h,h+1}, \bs{j},  r ) \\
	& \qquad +
	\sum_{ \substack{ k > h+1}  }
	{(-1)}^{{ {\SOE{\widetilde{a}}}_{h-1,k}}}
	{q}^{ {\BK(h,k)}  + \SOE{a}_{h+1,k} }
	\ABJRS( \SE{A} - E_{h+1, k}, \SO{A}  + E_{h,k}, \bs{j}, r ),
\end{align*}
Again similar to \eqref{eq:compute Y1}, by Lemma \ref{formstepodd}{\rm(2)} one can obtain
\begin{align*}
{\fcY}_2
&=
	\sum_{k < h}
	{(-1)}^{{ {\SOE{\widetilde{a}}}_{h-1,k}} + 1 - \SOE{a}_{h,k} }
	{q}^{ {\BK(h,k)} }
	\STEP{ \SEE{a}_{h,k} +1}
	\ABJRS( \SE{A}+ E_{h,k}, \SO{A} - E_{h+1, k}, \bs{j} + 2 \bs{\ep}_{h},  r )  \\
	& \qquad +
	\frac{{(-1)}^{{ {\SOE{\widetilde{a}}}_{h-1,h}} + 1 - \SOE{a}_{h,h} } {q}^{ {\BK(h,h)} } }{({q} - 1) {v}^{j_h}}
		\{ \quad \ABJRS( \SE{A}, \SO{A}- E_{h+1, h} , {\bs{j}} + 2 \bs{\ep}_{h}, r )
		-  \ABJRS(  \SE{A}, \SO{A}- E_{h+1, h} , {\bs{j}} , r ) \quad \} \\
	& \qquad +
	\sum_{k > h}
	{(-1)}^{{ {\SOE{\widetilde{a}}}_{h-1,k}} + 1 - \SOE{a}_{h,k} }
	{q}^{ {\BK(h,k)}}
	\STEP{ \SEE{a}_{h,k} +1}
	\ABJRS( \SE{A}  + E_{h,k}, \SO{A} - E_{h+1, k}, \bs{j},  r ).
\end{align*}
Finally direct calculation using Lemma \ref{formstepodd}(1) shows
\begin{align*}
{\fcY}_3
&=
	\sum_{k < h}
	{(-1)}^{{ {\SOE{\widetilde{a}}}_{h-1,k}} }
	\STEPPDR{ {a}_{h,k} + 1}
	{q}^{ {\BK(h,k)} -1 + \SOE{a}_{h+1,k}}
	\ABJRS( \SE{A} -E_{h+1, k} + 2 E_{h,k}, \SO{A} - E_{h,k}, {\bs{j}} + 2 \bs{\ep}_{h} ,  r )  \\
	& \qquad +
	{(-1)}^{{ {\SOE{\widetilde{a}}}_{h-1,h}} }
	\frac{{q}^{ {\BK(h,h)} -1 + \SOE{a}_{h+1,h}}}{({q}^2 - 1) {v}^{2 j_h}} \{ \quad
	\ABJRS( \SE{A}  -E_{h+1, h}, \SO{A}  -E_{h,h}, \bs{j} + 4 \bs{\ep}_{h},  r ) \\
	&\qquad \qquad -
	{({q} + 1)}
	\ABJRS( \SE{A} -E_{h+1, h}, \SO{A}  -E_{h,h}, \bs{j} + 2 \bs{\ep}_{h},  r ) \\
	&\qquad \qquad +
	{v}^{2}
	\ABJRS( \SE{A} -E_{h+1, h}, \SO{A}  -E_{h,h}, \bs{j},  r ) \quad \}
	\\
	& \qquad +
	{(-1)}^{{ {\SOE{\widetilde{a}}}_{h-1,h+1}} }
	{q}^{ {\BK(h,h+1)} -1 + \SOE{a}_{h+1,h+1}}{v}^{j_{h+1}}
	\STEPPDR{ {a}_{h,h+1} + 1}
	\ABJRS( \SE{A}  + 2 E_{h,h+1}, \SO{A} - E_{h,h+1}, \bs{j},  r  )  \\
	& \qquad +
	\sum_{k>h+1}
	{(-1)}^{{ {\SOE{\widetilde{a}}}_{h-1,k}} }
	{q}^{ {\BK(h,k)} -1 + \SOE{a}_{h+1,k}}
	\STEPPDR{ {a}_{h,k} + 1}
	\ABJRS( \SE{A}  -E_{h+1, k} + 2 E_{h,k}, \SO{A} - E_{h,k}, \bs{j},  r ) ,
\end{align*}
where the second equality is due to fact that ${\Phi}_{(A^0+\lambda-E_{h+1,h+1}+2E_{h,h}|A^1-E_{h,h})}$ appears only when $a_{h,h}^1=1$.
Putting together and replacing $q$ by $v^2$, we have proved the proposition.
\end{proof}

\begin{proof}[\bf Proof of Proposition \ref{mulformodd2}]

Observe that for any {$\mu \in \CMN(n, r-1)$},
{${(-1)}^{\parity{\mu|E_{h+1,h}} \cdot \parity{A}}
= {(-1)}^{\parity{A}}$}.

By the assumption 
for any  $\lambda \in \CMN(n, r-\snorm{A})$,
{$({A+\lambda})$}  satisfies the SDP condition on the $h$-th row,
we compute by  Proposition \ref{prop_PhiAPhiB},  Proposition \ref{philower1} and \eqref{def_ajr}
\begin{align*}
  \ABJRS( \mathrm{O}, E_{h+1, h}, \bs{ 0 },  r ) \cdot \ABJRS( \SE{A}, \SO{A}, \bs{j},  r )
& =\sum_{\substack{  \\ \mu \in \CMN(n,r-1)  }}
	{v}^{\mu \cdot {\bs{0}}} {\Phi}_{(\mu |E_{h+1, h} )}
	\cdot
	\sum_{\substack{ \\ \lambda \in  \CMN(n,r-\snorm{A}) }
	} {v}^{\lambda \cdot {\bs{j}}} {\Phi}_{(\SE{A} + \lambda  | \SO{A})} \\
&=  {(-1)}^{\parity{A}}({\fcY}_1 + {\fcY}_2 + {\fcY}_3),
\end{align*}
where
\begin{align*}
{\fcY}_1&=
	\sum_{\substack{  \lambda \in \CMN(n,r-\snorm{A}) } }
	{v}^{\lambda \cdot {\bs{j}}}
 	\sum_{k=1}^n
	{(-1)}^{{\SOE{\widetilde{a}}}_{h-1,k} + \SOE{a}_{h,k}} {q}^{ \sum^{k-1}_{j=1}{(A + \lambda )}_{h+1, j}}
	{\Phi}_{(\SE{A} + \lambda - E_{h,k}| \SO{A}+ E_{h+1, k})} \\
{\fcY}_2&=
	\sum_{\substack{  \lambda \in \CMN(n,r-\snorm{A}) } }
	{v}^{\lambda \cdot {\bs{j}}}
 	\sum_{k=1}^n
	{(-1)}^{{\SOE{\widetilde{a}}}_{h-1,k} + \SOE{a}_{h,k}+1}
	{q}^{ \sum^{k-1}_{j=1}{(A + \lambda )}_{h+1, j} -1}
	\STEPPDR{ {(A + \lambda )}_{h+1,k} +1}\\
	&\qquad \qquad \cdot   {\Phi}_{(\SE{A} + \lambda  - E_{h,k} + 2E_{h+1, k} | \SO{A}  -E_{h+1, k})} \\
{\fcY}_3&=\sum_{\substack{  \lambda \in \CMN(n,r-\snorm{A}) } }
	{v}^{\lambda \cdot {\bs{j}}}
 	\sum_{k=1}^n
	{(-1)}^{{\SOE{\widetilde{a}}}_{h-1,k}+1} {q}^{ \sum^{k-1}_{j=1}{(A + \lambda )}_{h+1, j} + {(A + \lambda )}_{h,k}-1}
	\STEP{ \SEE{(A + \lambda )}_{h+1,k}+1} \\
	& \qquad \qquad  \qquad  \cdot
	{\Phi}_{(\SE{A} + \lambda  + E_{h+1,k} | \SO{A}  - E_{h,k} )} \\
\end{align*}
Direct calculation shows
\begin{align*}
{\fcY}_1 &=
 	\sum_{k<h}
	{(-1)}^{{\SOE{\widetilde{a}}}_{h-1,k} + \SOE{a}_{h,k}}
	{q}^{ \AK(h+1,k)}
	\ABJRS( \SE{A} - E_{h,k}, \SO{A}+ E_{h+1, k}, \bs{j},  r ) \\
	&\qquad +
	{(-1)}^{{\SOE{\widetilde{a}}}_{h-1,h} + \SOE{a}_{h,h}}
	{q}^{ \AK(h+1,h)} {v}^{ j_h }
	\ABJRS( \SE{A} , \SO{A}+ E_{h+1, h}, \bs{j},  r ) \\
	&\qquad +
	{(-1)}^{{\SOE{\widetilde{a}}}_{h-1,h+1} + \SOE{a}_{h,h+1}}
	{q}^{ \AK(h+1,h+1)}
	\ABJRS( \SE{A} - E_{h,h+1}, \SO{A}+ E_{h+1, h+1}, \bs{j},  r ) \\
	&\qquad +
 	\sum_{k>h+1}
	{(-1)}^{{\SOE{\widetilde{a}}}_{h-1,k} + \SOE{a}_{h,k}}
	{q}^{  \AK(h+1,k)}
	\ABJRS( \SE{A} - E_{h,k}, \SO{A}+ E_{h+1, k}, \bs{j} + 2 \bs{\ep}_{h+1},  r ).
\end{align*}
Meanwhile by Lemma \ref{formstepodd}(1) and the fact that ${\Phi}_{(A^0+\lambda-E_{h,h}+2E_{h+1,h+1}|A^1-E_{h+1,h+1})}$ appears only when $a^1_{h+1,h+1}=1$
we obtain
\begin{align*}
{\fcY}_2
&=
 	\sum_{k<h}
	{(-1)}^{{\SOE{\widetilde{a}}}_{h-1,k} + \SOE{a}_{h,k}+1}
	{q}^{ \AK(h+1,k)-1}
	( {\STEPP{ {a}_{h+1,k} +1}} -  \STEP{ {a}_{h+1,k}+1}) \\
	& \qquad \qquad \cdot
	\ABJRS( \SE{A}  - E_{h,k} + 2E_{h+1, k}, \SO{A}  -E_{h+1, k}, \bs{j},  r ) \\
	& \qquad +
	{(-1)}^{{\SOE{\widetilde{a}}}_{h-1,h} + \SOE{a}_{h,h}+1}
	{q}^{ \AK(h+1,h) -1 }{v}^{j_h}
	\STEPPDR{ {a}_{h+1,h} +1}
	\ABJRS( \SE{A} + 2E_{h+1, h}, \SO{A}  -E_{h+1, h}, \bs{j},  r ) \\
	& \qquad +
	{(-1)}^{{\SOE{\widetilde{a}}}_{h-1,h+1} + \SOE{a}_{h,h+1}+1}
	{q}^{ \AK(h+1,h+1)-1}
	\frac{1}{({q}^2 - 1){v}^{2 j_{h+1}}}  \{ \quad
	\ABJRS( \SE{A}  - E_{h,h+1}, \SO{A}  -E_{h+1, h+1}, \bs{j} + 4 \bs{\ep}_{h+1},  r ) \\
	&\qquad \qquad -
	 ({q} + 1) \ABJRS( \SE{A}  - E_{h,h+1}, \SO{A}  -E_{h+1, h+1}, \bs{j} + 2 \bs{\ep}_{h+1},  r ) \\
	&\qquad \qquad +
	 {q} \ABJRS( \SE{A}  - E_{h,h+1}, \SO{A}  -E_{h+1, h+1}, \bs{j},  r )
	 \quad \}
	 \\
	& \qquad +
 	\sum_{k>h+1}
	{(-1)}^{{\SOE{\widetilde{a}}}_{h-1,k} + \SOE{a}_{h,k}+1}
	{q}^{ \AK(h+1,k)-1}
	\STEPPDR{ {a}_{h+1,k} +1} \\
	& \qquad \qquad \cdot \ABJRS( \SE{A}  - E_{h,k} + 2E_{h+1, k}, \SO{A}  -E_{h+1, k}, \bs{j}+ 2 \bs{\ep}_{h+1},  r ) .
\end{align*}
Finally straightforward computation leads to
\begin{align*}
{\fcY}_3
&=
	\sum_{k < h}
 	{(-1)}^{{\SOE{\widetilde{a}}}_{h-1,k}+1} {q}^{ \AK(h+1,k) + {a}_{h,k}-1}
	\STEP{ \SEE{a}_{h+1,k}+1}
	\ABJRS( \SE{A} +  E_{h+1,k}, \SO{A}  - E_{h,k}, \bs{j},  r ) \\
	& \qquad +
 	{(-1)}^{{\SOE{\widetilde{a}}}_{h-1,h}+1}
	{q}^{ \AK(h+1,h) }
	\STEP{ \SEE{a}_{h+1,h}+1}
	\ABJRS( \SE{A} +  E_{h+1,h}, \SO{A}  - E_{h,h}, \bs{j} + 2 \bs{\ep}_{h},  r ) \\
	& \qquad +
 	{(-1)}^{{\SOE{\widetilde{a}}}_{h-1,h+1}+1}
		\frac{{q}^{ \AK(h+1,h+1) + {a}_{h,h+1}-1} }{ ({q} - 1){v}^{j_{h+1}} }\{ \quad
	\ABJRS( \SE{A}, \SO{A}  - E_{h,h+1}, \bs{j} + 2 \bs{\ep}_{h+1},  r )  \\
	& \qquad \qquad \qquad -
	\ABJRS( \SE{A}, \SO{A}  - E_{h,h+1}, \bs{j},  r )
	 \quad \}
	 \quad \mbox{(by Lemma \ref{formstepodd} {\rm(1)})}
	 \\
	& \qquad +
	\sum_{k>h+1}
 	{(-1)}^{{\SOE{\widetilde{a}}}_{h-1,k}+1}
	{q}^{ \AK(h+1,k) + {a}_{h,k}-1}
	\STEP{ \SEE{a}_{h+1,k}+1}
	\ABJRS( \SE{A}  + E_{h+1,k}, \SO{A}  - E_{h,k}, \bs{j} + 2 \bs{\ep}_{h+1},  r ).
\end{align*}
Then the proposition is proved.
\end{proof}



\begin{thebibliography}{10}
\bibitem[BW]{BW} H. Bao and W. Wang, {\it A new approach to Kazhdan-Lusztig theory of type B via quantum symmetric pairs,} Ast\'erisque {\bf 402}, 2018, vii+134pp.

\bibitem[BKLW] {BKLW}H. Bao, J. Kujawa, Y. Li and W. Wang, {\it Geometric Schur Duality of Classical Type}, Transformation Groups {\bf 23} (2018), 329--389.



\bibitem[BLM]{BLM}
	A. Beilinson, G. Lusztig, R. MacPherson, \emph{A geometric setting for the quantum deformation of $GL_n$}, Duke Mathematical Journal {\bf 61} (1990) 655--677.

\bibitem[BGJKW]{BGJKW} Benkart, G., N. Guay, J. Jung, S.-J. Kang, and S. Wilcox, \emph{Quantum Walled Brauer-Clifford
Superalgebras}, Journal of Algebra {\bf 454} (2016), 433--474. 


\bibitem[BK1]{BK1}
J. Brundan, A. Kleshchev, \emph{Projective representations of symmetric groups via Sergeev duality}, Mathematische Zeitschrift {\bf239} (2002), 27--68.
\bibitem[BK2]{BK2} J. Brundan, A. Kleshchev, {\it Modular representations of the supergroup $Q(n)$, I}, Journal of Algebra {\bf 260} (2003), 64-98.

\bibitem[BKu]{BKu}J. Brundan, J. Kujawa, {\it A new proof of the Mullineux conjecture}, Journal of Algebraic Combinatorics  {\bf 18} (2003), 13--39.

\bibitem[CW]{CW}
S.-J. Cheng,  W. Wang,
\emph{Dualities and Representations of Lie Superalgebras},
Graduate Studies in Mathematics, Volume 144,
American Mathematical SocietyProvidence, Rhode Island, 2012.

\bibitem[DDF]{DDF}
B. Deng, J. Du, Q. Fu, \emph{A double Hall algebra approach to quantum Schur--Weyl theory}, 
		 London Mathematical Society, Lecture Note Series,
Vol. 401, Cambridge University Press, 2012.
\bibitem[DDPW]{DDPW}
B. Deng, J. Du, B. Parshall, J. Wang, \emph{Finite Dimensional Algebras and Quantum Groups},
Mathematical Surveys and Monographs, Volume 150, American Mathematical Society, Providence, RI, 2008.


\bibitem[Du]{Du} J. Du, \emph{Cells in certain sets of matrices}, T\^ohoku Mathematical Journal, {\bf 48}(1996), 417--427.


\bibitem[DF2]{DF2}
J. Du and Q. Fu, \emph{A modified BLM approach to quantum affine {$\mathfrak{gl}_{n}$}}, Mathematische Zeitschrift {\bf 266} (2010),
747--781.

\bibitem[DF]{DF}
	J. Du, Q. Fu, \emph{Quantum affine $\mathfrak{gl}_n$ via Hecke algebras}, Advances in Mathematics {\bf 282} (2015), 23--46.

 \bibitem[DG]{DG}
	 J. Du, H. Gu, \emph{A realization of the quantum supergroup $U(\mathfrak{gl}_{m|n})$}, Journal of Algebra  {\bf 404} (2014), 60--99.
 \bibitem[DGZ]{DGZ}
J. Du , H. Gu, Z. Zhou, \emph{Multiplication formulas and semisimplicity for q-Schur superalgebras},
	Nagoya Mathematical Journal {\bf 237} (2020) 98--126.

\bibitem[DR]{DR} Du, J. and H. Rui, \emph{Quantum Schur superalgebras and Kazhdan--Lusztig combinatorics}.
Journal of Pure and Applied Algebra 215, no. 11 (2011), 2715--37.

\bibitem[DLZ]{DLZ}
J. Du, Y. Lin, Z. Zhou, \emph{Quantum queer supergroups via $\upsilon$-differential operators},
		Journal of Algebra, {\bf 599} (2022), 48-103.



\bibitem[DW1]{DW1}
 J. Du and J. Wan, \emph{Presenting queer Schur superalgebras}, International Mathematics Research Notices 2015, no.8, 2210--2272.

\bibitem[DW2]{DW2}
	J. Du and J. Wan, \emph{The queer {$q$}-Schur superalgebra}, Journal of the Australian Mathematical Society {\bf 105} (2018), 316--346, doi:10.1017/S 1446788717000337.

\bibitem[DWu1]{DWu1} J. Du and Y. Wu, {\em A new realisation of the $i$-quantum group ${\bf U}^\jmath(n)$}, Journal of Pure and Applied Algebra, {\bf 226} (2022), no. 1, Paper No. 106793, 27 pp.

\bibitem[DWu2]{DWu2} J. Du and Y. Wu, {\em The $i$-quantum group ${\bf U}^\imath(n)$}, Pacific Journal of Mathematics, to appear.

\bibitem[F]{F}
Q. Fu, \emph{On Schur algebras and little Schur algebras},
Journal of Algebra, {\bf 322} (2009), 1637--1652.

\bibitem[GV]{GV} V. Ginzburg, E. Vasserot, \emph{Langlands reciprocity for affine quantum groups of type $A_n$}. International Mathematics Research Notices {\bf 3} (1993), 67--85,


\bibitem[GJKK]{GJKK}
D. Grantcharov, J. H. Jung, S.-J. Kang, and M. Kim. \emph{Highest weight modules over quantum
queer superalgebra $U_q(\mathfrak q(n))$}, Communications in Mathematical Physics {\bf 296} (2010),
827--860.

\bibitem[GJKKK]{GJKKK}
	D. Grantcharov, J. H. Jung, S.-J. Kang, M. Kashiwara, and M. Kim,
	\emph{Crystal bases for the quantum queer superalgebra and semistandard decomposition tableaux}
	Transactions of the American Mathematical Society {\bf 366} (2014), 457-489.

\bibitem[GLL]{GLL}
 H. Gu, Z. Li, Y. Lin, \emph{The integral Schur-Weyl-Sergeev duality},
 Journal of Pure and Applied Algebra {\bf 226}  (2022).

 \bibitem[L]{L}G. Lusztig, \emph{Aperiodicity in quantum affine $\mathfrak{gl}_n$}, Asian Journal of Mathematics  {\bf 3} (1999), 147--177.

\bibitem[Ol]{Ol}
	G. I. Olshanski,
	\emph{Quantized Universal Enveloping Superalgebra of Type Q and a Super-Extension of the Hecke Algebra}, Letters in Mathematical Physics {\bf 24} (1992), 93--102.
\bibitem[Ser]{Ser}
	A. N. Sergeev, \emph{The tensor algebra of the identity representation as a module over the lie superalgebra {$\mathfrak{Sl}(n,m)$} and {$Q(n)$}}, Math. USSR Sbornik {\bf 51} (1985), 419--427.
\end{thebibliography}
\end{document}